\theoremstyle{plain}
\newtheorem{lemma}{Lemma}[section]
\newtheorem*{theorem*}{Theorem}
\newtheorem*{lemma*}{Lemma}
\newtheorem*{proposition*}{Proposition}
\newtheorem*{conjecture*}{Conjecture}
\newtheorem*{corollary*}{Corollary}
\newtheorem*{problem*}{Problem}
\newtheorem{theorem}[lemma]{Theorem}
\newtheorem{conjecture}[lemma]{Conjecture}
\newtheorem{corollary}[lemma]{Corollary}
\newtheorem{proposition}[lemma]{Proposition}
\newtheorem{problem}[lemma]{Problem}
\theoremstyle{definition}
\newtheorem{definition}[lemma]{Definition}
\newtheorem{example}[lemma]{Example}
\newtheorem{remark}[lemma]{Remark}
\newtheorem{warning}[lemma]{Warning}
\newtheorem{observation}[lemma]{Observation}
\newcommand{\Z}{\mathbb{Z}}
\newcommand{\N}{\mathbb{N}}
\newcommand{\C}{\mathbb{C}}
\newcommand{\OO}{\mathcal{O}}
\newcommand{\te}{\otimes}
\newcommand{\sm}{\setminus}
\newcommand{\cO}{\mathcal{O}}
\newcommand{\PP}{\mathbb{P}}
\DeclareMathOperator{\rk}{rk}
\DeclareMathOperator{\hook}{hook}
\DeclareMathOperator{\init}{init}
\begin{document}

\date{\today}
\author[I. Coskun]{Izzet Coskun}
\address{Department of Mathematics, Statistics and CS \\University of Illinois at Chicago, Chicago, IL 60607}
\email{coskun@math.uic.edu}
\author[L. Costa]{Laura Costa}
\address{Facultat de Matem\`{a}tiques, Departament d'Algebra i Geometria, Gran Via de les Corts Catalanes 585, 08007 Barcelona, SPAIN}
\email{costa@ub.edu}
\author[J. Huizenga]{Jack Huizenga}
\address{Department of Mathematics, The Pennsylvania State University, University Park, PA 16802}
\email{huizenga@psu.edu}
\author[R. M. Mir\'{o}-Roig]{Rosa Maria Mir\'{o}-Roig}
\address{Facultat de Matem\`{a}tiques, Departament d'Algebra i Geometria, Gran Via de les Corts Catalanes 585, 08007 Barcelona, SPAIN}
\email{miro@ub.edu}
\author[M. Woolf]{Matthew Woolf}
\address{Department of Mathematics, Statistics and CS \\University of Illinois at Chicago, Chicago, IL 60607}
\email{woolf@math.uic.edu}
\subjclass[2010]{Primary: 14M15. Secondary: 14J60, 13C14, 13D02, 14F05}
\keywords{Flag varieties, Ulrich bundles, Schur bundles}
\thanks{During the preparation of this article the first author was partially supported by the NSF CAREER grant DMS-0950951535, the second author was partially supported by MTM2013-45075-P, the third author was partially supported by a National Science Foundation Mathematical Sciences Postdoctoral Research Fellowship, and the fourth author was partially supported by MTM2013-45075-P}

\title{Ulrich Schur bundles on Flag varieties}

\begin{abstract}
In this paper, we study equivariant vector bundles on partial flag varieties arising from Schur functors.  We show that a partial flag variety with three or more steps does not admit an Ulrich bundle of this form with respect to the minimal ample class.  We classify Ulrich bundles of this form on two-step flag varieties $F(1,n-1;n)$, $F(2, n-1; n)$, $F(2, n-2; n)$, $F(k, k+1; n)$ and $F(k, k+2; n)$. We give a conjectural description of the two-step flag varieties which admit such Ulrich bundles.
\end{abstract}
\maketitle

\setcounter{tocdepth}{1}
\tableofcontents

\section{Introduction}\label{sec-intro}
Let $V$ be an $n$-dimensional vector space and  $0< k_1 < \cdots < k_r <n$ an increasing sequence of integers. For convenience, set $k_0 = 0$ and $k_{r+1} =n$. The $r$-step partial flag variety $F(k_1, \dots, k_r; n)$  parameterizes partial flags $$W_1 \subset W_2 \subset \cdots \subset W_r \subset V,$$ where $W_i$ is a $k_i$-dimensional subspace of $V$ for $1 \leq i \leq r$. The variety $F(k_1, \dots, k_r; n)$ is endowed with a collection of tautological subbundles
$$0=T_0 \subset T_1 \subset T_2 \subset \cdots \subset T_r  \subset T_{r+1}= \underline{V} = V \otimes \OO_{F(k_1, \dots, k_r; n)}, $$ where $T_i$ is a vector bundle of rank $k_i$ and $\underline{V}$ denotes the trivial bundle of rank $n$. Let $U_i = T_i/ T_{i-1}$. In this paper, we study the equivariant vector bundles $E_\lambda$ on $F(k_1, \dots, k_r; n)$ defined by tensor products of Schur functors of these tautological bundles $$E_\lambda = \mathbb{S}^{\lambda_1} U_1^* \otimes \mathbb{S}^{\lambda_2} U_2^* \otimes \cdots \otimes \mathbb{S}^{\lambda_{r+1}} U_{r+1}^*,$$ where the partition $\lambda = (\lambda_1|\cdots |\lambda_{r+1})$ is the concatenation of partitions $\lambda_i$ of length $k_i-k_{i-1}$.  We call the bundles $E_\lambda$ on the flag variety \emph{Schur bundles}.

The question we study in this paper is to determine which Schur bundles on partial flag varieties are Ulrich bundles with respect to the minimal ample class.  The Picard group of $F(k_1, \dots, k_r; n)$ is generated by the classes of Schubert divisors. The sum of the Schubert divisors corresponds to a line bundle $\cO(1)$ that defines a projectively normal embedding of $F(k_1, \dots, k_r; n)$ \cite{Ramanathan}. Unless we specify otherwise, we will always consider the partial flag varieties in this embedding.

Let $X \subset \PP^m$ be an arithmetically Cohen-Macaulay variety of dimension $d$.  A vector bundle $\mathcal{E}$ of rank $r$ on $X$ is Ulrich if $H^i(X, \mathcal{E}(-i)) = 0$ for $i>0$ and $H^j(X, \mathcal{E}(-j-1))=0$ for $j < d$ (see \cite{BaHU}, \cite{BHU}, \cite{ESW}). These are the bundles whose Hilbert polynomials have $d$  zeros at the first $d$ negative integers. Their pushforwards to $\PP^m$ via the inclusion of $X$ has a minimal free resolution where all the syzygies are linear and the $i$-th Betti number is $\deg(X) r {m-d \choose i}$ \cite[Proposition 2.1]{ESW}.

Ulrich bundles have important applications in liaison theory, singularity theory and Boij-S\"{o}derberg theory. For example, if $X$ admits an Ulrich bundle, then its  cone of cohomology tables coincides with that of $\PP^n$ \cite{EisenbudSchreyer}.   In view of their importance, Eisenbud, Schreyer and Weyman \cite{ESW}  formulated the problem of determining which varieties admit Ulrich bundles. Many authors have considered this problem. We refer the reader to \cite{CasanellasHartshorne}, \cite{CKM}, \cite{CostaMiroRoig1}, \cite{CMP}, \cite{ESW}, \cite{Faenzi}, \cite{Miro}, \cite{MP1}, \cite{MP2} for references and further results.  For example, Ulrich bundles exist on curves, linear determinantal varieties, hypersurfaces and more generally on complete intersections (see \cite{BaHU}, \cite{BHU}, \cite{ESW}, \cite{KP}).

The second and fourth authors have initiated the study of homogeneous Ulrich bundles on homogeneous varieties. In \cite{CostaMiroRoig1}, they  classified all Schur bundles which are Ulrich on Grassmannians under the Pl\"{u}cker embedding. In this paper, we extend this study to flag varieties. Our first main theorem is the following.

\begin{theorem*}[Theorem \ref{thm-higherStep}]
If $r\geq 3$, then no flag variety $F(k_1, \dots, k_r; n)$ admits a Schur bundle $E_\lambda$ which is Ulrich with respect to $\OO(1)$.
\end{theorem*}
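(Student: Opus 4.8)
The plan is to convert the Ulrich condition into a combinatorial statement about the weight of $E_\lambda$ via Bott's theorem, and then to obstruct it for $r\ge 3$ by a divisibility argument. Write $F=F(k_1,\dots,k_r;n)$, $d=\dim F$, and $B_1,\dots,B_{r+1}$ for the blocks of coordinates of $\GL_n$ (of sizes $n_a=k_a-k_{a-1}$). Each twist $E_\lambda(-p)=E_\lambda\otimes\cO(-p)$ is again an irreducible homogeneous bundle, since $\cO(1)=\bigotimes_{i=1}^r\det T_i^*$ and $\det T_i=\bigotimes_{j\le i}\det U_j$, so $\cO(-p)$ merely shifts the partitions defining $E_\lambda$. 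Let $\mu$ be the highest weight of $E_\lambda$, which is weakly decreasing on each block, let $h$ be the weight of $\cO(1)$, which is constant on each block with distinct values on distinct blocks, set $\rho=(n-1,\dots,1,0)$ and $w=\mu+\rho$. By the Weyl dimension formula, $\chi\bigl(E_\lambda(t)\bigr)=\prod_{i<j}\frac{(w_i-w_j)+t(h_i-h_j)}{j-i}$. Since the Hilbert polynomial of an Ulrich bundle vanishes at $t=-1,\dots,-d$, for each $p\in\{1,\dots,d\}$ some pair $i<j$ must satisfy $w_i-w_j=p(h_i-h_j)$; as $w$ is strictly decreasing on each block, such a pair cannot lie in one block (there $h_i=h_j$ while $w_i\ne w_j$), so it lies in two distinct blocks and $p=p_{ij}:=\frac{w_i-w_j}{h_i-h_j}$.

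The number of pairs $\{i,j\}$ in two different blocks is $\binom n2-\sum_a\binom{n_a}2=\sum_{a<b}n_an_b=d$, so $E_\lambda$ Ulrich forces $\{i,j\}\mapsto p_{ij}$ to be a \emph{bijection} from the cross-block pairs onto $\{1,\dots,d\}$; in particular every $p_{ij}$ is a positive integer. Plotting $P_i=(h_i,w_i)\in\ZZ^2$ one obtains $n$ points lying in $r+1$ equally spaced vertical columns $C_1,\dots,C_{r+1}$ (one per block, with distinct heights in each column), and $p_{ij}$ is the slope of $\overline{P_iP_j}$. The problem becomes: a configuration of $n$ points in $r+1$ columns whose $d$ inter-column segments have distinct integer slopes realizing exactly $\{1,\dots,d\}$.

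For $P,R,Q$ in columns $C_a,C_m,C_b$ with $a<m<b$, the slope of $\overline{PQ}$ is a strict convex combination of those of $\overline{PR}$ and $\overline{RQ}$. Hence: (i) the slopes $1$ and $d$ are each realized by a unique segment joining \emph{adjacent} columns; and (ii), since $r\ge 3$, every column lies at distance $2$ from another, so the heights within a column are pairwise congruent mod $2$, whence all slopes between two adjacent columns share one parity, and chaining over $C_1,\dots,C_{r+1}$ makes every slope between columns at odd distance have one fixed parity $\varepsilon$. More generally the heights in a column $C_a$ are all congruent modulo the $\operatorname{lcm}$ of its distances to the other columns, which is large for interior columns, so segments out of such a column have very sparse slopes modulo that lcm. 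The plan is to contradict ``slopes $=\{1,\dots,d\}$'' by combining (i), the parity count (which forces $\#\{$odd-distance pairs$\}\le\lceil d/2\rceil$), and these sparsity statements (a residue class mod $q$ meets $\{1,\dots,d\}$ in at most $\lceil d/q\rceil$ elements).

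\emph{Main obstacle.} The parity count alone forces a contradiction only when odd-distance column pairs outnumber even-distance ones, and this can fail: for $F(1,11,12;22)$ the column multiplicities are $(1,10,1,10)$, $d=141$, and the even-distance pairs number $101$ against $40$ odd-distance ones, so no contradiction comes from parity. The crux is to use the large-modulus congruences on interior columns to obtain a contradiction uniformly in the block sizes $(n_1,\dots,n_{r+1})$ — an interior column is ``too congruent'' to supply the many near-consecutive slopes that the interval $\{1,\dots,d\}$ demands — rather than a case split; packaging this into a single clean estimate is the heart of the proof.
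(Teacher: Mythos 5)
Your reduction of the Ulrich condition to a combinatorial statement is sound and is essentially the paper's own reformulation: by Borel--Weil--Bott each twist $E_\lambda(-p)$ has all cohomology vanishing exactly when $\lambda(-p)+\rho$ has a repeated entry, and since the number of cross-block pairs equals $N=\dim F$, the Ulrich condition forces the collision times $p_{ij}$ to biject onto $\{1,\dots,N\}$. Your congruence observations (entries of a block congruent modulo the distances to the other blocks, hence the parity constraints on which pairs can collide at even times) are exactly the paper's Lemma \ref{lem-congruence} and its immediate consequences.

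But the proof stops there: you yourself flag that the parity/sparsity counting does not close the argument (your $(1,10,1,10)$ example), and the ``single clean estimate'' you hope for is never produced, so the nonexistence claim for $r\ge 3$ is not established. This is a genuine gap, and it is precisely where the paper's work lies. The paper does not argue by a global count at all; after Lemma \ref{lem-congruence} it analyzes the \emph{structure} of the first few and last few collisions. For $r\ge 5$ (Proposition \ref{prop-fiveStep}) the collisions at times $t=1,2,3$ force an interior block to be a single entry sitting in one of two explicit local configurations, and then no legal collision exists at $t=4$ because it would violate the modulus-$6$ spacing within blocks. For $r=3$ (Proposition \ref{prop-threeStep}) the early collisions force the second block to have one entry, duality handles the late times, and the contradiction is either a short inequality $2j+jm+2m\le 2(j+m)$ (only after the structural reductions) or a timing comparison showing $bc_l$ would precede $c_1d_1$. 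For $r=4$ (Proposition \ref{prop-fourStep}) an even finer analysis pins down the positions near both ends, computes $N$, and exhibits a time ($t=5$) at which no collision can occur. So the missing ingredient is not a cleverer counting bound but this local forcing of block sizes and positions from the first/last collision times combined with duality; without some version of it your proposal does not prove the theorem.
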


In view of Theorem \ref{thm-higherStep}, we concentrate on two-step flag varieties. We refer the reader to \S \ref{sec-overview2step} for a detailed description of our results for two-step flag varieties. We may summarize our results as follows.

\begin{theorem*}[Theorems \ref{thm-1n1}, \ref{thm-beta1intro}, \ref{thm-beta2intro}, \ref{thm-1n2intro}, \ref{thm-2n2intro}]
We classify all the Schur bundles $E_\lambda$ which are Ulrich with respect to $\cO(1)$ on
$$ F(1,n-1;n), \quad F(1, n-2; n),  \quad F(2, n-2; n), \quad F(k, k+1; n), \quad \mbox{and} \quad F(k, k+2; n).$$
\end{theorem*}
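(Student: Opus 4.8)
The plan is to translate the Ulrich condition into a purely combinatorial statement via the Borel--Weil--Bott theorem and then analyze it family by family. Recall that, for a vector bundle $\mathcal{E}$ on a $d$-dimensional arithmetically Cohen--Macaulay variety $X \subset \PP^m$, the defining vanishings of an Ulrich bundle are together equivalent to $H^\bullet(X, \mathcal{E}(-p)) = 0$ for every $1 \le p \le d$ (see \cite{ESW}). On $F(a,b;n)$ the line bundle $\OO(1)$ has weight $\omega_a + \omega_b$, so $E_\lambda(-p) = E_\lambda \otimes \OO(-p)$ is again a Schur bundle $E_{\lambda(p)}$ whose weight is obtained from that of $E_\lambda$ by translating the three blocks associated to $U_1^*$, $U_2^*$ and $U_3^*$ by $-2p$, $-p$ and $0$ respectively (up to an overall normalization). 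By Bott's algorithm, $H^\bullet(F(a,b;n), E_{\lambda(p)})$ either vanishes, precisely when the sequence $\lambda(p)+\rho$ has a repeated coordinate, or is nonzero and concentrated in a single cohomological degree, in which case $\chi(E_{\lambda(p)}) \neq 0$. Hence $E_\lambda$ is Ulrich with respect to $\OO(1)$ if and only if $\lambda(p) + \rho$ has a repeated coordinate for every $p$ in $\{1, \dots, \dim F(a,b;n)\}$, and this is the statement we must characterize.

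The combinatorial core runs as follows. Within each of the three blocks the coordinates of $\lambda(p)$ are weakly decreasing and those of $\rho$ are strictly decreasing, so a repeated coordinate of $\lambda(p) + \rho$ must be shared by two \emph{distinct} blocks. Since the blocks drift apart linearly in $p$ at the rates above, for each pair of blocks the set of $p$ admitting a collision between that pair, and the location of the collision, is governed by $n$, $a$, $b$ and the outermost columns of $\lambda$. First one extracts necessary conditions from the extreme twists $p = 1$ and $p = \dim F(a,b;n)$, together with a handful of neighbouring values; these fix the lengths and entries of the boundary columns of the partitions $\lambda_1, \lambda_2, \lambda_3$. Then an induction on $p$ shows that once a collision is forced at $p$, requiring one also at $p+1$ leaves only very limited freedom for how the shape of $\lambda$ may continue, and iterating forces $\lambda$ into one of finitely many ``staircase'' shapes rigidly determined by $n$, $a$, $b$.

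Carrying this out in the five families is the content of Theorems \ref{thm-1n1}, \ref{thm-beta1intro}, \ref{thm-beta2intro}, \ref{thm-1n2intro} and \ref{thm-2n2intro}. For $F(k,k+1;n)$ and $F(k,k+2;n)$ the middle bundle $U_2$ has rank $1$ or $2$, so $\mathbb{S}^{\lambda_2} U_2^*$ is merely a power of a line bundle, respectively a single two-term segment; this collapses most of the collision bookkeeping, and one can also exploit the two projections to $\Gr(k,n)$ and $\Gr(k+1,n)$ (respectively $\Gr(k+2,n)$) together with the classification of Ulrich Schur bundles on Grassmannians from \cite{CostaMiroRoig1}. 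For $F(1,n-1;n)$, $F(1,n-2;n)$ and $F(2,n-2;n)$ it is instead the outer bundles $U_1$ and $U_3$ that are small, so every relevant collision pairs a small block with a large one and the possibilities can be enumerated directly. In each case the surviving partitions $\lambda$ assemble into the explicit lists stated in those theorems.

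The main difficulty is that the Ulrich condition is a requirement on \emph{all} of the twists $p = 1, \dots, \dim F(a,b;n)$ simultaneously, and across all three pairs of blocks, with the ranks $a$, $b-a$, $n-b$ appearing as free parameters. Disqualifying a bundle is cheap, since one bad value of $p$ suffices, but proving a candidate list complete means controlling these collisions uniformly without the argument degenerating into an unmanageable case analysis. The small middle or outer ranks in these five families are exactly what makes a clean closed-form answer possible; this is also why we can only conjecture the answer for general two-step flags, whereas for $r \ge 3$ steps there is no room to sustain the collisions at all, as recorded in Theorem \ref{thm-higherStep}.
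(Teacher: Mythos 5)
Your reduction to the collision problem is the same as the paper's (Borel--Weil--Bott turns ``$E_\lambda$ is Ulrich for $\OO(1)$'' into ``$\lambda(p)+\rho$ has a repeated entry, necessarily across two distinct blocks, for all $p\in[1,N]$''), but from that point on the proposal is a plan rather than a proof, and the plan as stated would not succeed. The assertion that examining $p=1$, $p=N$ and ``a handful of neighbouring values'' plus an induction on $p$ forces $\lambda$ into ``finitely many staircase shapes rigidly determined by $n,a,b$'' is contradicted by the actual answers: for $F(1,n-1;n)$ (type $(1,n,1)$) there are $2^n$ Ulrich partitions, parametrized by arbitrary subsets of $[n]$; for $F(k,k+1;n)$ and $F(k,k+2;n)$ the existing examples occur only when $n-k-1$ (resp.\ $n-k-2$) lies in the sparse set of numbers $(4^{m+1}-1)/3$ or sums of two such, a self-similar structure that in the paper comes out of a sumset identity $[0,N']=A'\amalg C'\amalg\tfrac12(A'+C')$ and a sieve/recursion argument, not from boundary twists plus a local induction; and for $F(2,n-1;n)$ the classification is by decompositions $n-1=2km+m-1$ via an elongation operation on a fundamental pattern. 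None of this global structure (duality of partitions, the trapezoid and rectangle rules, the greedy-algorithm uniqueness, the elongation/dimension-formula induction) is visible in your outline, and it is exactly what makes ``controlling the collisions uniformly'' possible; without it the completeness half of each classification is unproved.

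A second concrete gap is the suggested shortcut for $F(k,k+1;n)$ and $F(k,k+2;n)$ via the two projections to Grassmannians and the classification of $GL(V)$-invariant Ulrich bundles on Grassmannians. The Ulrich condition here is with respect to $\OO(1)=L_1\otimes L_2$ on the flag variety, and there is no argument given (nor an obvious one) that it implies, or follows from, Ulrichness of any associated Schur bundle on $G(k,n)$ or $G(k+1,n)$ under the Pl\"ucker polarization; the paper never uses such a reduction, and the relevant cohomology vanishing mixes all three blocks of $\lambda+\rho$, so it does not factor through either projection. To repair the proposal you would need to supply, for each of the five families, an argument covering all $N$ twists simultaneously --- e.g.\ the paper's normalization/greedy construction together with duality to control both early and late times, the sumset recursion for $\beta=1$, the case analysis on $b_1-b_2\in\{1,3,5\}$ for $\beta=2$, and the elongation inductions for types $(2,n,1)$ and $(2,n,2)$ --- rather than the asserted but unsubstantiated induction on $p$.
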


The Borel-Weil-Bott Theorem computes the cohomology of Schur bundles on flag varieties. Let $N$ denote the dimension of $F(k_1, \dots, k_r; n)$. In order to determine whether a bundle $E_\lambda$ is Ulrich, we need to check that the cohomology of  $N$ consecutive twists of $E_\lambda$ vanishes. Using the Borel-Weil-Bott Theorem, this reduces to an intricate combinatorial problem. In \S \ref{sec-prelim}, we explain the algebraic geometry and representation theory background needed to turn the problem into a combinatorial problem. Then for the rest of the paper we study the combinatorial problem, which may be stated as follows.

\begin{problem}
Let $$P= (a_1^1, \dots, a_{l_1}^1 | a_1^2, \dots, a^2_{l_2}| \cdots | a_1^{r+1}, \dots, a_{l_{r+1}}^{r+1})$$ be a strictly decreasing sequence of integers divided into $r+1$ subsequences. Let $P(t)$ denote the sequence $$(a_1^1 - tr , \dots, a_{l_1}^1 -tr | a^2_1-t(r-1) , \dots, a^2_{l_2} - t (r-1) | \cdots | a_1^{r+1} , \dots, a^{r+1}_{l_{r+1}})$$ obtained by subtracting $t (r-i+1)$ from $a^i_j$. Classify the sequences $P$ for which the sequences $P(t)$ have exactly two equal entries for $1 \leq t \leq \sum_{1\leq i<j \leq r+1} l_i l_j$.
\end{problem}

Theorem \ref{thm-higherStep} is equivalent to the statement that such sequences do not exist if $r \geq 3$. The combinatorial problem is most interesting when $r=2$. In this case, there are infinite families of examples, which makes their classification challenging. We conjecture that there do not exist such sequences with both $l_1$ and $l_2$ at least three. In terms of geometry, this conjecture translates to the following.

\begin{conjecture}\label{conj-main}
The two-step flag variety $F(k_1, k_2; n)$ does not admit a Schur bundle which is Ulrich with respect to $\OO(1)$ if $k_1 \geq 3$ and $k_2-k_1 \geq 3$.
\end{conjecture}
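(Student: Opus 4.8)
A natural route to Conjecture~\ref{conj-main} is to attack the combinatorial problem above (in the case $r=2$) directly. By the Borel--Weil--Bott reduction of \S\ref{sec-prelim}, the conjecture is equivalent to the statement that there is no strictly decreasing integer sequence $P=(X\mid Y\mid Z)$, with block sizes $l_1=|X|\ge 3$, $l_2=|Y|\ge 3$, $l_3=|Z|\ge 1$, for which $P(t)$ has exactly two equal entries for every $t=1,\dots,T$, where $T=l_1l_2+l_1l_3+l_2l_3$. We first rephrase this as a statement about difference sets. Regarding the $j$-th entry of $P$ as the affine function of $t$ obtained by subtracting $2t$, $t$, or $0$ according as the entry lies in the first, second, or third block, two entries coincide at time $t$ exactly when these lines cross. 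Entries in a common block give parallel lines, distinct at $t=0$, hence never meet, so all coincidences come from the $T$ cross-block pairs: a block-$1$/block-$2$ pair $(x,y)$ meets at time $x-y$, a block-$2$/block-$3$ pair at time $y-z$, and a block-$1$/block-$3$ pair at time $(x-z)/2$. Since each of these $T$ pairs meets exactly once, the Ulrich condition is equivalent to the multiset identity
\[
(X-Y)\ \uplus\ (Y-Z)\ \uplus\ \tfrac12(X-Z)\ =\ \{1,2,\dots,T\},
\]
where $X,Y,Z$ now denote the underlying sets; in particular $X-Z\subseteq 2\ZZ$.

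Since the meeting times are unchanged by translating $P$, we normalize $\min Z=0$ and, using $X-Z\subseteq 2\ZZ$, arrange that every element of $X\cup Z$ is even (the parities in $Y$ remaining free). Two remarks will be used later: because no two meeting times coincide, every coincidence is in fact an adjacent transposition, so the list of coincidences read in time order is a reduced word for the permutation taking the block order $(X,Y,Z)$ to the reversed order $(Z,Y,X)$; and the substitution $(X,Y,Z)\mapsto(-Z,-Y,-X)$ preserves the normalized problem and swaps the multisets $X-Y$ and $Y-Z$, realizing the symmetry $l_1\leftrightarrow l_3$ (so when $l_3\ge 3$ we may also assume $l_1\le l_3$). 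Encoding $X,Y,Z$ by the $0/1$ Laurent polynomials $f=\sum_{x\in X}q^{x}$, $g=\sum_{y\in Y}q^{y}$, $h=\sum_{z\in Z}q^{z}$ and writing a bar for $q\mapsto q^{-1}$, the identity becomes
\[
f(q)\,\bar g(q)+g(q)\,\bar h(q)+d(q)=q+q^{2}+\cdots+q^{T},\qquad d(q^{2})=f(q)\,\bar h(q).
\]

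From the extreme-degree terms of this identity one reads off exactly which pairs realize the meeting times $1,2$ and $T-1,T$; differentiating at $q=1$ gives the single linear relation $\sum_{i,j}(x_i-y_j)+\sum_{j,k}(y_j-z_k)+\tfrac12\sum_{i,k}(x_i-z_k)=\binom{T+1}{2}$ among the coordinate sums; and evaluation at $q=-1$, combined with the parity normalization, forces the parities occurring in $Y$ to be nearly balanced. The plan is to feed these rigidity statements into an induction on $n=l_1+l_2+l_3$: delete a carefully chosen extreme entry ($\max X$, $\min Z$, $\max Y$, or $\min Y$), show that the meeting times in which it participates are forced to occupy a block of consecutive values at the top or bottom of $\{1,\dots,T\}$, and deduce that the remaining configuration solves the analogous problem on a flag variety with one fewer point; iterating, one should be driven either into a numerical impossibility or into one of the already-classified cases $F(1,n-1;n)$, $F(1,n-2;n)$, $F(2,n-2;n)$, $F(k,k+1;n)$, $F(k,k+2;n)$ (Theorems~\ref{thm-1n1}, \ref{thm-beta1intro}, \ref{thm-beta2intro}, \ref{thm-1n2intro}, \ref{thm-2n2intro}).

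The reason this is still only a conjecture is that the boundary regimes $l_2\le 2$ and $\{l_1,l_3\}\subseteq\{1,2\}$ genuinely support infinite families of solutions, so no soft argument (moment identities, counting, or root-of-unity evaluations alone) can possibly settle it: the proof must use both $l_1\ge 3$ and $l_2\ge 3$ in an essential, structural way. Once $l_1,l_2\ge 3$ there is a forced $3\times 3$ array of pairwise-distinct differences inside $X-Y$, and the heart of the matter is to show that this array, interacting through the halving operation with the overlapping portion of $\tfrac12(X-Z)$, cannot be packed into $\{1,\dots,T\}$ without a repeat. The essential difficulty is the mismatch between the genuinely-integer differences $X-Y$ and $Y-Z$ and the halved differences $\tfrac12(X-Z)$, whose distribution modulo small integers is controlled only after the normalization of $X\cup Z$; this same mismatch is also what makes the peeling step delicate, since deleting an extreme entry disturbs both $T$ and the parity normalization, so the induction most likely has to be carried out for a slightly more general ``shifted'' form of the combinatorial problem.
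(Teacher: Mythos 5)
You are addressing Conjecture \ref{conj-main}, which the paper does not prove: it is stated as an open conjecture, restated combinatorially as Conjecture \ref{conj-simple} (no Ulrich partition of type $(\alpha,\beta,\gamma)$ with two of the blocks of size at least $3$, one of them the middle block), and supported only by the partial classifications of Theorems \ref{thm-1n1}, \ref{thm-beta1intro}, \ref{thm-beta2intro}, \ref{thm-1n2intro}, \ref{thm-2n2intro} and by a brute-force search showing any counterexample with $\beta\ge 3$, $\gamma\ge 3$ has $\alpha+\beta+\gamma\ge 13$. So there is no ``paper proof'' to match your argument against, and your proposal does not close the gap either.

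Concretely: your reformulation is correct and is essentially the paper's own (\S\ref{sec-comb}), namely that the Ulrich condition is the multiset identity $(X-Y)\uplus(Y-Z)\uplus\tfrac12(X-Z)=\{1,\dots,T\}$ with $T=l_1l_2+l_1l_3+l_2l_3$, in the spirit of the sumset recasting used for $\beta=1$ in \S\ref{sec-sumset}. But everything beyond the reformulation is an unexecuted plan. The moment identity and root-of-unity evaluations are soft constraints that, as you note, cannot distinguish the conjectural non-existence regime from the regimes where infinite families exist. The load-bearing step --- that one can delete an extreme entry whose meeting times are forced to occupy a consecutive block at the top or bottom of $[T]$, so that the remaining configuration is again Ulrich of smaller type --- is neither proved nor correct as stated: in the paper's known families the intersection times of extremal entries are interleaved with those of other entries rather than consecutive (see the elongations $E^k(F_m)$ of type $(2,n,1)$ in \S\ref{sec-1n2}, the partitions $P_u$ of type $(2,2u,2)$ in \S\ref{sec-2n2}, and the $(1,2,k)$ and $(k_1+k_2,2,1)$ families in \S\ref{sec-beta2}), so any peeling lemma must use $l_1\ge 3$ and $l_2\ge 3$ in an essential structural way that you describe as ``the heart of the matter'' but do not supply. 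You acknowledge this in your final paragraph; as written, the proposal is a strategy sketch, the key lemma is missing, and the conjecture remains open.
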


If Conjecture \ref{conj-main} is true, then our results give a complete classification of all Schur bundles which are Ulrich with respect to $\OO(1)$  on partial flag varieties. In particular, such bundles are very rare. Hence, in order to construct Ulrich bundles on partial flag varieties, one has to look elsewhere.

One may also consider Schur bundles which are Ulrich with respect to other embeddings of $F(k_1, \dots, k_r; n)$. Our constructions all scale. In particular, if $E_\lambda$ is Ulrich with respect to $\OO(1)$, then multiplying the integers in $\lambda$ by $m$ gives a bundle which is Ulrich with respect to $\OO(m)$.  Our combinatorial techniques can in principle be applied to other polarizations, though we do not discuss any such results here.

\subsection*{The organization of the paper} In \S \ref{sec-prelim}, we explain the necessary background from algebraic geometry and representation theory needed to turn the classification of Schur bundles which are Ulrich into a combinatorial problem. In \S \ref{sec-comb}, we explain the combinatorial problem. In \S \ref{sec-higherStep}, we solve the combinatorial problem for flag varieties of at least three steps and show that they do not admit a Schur bundle which is Ulrich for $\cO(1)$. In \S \ref{sec-overview2step}, we explain our results for two-step flag varieties and classify Ulrich bundles on flag varieties of the form $F(1,n-1;n)$. In sections \S \ref{sec-sumset}, \S \ref{sec-beta2}, \S \ref{sec-1n2} and \S \ref{sec-2n2}, we classify Ulrich bundles on flag varieties of the form  $F(k, k+1; n)$, $F(k, k+2; n)$, $F(2, n-1; n)$ and $F(2, n-2; n)$, respectively.

\section{Algebraic geometry background}\label{sec-prelim}

\subsection*{Partial flag varieties}
Let $0< k_1 < \cdots < k_r <n$ be an increasing sequence of integers. Set  $k_0 = 0$ and $k_{r+1} = n$. The $r$-step partial flag variety $F(k_1, \dots, k_r; n)$  parameterizes partial flags $W_1 \subset W_2 \subset \cdots \subset W_r \subset V,$ where $\dim W_i = k_i$.
  Given any set of indices $1 \leq i_1 < i_2 < \cdots < i_s \leq r$, there is a natural projection $$\pi_{i_1, \dots, i_s} : F(k_1, \dots, k_r; n) \rightarrow F(k_{i_1}, \dots, k_{i_s}; n).$$  In particular, the partial flag variety can be realized as an iterated Grassmannian bundle
$$F(k_1, \dots, k_r; n) \rightarrow F(k_2, \dots, k_r; n) \rightarrow \cdots \rightarrow G(k_r; n).$$ From this description, it is immediate to read that
$$N:=\dim(F(k_1, \dots, k_r; n))= \sum_{i=1}^r k_i (k_{i+1} - k_i).$$
The partial flag variety has $r$ projections to Grassmannians $$\pi_i : F(k_1, \dots, k_r; n) \rightarrow G(k_i; n).$$ The Picard group of $F(k_1, \dots, k_r; n)$ is generated by $L_i= \pi_i^* \OO_{G(k_i; n)}(1)$. A line bundle $L$ is ample on $F(k_1, \dots, k_r; n)$ if and only if $L = L_1^{\otimes a_1} \otimes \cdots \otimes L_r^{\te a_r}$ with $a_i > 0$ for every $1 \leq i \leq r$.

\subsection*{The degree of partial flag varieties} Let $X \subset \PP^n$ be an equivariant embedding of a projective homogeneous variety. The Weyl dimension formula implies that the Hilbert polynomial of $X$ factors as a product of linear polynomials. Consequently, one obtains a formula for the degree of $X$ \cite{GrossWallach}.
Let $\psi$ denote the dominant weight defining the embedding of $X$ in $\PP^n$. Let $\rho$ be half the sum of the positive roots and let $N$ be the dimension of $X$. Then the degree of $X$ is given by
\begin{equation}\label{eq-degree}
N! \prod_{\alpha} \frac{\langle \psi, \check{\alpha} \rangle }{\langle \rho, \check{\alpha} \rangle },
\end{equation}
where the product is over all positive roots $\alpha$ with $ \langle \psi, \check{\alpha} \rangle >0$ and $\check{\alpha}$ denotes the coroot  \cite{GrossWallach}.

Now we specialize to the case of partial flag varieties. Let $a_1, \dots, a_r$ be a sequence of positive integers. Set $$b_{ij} = \sum_{k=i}^j a_k.$$ We would like to calculate the degree of $F(k_1, \dots, k_r; n)$ under the embedding defined by $$L_{a_1, \dots, a_r} = L_1^{\otimes a_1} \otimes \cdots \otimes L_r^{\otimes a_r}.$$ The ample line bundle $L_{a_1, \dots, a_r}$ corresponds to the dominant weight
$$ \psi_{a_1, \dots, a_r} = b_{1r} e_1 + \cdots + b_{1r}e_{k_1} + b_{2r} e_{k_1 + 1} + \cdots + b_{2r} e_{k_2} + \cdots + b_{rr} e_{k_r}.$$
We have that  $$\rho = (n-1) e_1 + (n-2) e_2 + \cdots + e_{n-1}.$$ The coroots that have nonzero intersection with $\psi_{a_1, \dots, a_r}$ are precisely $e_i - e_j$, where $k_{s-1} < i \leq k_s$ for some $1 \leq s \leq r$ and $k_t < j \leq k_{t+1}$ for $s<t \leq r$. For such $e_i - e_j$, the numerator in Equation (\ref{eq-degree}) is $b_{st}$ and the denominator is $j-i$. We thus conclude the following proposition.

\begin{proposition}
The degree of the partial flag variety $F(k_1, \dots, k_r; n)$ embedded by the line bundle $L_{a_1, \dots, a_r}$ is
\begin{equation}\label{eq:partialflagdegree}
N! \frac{\prod_{1 \leq i \leq j \leq r} b_{ij}^{(k_i - k_{i-1})(k_{j+1} - k_j)}}{\prod_{s=1}^r \prod_{k_{s-1} < i \leq k_s} \frac{(n-i)!}{(k_s -i)!}}.
\end{equation}
\end{proposition}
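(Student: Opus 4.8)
The plan is to specialize the general degree formula~\eqref{eq-degree} to the case at hand, so the work is purely bookkeeping: identify the weight $\psi$, the relevant positive roots, and evaluate the numerator and denominator of each factor. Much of this has already been set up in the paragraph preceding the statement, so the proof is a matter of organizing that data.

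First I would recall that $L_{a_1,\dots,a_r}$ corresponds to the dominant weight $\psi_{a_1,\dots,a_r}$ written out above, whose $i$-th coordinate is $b_{st}$... more precisely, it equals $b_{sr}$ when $k_{s-1} < i \le k_s$. Then I would invoke the description (already given) of which coroots $\check\alpha = e_i - e_j$ pair nontrivially with $\psi$: these are exactly the pairs $(i,j)$ with $k_{s-1} < i \le k_s$ and $k_t < j \le k_{t+1}$ for some $s \le t \le r$ with $s \le t$ (equivalently, $i$ and $j$ lie in different ``blocks'' determined by the $k$'s, with the block of $i$ no later than the block of $j$, excluding the last block for $i$). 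For such a root, $\langle \psi,\check\alpha\rangle = b_{sr} - b_{t+1,r} = b_{s,t}$ and $\langle \rho,\check\alpha\rangle = (n-i) - (n-j) = j - i$.

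Next I would collect the contributions. For fixed indices $s \le t$ with $1 \le s \le r$ and $s \le t \le r$, the pairs $(i,j)$ with $i$ in block $s$ and $j$ in block $t+1$ contribute a factor $b_{st}$ to the numerator, and there are $(k_s - k_{s-1})(k_{t+1} - k_t)$ such pairs; reindexing $t+1 \mapsto j$ gives the numerator $\prod_{1\le i \le j \le r} b_{ij}^{(k_i-k_{i-1})(k_{j+1}-k_j)}$ in~\eqref{eq:partialflagdegree}. For the denominator, I would group the product $\prod_{\alpha}(j-i)$ over the $j$'s: for each fixed $i$ with $k_{s-1} < i \le k_s$, the relevant $j$'s are exactly those with $k_s < j \le n$, so $\prod_j (j-i) = \prod_{m=k_s - i + 1}^{n-i} m = \frac{(n-i)!}{(k_s-i)!}$. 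Taking the product over all $i$ (organized by which block $s$ contains $i$) yields the denominator $\prod_{s=1}^r \prod_{k_{s-1} < i \le k_s} \frac{(n-i)!}{(k_s-i)!}$. Multiplying by $N!$ as in~\eqref{eq-degree}, with $N = \sum_{i=1}^r k_i(k_{i+1}-k_i)$, gives the claimed formula.

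The only subtle point — and the one I would be most careful about — is getting the index conventions exactly right: keeping straight that $\psi$ is constant on each block with value $b_{sr}$, that the numerator $\langle\psi,\check\alpha\rangle$ telescopes to $b_{st}$ (not $b_{s,t+1}$ or $b_{s+1,t}$), and that the denominator product over $j$ for fixed $i$ runs over $k_s < j \le n$ rather than, say, the complement of a single block. Once the reindexing $t \mapsto j = t+1$ and the factorial identity $\prod_{m=k_s-i+1}^{n-i} m = (n-i)!/(k_s-i)!$ are in place, everything assembles directly from~\eqref{eq-degree} with no further input needed.
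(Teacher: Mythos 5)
Your proposal is correct and follows the same route as the paper: specialize the general degree formula to the weight $\psi_{a_1,\dots,a_r}$, identify the coroots $e_i-e_j$ with nonzero pairing, and evaluate $\langle\psi,\check\alpha\rangle=b_{st}$, $\langle\rho,\check\alpha\rangle=j-i$; the paper's proof is exactly this computation (your grouping of the denominator into $(n-i)!/(k_s-i)!$ makes explicit a step the paper leaves implicit, and your range $s\le t\le r$ fixes a small typo in the paper's statement of the relevant coroots). The only nitpick is the phrase ``reindexing $t+1\mapsto j$'': the relabeling matching your contributions to the displayed product is $(s,t)\mapsto(i,j)$, but the formula you arrive at is the right one.
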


\begin{remark}
We record a few special cases of the formula.
The degree of $G(k,n)$ under the $a$th power of the Pl\"{u}cker embedding is
 $$(k(n-k))! a^{k(n-k)} \prod_{1 \leq i \leq k} \frac{(k-i)!}{(n-i)!}.$$
The degree of $F(k_1, \dots, k_r; n)$ under the line bundle $L_{1,\dots,1}$ is
$$N !  \frac{\prod_{1 \leq i \leq j \leq r} (j-i+1)^{(k_i - k_{i-1})(k_{j+1} - k_j)}}{\prod_{s=1}^r \prod_{k_{s-1} < i \leq k_s} \frac{(n-i)!}{(k_s -i)!}}.$$
In particular, specializing to the case $F(1, n-1; n)$, we obtain
$$(2n-3)! \frac{2}{(n-1)!(n-2)!}.$$
Specializing to the case $F(1, n-2, n)$, we obtain
$$(3n-7)! \frac{4}{(n-1)!(n-2)!(n-3)!}.$$
\end{remark}

\subsection*{Borel-Weil-Bott Theorem and dimension of cohomology groups}
The partial flag variety $F(k_1, \dots, k_r; n)$ is endowed with a collection of tautological subbundles
$$0=T_0 \subset T_1 \subset T_2 \subset \cdots \subset T_{r+1} = \underline{V}, $$  where $T_i$ is a vector bundle of rank $k_i$ and $\underline{V}$ is the trivial bundle of rank $n$. Let $U_i = T_i/ T_{i-1}$.

Let $E_\lambda$ be a Schur bundle on $F(k_1, \dots, k_r; n)$.  The partition $\lambda$ is a concatenation $\lambda = (\lambda_1|\cdots|\lambda_{r+1})$, where $\lambda_i$ has length $k_i - k_{i-1}$, and
$$E_\lambda = \mathbb{S}^{\lambda_1} U_1^* \otimes \mathbb{S}^{\lambda_2} U_2^* \otimes \cdots \otimes \mathbb{S}^{\lambda_{r+1}} U_{r+1}^* ,$$ where $\mathbb{S}^{\lambda_i}$ denotes the Schur functor of type $\lambda_i$.  Such bundles can be characterized as the pushforwards of line bundles on complete flag varieties.

Since $U_1, \dots, U_{r+1}$ give a filtration of the trivial bundle $\underline{V}$, the tensor product of their determinants is trivial by the Whitney formula. Hence, adding a constant integer to all the entries of $\lambda$ corresponds to tensoring the bundle $E_{\lambda}$ with a power of the trivial line bundle and does not change its isomorphism class.
 We will say two partitions $\lambda,\lambda'$ are equivalent if they differ by adding a constant to all the entries; equivalent partitions give isomorphic bundles.  There is also no harm in allowing negative integers in $\lambda$, since adding a large constant will make all entries positive.

For studying Ulrich bundles, it is important to know the effect of tensoring $E_\lambda$  by an ample line bundle $L_{a_1,\ldots,a_r}$.  By the Littlewood-Richardson rule, adding a constant sequence to $\lambda_i$ corresponds to tensoring the bundle by the determinant of $U_i$. Since the determinant of $U_i$ is $L_i \otimes L_{i-1}^{-1}$, we have
$$E_\lambda \te L_{a_1,\ldots,a_r} \cong E_{\lambda + \psi_{a_1,\ldots,a_r}},$$ where $  \psi_{a_1,\ldots,a_r}$ is the dominant weight corresponding to $L_{a_1,\ldots,a_r}$.

Let $\rho = (n-1, n-2, \dots, 1, 0)$. We say that $\lambda$ is {\em singular} if $\lambda + \rho$ has repeated entries. Otherwise, we say that $\lambda$ is {\em regular}. Let $s$ be the permutation in $\mathfrak{S}_n$ that lists the entries in $\lambda + \rho$ in decreasing order. Let $q(\lambda)$ denote the minimal length of the permutation $s$. The famous Borel-Weil-Bott Theorem computes the cohomology of line bundles on the complete flag variety, which allows us to calculate the cohomology of the Schur bundle $E_{\lambda}$ \cite{Weyman}.

\begin{theorem}[Borel-Weil-Bott]\label{thm-BWB}
Let $E_{\lambda}$ be a Schur bundle on $F(k_1, \dots, k_r; n)$.
\begin{enumerate}
\item If $\lambda$ is singular, then $H^i(E_{\lambda})=0$ for every $i$.
\item If $\lambda$ is regular, then $H^i(E_{\lambda}) = 0$ for $i \not= q(\lambda)$ and $H^{q(\lambda)}(E_{\lambda}) =  \mathbb{S}^{s(\lambda+\rho) - \rho} V^*$.
\end{enumerate}
\end{theorem}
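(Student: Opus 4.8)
The plan is to deduce the statement from the classical Borel--Weil--Bott theorem for line bundles on the \emph{complete} flag variety $\mathrm{Fl}(V)$, which in turn I would prove by the standard induction over minimal parabolics. The first step is to realize $E_\lambda$ as a pushforward. Let $p \colon \mathrm{Fl}(V) \to F(k_1,\dots,k_r;n)$ be the forgetful map; its fiber over a flag $W_1 \subset \cdots \subset W_r$ is the product $\prod_{i=1}^{r+1} \mathrm{Fl}(W_i/W_{i-1})$ of complete flag varieties of the graded pieces (with $W_0 = 0$ and $W_{r+1} = V$). A weight $\lambda = (\lambda_1 | \cdots | \lambda_{r+1})$ with each $\lambda_i$ weakly decreasing determines a line bundle $\mathcal{L}_\lambda$ on $\mathrm{Fl}(V)$ whose restriction to each fiber is the line bundle of the dominant weight $\lambda_i$ on $\mathrm{Fl}(W_i/W_{i-1})$. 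The Borel--Weil theorem on each factor then gives $p_* \mathcal{L}_\lambda \cong \mathbb{S}^{\lambda_1} U_1^* \otimes \cdots \otimes \mathbb{S}^{\lambda_{r+1}} U_{r+1}^* = E_\lambda$ together with $R^j p_* \mathcal{L}_\lambda = 0$ for $j>0$, since a dominant line bundle on a complete flag variety has cohomology concentrated in degree $0$. Because the higher direct images vanish, the Leray spectral sequence for $p$ degenerates and yields
$$H^i\big(F(k_1,\dots,k_r;n),\, E_\lambda\big) \;\cong\; H^i\big(\mathrm{Fl}(V),\, \mathcal{L}_\lambda\big) \qquad \text{for all } i.$$

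It then suffices to compute $H^i(\mathrm{Fl}(V), \mathcal{L}_\mu)$ for an arbitrary weight $\mu \in \Z^n$, which I would do by induction on the length of the shortest permutation sorting $\mu+\rho$ into strictly decreasing order. Picking a simple reflection $s_\alpha$, $\alpha = e_i - e_{i+1}$, that strictly decreases this length, I would factor $\mathrm{Fl}(V)$ as a $\mathbb{P}^1$-bundle over the partial flag variety forgetting the $i$-th subspace; on a fiber $\mathcal{L}_\mu$ restricts to $\mathcal{O}_{\mathbb{P}^1}(\mu_i - \mu_{i+1})$. If $\mu_i - \mu_{i+1} = -1$ --- equivalently $(\mu+\rho)_i = (\mu+\rho)_{i+1}$, the singular case --- then the derived pushforward along the $\mathbb{P}^1$-bundle vanishes, hence so does all cohomology of $\mathcal{L}_\mu$ on $\mathrm{Fl}(V)$. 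Otherwise, comparing $\mathcal{L}_\mu$ with $\mathcal{L}_{s_\alpha \cdot \mu}$, where $s_\alpha \cdot \mu := s_\alpha(\mu+\rho) - \rho$, through the Leray spectral sequence of the $\mathbb{P}^1$-bundle gives an isomorphism of cohomology with the cohomological degree shifted by one. Iterating, I either reach a singular weight, where everything vanishes, or the unique dominant weight $s(\mu+\rho) - \rho$ in the $\rho$-shifted orbit, where Borel--Weil places all the cohomology in degree $q(\mu) = \ell(s)$ and identifies it with $\mathbb{S}^{s(\mu+\rho)-\rho} V^*$. Specializing $\mu = \lambda$ and $\rho = (n-1,\dots,1,0)$ recovers exactly the two cases of the theorem.

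The hard part here is not conceptual but bookkeeping: getting the duality conventions right in the first step so that the Schur functors act on the quotients $U_i^*$ rather than on $U_i$ (this pins down which line bundle $\mathcal{L}_\lambda$ on $\mathrm{Fl}(V)$ to use), checking the vanishing $R^j p_* \mathcal{L}_\lambda = 0$, and keeping the direction of the cohomological degree shift in the induction consistent with the dotted-action identity $s_\alpha \cdot \mu = s_\alpha(\mu+\rho)-\rho$. Since only the statement is used in the sequel, it would be reasonable to cite \cite{Weyman} for these details and to record here only the outline above.
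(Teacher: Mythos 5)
The paper does not prove this statement at all: it is quoted as the classical Borel--Weil--Bott theorem with a citation to \cite{Weyman}, so there is no internal proof to compare against. Your outline is the standard argument (realize $E_\lambda$ as $p_*\mathcal{L}_\lambda$ for the forgetful map $p\colon \mathrm{Fl}(V)\to F(k_1,\dots,k_r;n)$, kill higher direct images by relative Borel--Weil on the fiber $\prod_i \mathrm{Fl}(W_i/W_{i-1})$, collapse Leray, then run the Demazure induction over simple reflections on the complete flag variety), and it is correct as a sketch. The only step stated a bit too glibly is the degree-shift isomorphism $H^i(\mathcal{L}_\mu)\cong H^{i+1}(\mathcal{L}_{s_\alpha\cdot\mu})$: the Leray spectral sequence of the $\mathbb{P}^1$-bundle alone does not give this; one must also identify $R^0\pi_*\mathcal{L}_\mu$ with $R^1\pi_*\mathcal{L}_{s_\alpha\cdot\mu}$ (via relative Serre duality together with an identification of the resulting rank-$(d+1)$ bundles, or Demazure's exact-sequence argument). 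Since you, like the paper, defer these details to \cite{Weyman}, the proposal is acceptable as it stands.
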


We recall how to compute the dimension of a representation $\mathbb{S}^{\mu} V$ of $GL(n)$. Let $Y(\mu)$ be the Young diagram corresponding to the partition $\mu$. The hook-length $\hook(i,j)$ is the number of boxes in the same row to the right of the box $(i,j)$ and in the same column below the box $(i,j)$ (including the box itself).
The dimension of the representation $\mathbb{S}^{\mu} V$ is given by
$$\prod_{(i,j) \in Y(\mu)} \frac{n+j -i}{\hook(i,j)},$$ which is always nonzero.
In particular, the rank of the vector bundle $E_{\lambda}$ is given by
$$\prod_{s=1}^{r+1} \prod_{(i,j) \in Y(\lambda_s)} \frac{k_s-k_{s-1} +j -i}{\hook (i,j)}. $$

\section{Ulrich bundles and the combinatorial problem}\label{sec-comb}

In this section we recall basic facts on Ulrich bundles and phrase the condition that a Schur bundle $E_\lambda$ on the flag variety $F(k_1,\ldots,k_r;n)$ is Ulrich combinatorially.

\subsection{Ulrich bundles in general}Let $(X, \OO_X(1))$ be a polarized projective variety. A vector bundle $E$ on $X$ is {\em arithmetically Cohen Macaulay} if it is locally Cohen-Macaulay and $H_*^i (X, E) =0$ for every $1 \leq i \leq \dim(X) -1$. A vector bundle on $X$  is {\em initialized} if $H^0(X, E) \not= 0$, but $H^0(X, E(-1)) = 0$. If $E$ is ACM, then there exists a twist of $E$ which is initialized.

\begin{definition}
A vector bundle $E$ is an {\em Ulrich bundle} if $E$ is ACM and the initialized twist $E_{\init}$ of $E$ satisfies $h^0(X, E_{\init}) = \rk(E) \deg(X)$.
\end{definition}

\begin{remark}\label{rem-UlrichCrit}For our purposes, there is another useful criterion for determining when a bundle is Ulrich.  Let $X$ be a smooth, $N$-dimensional arithmetically Cohen-Macaulay projective variety and suppose $E$ is an initialized  bundle on $X$.   Then, by \cite[Proposition 2.1, Corollary 2.2]{ESW}, $E$ is Ulrich if and only if $H^i(E(-t))=0$ for all $i$ and $t\in [N] = \{1,\ldots,N\}$ (see \cite[Remark 2.6]{CostaMiroRoig1}).  It is well-known that flag varieties are arithmetically Cohen-Macaulay when embedded by any ample complete linear system (see \cite[Theorem 5]{Ramanathan}).
\end{remark}

\begin{remark}
Since for an initialized Ulrich bundle $h^0(E) = \rk(E) \deg(X)$, whenever there is a Schur bundle $E_\lambda$ which is Ulrich we obtain interesting combinatorial identities relating two hook-length formulas to the degree of the flag variety.  In the other direction, one could potentially find Ulrich bundles by searching for such identities; however, this seems difficult.
\end{remark}

\subsection{The combinatorial problem} Let $E_\lambda$ be a Schur bundle on $F(k_1,\ldots,k_r;n)$ corresponding to the concatenated sequence $\lambda = (\lambda_1|\cdots|\lambda_{r+1})$ of partitions.  The Borel-Weil-Bott Theorem \ref{thm-BWB} allows us to rephrase the condition that $E_\lambda$ is an initialized Ulrich bundle with respect to $\OO(1)$ purely in terms of the combinatorics of $\lambda$.  Recall $\lambda_i$ is a partition of length $k_i-k_{i-1}$, so $\lambda$ has total length $n$.

First, we write $P = \lambda +\rho$, where $\rho = (n-1,n-2,\ldots,1,0)$. For concreteness, we explicitly write out the partition $P$ as  $$P = (a_1^1, \dots, a_{l_1}^1 | a_1^2, \dots, a^2_{l_2}| \cdots | a_1^{r+1}, \dots, a_{l_{r+1}}^{r+1});$$ here $l_i  = k_i-k_{i-1}$.  Observe that $H^0(E_\lambda)\neq 0$ if and only if the sequence of entries of $P$ is strictly decreasing.

For any integer $t\geq 0$, we define a partition $$P(t) = (a_1^1 - tr,\ldots, a_{l_1}^1 - tr | a_1^2 - t(r-1),\ldots , a_{l_2}^2-t(r-1)|\cdots|a_1^{r+1},\ldots,a_{l_{r+1}}^{r+1})$$ by subtracting $t(r+1-i)$ from every entry in the $i$th part, and analogously define partitions $\lambda(t)$.  It is also convenient to view the entries of $P(t)$ as being functions of $t$ and define $$a_k^i(t) = a_k^i-t(r+1-i),$$ so that $$P(t) = (a_1^1(t),\ldots,a_{l_1}^1(t)|\cdots|a_1^{r+1}(t),\ldots,a_{l_{r+1}}^{r+1}(t)).$$ Then $E_{\lambda}(-t) = E_{\lambda(t)}$ has $H^i(E_{\lambda}(-t))=0$ for all $i$ precisely when there is a repeated entry in $P(t)$.  Therefore $E_\lambda$ is an initialized Ulrich bundle if and only if $P(t)$ has a repeated entry for all integers $t\in [N]$, where $N = \dim F(k_1,\ldots,k_r;n)$ is the dimension of the flag variety.

We now make the central combinatorial definitions of the paper.

\begin{definition}
Let the partition $$P = (a_1^1,\ldots,a_{l_1}^1|a_1^2,\ldots,a_{l_2}^2|\cdots|a_1^{r+1},\ldots, a_{l_{r+1}}^{r+1})$$ consist of a decreasing sequence of integers arranged into $r+1$ blocks, with $l_i>0$ for all $i$.  The \emph{type} of the partition is $(l_1,\ldots,l_{r+1})$.  The \emph{dimension} of the partition is $N = \sum_{k<h} l_kl_h$.

The partition $P$ is \emph{Ulrich}  if for all $t\in [N]=\{1,\ldots,N\}$ the partition $P(t)$ has a repeated entry.

Two partitions are \emph{equivalent} if they differ by adding a constant to all the entries.  When we count Ulrich partitions or talk about uniqueness, it is understood that we always do so modulo equivalence.
\end{definition}

We make the correspondence between the algebro-geometric and combinatorial problems explicit.

\begin{proposition}
There is a bijective correspondence $\lambda \mapsto \lambda + \rho = P$ between equivalence classes of partitions $\lambda$ such that the Schur bundle $E_\lambda$ on $F(k_1,\ldots,k_r;n)$ is Ulrich with respect to $\OO(1)$ and equivalence classes of Ulrich partitions $P$ of type $(k_1,k_2-k_1,\ldots,k_r-k_{r-1},n-k_r)$.
\end{proposition}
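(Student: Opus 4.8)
The plan is to unwind the definitions and observe that essentially all the work has already been done in the preceding discussion; the proposition is really a bookkeeping statement assembling the earlier observations. First I would set up the map: given a partition $\lambda = (\lambda_1|\cdots|\lambda_{r+1})$ with $\lambda_i$ of length $k_i - k_{i-1}$, define $P = \lambda + \rho$ where $\rho = (n-1, n-2, \ldots, 1, 0)$. Since $\lambda$ has total length $n$, so does $P$, and $P$ is naturally divided into $r+1$ blocks of sizes $l_i = k_i - k_{i-1}$, i.e.\ has type $(k_1, k_2-k_1, \ldots, k_r - k_{r-1}, n - k_r)$. I would check that this assignment descends to equivalence classes: adding a constant $c$ to all entries of $\lambda$ adds $c$ to all entries of $P$, so the two notions of equivalence correspond, and the map is a well-defined bijection between equivalence classes of partitions of the appropriate block structure and equivalence classes of integer sequences divided into $r+1$ blocks of sizes $l_i$ (its inverse is $P \mapsto P - \rho$).

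Next I would match the two ``Ulrich'' conditions. On the geometric side, by Remark \ref{rem-UlrichCrit}, using that flag varieties are arithmetically Cohen--Macaulay in the embedding by $\OO(1)$ \cite{Ramanathan}, an initialized bundle $E_\lambda$ is Ulrich if and only if $H^i(E_\lambda(-t)) = 0$ for all $i$ and all $t \in [N]$. I would then invoke the identifications already recorded: $E_\lambda(-t) = E_{\lambda(t)}$ where $\lambda(t)$ is obtained by subtracting $t(r+1-i)$ from the $i$th block (this is the computation $E_\lambda \otimes L_{a_1,\ldots,a_r} \cong E_{\lambda + \psi_{a_1,\ldots,a_r}}$ specialized to the dual of $\OO(1) = L_{1,\ldots,1}$), and $\lambda(t) + \rho = P(t)$ in the notation of the combinatorial section. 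By the Borel--Weil--Bott Theorem \ref{thm-BWB}, $H^i(E_{\lambda(t)}) = 0$ for all $i$ precisely when $\lambda(t)$ is singular, i.e.\ when $\lambda(t) + \rho = P(t)$ has a repeated entry. Hence $E_\lambda$ (initialized) is Ulrich iff $P(t)$ has a repeated entry for every $t \in [N]$, which is exactly the definition of $P$ being an Ulrich partition. Note also that $N = \dim F(k_1,\ldots,k_r;n) = \sum_{i=1}^r k_i(k_{i+1}-k_i) = \sum_{k<h} l_k l_h$, so the dimension $N$ appearing in the combinatorial definition agrees with the geometric one; this is a short direct computation with the $l_i$'s.

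Finally I would handle the ``initialized'' hypothesis, which is the one genuinely non-formal point in the translation. The combinatorial definition of Ulrich partition imposes a repeated entry in $P(t)$ only for $t \in [N]$, with no condition at $t = 0$; in particular it does not require $P = P(0)$ to be strictly decreasing (equivalently $H^0(E_\lambda) \neq 0$). To reconcile this with the requirement in the geometric definition that $E_\lambda$ be initialized, I would argue that for any partition $P$ (equivalently any $\lambda$) satisfying the combinatorial Ulrich condition, a unique representative of its equivalence class under the $L_{a_1,\ldots,a_r}$-twisting action has its bundle initialized, and that the geometric Ulrich property is insensitive to which such twist one picks (Ulrich bundles are stable under the twists built into the combinatorics, precisely because the combinatorial condition is stated on a window of length $N$). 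More concretely: among the twists $\lambda(t)$, $t \in \Z$, exactly one makes $E_{\lambda(t)}$ initialized, and the combinatorial Ulrich conditions for $\lambda$ and for $\lambda(t)$ are translates of one another; one then checks these translated conditions are equivalent, using that consecutive $P(t)$ and $P(t+1)$ interact correctly. I expect this compatibility of the ``initialized'' normalization with the combinatorial window to be the only step requiring genuine care; everything else is a direct substitution of Borel--Weil--Bott into the ESW criterion. Alternatively, and perhaps more cleanly, I would simply cite the explicit discussion preceding Remark \ref{rem-UlrichCrit}, where the equivalence ``$E_\lambda$ initialized Ulrich $\iff$ $P(t)$ has a repeated entry for all $t \in [N]$'' is already asserted, so that the proposition's content reduces to recording the bijection $\lambda \mapsto \lambda + \rho$ and the type computation.
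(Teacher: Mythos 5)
Your proposal is correct and follows essentially the same route as the paper: the paper states this proposition without a separate proof, as a summary of the immediately preceding translation (Borel--Weil--Bott plus the ESW criterion of Remark \ref{rem-UlrichCrit}), which is exactly what you reconstruct. Your one point of ``genuine care'' is actually moot: the combinatorial definition of an Ulrich partition already requires $P$ to be a strictly decreasing sequence, which is precisely the condition $H^0(E_\lambda)\neq 0$, and the repeated entry in $P(1)$ gives $H^0(E_\lambda(-1))=0$, so the bundle corresponding to an Ulrich partition is automatically initialized and no further compatibility check of the normalization is needed (note also that ``equivalence'' in the paper means adding a constant to all entries, not twisting by $\OO(1)$, so the initialized representative is pinned down by the partition itself rather than chosen within an equivalence class).
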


The main question which we study from this point forward is to determine the possible types of Ulrich partitions, and, when there exists an Ulrich partition of a given type, to classify all such partitions up to equivalence.

\begin{remark}
The dimension $N$ of a partition $P$ is precisely the number of pairs of entries coming from different blocks.  For there to be a repeated entry in $P(t)$ at every time $t\in [N]$, it is therefore necessary and sufficient that every pair of entries meets at some time $t\in [N]$ and that no two pairs of entries meet at the same time.
\end{remark}

\begin{remark}
There is no harm in considering lists $P$ where some of the entries are negative since the notion of Ulrich partition is invariant under equivalence.  It will at times also be convenient to change the definition of $P(t)$ slightly by subtracting a different consecutive range of integers from each part.  For example, in the two-step flag variety case where $P$ has $3$ parts it will be convenient to subtract $1,0,-1$ from the three parts, so that the middle block is constant with time.  We will make our choice of normalization clear in each section as needed.
\end{remark}

We now provide a simple example illustrating the combinatorial definitions and the conversion to the algebro-geometric language.

\begin{example}\label{ex-combToAG}
Consider the partition $P = (5|3,-1,-2,-4|{-5})= (a|b_1,b_2,b_3,b_4|c)$ of type $(1,4,1)$.  For convenience, we normalize $P(t)$ by subtracting $1,0,-1$ at each time.  The dimension is $N=9$, and this partition is Ulrich.  To visualize the Ulrich condition, we draw a \emph{time evolution diagram} showing the positions of the entries of $P(t)$ in Figure \ref{fig-141}.  Along the first row we place the entries $a,b,b,b,b,c$ in the corresponding positions; we provide numerical positions above some of the entries for reference.  For each time $t\in [N]$ we similarly place the entries of $P(t)$ in the corresponding row.  When two entries intersect, we put a box around them to highlight the intersection.  The Ulrich condition means that for every time $t\in [N]$ there is exactly one box in the corresponding row.  We also display $P(N+1)$ at time $N+1$, which is relevant to the computation of the \emph{dual partition} $P^*$; see \S \ref{ssec-symmetryDuality}.

\begin{figure}[htbp]
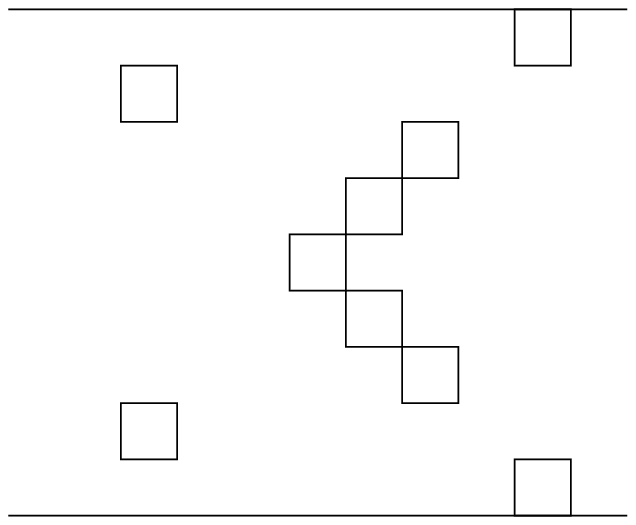
\caption{Time evolution diagram of the Ulrich partition $(5|3,-1,-2,-4|{-5})$ of type $(1,4,1)$. See Example \ref{ex-combToAG}.}\label{fig-141}
\end{figure}

Replacing $P$ by the equivalent partition $(11|9,5,4,2|1)$ and putting $\lambda = P - \rho$, we have $\lambda = (6|5,2,2,1|1) = (\lambda_1|\lambda_2|\lambda_3)$.  The corresponding initialized Ulrich bundle on $F(1,5;6)$ is $$E_\lambda = \mathbb{S}^{\lambda_1} U_1^* \otimes \mathbb{S}^{\lambda_2} U_2^* \otimes \mathbb{S}^{\lambda_{3}} U_3^*.$$ A straightforward computation with the hook-length formula shows that $\rk(E_\lambda) = 70$ and $h^0(E_\lambda) = 17640$.  This is consistent with the calculation $\deg F(1,5;6) = 252$ for the embedding of $F(1,5;6)$ by $\OO(1)$.
\end{example}

\subsection{Symmetry and duality}\label{ssec-symmetryDuality} There are certain symmetries satisfied by Ulrich partitions which we will exploit throughout the paper.  Suppose $P = (a_1^1,\ldots,a_{l_1}^1|\cdots | a_1^{r+1},\ldots,a_{l_{r+1}}^{r+1})$ is a partition of type $(l_1,\ldots,l_{r+1})$.  Multiplying all the entries of $P$ by $-1$ and reversing the order in which they are written, we obtain the \emph{symmetric} partition $$P^s=(-a_{l_{r+1}}^{r+1},\ldots,-a_{1}^{r+1}|\cdots| {-a_{l_1}^1},\ldots,-a_1^1)$$  of the reversed type $(l_{r+1},\ldots,l_1)$.  It is clear that if $P$ is Ulrich then $P^s$ is also Ulrich.  The time evolution diagram of $P^s$ is flipped about a vertical axis.  It is often convenient to add a constant to all the entries of the symmetric partition to keep some entries fixed; there is no harm in this since we only care about Ulrich partitions up to equivalence.

There is another slightly less trivial symmetry which we call \emph{duality}. Starting from an Ulrich partition $P$ we define the partition $$P^* = (a_1^{r+1}(N+1),\ldots,a_{l_{r+1}}^{r+1}(N+1)|\cdots | a_1^1(N+1),\ldots , a_{l_1}^1(N+1)).$$ The integers which appear in $P^*$ are precisely the integers in $P(N+1)$; we have just reordered them to be decreasing.  In the time evolution diagram for $P$, $P^*$ can be read off from the final row below the horizontal line.  Then $P^*$ is Ulrich of type $(l_{r+1},\ldots, l_1)$; its time evolution diagram is obtained from the diagram for $P$ by flipping about a horizontal axis $t = (N+1)/2$.

Both the symmetries we have described reverse the type of the Ulrich partition, so change the type unless it is symmetric.  The partition $(P^s)^*$ has the same type as $P$, and we call this partition the \emph{symmetric dual}.  In particular, if there is a unique Ulrich partition of a given type then it is equal to its symmetric dual.

\begin{example}
The symmetric partition obtained from $(5|3,-1,-2,-4|{-5})$  is $(5|4,2,1,-3|{-5})$.  Both of these partitions are self-dual.
\end{example}

\begin{remark}
Suppose the flag variety $F(k_1,\ldots,k_r;n)$ carries an Ulrich Schur bundle $E_{\lambda}$, and let $P = \lambda +\rho$.  Then $E_{P^s-\rho}$ and $E_{P^* -\rho}$ are Ulrich bundles on the dual flag variety $F(n-k_r,\ldots,n-k_1;n)$, while $E_{(P^{s})^*-\rho}$ is an Ulrich bundle on the original flag variety.
\end{remark}

\section{Ulrich bundles on flag varieties with three or more steps}\label{sec-higherStep}
In this section, we prove the following theorem.

\begin{theorem}\label{thm-higherStep}
A flag variety of three or more steps does not have a Schur bundle $E_\lambda$ which is  Ulrich with respect to the line bundle $\cO(1)$.
\end{theorem}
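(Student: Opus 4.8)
The plan is to work entirely with the combinatorial reformulation: we must show that no partition $P$ of type $(l_1,\ldots,l_{r+1})$ with $r\geq 3$ is Ulrich, i.e.\ that there is necessarily some time $t\in[N]=\{1,\ldots,N\}$, $N=\sum_{k<h}l_kl_h$, at which $P(t)$ has no repeated entry. Following the remark after the definition of Ulrich partitions, the Ulrich condition is equivalent to: (i) every pair of entries from distinct blocks collides at some time in $[N]$, and (ii) no two such pairs collide simultaneously. The strategy is to count collisions and show that condition (i) and condition (ii) together force too many collisions to fit in the interval $[N]$, or else force a forbidden simultaneous collision.

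First I would fix normalizations. For a pair consisting of an entry $a$ in block $i$ and an entry $b$ in block $j$ with $i<j$, the gap $a(t)-b(t)$ decreases linearly in $t$ with slope $-(j-i)$, starting from the positive value $a-b$; so this pair collides at the unique time $t=(a-b)/(j-i)$, which must be a positive integer lying in $[N]$. Thus $(j-i)\mid (a-b)$ and $a-b\leq N(j-i)$ for every cross-block pair. The key quantitative observation is that pairs coming from blocks $i<j$ that are far apart (large $j-i$) collide rarely relative to how large their initial gaps are: in particular, an entry in block $1$ and an entry in block $r+1$ must have gap divisible by $r$ and at most $Nr$. I would then set up a double-counting / pigeonhole argument on the $N$ time slots: each $t\in[N]$ receives exactly one colliding pair (by (i) and (ii)), so the $N$ cross-block pairs are in bijection with the times $1,\ldots,N$. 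Comparing this bijection against the divisibility constraints and the monotonicity of the entries within each block (the $a^i_k$ are strictly decreasing in $k$), one derives rigid conditions on the multiset of gaps. The cleanest route is probably to track the three "outermost" blocks $1$, $2$ (or a middle block) and $r+1$ and show that the gaps forced by having all of blocks $1$–$(r+1)$ interacting cannot be simultaneously realized without two pairs sharing a collision time; the extra block present when $r\geq 3$ is what breaks it, since for $r=2$ genuine infinite families exist.

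Concretely, the main steps in order: (1) reduce to the combinatorial statement and record the collision-time formula $t_{\text{pair}}=(a-b)/(j-i)$ together with the constraints "$(j-i)\mid(a-b)$, $1\leq t_{\text{pair}}\leq N$"; (2) observe that the $N$ collisions must be a permutation of $[N]$; (3) analyze the pairs between consecutive blocks and between the first and last blocks, using strict monotonicity within blocks to order their collision times, and extract inequalities on the block boundaries; (4) sum these up (an averaging argument over the $N$ slots) to produce a contradiction when $r\geq 3$, i.e.\ show the total "collision budget" $N$ is exceeded or that two pairs are forced to the same slot. I expect step (4)—pinning down exactly which configuration of gaps is forced and showing it is infeasible for every type with $r+1\geq 4$ blocks—to be the main obstacle, because the naive counting is tight and one needs to exploit the divisibility obstructions (collisions between non-adjacent blocks happen on a sparse arithmetic progression of times) rather than mere cardinality to close the gap; handling the case where some $l_i$ are large versus all equal to $1$ may require separate sub-arguments.
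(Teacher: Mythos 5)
Your setup is the same as the paper's (Schur bundle Ulrich for $\cO(1)$ $\Leftrightarrow$ Ulrich partition; each cross-block pair collides at the unique time $(a-b)/(j-i)$, and the Ulrich condition means these $N$ collision times are exactly a permutation of $[N]$), but the proposal stops where the actual proof begins: step (4), which you yourself flag as the main obstacle, is precisely the content of the theorem, and the generic ``averaging/pigeonhole over the $N$ slots'' you sketch does not close it. The counting is not merely tight in an inconvenient way --- it genuinely does not suffice without structural input. The paper's key missing ingredient is the congruence lemma (Lemma \ref{lem-congruence}): all entries in block $j$ are congruent to all entries in block $k$ modulo $k-j$. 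When there are many blocks this forces entries within a single block to be far apart (e.g.\ consecutive entries in any block differ by at least $6$ when there are $\geq 6$ blocks) and controls the parity of collision times, so that at even times only collisions between non-adjacent blocks can occur. From this the paper pins down the collisions at times $1,2,3$ (and, dually, $N-2,N-1,N$), concludes that certain middle blocks must consist of a \emph{single} entry, and only then derives contradictions: for four blocks a counting inequality $2j+jm+2m\leq 2(j+m)$ (available only after the singleton and even-time restrictions are in place) plus a timing comparison; for six or more blocks a short mod-$6$ argument at times $t\leq 4$; and for five blocks a long bespoke positional analysis (Proposition \ref{prop-fourStep}) that tracks explicit positions, uses duality to reduce the case analysis, and finds a forced triple collision. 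None of this is recoverable from divisibility of gaps and a bijection with $[N]$ alone.

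So the gap is concrete: you have not identified the congruence constraints within blocks, the resulting parity restriction on which block-pairs can collide at even times, or the mechanism (analysis of the first/last three collisions) that forces middle blocks to be singletons --- and without these the ``collision budget'' comparison you propose cannot be carried out, since for $r=2$ the same budget is satisfiable by infinite families, so any successful argument must exploit exactly the finer arithmetic structure that distinguishes $r\geq 3$. As written, the proposal is a correct reduction plus a plan whose decisive step is left open, and the step that is open is the theorem itself.
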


As a special case, we obtain the following corollary.

\begin{corollary}
The complete flag variety $F(1, 2, 3, \dots, n-1; n)$ does not admit a Schur bundle which is Ulrich for $n \geq 4$.
\end{corollary}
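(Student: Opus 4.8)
The plan is to translate the problem entirely into the combinatorial language of \S\ref{sec-comb} and argue that an Ulrich partition of type $(l_1,\ldots,l_{r+1})$ with $r\geq 3$ cannot exist. Recall that for an Ulrich partition $P$, every one of the $N=\sum_{k<h}l_k l_h$ pairs of entries from distinct blocks must ``meet'' (become equal) at some integer time $t\in[N]$, and no two such pairs may meet at the same time; in other words the collision times give a bijection between the set of cross-block pairs and $\{1,\ldots,N\}$. An entry $a$ in block $i$ and an entry $b$ in block $j$ (say $i<j$) have values $a - t(r+1-i)$ and $b - t(r+1-j)$ at time $t$, so they collide at the unique time $t = (a-b)/(j-i)$, which must be a positive integer in $[N]$. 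So the combinatorial content is a system of $N$ equations constraining the $n$ entries of $P$, one for each cross-block pair, asserting that all $N$ ratios $(a^i_s - a^j_u)/(j-i)$ are distinct positive integers filling out $\{1,\ldots,N\}$ exactly.

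The key step I would carry out is a counting/extremal argument that rules this out for $r\ge 3$. First, I would observe that the latest possible collision time is controlled: the pair that meets at time $N$ must be a pair of entries from the two ``outermost'' available blocks in a suitable sense, since the slope difference $j-i$ is at most $r$ and the value gap $a-b$ is bounded by the spread of $P$. More precisely, I would try to identify the pair $(a,b)$ that collides at time $N$ and the pair that collides at time $N-1$, and show that with four or more blocks there are \emph{too many} candidate pairs forced to occupy the top few time slots: for instance, among pairs involving block $1$ and block $r+1$ (slope difference $r$), and pairs involving block $1$ and block $r$ or block $2$ and block $r+1$ (slope difference $r-1$), one gets more pairs that must collide ``late'' than there are late time slots, or else the required integrality of the ratios forces a repeated collision time. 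Concretely, I would look at the entries of $P$ reduced modulo small integers: integrality of $(a^i_s-a^j_u)/(j-i)$ for all cross pairs with $1\le j-i\le r$ imposes congruence conditions mod $2,3,\ldots,r$ on the entries within and across blocks, and for $r\ge 3$ these congruence conditions, combined with the distinctness of the $N$ collision times, become overdetermined. Equivalently, I expect the author partitions the time axis $[N]$ into intervals according to which slope difference $j-i$ is responsible, counts how many pairs have each slope difference (namely $\sum_{j-i=d} l_i l_j$ pairs have slope difference $d$), and notes that a pair with slope difference $d$ collides at a time that is a multiple of... no — rather, that within a fixed slope difference $d$ the collision times are $(a-b)/d$ and two such with the same $d$ are distinct iff the value gaps differ, while across different $d$'s coincidences are hard to avoid; the arithmetic of fitting all of these disjointly into $[N]$ fails once $r\ge 3$.

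The main obstacle I anticipate is precisely making this fitting argument airtight: it is easy to see heuristically that more blocks means more ``types'' of pairs competing for the same scarce time slots, but converting this into a clean contradiction requires carefully choosing which handful of entries of $P$ to track (likely the largest entry of the first block, the smallest of the last block, and one or two entries of the middle blocks) and then doing a finite case analysis on their residues modulo $2$ and $3$ — the $r=3$ case being the sharp one, from which $r\ge 4$ follows by projecting down to a three-step flag variety or by restricting attention to three of the blocks. I would first nail down the $r=3$ case (type $(l_1,l_2,l_3,l_4)$, slope differences $1,2,3$, dimension $N$), produce the contradiction there by examining the collisions at times $N$, $N-1$, and the integrality constraints mod $2$ and $3$, and then reduce the general $r\ge 4$ case to this by discarding intermediate blocks (noting that forgetting a block corresponds to a flag variety projection and that the restriction of an Ulrich partition's combinatorics to four consecutive-enough blocks still satisfies enough of the constraints to invoke the $r=3$ contradiction). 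The corollary about the complete flag variety $F(1,2,\ldots,n-1;n)$ is then immediate from Theorem~\ref{thm-higherStep} since for $n\ge 4$ it has $n-1\ge 3$ steps.
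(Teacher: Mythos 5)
Taken as a proof of the corollary itself, your last sentence is the whole argument and it coincides with the paper's: $F(1,2,\ldots,n-1;n)$ is an $(n-1)$-step flag variety, so for $n\geq 4$ it has at least three steps and Theorem~\ref{thm-higherStep} applies. If you are simply citing that theorem, nothing more is needed.

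However, most of your proposal is an attempted sketch of Theorem~\ref{thm-higherStep} itself, and there the key reduction you lean on has a genuine gap. You propose to settle the four-block case ($r=3$) and then handle $r\geq 4$ ``by discarding intermediate blocks,'' claiming the restriction of an Ulrich partition to a subcollection of blocks still satisfies enough constraints to invoke the $r=3$ contradiction. It does not: restricting to a subset of blocks only preserves the property that distinct cross-block pairs collide at distinct times (the ``pre-Ulrich'' condition of \S\ref{sec-comb}), while the Ulrich condition requires the collision times to fill out the \emph{entire} consecutive interval $[1,N']$ for the smaller dimension $N'$, and that is destroyed by deleting blocks; nor does a projection $F(k_1,\ldots,k_r;n)\to F(k_{i_1},\ldots,k_{i_s};n)$ carry $\OO(1)$-Ulrich bundles to $\OO(1)$-Ulrich bundles. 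This is precisely why the paper cannot argue this way: it proves the theorem by three separate arguments — a parity/congruence argument for six or more blocks (Proposition~\ref{prop-fiveStep}), and distinct, increasingly intricate analyses for four blocks (Proposition~\ref{prop-threeStep}) and five blocks (Proposition~\ref{prop-fourStep}), the five-block case being the hardest rather than a consequence of the four-block one. Your congruence intuition is in the right direction (it is Lemma~\ref{lem-congruence}, entries of the $j$- and $k$-blocks congruent mod $k-j$, that drives the paper's proofs), but the ``overdetermined congruences / too many pairs for the late time slots'' counting is never made precise and, as stated, does not yield a contradiction. So either state the corollary as an immediate consequence of Theorem~\ref{thm-higherStep} and stop, or be prepared to reproduce the full case analysis of that theorem; the shortcut via block deletion is not available.
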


We will split the proof into three parts. We first analyze flag varieties with at least five steps. The three and four step flag varieties require separate (and more intricate) arguments. We begin by deriving simple congruences among the entries of an Ulrich partition.

\begin{lemma}\label{lem-congruence}
Let $(a^1_i|a^2_i|\cdots|a^{s}_i)$ be an Ulrich partition. Suppose $1 \leq j \leq k \leq s$. Then all the entries $a^j_i$ in the $j$-block are congruent to all the entries $a^k_i$ in the $k$-block modulo $k-j$.
\end{lemma}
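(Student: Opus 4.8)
The plan is to exploit the fact that, for an Ulrich partition $P$ of type $(l_1,\dots,l_s)$, every pair of entries drawn from two distinct blocks must coincide at some integer time $t$ in the range $[N]$, where $N=\sum_{k<h}l_kl_h$. Fix indices $j\le k$ and a pair of entries $a^j_i$ (in the $j$-block) and $a^k_{i'}$ (in the $k$-block). Under the normalization of $P(t)$ used in the paper, the entry $a^j_i$ evolves as $a^j_i - t(s+1-j)$ and $a^k_{i'}$ evolves as $a^k_{i'} - t(s+1-k)$, so these two entries are equal at time $t$ precisely when
$$ a^j_i - a^k_{i'} = t\big((s+1-j)-(s+1-k)\big) = t(k-j). $$
Since the Ulrich condition forces this meeting to happen at some \emph{integer} $t$ (indeed $t\in[N]$), we conclude $a^j_i - a^k_{i'} \equiv 0 \pmod{k-j}$, i.e. $a^j_i \equiv a^k_{i'} \pmod{k-j}$. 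This is exactly the claimed congruence, and it holds for \emph{every} choice of $i$ and $i'$, so all entries of the $j$-block are congruent mod $k-j$ to all entries of the $k$-block.

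First I would recall precisely the normalization convention for $P(t)$ from \S\ref{sec-comb}, so that the coefficient of $t$ attached to block $m$ is $s+1-m$ (equivalently, any two consecutive blocks differ by slope $1$; the only thing that matters is that blocks $j$ and $k$ differ in slope by exactly $k-j$, which is invariant under the allowable reparametrizations of the normalization). Then I would observe that since every pair of entries from distinct blocks meets exactly once among the $N$ pairs and the meeting times are all in $[N]$, in particular the pair $(a^j_i, a^k_{i'})$ meets at an integer time; solving the linear equation above for that integer $t$ gives the divisibility. No case analysis is needed.

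There is essentially no obstacle here: the statement is an immediate consequence of the definition of Ulrich partition once one writes down the linear time-evolution of the two entries. The only point requiring a line of care is confirming that the slope difference between the $j$-block and the $k$-block is $k-j$ under whatever normalization is in force — but this is built into the definition of $P(t)$ (subtracting a consecutive range of integers from successive blocks), and it is unaffected by adding a global constant or by shifting which consecutive range is subtracted. Once that is noted, the congruence $a^j_i\equiv a^k_{i'}\pmod{k-j}$ drops out, and since $i,i'$ were arbitrary the lemma follows.
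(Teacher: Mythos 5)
Your proof is correct and is essentially the paper's own argument: both reduce to the observation that the relative speed of the $j$- and $k$-blocks is $k-j$, so a collision at an integer time forces $k-j$ to divide the initial difference of any pair of entries. (The small off-by-one in your stated coefficient $s+1-m$ versus the paper's $s-m$ is harmless, as you note, since only the slope difference $k-j$ matters.)
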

\begin{proof}
We can normalize the evolution of the Ulrich partition so that we keep all the entries $a^j_i$ in the $j$-block  fixed and add $k-j$ to all the entries $a^k_i$ in the $k$-block. This preserves the congruence classes of the $a^j_i$ and $a^k_i$ modulo $k-j$.  In order for them to collide at some time, they must all have the same congruence class.
\end{proof}

 \subsection{Flag varieties with at least five steps}
\begin{proposition}\label{prop-fiveStep}
There are no Ulrich partitions with $6$ or more blocks.
\end{proposition}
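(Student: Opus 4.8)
The plan is to combine the congruence constraints from Lemma \ref{lem-congruence} with a counting argument on when pairs of entries collide. Suppose $P = (a^1_i|\cdots|a^s_i)$ is an Ulrich partition with $s = r+1 \geq 6$ blocks, of type $(l_1,\ldots,l_s)$ and dimension $N = \sum_{k<h} l_k l_h$. Normalize the time evolution so that the $i$-th block moves with speed $s-i$ (i.e.\ subtract $t(s-i)$ from each entry of block $i$ at time $t$). Then a pair consisting of an entry from block $j$ and an entry from block $k$ with $j<k$ has relative speed $k-j$, so it collides exactly once (in a half-infinite time interval), at a time of the form (initial gap)$/(k-j)$. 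By Lemma \ref{lem-congruence}, this initial gap is divisible by $k-j$, so the collision happens at a positive integer time; the Ulrich condition forces all $N$ such collision times to be distinct and to lie in $\{1,\ldots,N\}$, hence they are exactly a permutation of $\{1,\ldots,N\}$.

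The key step is then to extract a parity or divisibility contradiction from the fact that the multiset of collision times is exactly $\{1,\ldots,N\}$. First I would look at the blocks in pairs by the residue $k-j$. Among all $N$ pairs, those with $k - j$ even must collide at times where the initial gap is divisible by $k-j$, and in particular the gap is even; pairs with $j,k$ in "far apart" blocks are heavily constrained. I expect the cleanest route is to focus on the blocks $1$ and $s$: an entry from block $1$ and an entry from block $s$ collide at a time that is a multiple of $s-1 \geq 5$, and there are $l_1 l_s$ such collisions, all distinct multiples of $s-1$ inside $[1,N]$; meanwhile the total number of multiples of $s-1$ in $[1,N]$ is $\lfloor N/(s-1)\rfloor$. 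More generally, for each $d$ with $2 \le d \le s-1$, every pair $(j,k)$ with $k-j = d$ contributes a collision at a multiple of $d$, but also pairs with $k - j$ a multiple of $d$ contribute collisions at multiples of $d$ only when the initial gap is divisible by the full $k-j$; I would set up the inequality
\[
\sum_{\substack{1\le j<k\le s \\ d \mid (k-j)}} l_j l_k \;\le\; \left\lfloor \frac{N}{d} \right\rfloor
\]
for a well-chosen $d$ (say the largest prime $\le s-1$, or simply $d=2$ together with $d = s-1$), and show it fails. For $d = 2$ this says more than half of all the $\binom{s}{2}$-weighted pairs collide at even times, which is impossible since only $\lfloor N/2\rfloor$ even times are available; the point is that when $s \ge 6$ the pairs $(j,k)$ with $k-j$ even already account for more than half the weight $\sum l_j l_k$, forcing a collision of two pairs.

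The main obstacle I anticipate is making the weighted inequality genuinely tight enough: the quantity $\sum_{d\mid (k-j)} l_j l_k$ depends on the relative sizes of the $l_i$, and one must verify the contradiction uniformly over all compositions $(l_1,\ldots,l_s)$ of the relevant numbers. The natural way to handle this is to reduce to the extreme case where all $l_i = 1$ (so $N = \binom{s}{2}$ and the pair with difference $d$ is literally unique for $d = s-1$, with $s-d$ choices in general), prove the contradiction there by a direct count of multiples, and then argue that increasing any $l_i$ only worsens the imbalance because it multiplies both an "even-difference" and the total weight but the even-difference pairs grow at least as fast. If the clean $d=2$ argument does not close all cases for $s = 6$ or $s = 7$, I would supplement it with the $d = s-1$ count, which is extremely restrictive: $l_1 l_s$ distinct multiples of $s-1$ must fit in $[1,N]$, and for $s$ moderately large relative to the $l_i$ this already fails. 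Proposition \ref{prop-fiveStep} then follows, and the remaining three- and four-step cases are deferred to the subsequent (more intricate) arguments.
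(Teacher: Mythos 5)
Your setup is fine: the $N$ collisions do occur at pairwise distinct integer times filling $\{1,\dots,N\}$, and Lemma \ref{lem-congruence} is what makes each collision time an integer. But the arithmetic constraint on which your whole counting scheme rests is false. The lemma says the \emph{gap} between an entry of block $j$ and an entry of block $k$ is divisible by $k-j$; since the relative speed is $k-j$, this guarantees only that the collision time $\mathrm{gap}/(k-j)$ is a positive integer — it does not make that time a multiple of $k-j$, nor of any divisor $d$ of $k-j$. (Already in the paper's type $(1,4,1)$ example the two outer blocks, at distance $2$, meet at the odd time $5$.) Hence the displayed inequality $\sum_{d\mid(k-j)}l_jl_k\le\lfloor N/d\rfloor$, and in particular the ``multiples of $s-1$'' count for blocks $1$ and $s$, has no justification. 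The only constraint of this flavour that is actually available is a parity statement, and it needs the extra input (established in the paper) that the time-$1$ collision is between adjacent blocks, which forces the block parities to alternate: pairs of blocks at odd distance can then meet only at odd times, giving $\sum_{k-j\ \mathrm{odd}}l_jl_k\le\lceil N/2\rceil$.

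Even this corrected inequality cannot close the argument by a global count. Writing $u=\sum_{i\ \mathrm{odd}}l_i$ and $v=\sum_{i\ \mathrm{even}}l_i$, the odd-distance weight minus the even-distance weight equals $\frac{1}{2}\bigl(\sum_i l_i^2-(u-v)^2\bigr)$, which is negative for types such as $(M,1,M,1,M,1)$ with $M\ge 3$; for those types the parity count is comfortably satisfied and yields no contradiction. Your proposed reduction to the case all $l_i=1$ with a monotonicity argument fails for the same reason (and note that even for $l_i\equiv 1$, $s=6$, the even-distance pairs number $6$ of $15$, not ``more than half''). This is why the paper's proof of Proposition \ref{prop-fiveStep} is local rather than global: from Lemma \ref{lem-congruence} and $s\ge 6$ it deduces that entries \emph{within} each block are congruent mod $6$ (so consecutive entries of a block differ by at least $6$), shows the time-$1$ collision is adjacent and the time-$2$ collision is between blocks at distance $2$ with a single entry in the intervening block, pins down the configuration realizing the collisions at times $1$--$3$, and then verifies that no collision at time $4$ is compatible with the mod-$6$ rigidity. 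Some finite analysis of the first few times of this kind is what your argument is missing; no divisibility count over all of $[N]$ of the type you propose can substitute for it.
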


\begin{proof}
Suppose there is an Ulrich partition $(a^1_1,\ldots,a^1_{l_1}|\cdots|a^s_1,\ldots,a^s_{l_s})$ with $s \geq 6$. Since $s \geq 6$, we have that  $\max\{k-1,s-k\} \geq 3$ for every $1 \leq k \leq s$. By Lemma \ref{lem-congruence}, we conclude that the entries $a^k_i$ in the $k$-block all have the same congruence class modulo 6. In particular, they all have the same parity.

At time $t=1$, two adjacent blocks must meet; say $a_{l_k}^k(1) = a_{1}^{k+1}(1)$.  This forces $a_{l_k}^k$ and $a_1^{k+1}$ to have opposite parities.
If $k-j$ is even, then the entries $a^k_i(t)$ and $a^j_h(t)$ in the $k$- and $j$-blocks have the same parities at all times $t$. On the other hand, if $k-j$ is odd, then the entries in the $k$- and $j$-blocks have the same parity at odd times and opposite parities at even times $t$. Consequently, at even times the intersections must occur between entries in nonadjacent blocks.

Suppose that at time $t=2$, $a^j_{l_j}$ and $a^{j+2}_1$ collide.  Normalize the evolution of the sequence so that $1,0,-1$ is subtracted from the $j$-, $(j+1)$-, and $(j+2)$-blocks, respectively.
Then, up to switching their order, at times $t=1,3$, the collisions must be $a^j_{l_j}a^{j+1}_1$ and $a^{j+1}_1a^{j+2}_1$ depending on the sequence at time $t=0$. We observe that the possible positions of the entries at time $t=0$ are
$$ \cdots \times \times \  a^j_{l_j} \ a^{j+1}_1 \ \times \times \ a^{j+2}_1 \ \cdots \quad \mbox{or} \quad  \cdots \times \times \  a^j_{l_j} \  \times \times \ a^{j+1}_1 \  a^{j+2}_1 \cdots,$$ where we denote the positions unoccupied by an entry by $\times$.  In particular, after normalizing the positions, the triple $(a_{l_j}^j,a_1^{j+1},a_1^{j+2})$ either equals  $(1,0,-3)$ or $(3,0,-1)$.  Furthermore, there is a unique entry in the $(j+1)$-part: $l_{j+1}=1$.

At time $t=4$, the collision again must be between entries from nonadjacent blocks. For $k \not= j$, if $a^k_{l_k}(4) = a^{k+2}_1(4)$, then  $a^k_{l_k}$ passes through the entries in the $(k+1)$-block without intersecting them, contradicting that the partition is Ulrich.  Hence, the only possible collisions at time $t=4$ are either $a^j_{l_j} a^{j+2}_2 $  or $a^j_{l_j-1} a^{j+2}_1$. However, these are both impossible since they would give a separation of only four between either $a^j_{l_j}$ and $a^{j}_{l_j-1}$ or $a^{j+2}_1$ and $a^{j+2}_2$. This contradicts the fact that entries in each block are congruent modulo 6.

We conclude that there cannot be any Ulrich partitions with $s \geq 6$.
\end{proof}

\subsection{Three-step flag varieties} In this subsection, we analyze Ulrich bundles on three-step flag varieties.

\begin{proposition}\label{prop-threeStep}
There are no Ulrich partitions with $4$ blocks.
\end{proposition}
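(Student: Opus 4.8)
The plan is to rule out Ulrich partitions of type $(l_1,l_2,l_3,l_4)$ by combining the congruence constraints of Lemma \ref{lem-congruence} with a parity/timing analysis of the collision pattern, in the spirit of Proposition \ref{prop-fiveStep}. Throughout I normalize the evolution $P(t)$ so that the four blocks have $3,2,1,0$ subtracted from them respectively, so that the $4$-block is stationary and consecutive blocks approach at unit speed relative to one another. Write $N = \sum_{k<h}l_kl_h$ for the dimension; I must show some $P(t)$ has no repeated entry for $t\in[N]$, or rather derive a contradiction from assuming all of them do. The key structural fact I expect to lean on: every \emph{pair} of entries from distinct blocks must collide at exactly one time in $[N]$, and no two pairs collide simultaneously (the Remark after the definition). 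In particular the $l_1 l_4$ pairs between the outer blocks must each find a distinct collision time, and these collisions are the "slowest" (relative speed $3$), which severely constrains the spacing of the outer blocks.

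First I would extract congruences: by Lemma \ref{lem-congruence}, entries in block $j$ and block $k$ agree mod $k-j$. So blocks $1$ and $2$, blocks $2$ and $3$, blocks $3$ and $4$ each have fixed pairwise congruence mod $1$ (vacuous), while blocks $1$ and $3$, and blocks $2$ and $4$, agree mod $2$, and blocks $1$ and $4$ agree mod $3$. In particular, within a single block all entries are congruent mod $2$ (from the $j,j+2$ relation when such a block exists) — but note this only applies when $s\ge 3$ blocks away exists; with $4$ blocks, block $2$ entries are congruent to block $4$ entries mod $2$ and to block... hmm, actually each block is within distance $2$ of some other block, so every block's entries are pairwise congruent mod $2$: block $1$ with block $3$, block $2$ with block $4$, block $3$ with block $1$, block $4$ with block $2$, and additionally block $1\equiv$ block $4$ mod $3$. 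So there is a global parity assignment $\varepsilon_j\in\{0,1\}$ to each block such that at the normalized time $t$, block $j$ entries have parity $\varepsilon_j + t(4-j) \bmod 2 = \varepsilon_j + tj \bmod 2$. As in Proposition \ref{prop-fiveStep}, the $t=1$ collision between some pair of adjacent blocks forces those two $\varepsilon$'s to differ, and then one analyzes which collisions are possible at even versus odd times. The main obstacle — and where the four-step case is genuinely harder than five-or-more — is that one can no longer invoke the "$\max\{k-1,s-k\}\ge 3$" trick to force a mod-$6$ congruence on every block; here the congruences are only mod $2$ and mod $3$, so the parity argument alone does not immediately collapse, and a more delicate case analysis on small $t$ (the behavior at $t=1,2,3,4$ and the ordering of the outermost entries) is needed, likely branching on whether $l_2$ or $l_3$ equals $1$.

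Concretely I would argue: at $t=1$ two adjacent blocks meet, forcing a parity jump there; tracking parities, at even times collisions can only occur between blocks at even distance, i.e.\ $\{1,3\}$, $\{2,4\}$, or within an understanding that a block-$1$/block-$4$ collision (distance $3$, odd) is an odd-time event. A block-$1$/block-$3$ collision at an even time means a block-$1$ entry overtakes the block-$2$ entries it must individually collide with; forcing the block-$2$ entries between them to have been already consumed — this pins down $l_2 = 1$ (or symmetrically $l_3=1$ via the symmetric partition $P^s$ of Section \ref{ssec-symmetryDuality}, which is Ulrich of type $(l_4,l_3,l_2,l_1)$). Once a middle block is a singleton, the partition's middle entry is a single point that every entry of blocks $1$ and $4$ must collide with, at $l_1 + l_4$ distinct times, all with relative speeds $\le 2$; combined with the mod-$2$ spacing within the outer blocks and the mod-$3$ relation between blocks $1$ and $4$, I would derive an arithmetic overdetermination: the forced collision times would either coincide or exceed $N$. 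I expect the cleanest finish is to reduce, after symmetry, to type $(l_1,1,l_3,l_4)$, then observe that the block-$3$ entries and the block-$1$ entries must each collide with the unique block-$2$ entry, interleaving with the block-$3$/block-$4$ and block-$1$/block-$3$ and block-$1$/block-$4$ collisions, and count times modulo small numbers to find two collisions forced onto the same $t$, contradicting Ulrich. The genuinely hard part will be organizing this final case analysis so it is exhaustive without an explosion of subcases — I would try to phrase it as: "consider the three entries $a^1_{l_1}, a^2_1, a^3_1$ nearest the middle and the at most $9$ normalized configurations of their order at $t=0$," echoing the argument of Proposition \ref{prop-fiveStep}, and then propagate outward using the congruences to kill each configuration.
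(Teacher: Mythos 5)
Your setup is the same as the paper's: the congruence lemma gives a fixed parity within each block, the $t=1$ collision must be between adjacent blocks, even times only admit collisions between blocks at distance two, and the collision at $t=2$ (after using the symmetry of \S\ref{ssec-symmetryDuality} to assume it is between the first and third blocks) forces the second block to be a singleton because its entries must all be crossed at the single available time $t=1$. Note, though, that your phrasing ``a block-$1$/block-$3$ collision at an even time pins down $l_2=1$'' is only correct for the collision at $t=2$; at later even times it forces nothing.

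The genuine gap is the endgame, which is exactly where the content of the proposition lies and where you only gesture at an ``arithmetic overdetermination.'' The paper's proof needs two specific arguments that your plan does not contain and that a case analysis of ``at most $9$ configurations near the middle at $t=0$, propagated outward by congruences'' would not produce, because the contradiction is global rather than local. First, one must use that $N$ is odd (the last collision is adjacent, by the dual of the $t=1$ argument) and analyze the collision at the even time $N-1$: if it is a $bd$ collision, the last three collisions force the third block to be a singleton as well, and then a counting argument kills this case --- the type is $(j,1,1,m)$, so $N=2j+jm+2m+1$, while every even time needs an $ac$ or $bd$ collision and there are only $j+m$ such pairs, giving $2j+jm+2m\le 2(j+m)$, i.e.\ $jm\le 0$. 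Second, in the remaining case the time-$(N-1)$ collision is $a_1c_l$, and after normalizing $b=y$, $d_1=-y$ one compares two specific collision times: the first three collisions pin $c_1\in\{y-1,y-3\}$ and exclude $c$'s from positions $-y+1,-y+2,-y+3$, so $d_1$ meets $c_1$ no earlier than time $2y-3$ while $b$ meets $c_l$ strictly earlier; since $bc_l$ must be among the last three collisions and $c_1d_1$ is not, this ordering is impossible. Your proposal would need to supply both of these steps (or substitutes of comparable strength, necessarily combining information from the first few and the last few collision times simultaneously); counting residues modulo small numbers alone will not close either case.
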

\begin{proof}
Let $(a_1,\dots, a_j|b_1, \dots, b_k|c_1, \dots, c_l|d_1, \dots, d_m)$ be an Ulrich partition for a three-step flag variety. By Lemma \ref{lem-congruence}, the parities of $a_i(t)$ and $c_h(t)$ (respectively, $b_i(t)$ and $d_h(t)$) are equal at all times.  On the other hand, by the same argument as in the proof of Proposition \ref{prop-fiveStep}, entries in adjacent blocks have the same parity at odd times $t$ and opposite parities at even times $t$. Consequently, when $t$ is even, the only collisions  can be  $ac$ or  $bd$ collisions. Since the last intersection at time $t = N$ has to be between entries from adjacent blocks, we conclude that  the dimension $N$ is odd.

By symmetry (see \S \ref{ssec-symmetryDuality}), we can assume that at time  $t=2$, the collision is between $a_j$ and $c_1$.
We normalize the evolution of the partition so that we subtract $2,1,0,-1$, respectively, from the $A$-, $B$-, $C$-, and $D$-blocks. Consequently,  the first three collisions are
$$a_jb_1,\  a_j c_1, \ b_1 c_1 \quad \mbox{or} \quad b_1 c_1,\  a_j  c_1, \ a_j  b_1.$$ The corresponding sequences at time $t=0$ look like
$$\cdots a_j \  b_1 \times \times \  c_1 \ \times \times   \cdots \quad \mbox{respectively} \quad \cdots a_j \ \times \times  \ b_1 \ c_1 \ \times \times  \cdots $$
In either case, we conclude that the length $k$ of the  $B$-block is one since by time $t=3$, $a_j$ must have passed through the entire $B$-block. From now on we denote the unique entry in the $B$-block by $b$.

Similarly, $t=N-1$ is an even time.  Hence, the only possible collisions are $ac$ or  $bd$ collisions. If the collision is between $b$ and $d_m$, then the last three collisions at times $N-2, N-1$ and $N$ are
$$ c_1  d_m,\  b  d_m, \ b  c_1 \quad \mbox{or} \quad b   c_1, \ b  d_m, \  d_m  c_1.$$ In particular, we conclude that the length of the $C$-block is also one. Denote the lengths of the $A$- and $D$-blocks by $j$ and $m$, respectively. The dimension of the flag variety is $$N=2j + jm + 2m + 1.$$ Since the collisions at even times are  $ac$ or $bd$ collisions, the dimension can be at most one more than twice the total number $j+m$ of possible $ac$ and $bd$ collisions.  We, therefore,  have the inequality
$$2j + jm + 2m \leq 2(j+m).$$ This implies that $jm \leq 0,$ which is absurd.

We conclude that at time $t= N-1$, the intersection must be between $a_1$ and $c_l$. Hence, the last three intersections at times $N-2,N-1,N$ are one of
 $$a_1  b,\  a_1  c_l, \  b  c_l \quad \mbox{or} \quad b   c_l,\  a_1  c_l, \ a_1  b.$$ Normalize the position of the sequence so that $b=y$   and $d_1=-y$. Then $c_1$ equals either $y-1$ or $y -3$. Furthermore, there cannot be any $c$'s in positions $-y+1$, $-y+2$ or $-y+3$ since  $d_1$ is at these positions at times $1,2,3$ when there are other collisions. In either case, $d_1$ intersects $c_1$ at a time $t_0 \geq 2y -3$. On the other hand, $b$ intersects $c_l$ at some time $t_1< 2y-3$. Hence, the intersection $bc_l$ happens before the intersection $c_1d_1$. This is a contradiction, since $bc_l$ is one of the last three intersections and $c_1d_1$ is not. We conclude that there are no Ulrich partitions for three step flag varieties with respect to $\cO(1)$.
\end{proof}

\subsection{Four-step flag varieties} Finally, we analyze Ulrich bundles on four-step flag varieties. This is the most intricate case.

\begin{proposition} \label{prop-fourStep}
There are no Ulrich partitions with $5$ blocks.
\end{proposition}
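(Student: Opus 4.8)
The plan is to follow the template of the three-step case, but with substantially more bookkeeping, since with five blocks $A,B,C,D,E$ the congruences of Lemma \ref{lem-congruence} are weaker and there are more ways for collisions to be distributed. Write the Ulrich partition as $(a_1,\dots,a_j|b_1,\dots,b_k|c_1,\dots,c_l|d_1,\dots,d_m|e_1,\dots,e_p)$ with dimension $N = jk+jl+jm+jp+kl+km+kp+lm+lp+mp$. First I would extract parity information: by Lemma \ref{lem-congruence} applied to blocks at distance $2$ and $4$, the $A$- and $C$-blocks share a parity, $C$ and $E$ share a parity (so $A,C,E$ all share a parity at a fixed normalization), and $B,D$ share a parity; moreover, as in Propositions \ref{prop-fiveStep} and \ref{prop-threeStep}, a collision at $t=1$ between two adjacent blocks forces those blocks to have opposite parities, pinning down the relative parity of $\{A,C,E\}$ versus $\{B,D\}$ at each time. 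The conclusion, exactly as before, is that at even times $t$ the only possible collisions are between blocks at even distance — i.e. $AC$, $CE$, $AE$, or $BD$ collisions — and since the final collision at $t=N$ is between adjacent blocks, $N$ is odd.

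Next I would run the same "a block cannot pass through an intervening block" argument at small times to force most interior blocks to have length one. At $t=1,2,3$ a near-boundary block collides with a neighbor, then (by evenness) with a block two away, then with its other neighbor, and the only way this is consistent with the Ulrich condition is if the block being "passed through" has length one; so one expects to conclude $k = l = m = 1$ (the three interior blocks are singletons), writing $b,c,d$ for their entries, leaving only $j = |A|$ and $p = |E|$ free. The same analysis near $t=N$ should be automatic by the symmetry/duality of \S\ref{ssec-symmetryDuality}, or can be re-run directly. With $k=l=m=1$ the dimension simplifies to $N = 2j + 2p + jp + 3$ (collecting the terms $jk+km+\cdots$ with the singleton lengths), and I would then count: the collisions at even times are confined to $AC$, $CE$, $AE$, $BD$ pairs, of which there are at most $j + p + j + 1 = 2j + p + 1$ (here $|AC\text{-pairs}| = j$, $|CE\text{-pairs}| = p$, $|AE\text{-pairs}| = jp$, $|BD\text{-pairs}| = 1$; the correct count must be assembled carefully), so the number of even times $\lfloor N/2\rfloor$ is bounded above by that total, yielding an inequality in $j,p$ that should be violated once $j,p \geq 1$ — this is the four-step analogue of the contradiction $jm \le 0$ in Proposition \ref{prop-threeStep}.

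The main obstacle will be the middle step: with five blocks there are genuinely more configurations at small $t$ than in the three-block case, and degenerate possibilities — e.g. two of the interior blocks could a priori still have length $\ge 2$ if the early collisions are arranged cleverly, or the first even-time collision could be an $AE$ collision rather than an $AC$ or $CE$ collision, which weakens the "passing through" obstruction — so the case analysis pinning down $k=l=m=1$ is where the real work lies and where I expect several sub-cases each needing its own short argument (this is presumably why the authors call it "the most intricate case"). Once all interior lengths are forced to be $1$, the final counting contradiction should be a short computation: one compares $N = 2j + 2p + jp + 3$ against roughly $2(\text{number of even-distance pairs}) + 1$ and checks the inequality fails for all $j,p \ge 1$, after separately disposing of the genuinely small cases (such as $j=1$ or $p=1$) by hand if the generic counting bound is not tight enough there.
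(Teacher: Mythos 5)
Your opening moves (the parity classes $\{A,C,E\}$ versus $\{B,D\}$, the restriction of even-time collisions to even-distance pairs, and the conclusion that $N$ is odd) are fine and agree with the paper. But the core of your plan has two genuine gaps. First, the ``passing through'' argument does not force all three interior blocks to be singletons. The $t=2$ collision (say $ac$) forces only $|B|=1$, and the dual statement at $t=N-1$ forces only $|D|=1$; nothing in the early- or late-time analysis constrains the central $C$-block, which sits between the two singletons and is never traversed in the first few steps. This is already visible in the three-step case: there the paper pins down only one middle block from the early times, forces the other to be a singleton only in one of the two end-configurations, and must handle the remaining configuration by a timing comparison ($bc_l$ versus $c_1d_1$), not by a length argument. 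In the four-step case the paper likewise ends up with $(a_1,\dots,a_j|b|c_1,\dots,c_l|d|e_1,\dots,e_p)$ with $l$ arbitrary, and the entire difficulty of the proof --- the case analysis IA/IB/IIA/IIB of the first and last three collisions, the normalization $b=y$, $d=-y$, the computations $N=2y-3$, $c_2=y-7$, $e_p=-3y+5$, the exclusion of a $c$ at various positions, and finally the impossibility of any collision at $t=5$ --- exists precisely to deal with a possibly long $C$-block. Your step ``conclude $k=l=m=1$'' is therefore an unproved (and unprovable-by-local-means) assertion, not a routine extension of the three-step argument.

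Second, even if one grants $k=l=m=1$, your closing count does not produce a contradiction. With those lengths the dimension is $N=3j+3p+jp+3$ (not $2j+2p+jp+3$), and the even-distance pairs number $j+p+jp+1$, the crucial difference from the three-step case being that $A$ and $E$ are at distance $4$, so the $jp$ pairs $AE$ are eligible for even times. The requirement $(N-1)/2\le j+p+jp+1$ becomes $j+p\le jp$, which holds for all $j,p\ge 2$; a contradiction arises only when $j=1$ or $p=1$. So the proposed analogue of the inequality $jm\le 0$ from Proposition \ref{prop-threeStep} simply does not materialize --- in the three-step case the counting worked because the only even-distance pairs were $AC$ and $BD$, with no cross term, whereas here the cross term $jp$ kills the bound exactly in the generic case. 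The paper's proof takes a different route entirely (duality as in \S\ref{ssec-symmetryDuality} plus a fine analysis of positions up to time $5$), and some replacement of that analysis would be needed; as written, your proposal would stall both at the claim $l=1$ and at the final inequality.
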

\begin{proof}

Let $(a_1, \dots, a_j|b_1, \dots, b_k|c_1, \dots, c_l|d_1, \dots, d_m|e_1, \dots, e_p)$ be an Ulrich partition for a four-step flag variety. We can normalize the evolution so that at each time we subtract $2,1,0,-1,-2$ from the entries in the $A$-, $B$, $C$, $D$, $E$-blocks, respectively. By parity, at time $t=2$ we must have an $ac$, $bd$ or $ce$ collision. Suppose we have a $bd$ collision. As before, the length $l$ of the $C$-block is $1$, so we denote its entry by $c$.  Then the first three collisions are one of
$$b_k  c,  \ \ b_k  d_1,  \ \  c  d_1 \quad \mbox{or} \quad c  d_1, \ \  b_k   d_1, \ \ b_k  c$$ corresponding to the following sequences at time $t=0$
$$ \cdots b_k \ c \ \times \ \times \ d_1 \cdots \quad \mbox{or} \quad \cdots b_k \ \times\ \times \ c \ d_1 \cdots.$$
By Lemma \ref{lem-congruence},  the entries in the $B$-  (respectively, $D$-) block are the same modulo 6, so the distances between consecutive entries are at least 6. Similarly, the consecutive entries in the $A$- (respectively, $E$-) blocks are at least 12 apart. At time $t=4$, the collision cannot be a $bd$ collision since then the distance between two consecutive entries in either the $B$- or $D$- blocks would  be only 4 instead of 6. Hence, the collision at time $t=4$ must be either $a_jc$ or $ce_1$. However, neither of these are possible. In the first case, $a_j$ never collides with $d_1$ and in the latter case $e_1$ never collides with $b_k$. We conclude that the first collision cannot be a $bd$ collision.

The collision at time $t=2$ must be an $ac$ or $ce$ collision. Without loss of generality, we may assume it is $ac$. Therefore, there is only one $b$ and $b$ collides with $c_1$ at time $1$ or $3$.  As in the  case of three-step flag varieties, the dimension $N$  of the flag variety must be odd.   Hence, at time $N-1$ we must have an $ac$, $bd$ or $ce$ collision. By symmetry, we see that the intersection at time $t=N-1$ cannot be a $bd$ collision, hence must be an $ac$ or $ce$ collision. If it were an $ac$ collision, then $b$ would have to intersect $c_l$ at time $N$ or $N-2$. Since the time it takes for $d_1$ to intersect $c_1$ is longer than the time it takes $b$ to intersect $c_l$ (as in the proof of Proposition \ref{prop-threeStep}), we conclude that the intersection at time $t = N-1$ must be $c_1e_p$ and there can be only one entry in the $D$-block.

We have so far concluded that the sequence looks like $(a_1, \dots, a_j|b|c_1, \dots, c_l|d|e_1, \dots, e_p)$. The first three intersections are one of
$$\mbox{Case I:} \  a_j   b ,\ \  a_j   c_1, \ \ b   c_1 \quad \mbox{or} \quad \mbox{Case II:} \ b  c_1, \ \  a_j  c_1, \ \  a_j  b$$ and the last three intersections are one of
$$\mbox{Case A:} \ d  e_p, \ \  c_1  e_p, \ \ c_1  d \quad \mbox{or} \quad \mbox{Case B:} \ c_1  d, \ \ c_1  e_p, \ \  d   e_p.$$

For the rest of the argument, we normalize positions so that $b=y$ and $d=-y$. Then at time $y$ we have the $bd$ collision at position $0$.

We first show that case IB is impossible.  We compute $c_1 = y-3$, and the equality $$c_1 =c_1(N-2) = d(N-2)=d+N-2$$ gives $N = 2y-1$.  Similarly, from $e_p(N) = d(N)$ we find $e_p = -3y+1$.  As $a_j = y+1$, we conclude the intersection $a_je_p$ happens at time $y$.  This contradicts that $b$ and $d$ already collide at time $y$.  Symmetrically, case IIA is also impossible since the intersection $a_1e_1$ happens at time $y$.

We conclude that we must be in Case IA or Case IIB. These cases are the same under duality (see \S\ref{ssec-symmetryDuality}). Consequently, it suffices to analyze Case IA.

In Case IA, we first prove that at time $t=0$ the sequence must look like
\begin{equation}\tag{$\ast$} \cdots a_j \ b \ \times  \times \ c_1 \ \times \times \times  \  c_2  \ \times \times \times \dots  \times \times \times \times \times \ d \ \times  \times  \times  \times  \times \cdots ,\end{equation}
where $\times$  denotes a position unoccupied by any entries $a,b,c,d,e$. At time $t=1,2,3$, the collisions are $a_j b, a_j c_1$ and $b c_1$. Since at these times $d$ is at positions $-y+1, -y+2, -y+3$, there cannot be any entries of the $C$-block in these positions.  Furthermore, there cannot be any entries of the  $C$-block in position $y-5$ because $a_j$ is at position $y-5$ at time $t=3$ while the collision $bc_1$ occurs. By Lemma \ref{lem-congruence}, the distance between the entries in the $C$-block are even. Hence, there cannot be any entries of the $C$-block  at positions $y-2\alpha$ for any integer $\alpha$. At time $t=4$, a collision of the form $ac$, $bd$, or $ce$ must occur. Since $e_1$ has not passed through $d$ and $b(4)>a_j(4)>d(4)$, the only possible collision  is $a_jc_2$. Hence,  $c_2=y-7$.

As in case IB, we now compute that the dimension of the flag variety is $N=2y-3$. At times $N-2$, $N-1$, and $N$, the collisions are specified and do not involve $b$. At these times $b$ is at positions $-y+5, -y+4, -y+3$. Hence, we conclude that there cannot be any entries of the $C$-block at positions $-y+1, \ldots, -y+5$. Since $e_p(N-2) = d(N-2)$ we have $e_p = -3y+5$.  The $c_2d$ collision happens at time $2y-7$, and $e_p(2y-7) = y-9$, so there cannot be a $c$ at position $y-9$ either. Combining all these claims, we recover the claimed shape of the sequence at time $t=0$.

Next consider the position of $e_1$.  Since there are intersections for times $1\leq t \leq 4$, we find $e_1\leq -y-5$.  To complete the claim that the entries of the partition are as in $(\ast)$, we only need to show that in fact $e_1< -y-5$.  So suppose $e_1 = -y-5$.  Since $c_2 = y-7$, the intersection  $c_2e_1$ occurs at time $y-1$.  However, since $a_j = y+1$ and $e_p = -3y+5$, the intersection $a_j e_p$ also occurs at time $y-1$.  This contradiction shows $e_1< -y-5$, and establishes the pattern $(\ast)$ for the entries.

We finally obtain a contradiction by showing there is no possible collision at time $t=5$.  The collision cannot be a $de$ collision since $e_1<-y-5$. Since nothing has passed through the $d$ before time $5$, the time $t=5$ intersection cannot involve an $e$.

Since there are no $c$ entries at position $-y+5$, the collision cannot be of the form $cd$. Since there are no $c$ entries at position $y-5$, the collision cannot be of the form $bc$. Since $b$ has not passed though $c_2$, the collision cannot be of the form $bd$. Since there are no $c$ entries in position $y-9$, the collision cannot be $a_j c$. Since the entries in the $A$-block are at least 12 apart, the collision cannot be $a_{j-1}  c_1$. We conclude that the collision is not of the form $ac$.  For the same reason, the collision cannot be of the form $a b$.

This only leaves the possibility of the collision being between $a_1$ and $d$. This implies $y=7$, so $c_2=0$ and there is a triple intersection by $b$, $c_2$, and $d$ at time $7$. This concludes the proof that there cannot be an Ulrich partition for four-step flag varieties.
\end{proof}

Theorem \ref{thm-higherStep} follows immediately from Propositions \ref{prop-fiveStep}, \ref{prop-threeStep} and \ref{prop-fourStep}.

\section{Results on two-step flag varieties}\label{sec-overview2step}

The rest of the paper is entirely combinatorial and will focus on the classification of Ulrich partitions with three parts,  corresponding to   Ulrich bundles $E_\lambda$ on two-step flag varieties with respect to $\OO(1)$.  In this section, we state our classification results, and outline the plan for the remainder of the paper.

From now on, $F(p,q;v)$ denotes a two-step flag variety.
Let $(\alpha,\beta,\gamma)= (p, q-p, v-q)$ be the type of a partition.  We present two different flavors of classification results.  We classify Ulrich partitions where $\beta \leq 2$ but $\alpha$ and $\gamma$ are arbitrary.  We then also classify Ulrich partitions where $\alpha \leq 2$ and $\gamma \leq 2$ but $\beta$ is arbitrary. We begin by analyzing the simplest case of $(1,n,1)$ partitions, which correspond to Schur bundles on $F(1, n; n+1)$.

\begin{theorem}\label{thm-1n1}
There are $2^n$ Ulrich partitions of type $(1,n,1)$ for any $n\geq 1$ (up to equivalence).
\end{theorem}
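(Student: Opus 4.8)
I want to classify Ulrich partitions of type $(1,n,1)$, i.e. partitions $P=(a\,|\,b_1,\ldots,b_n\,|\,c)$ with $a>b_1>\cdots>b_n>c$, up to adding a constant. Normalize the time evolution by subtracting $1,0,-1$ from the three blocks, so the middle block stays fixed while $a$ decreases by $1$ and $c$ increases by $1$ each unit of time. The dimension is $N = 1\cdot n + 1 \cdot n + 1 \cdot 1 = 2n+1$. The Ulrich condition is: for each $t\in\{1,\ldots,2n+1\}$ there is exactly one collision, and the collisions are among the $2n+1$ pairs $\{a,b_i\}$, $\{b_i,c\}$, $\{a,c\}$.

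**The plan.** The key structural observation is that $a$ must pass through all of $b_1,\ldots,b_n$ and then through $c$ (it never overtakes any $b_i$ since after collision it keeps decreasing; it collides with each exactly once), and symmetrically $c$ passes up through $b_n,\ldots,b_1$ and through $a$. So as $t$ runs over $[N]$, each $b_i$ gets hit once by $a$ (at time $a - b_i$) and once by $c$ (at time $b_i - c$), and there is the single $ac$ collision at time $(a-c)/2$. The Ulrich condition forces these $2n+1$ times to be exactly $1,2,\ldots,2n+1$ in some order. I would normalize $a-c$ even (so the $ac$ collision is at an integer time) — in fact parity considerations as in Lemma \ref{lem-congruence} force $a\equiv c\pmod 2$. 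Writing $p_i = a - b_i$ (the "$a$-time" of $b_i$) and $q_i = b_i - c$ (the "$c$-time"), we have $p_i + q_i = a - c =: 2M$ constant, $p_1 < p_2 < \cdots < p_n$, $q_1 > q_2 > \cdots > q_n$, and the $ac$-time is $M$. So the problem becomes: choose $n$ distinct values $p_1<\cdots<p_n$ in $\{1,\ldots,2n+1\}\setminus\{M\}$ such that the complementary "reflected" values $2M - p_i$ are exactly the remaining $n$ values of $\{1,\ldots,2n+1\}\setminus\{M\}$, and all $2n+1$ numbers $p_i, 2M-p_i, M$ are distinct and fill $[2n+1]$ — equivalently, $\{p_1,\ldots,p_n,M,2M-p_1,\ldots,2M-p_n\} = \{1,\ldots,2n+1\}$.

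**Reduction to a counting problem.** From $\{p_i\}\cup\{2M-p_i\}\cup\{M\} = \{1,\ldots,2n+1\}$, summing gives $2\sum p_i + M(?)$... more cleanly, the set is symmetric about $M$ under $x\mapsto 2M-x$, and it is $\{1,\ldots,2n+1\}$, which is symmetric about its midpoint $n+1$; hence $M = n+1$ and $2M = 2n+2$. So the condition is exactly: $\{p_1,\ldots,p_n\}$ is an $n$-element subset of $\{1,\ldots,2n+1\}\setminus\{n+1\}$ containing exactly one of each pair $\{j, 2n+2-j\}$ for $j=1,\ldots,n$. There are $n$ such complementary pairs, and for each we independently choose which element lies in $\{p_i\}$ (the other lies in $\{2M-p_i\}$). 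That gives $2^n$ choices. I then need to check that every such choice genuinely yields a valid partition, i.e. that after sorting $p_1<\cdots<p_n$ the resulting $b_i = a - p_i$ are strictly decreasing (automatic) and strictly between $a$ and $c$ — this holds since $1 \le p_i \le 2n+1$ and $p_i \ne n+1$ forces $0 < p_i < 2n+2 = a-c$, so $c < b_i < a$. Conversely any Ulrich partition arises this way by the argument above. Finally I must confirm no two distinct choices give equivalent partitions: since $a-c = 2n+2$ is fixed after normalization and the multiset $\{b_i\}$ is determined by $\{p_i\}$, distinct subsets give genuinely distinct (inequivalent) partitions.

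**Main obstacle.** The only real subtlety is justifying rigorously that the collisions must be precisely "each $b_i$ hit once by $a$, once by $c$, plus one $ac$ collision" and that these happen at distinct times iff the set condition holds — i.e. ruling out that $a$ and $c$ could somehow interact with the $b_i$'s in a different combinatorial pattern. But this is immediate from monotonicity: $a(t)$ is strictly decreasing and $c(t)$ strictly increasing while the $b_i$ are constant, so $a$ meets each $b_i > c$ exactly once (at time $a-b_i \in \{1,\ldots,a-c-1\}$), meets $c$ exactly once, and the $b_i$ never meet each other; counting $2n+1$ forced collisions against the $2n+1$ available times forces them to be a bijection onto $[2n+1]$, which is exactly the set-packing condition above. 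Hence the count is $2^n$. I would also remark that the symmetry and duality operations of \S\ref{ssec-symmetryDuality} act on these $2^n$ partitions by complementation of the chosen subset (symmetry $x \mapsto 2n+2-x$ on the $p_i$), which provides a useful sanity check.
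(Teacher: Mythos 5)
Your proposal is correct and is essentially the paper's own argument in different coordinates: the paper normalizes $a=n+1$, $c=-n-1$, places the $ac$ collision at the middle time, and observes that for each position $p\in[n]$ exactly one of $\pm p$ lies in $B$, which is precisely your statement that the collision-time set $\{p_i\}$ picks exactly one element from each pair $\{j,\,2n+2-j\}$. Both routes yield the same bijection with subsets of $[n]$ and the count $2^n$, so no further comparison is needed.
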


\begin{proof}Indeed, suppose $(a|b_1,\ldots,b_n|c)$ is Ulrich.  Since $a$ and $c$ have the same parity, we may normalize $a = y+1$, $c = -y-1$ for some integer $y\geq 0$.  At time $y+1$, we have the $ac$ intersection.  Before time $y+1$, every intersection is of type $ab$ or $bc$.  Thus, for every position $p\in [y]$, exactly one of $p$ or $-p$ is in $B=\{b_1,\ldots,b_n\}$.  We conclude $y = n$, and if $B'\subset [n]$ is any subset then there is a unique Ulrich partition $(n+1|B|{-n-1})$ with $B\cap [n] = B'$.  Thus equivalence classes of Ulrich partitions of type $(1,n,1)$ are in bijective correspondence with subsets of $[n]$.
\end{proof}

\begin{example}
The Ulrich partition $(5|3,-1,-2,-4|{-5})$ corresponding to $B' = \{3\}\subset [4]$ in the previous construction was already studied in Example \ref{ex-combToAG}.
\end{example}

We next describe the small $\beta$ case, where $\beta\leq 2$.  There are new one- and two-parameter infinite families of examples; let $m_1,m_2\geq 0$ be nonnegative integers and define $$k_i = \sum_{j=0}^{m_i} 4^j = \frac{4^{m_i+1}-1}{3}.$$ These sums of powers of $4$ will somewhat surprisingly appear in many examples.  We prove the next result in \S \ref{sec-sumset} by recasting it as a problem about sumsets.

\begin{theorem}\label{thm-beta1intro}
For every $m_1\geq 0$, there is a unique Ulrich partition of type $(2,1,k_1)$. Up to symmetry, any Ulrich partition of type $(\alpha,1,\gamma)$ is of this form.
\end{theorem}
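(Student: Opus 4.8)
The plan is to reduce the statement to a purely additive problem about partitioning an interval of integers, as foreshadowed in the text. We work with an Ulrich partition of type $(\alpha,1,\gamma)$, written $P = (a_1,\dots,a_\alpha \mid b \mid c_1,\dots,c_\gamma)$, and normalize the time evolution by subtracting $1,0,-1$ from the three blocks, so that $b$ stays fixed with time. By the duality and symmetry of \S\ref{ssec-symmetryDuality}, it suffices to treat the case where at time $t=1$ the first collision is an $ab$ collision (the $bc$ case is symmetric), and it will turn out that the classification forces $\gamma\leq 1$ is \emph{not} required but rather $\alpha = 2$; we must show any Ulrich partition of this type has $\alpha\le 2$ or $\gamma\le 2$-type degeneracy, and then pin it down. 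Concretely, normalize so $b = 0$; then the dimension is $N = \alpha + \gamma + \alpha\gamma$, and the Ulrich condition says that the multiset of collision times is exactly $[N]$, each occurring once.

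The key observation is how collisions distribute. An $ab$ collision between $a_i$ (which moves left, i.e. $a_i(t) = a_i - t$) and $b=0$ occurs at time $a_i$; a $bc$ collision between $b$ and $c_j$ (moving right, $c_j(t) = c_j + t$) occurs at time $-c_j$; an $ac$ collision between $a_i$ and $c_j$ occurs at time $(a_i - c_j)/2$, which must be a positive integer, forcing $a_i \equiv c_j \pmod 2$ for all $i,j$ — in particular all $a_i$ have one fixed parity and all $c_j$ the opposite parity of... (here one checks, using that the $t=1$ collision is $a_\alpha b$, that $a_\alpha$ is odd, hence every $a_i$ is odd and every $c_j$ is odd as well since $a_i\equiv c_j$; a short separate check rules out the parity clash). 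The $\alpha$ numbers $\{a_i\}$ and the $\gamma$ numbers $\{-c_j\}$ give $\alpha + \gamma$ distinct collision times directly, and the $\alpha\gamma$ pairwise sums-and-halves $\{(a_i - c_j)/2\}$ must supply the remaining times, with everything fitting bijectively into $[N]$. Writing $x_i = (a_i+|{\cdot}|)/\cdots$ after a suitable affine change, this becomes the statement that a set $S$ of size $\alpha$ and a set $T$ of size $\gamma$ have the property that $S$, $T$, and the "average set" $\tfrac12(S+T)$ partition $\{1,\dots,N\}$. I would then show this sumset-partition condition is extremely rigid: counting and an extremal (largest/smallest element) argument forces $\alpha = 2$, and then an induction unwinds the structure of $T$, producing exactly the constraint $\gamma = k_1 = \sum_{j=0}^{m_1} 4^j$ together with a unique choice of the actual entries. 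The sums of powers of $4$ arise because at each stage the largest gap forces the next block of the $c$-entries to be four times as long as the previous, i.e. $k_{m+1} = 4k_m + 1$.

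More precisely, the main steps, in order, are: (1) set up the normalization and translate "Ulrich" into the bijection $[N] = \{a_i\} \sqcup \{-c_j\} \sqcup \{(a_i-c_j)/2\}$ (as multisets), and record the parity constraint; (2) use the forced first and last collisions (via symmetry/duality, reducing to Case $ab$ first) to locate $a_\alpha$, $a_1$, $c_1$, $c_\gamma$ and compute $N$ in terms of $\alpha,\gamma$ and the extreme entries; (3) prove $\alpha = 2$ by an extremal argument — if $\alpha\ge 3$, the three smallest or three largest of the relevant collision times cannot be accounted for without a repeated collision, exactly as in the proofs of Propositions \ref{prop-threeStep} and \ref{prop-fourStep} (reuse the "an $a$-entry passes through a whole block" obstruction); (4) with $\alpha = 2$, write $a_1 > a_2$ and analyze the interleaving of $\{c_j\}$ forced by the requirement that the average-collisions $(a_1 - c_j)/2$ and $(a_2 - c_j)/2$ tile the complement of $\{a_1,a_2,-c_1,\dots,-c_\gamma\}$ in $[N]$; this yields a recursive description in which the $c_j$ occupy, after rescaling, the initial segment and then each "reflected copy" must be four times as wide, giving $\gamma = k_1$; (5) check the resulting partition is genuinely Ulrich (a direct verification that all $N$ collision times are hit exactly once), and that the construction is unique modulo equivalence.

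The main obstacle will be step (4): showing that the average-set $\tfrac12(\{a_1,a_2\}+\{-c_j\})$ together with $\{a_1,a_2\}$ and $\{-c_j\}$ can tile an interval \emph{only} when the $c_j$'s are the specific configuration with $\gamma = k_1$. The difficulty is that this is a genuine combinatorial rigidity statement about perfect difference/average tilings, and a naive greedy argument has to be converted into a clean induction — one wants to peel off the largest few entries, observe they force a sub-tiling problem of the same shape with parameters $(2,1,k_{m_1-1})$, and conclude by induction on $m_1$. I expect this to require careful bookkeeping of which collisions are "used up" by the extreme entries, analogous to the normalization-and-case-analysis bookkeeping already carried out in \S\ref{sec-higherStep}, and the reinterpretation via sumsets in \S\ref{sec-sumset} (which the paper explicitly invokes) is precisely the device that makes this bookkeeping manageable. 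Steps (1)--(3) and (5) I expect to be routine given the machinery already developed; it is the exact determination of the infinite family in step (4) that is the heart of the matter.
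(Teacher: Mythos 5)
Your reduction is exactly the paper's route in \S\ref{sec-sumset}: normalize $b=0$, observe all $a_i$ and $c_j$ are odd, and translate the Ulrich condition into the disjoint decomposition $[N] = A \amalg C \amalg \tfrac12(A+C)$ with $A=\{a_i\}$, $C=\{-c_j\}$; your steps (1), (2) and (5) are fine. The genuine gap is step (3), your proof that $\alpha=2$. With a single middle entry there is no analogue of the ``an entry passes through a whole block without meeting it'' obstruction of Propositions \ref{prop-threeStep} and \ref{prop-fourStep}: every $a_i$ meets $b$ at the legitimate time $a_i$, so nothing is skipped. Nor do the extreme collision times detect anything: if $1\in A$ and $-3\in C$ (and the mirror configuration at the top end), then times $1,2,3$ (and the last three) are accounted for no matter how large $\alpha$ and $\gamma$ are --- indeed $A\supset\{1,5\}$, $C\supset\{-3\}$ already fills times $1$ through $5$, so an $\alpha\geq 3$ configuration cannot be excluded by looking near the ends. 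In the paper the exclusion is global: after shifting to even sets $A',C'\subset[0,N']$ with $[0,N']=A'\amalg C'\amalg\tfrac12(A'+C')$, one first pins down $0,N'\in A'$ and $2,N'-2\in C'$, and then an induction sweeping through the whole interval shows every integer $\equiv 2 \pmod 4$ below $N'$ lies in $C'$ and that $A'$ meets $(0,N')$ in nothing at all; each inductive step is powered by exhibiting a repetition in the sumset against the known extreme elements (e.g.\ $(4k+4)+(N'-2)=N'+(4k+2)$), together with the fact that each odd time must come from $\tfrac12(A'+C')$. That non-local mechanism is what your outline is missing, and without it $\alpha=2$ is unproved.

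A related structural point: in the paper, the same induction that forces $A'=\{0,N'\}$ simultaneously shows $C'$ contains all residues $2\bmod 4$, and this is exactly what launches your step (4): one gets $4\mid N'$, $C'=4C''\cup\{c\equiv 2 \pmod 4\}$, $N''=N'/4$, and the problem for $(A'',C'')$ is again of the same shape, giving the recursion $\gamma\mapsto 4\gamma+1$ and hence $\gamma=(4^{m+1}-1)/3$ (your count $k_{m+1}=4k_m+1$ is the right one). So steps (3) and (4) are not separable in the way your plan suggests: the mod-$4$ bookkeeping that proves $\alpha=2$ is the engine of the scaling-by-$4$ self-similarity, and once you replace your extremal argument by it, the rest of your plan coincides with the paper's proof of Theorem \ref{thm-sumset}.
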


The $\beta = 2$ case is more difficult, and occupies \S \ref{sec-beta2}.

 \begin{theorem}\label{thm-beta2intro}
There are Ulrich partitions of the following types:
\begin{enumerate}
\item\label{case222} $(2,2,2)$,
\item\label{case223} $(2,2,3)$,
\item\label{case12k} $(1,2,k_1)$ for any $m_1\geq 0$, and
\item\label{case12kk} $(1,2,k_1+k_2)$ for any $m_1,m_2\geq 0$.
\end{enumerate}
Unless the partition is of type $(1,2,2)$ or $(2,2,2)$, it is unique up to taking the symmetric dual.

Up to symmetry, any Ulrich partition of type $(\alpha,2,\gamma)$ is one of the above examples.
 \end{theorem}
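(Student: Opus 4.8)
The plan is to study Ulrich partitions $P = (a_1,\dots,a_\alpha \mid b_1, b_2 \mid c_1,\dots,c_\gamma)$ of type $(\alpha,2,\gamma)$ by tracking the two middle entries $b_1 > b_2$ as the partition evolves. Normalize the time evolution $P(t)$ so that $1,0,-1$ is subtracted from the three blocks, so the $B$-block is stationary; by Lemma \ref{lem-congruence} the entries of $A$ are all congruent mod $2$ to those of $B$, and likewise $C$ and $B$. The first step is to pin down the relative positions of $b_1, b_2$ and the neighboring $A$- and $C$-entries. Since the dimension is $N = 2\alpha + 2\gamma + \alpha\gamma$, each of $b_1, b_2$ must absorb all collisions with the $A$-entries (moving left relative to $b$) and all collisions with the $C$-entries (moving right relative to $b$); counting, $b_1$ together with $b_2$ must account for $2(\alpha+\gamma)$ of the $N$ collisions, while the remaining $\alpha\gamma$ collisions are $ac$ collisions. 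A parity argument as in \S\ref{sec-higherStep} shows that at even times only $ac$ collisions occur, which forces strong constraints; in particular the first and last collisions are between adjacent blocks, and one quickly reduces to the situation where at time $t=1$ one has an $ab$ or $bc$ collision, and symmetrically at time $t=N$.

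The key reduction is to bound $\alpha$ and $\gamma$ using the following mechanism: after $b_1$ has passed the last $A$-entry $a_\alpha$ and before it reaches the first $C$-entry $c_1$, the entry $b_2$ must be doing the "same work" on a shifted schedule, and the requirement that no two collisions happen simultaneously forces the gaps between consecutive $A$-entries, and between consecutive $C$-entries, to be large (at least $6$ by the mod-$2$ congruence combined with avoiding double collisions, exactly as in Proposition \ref{prop-fourStep}). Then the total "length" of the configuration that must be traversed grows linearly in $\alpha$ and $\gamma$, while the number of available collision times is only $N = 2\alpha+2\gamma+\alpha\gamma$; comparing these gives $\alpha \leq 2$ or $\gamma \leq 2$ unless a small sporadic configuration occurs. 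After symmetry (swapping the roles of $A$ and $C$) we may assume $\gamma$ is the large parameter. One then shows $\alpha \in \{1,2\}$: if $\alpha \geq 3$ the two extra $A$-entries $a_{\alpha-1}, a_{\alpha-2}$ are forced at least $12$ apart (the $A$-block is congruent mod $2$ to $B$, and mod $4$-type spacing arises from avoiding coincident $ab$ and $ac$ collisions), and a counting/interval argument rules this out.

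Once $\alpha \leq 2$, the problem becomes nearly rigid. For $\alpha = 1$ we have a type $(1,2,\gamma)$ partition; writing $P = (a \mid b_1, b_2 \mid c_1,\dots,c_\gamma)$ and normalizing, the constraint that every $c_i$ collides exactly once with each of $a, b_1, b_2$ and exactly once with each other $c_j$, with all collision times distinct and filling $[N]$, translates — after subtracting the stationary $b$-coordinates — into a sumset-type condition on the set $\{c_1,\dots,c_\gamma\}$ reminiscent of the one solved in \S\ref{sec-sumset}. Solving it shows $\gamma$ must be of the form $k_1$ or $k_1+k_2$ with $k_i = (4^{m_i+1}-1)/3$, yielding cases \eqref{case12k} and \eqref{case12kk}, and the solution is unique up to the symmetric dual except in the degenerate small case $(1,2,2)$. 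For $\alpha = \gamma = 2$ one has finitely many candidate spacings to check directly, producing exactly the types $(2,2,2)$ and $(2,2,3)$ (the latter via the $A\leftrightarrow C$ symmetry giving $(3,2,2)$ as well), with $(2,2,2)$ admitting more than one solution. Finally, uniqueness-up-to-symmetric-dual in each non-degenerate case follows because the reconstruction of $P$ from the collision data leaves no freedom once the spacings are fixed, and the symmetric dual $(P^s)^*$ of \S\ref{ssec-symmetryDuality} is the only other solution of the same type.

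I expect the main obstacle to be the counting argument that bounds $\alpha$ and $\gamma$ simultaneously: one must carefully account for the interaction between the schedules of $b_1$ and $b_2$ (which are offset in time but must never collide with anything simultaneously), and extract from "no double collisions" a quantitative lower bound on the spread of the $A$- and $C$-blocks that beats the quadratic count $\alpha\gamma$. The sporadic small cases where the linear-vs-quadratic comparison is inconclusive will need to be enumerated by hand, and the bookkeeping there — especially verifying that $(2,2,2)$ genuinely has multiple Ulrich partitions while $(2,2,3)$ does not — is the delicate part.
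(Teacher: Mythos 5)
There is a genuine gap: the two pillars of your reduction are based on false premises in the three-block setting. Lemma \ref{lem-congruence} only forces congruence mod $k-j$, so for a partition $(A|B|C)$ it gives nothing between adjacent blocks and only a common parity for the $A$- and $C$-entries; the $B$-entries are unconstrained (in all the actual examples they have mixed parity, e.g.\ $B=\{3,0\}$). Consequently your claim that ``at even times only $ac$ collisions occur'' is false: in the Ulrich partition $(12,4|3,0|{-2},-8)$ of type $(2,2,2)$ the entry $b_2=0$ meets $c_1$ at time $t=2$. Likewise the spacing bounds you import from Proposition \ref{prop-fourStep} (consecutive $A$-entries at least $6$ apart, at least $12$ apart when $\alpha\geq 3$) come from mod-$6$ congruences that only exist with five or more blocks; here consecutive $a$'s can differ by $2$ (see $(20,18,16,14,8,2|1,0|{-4})$), and the type $(3,2,2)$ partition $(16,10,4|3,0|{-2},-12)$ has $a$-gaps of $6$ and does exist, so the linear-vs-quadratic counting argument that was supposed to bound $\alpha$ and $\gamma$ collapses. (You also slip in asserting that the case $\alpha=\gamma=2$ ``produces'' type $(2,2,3)$.)

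Beyond that, the heart of the theorem --- why $\gamma$ must be of the form $(4^{m+1}-1)/3$ or a sum of two such numbers, and why the only sporadic types are $(2,2,2)$ and $(2,2,3)$ --- is asserted rather than proved: the $\alpha=1$ case does not visibly reduce to the sumset problem of \S\ref{sec-sumset}, because with two stationary $b$'s the collision times with $b_1$ and $b_2$ are coupled in a way that single-$b$ argument does not handle. The paper's actual route is quite different: it first proves the crucial restriction $b_1-b_2\in\{1,3,5\}$ (Lemma \ref{lem-BblockRestrict}), then reconstructs any Ulrich partition by the greedy algorithm (adding $a$'s and $c$'s at the first free time), and pins down the dimension $N$ by playing the partition against its dual via the trapezoid rule (Observation \ref{obsTrapezoid}), with a sieve computation (Proposition \ref{propaccc}) and the recursive analysis of Proposition \ref{propn21} and Lemma \ref{lemNoMorecs} producing the powers-of-$4$ structure and the two-parameter family. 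Your proposal contains no bound on $b_1-b_2$, no mechanism playing early times against late times, and no argument generating the $4$-adic recursion, so the classification and the uniqueness claims remain unproven.
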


We next turn our attention to cases where $\alpha$ and $\gamma$ are small and $\beta$ is arbitrary.  We first classify all partitions of type $(1,n,2)$ in \S\ref{sec-1n2}.

\begin{example}\label{example-1n2}
There is a fundamental example of this type, given by the partition  $$(a|b_1,\ldots,b_n|c_1,c_2) = (2n+1|n-1,n-2,\ldots,1,0|{-1},-2n-3).$$
Then
\begin{itemize}
\item for times $t\in [1,n]$, the $c_1$ entry meets the $B$-block.
\item At time $t = n+1$, $a$ meets $c_1$.
\item For times $t\in [n+2,2n+1]$, $a$ meets the $B$-block.
\item When $t=2n+2$, $a$ meets $c_2$, and
\item for $t\in [2n+3,3n+2]$, $c_2$ meets the $B$-block.
\end{itemize}
Therefore the partition is Ulrich.
In Figure \ref{fig-132} we present the time evolution diagram of the Ulrich partition $(7|2,1,0|{-1},-9)$ of type $(1,3,2)$.
\begin{figure}[htbp]
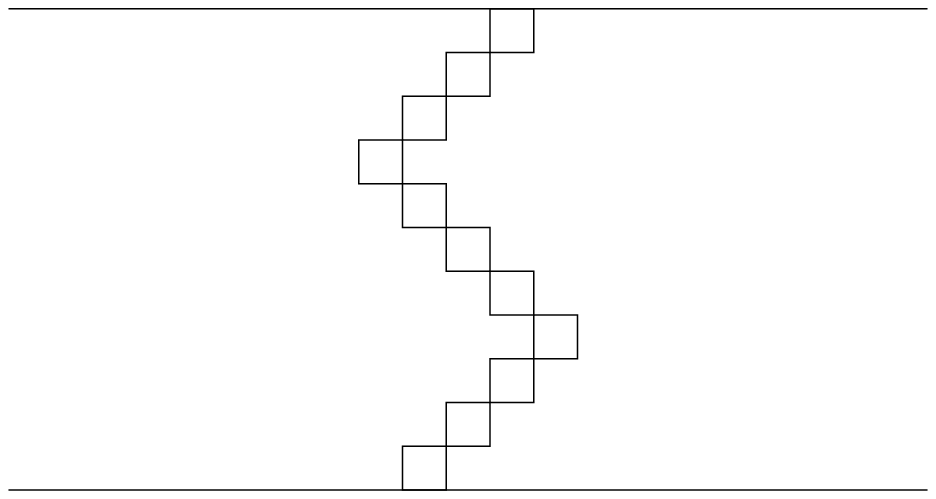
\caption{Time evolution diagram of the Ulrich partition $(7|2,1,0|{-1},-9)$ of type $(1,3,2)$.}\label{fig-132}
\end{figure}
\end{example}

\begin{theorem}\label{thm-1n2intro}
There is a bijective correspondence between Ulrich partitions of type $(1,n,2)$ and decompositions $$n = 2km+m-1 \qquad (k\geq 0, m>0)$$ of the integer $n$.
\end{theorem}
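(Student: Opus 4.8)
The plan is to set up coordinates exactly as in Example \ref{example-1n2} and analyze an arbitrary Ulrich partition $P = (a|b_1,\ldots,b_n|c_1,c_2)$ of type $(1,n,2)$ directly, normalizing $P(t)$ by subtracting $1,0,-1$ from the three blocks so that the $B$-block stays fixed. The dimension of $F(p,q;v)$ here is $N = \alpha\beta+\beta\gamma+\alpha\gamma = n + 2n + 2 = 3n+2$. By Lemma \ref{lem-congruence}, $a$ and $c_1,c_2$ all have the same parity (since $\gamma - \alpha$ considerations and the distance between $c_1$ and $c_2$ force $c_1 \equiv c_2 \pmod 2$), so we may put $a$ at an odd or even position uniformly; after normalization we can take $c_1 = -x$, $c_2 = -x - 2d$ for some $d \geq 1$, and $a$ of the correct parity.

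First I would pin down the global structure of the time-evolution diagram. The entries $a$, $c_1$, $c_2$ move right-to-left relative to the fixed $B$-block: $a$ moves left at unit speed, $c_1$ and $c_2$ move right at unit speed (relative to $B$). The $N = 3n+2$ collisions decompose as: $a$ meets each of the $n$ entries of $B$, $c_1$ meets each entry of $B$, $c_2$ meets each entry of $B$ ($3n$ total $B$-collisions), plus the three pair collisions $ac_1$, $ac_2$, $c_1c_2$. The key observation is that $c_1c_2$ \emph{never} happens (they move in the same direction at the same speed), so in fact the relevant collisions are $3n$ $B$-collisions together with $ac_1$ and $ac_2$ — and $3n + 2 = N$, so \emph{every} pair that can collide must collide exactly once in $[N]$ and no two collisions coincide. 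Because $a$ starts to the right of everything and $c_1, c_2$ start to the left, and the $B$-block is an interval-like set, the collision $ac_1$ must happen before any $c_1$-meets-$B$ collision that... — more precisely, I would argue that the $n$ entries of $B$ must be \emph{consecutive integers} (any gap larger than the minimal spacing dictated by parity would, by a counting/interleaving argument, make two of the $3n$ $B$-collisions coincide or miss a time slot), so up to equivalence $B = \{0,1,\ldots,n-1\}$ as in the example, and then $a > n-1$, $c_1 < 0$.

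Next I would extract the arithmetic. With $B = \{0,\ldots,n-1\}$, the collisions of $c_1$ with $B$ occupy times forming an arithmetic-progression-like block, those of $a$ with $B$ another, those of $c_2$ with $B$ another; the times $ac_1$ and $ac_2$ must fit into the two remaining slots. Writing $a = $ (position) and tracking when $a(t) = c_1(t)$ versus when $a(t)$ sweeps $B$ versus when $c_2(t)$ sweeps $B$, the non-overlap condition becomes a system of equations in the positions of $a, c_1, c_2$ and in $n$. The upshot should be that $c_2$'s sweep of $B$ fits precisely after $a$'s sweep with the $ac_2$ collision wedged between, forcing a relation of the form $2x = (\text{something linear in } n)$ while $2d$ (the gap $c_1 - c_2$) is forced to equal $2n + 2$ minus the length of $a$'s sweep, etc. Carrying this through, the free parameters collapse to: $m > 0$, the number of $B$-entries that $c_1$ passes before the $ac_1$ collision... no — rather, $m$ indexing how the $B$-sweeps interleave and $k \geq 0$ a secondary parameter, yielding $n = 2km + m - 1$. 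Conversely, given such a decomposition, I would write down the explicit partition generalizing the example and verify directly (as in Example \ref{example-1n2}) that the five phases of collisions tile $[N]$ without overlap, giving the bijection.

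\textbf{Main obstacle.} The hard part will be the middle step: showing rigorously that $B$ must be an interval (or rather a translate of one, of the forced spacing), and then correctly parametrizing all the ways the three $B$-sweeps can interleave in time without collision overlaps. There is genuine combinatorial freedom in \emph{where} the $ac_1$ and $ac_2$ collisions sit relative to the $B$-sweeps, and it is precisely this freedom — together with the constraint that the sweeps cover disjoint time intervals whose union is $[N]$ — that produces the two-parameter family $n = 2km + m - 1$ rather than a unique solution. Getting the bookkeeping exactly right (signs, off-by-one in where $ac_i$ lands, and checking that $c_1 c_2$ indeed causes no trouble since it is a "collision at infinity") is where the care is needed; everything else is the routine coordinate setup and a direct verification in the converse direction.
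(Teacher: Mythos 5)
There is a genuine gap, and it sits exactly at the step you yourself flag as the ``main obstacle'': your claim that the $B$-block of an Ulrich partition of type $(1,n,2)$ must consist of consecutive integers is false. The elongated examples are counterexamples: by Lemma \ref{lem-2n1} the partition $E(F_2)=(12,8|7,6,1,-4,-5|{-8})$ is Ulrich of type $(2,5,1)$, so its symmetric partition $(8|5,4,-1,-6,-7|{-8},-12)$ is Ulrich of type $(1,5,2)$, and its $B$-block $\{5,4,-1,-6,-7\}$ has large gaps. This example corresponds to the decomposition $5=2km+m-1$ with $(k,m)=(1,2)$. If your interval claim were correct, essentially only the fundamental examples of Example \ref{example-1n2} (the decompositions with $k=0$, $m=n+1$) would survive, and the asserted two-parameter correspondence could not hold: the parameter $k$ records precisely how many times the $B$-block splits into further contiguous runs (for $E^k(F_m)$ the $B$-block is a union of $2k+1$ runs, $2k$ of length $m$ and one of length $m-1$). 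So the combinatorial freedom that produces the family $n=2km+m-1$ lives in the shape of $B$ itself, not, as you propose, merely in where the $ac_1$ and $ac_2$ collisions sit relative to three sweeps of a fixed interval $\{0,\ldots,n-1\}$; the subsequent ``system of equations'' you sketch would therefore be solving the wrong problem.

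What is missing is the recursive structure that the paper's proof of Theorem \ref{thm-2n1} (the symmetric type $(2,n,1)$) is built on: one introduces the fundamental patterns $F_m$ and the elongation operation $E$, proves that elongation preserves the Ulrich property (Lemma \ref{lem-2n1}), shows that when the normalized parameter $y$ is small the partition must be $F_m$ (Lemma \ref{lem-2n1base}), and otherwise shows the partition is forced to be $E(P')$ for a strictly smaller Ulrich partition $P'$, concluding $P=E^k(F_m)$ by induction; the bijection with decompositions $n=2km+m-1$ then follows since the $E^k(F_m)$ are pairwise distinct. Your preliminary bookkeeping is fine --- the dimension $3n+2$, the fact that every cross-block pair must meet exactly once at distinct times (this is just the general remark following the definition of Ulrich partitions), and the parity of $a,c_1,c_2$ from Lemma \ref{lem-congruence} --- but none of it substitutes for identifying the elongation/induction mechanism, and the step you would need to make your approach work is precisely the one that is false.
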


 The fundamental example of a type $(1,n,2)$ partition corresponds to the trivial decomposition of $n$ given by $k=0$ and $m=n+1$.
For example, when $n=4$, $k=2$, and $m=1$ we will obtain a partition of type $(1,4,2)$ different from the fundamental  example.

Finally, we classify all partitions of type $(2,n,2)$ in \S \ref{sec-2n2}.

\begin{theorem}\label{thm-2n2intro}
If there is an Ulrich partition of type $(2,n,2)$, then $n$ is even.  If $n$ is even, there are exactly two partitions of this type, and they are symmetric to each other.
\end{theorem}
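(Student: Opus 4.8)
\emph{Plan.}

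\textbf{Setup and the sum relation.} Write the partition as $P=(a_1,a_2\mid b_1,\ldots,b_n\mid c_1,c_2)$ with $a_1>a_2>b_1>\cdots>b_n>c_1>c_2$, and evolve $P$ by subtracting $(1,0,-1)$ from the three blocks, so that at time $t$ the entries are $a_i-t$, $b_j$, $c_k+t$. Two entries from different blocks then collide exactly once, at the times $T_{AB}=\{a_i-b_j\}$, $T_{BC}=\{b_j-c_k\}$, and $T_{AC}=\{(a_i-c_k)/2\}$, and by Remark~\ref{rem-UlrichCrit} together with the dimension count of \S\ref{sec-comb}, $P$ is Ulrich if and only if $T_{AB}\sqcup T_{BC}\sqcup T_{AC}=\{1,\ldots,N\}$, $N=4n+4$, each time attained once. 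By Lemma~\ref{lem-congruence} applied to the first and third blocks, $a_1,a_2,c_1,c_2$ are congruent mod $2$; normalize so that they are even and $a_2=0$, and put $\delta=\tfrac12(a_1-a_2)\ge1$, $\varepsilon=\tfrac12(c_1-c_2)\ge1$, $d=-c_2$, $m=\tfrac12 d$. Summing all $N$ collision times gives $2(n+1)(d+\delta-\varepsilon)$, which must equal $1+\cdots+N=2(n+1)(4n+5)$, so $d+\delta-\varepsilon=4n+5$. Since $m$ occurs in $T_{AC}=\{m-\varepsilon,\,m,\,m+\delta-\varepsilon,\,m+\delta\}$ it is an integer, hence $d$ is even, $\delta-\varepsilon$ is odd, and in particular $\delta\ne\varepsilon$ and these four numbers are distinct.

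\textbf{$n$ is even.} Because $\delta-\varepsilon$ is odd, $\delta$ and $\varepsilon$ have opposite parities, and a one-line check shows that exactly two of the four elements of $T_{AC}$ are odd. On the other hand, since the $a_i$ and $c_k$ are even, the four collision times $a_1-b_j$, $a_2-b_j$, $b_j-c_1$, $b_j-c_2$ attached to a fixed $b_j$ are all $\equiv b_j\pmod 2$; thus $T_{AB}\cup T_{BC}$ is a disjoint union of $n$ four-element groups of constant parity, and so contains $4\cdot\#\{j:b_j\text{ odd}\}$ odd numbers. As $\{1,\ldots,4n+4\}$ has $2n+2$ odd numbers, $2+4\cdot\#\{j:b_j\text{ odd}\}=2n+2$, which forces $n$ to be even (and exactly $n/2$ of the $b_j$ to be odd). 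This proves the first assertion.

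\textbf{The two examples.} Now assume $n$ even. One checks directly that
$$P_n=\bigl(2n+4,\ 0\ \bigm|\ -1,-3,\ldots,-(n-1),\ -(n+2),-(n+4),\ldots,-2n\ \bigm|\ -(2n+2),\ -(4n+4)\bigr)$$
(for which $\delta=n+2$, $\varepsilon=n+1$, $m=2n+2$) is Ulrich: writing out $T_{AB}$, $T_{BC}$ and $T_{AC}=\{n+1,\,2n+2,\,2n+3,\,3n+4\}$, the three sets partition $\{1,\ldots,4n+4\}$, with $T_{AB}$ and $T_{BC}$ interleaving by odd/even residue on each of the intervals that $T_{AC}$ cuts out. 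Its symmetric partition $P_n^s$ is then also Ulrich, and since $\delta\ne\varepsilon$, $P_n$ is not fixed by $P\mapsto P^s$, so $P_n\not\equiv P_n^s$.

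\textbf{Uniqueness, and the main obstacle.} Let $P$ be any Ulrich partition of type $(2,n,2)$ with $n$ even. Using the symmetry $P\mapsto P^s$, which interchanges $\delta$ and $\varepsilon$, and $\delta\ne\varepsilon$, we may assume $\delta>\varepsilon$. The collision at time $N$ is between adjacent blocks (otherwise $(a_i-c_k)/2=N$ for some $i,k$, and then $(a_i-b_1)+(b_1-c_k)=a_i-c_k=2N$ with both summands collision times $\le N$ forces two collisions at time $N$), and it is not a $BC$-collision, since $b_1-c_2=N$ would give $b_1=\delta-\varepsilon-1\ge0$, contradicting $b_1<a_2=0$; hence $a_1-b_n=N$, so $b_n=2\delta-N$. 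Then $b_n-c_1=\delta-\varepsilon+1\ge2$, so by the same reasoning the time-$1$ collision is neither $AC$ nor $BC$, giving $a_2-b_1=1$, i.e.\ $b_1=-1$. Moreover $T_{AC}$ is forced to be symmetric about $(N+1)/2$ (its center is $m+\tfrac12(\delta-\varepsilon)=\tfrac12(d+\delta-\varepsilon)=(N+1)/2$), hence so is $T_{AB}\sqcup T_{BC}$. Combining this reflection symmetry with the extremal entries $b_1=-1$, $b_n=2\delta-N$, expressing all collision times attached to $b_1$ and $b_n$ as explicit linear functions of $\delta,\varepsilon$, and arguing inward from both ends while repeatedly invoking that no two pairs collide at the same time, one concludes $\delta=n+2$, $\varepsilon=n+1$, and then that $B$ must be the set appearing in $P_n$. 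Thus there is a unique Ulrich partition with $\delta>\varepsilon$, and exactly two of type $(2,n,2)$, namely $P_n$ and $P_n^s$. I expect this last step --- pinning down $\delta$ (equivalently, excluding every admissible value of $\delta$ except $n+2$) and then $B$ --- to be the main obstacle: $T_{AB}$, $T_{BC}$, $T_{AC}$ genuinely interleave in $\{1,\ldots,N\}$ rather than occupying separate intervals, so the packing argument is delicate; the usable structure is that each of $T_{AB}$ and $T_{BC}$ is a union of two translates of $\pm B$, that $T_{AB}\sqcup T_{BC}$ is reflection-symmetric about $(N+1)/2$, and that the available small and large times are severely constrained once $b_1$ and $b_n$ are known.
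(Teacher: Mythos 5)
Your first half is correct and, in fact, proceeds by a route the paper does not use: the observation that the sum of all $N=4n+4$ collision times equals $2(n+1)(d+\delta-\varepsilon)$ independently of the $b_j$'s (the $\sum b_j$ terms cancel between $T_{AB}$ and $T_{BC}$), giving $d+\delta-\varepsilon=4n+5$, and the ensuing parity count (exactly two odd times in $T_{AC}$, four times of parity $b_j$ for each $j$) is a clean, self-contained proof that $n$ is even. The paper only obtains evenness as a corollary of the full classification in \S\ref{sec-2n2}. Your explicit partition $P_n$ is, after shifting by $2u+1$ with $n=2u$, exactly the paper's $P_u$, and your verification sketch that it is Ulrich, together with $\delta\neq\varepsilon$ forcing $P_n\not\equiv P_n^s$, is fine.

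However, the second assertion — that for $n$ even there are \emph{exactly} two such partitions — is not proved. Everything after ``Combining this reflection symmetry with the extremal entries $b_1=-1$, $b_n=2\delta-N$ \dots one concludes $\delta=n+2$, $\varepsilon=n+1$, and then that $B$ must be the set appearing in $P_n$'' is an unsubstantiated assertion, and you yourself flag it as the main obstacle. This is where essentially all of the difficulty of Theorem \ref{thm-2n2} lies: knowing the two extremal collisions, the symmetry of $T_{AC}$ about $(N+1)/2$, and the constraint $d+\delta-\varepsilon=4n+5$ does not by itself exclude other values of $\delta$ or other $B$-blocks, precisely because the three time-sets interleave. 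The paper's uniqueness argument requires genuine structural input: the first/last collisions are $a_2b_1$ and $a_1b_n$ (Lemma \ref{lem-afirst}), a bound on the initial contiguous $b$-block (Lemma \ref{lem-boundm}), the dimension formula (Observation \ref{obs-dim2n2}), the base case $y\le s+m$ (Lemma \ref{lem-fundamental2n2}), and crucially the elongation lemma (Lemma \ref{lem-mainMeat}) showing that any larger example must be a $K(q,r,m)$-elongation of a smaller Ulrich partition with the same parameter $m$, which by induction must be some $P_u$ with $r=2$, $m=1$ — and then $r\nmid s+m$ gives the contradiction. Your proposal contains no analogue of this inductive/recursive step (nor any argument ruling out, say, larger $\delta$ with a correspondingly rearranged $B$), so the uniqueness half of the theorem remains open in your write-up.
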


\subsection{Conjectures}
Our classification of Ulrich partitions for two-step flag varieties remains incomplete.  However, we conjecture that we have found all the examples:
\begin{conjecture}\label{conj-fullList}
If $(\alpha,\beta,\gamma)$ is the type of an Ulrich partition, then up to symmetry this type is one of
 $$(1,n,1),\quad
(1,n,2),\quad
(2,2n,2),\quad
(2,2,3),\quad
(2,1,k_1),\quad
(1,2,k_1),\quad\mbox{or}\quad
(1,2,k_1+k_2)$$
where $n$ denotes a positive integer and $k_i$ is a number of the form $(4^{m_i+1}-1)/3$ for some $m_i\geq 0$.
\end{conjecture}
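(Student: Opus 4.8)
The plan is to reduce Conjecture \ref{conj-fullList} to Conjecture \ref{conj-main}, which already appears above. The theorems proved in the paper classify every Ulrich partition whose type $(\alpha,\beta,\gamma)$ lies in the region $\beta\le 2$ (Theorems \ref{thm-beta1intro} and \ref{thm-beta2intro}) or in the region $\alpha\le 2$ and $\gamma\le 2$ (Theorems \ref{thm-1n1}, \ref{thm-1n2intro} and \ref{thm-2n2intro}), and in every one of these cases the types that occur are precisely those on the list. Conjecture \ref{conj-main} asserts that no Ulrich partition has $\alpha\ge 3$ and $\beta\ge 3$; applying the type-reversing symmetry $P\mapsto P^s$ of \S\ref{ssec-symmetryDuality} then also rules out $\beta\ge 3$ and $\gamma\ge 3$. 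Any type not lying in one of the two classified regions has $\beta\ge 3$ together with $\alpha\ge 3$ or $\gamma\ge 3$, hence carries no Ulrich partition by Conjecture \ref{conj-main} and the symmetry. Thus Conjecture \ref{conj-fullList} is equivalent to Conjecture \ref{conj-main}, and the remainder of the plan concerns the latter.

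For Conjecture \ref{conj-main} we argue purely combinatorially. Suppose $P=(a_1,\dots,a_\alpha\mid b_1,\dots,b_\beta\mid c_1,\dots,c_\gamma)$ is an Ulrich partition with $\alpha,\beta\ge 3$, normalized so that $P(t)$ subtracts $t,0,-t$ from the three blocks: the $A$-entries move down at unit speed, the $B$-entries stay fixed, and the $C$-entries move up at unit speed. Then $a_i$ meets $b_j$ at time $a_i-b_j>0$, $b_j$ meets $c_k$ at time $b_j-c_k>0$, and $a_i$ meets $c_k$ at time $(a_i-c_k)/2$, a positive integer because all $a_i$ and $c_k$ share a common parity by Lemma \ref{lem-congruence}. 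Being Ulrich is exactly the statement that the multiset
$$\{a_i-b_j\}_{i,j}\;\cup\;\{b_j-c_k\}_{j,k}\;\cup\;\{(a_i-c_k)/2\}_{i,k}$$
equals $\{1,\dots,N\}$, where $N=\alpha\beta+\beta\gamma+\gamma\alpha$. Equivalently, writing $A=\{a_i\}$, $B=\{b_j\}$, $C=\{c_k\}$: the difference sets $A-A$, $B-B$, $C-C$ pairwise intersect only in $\{0\}$ (so the $ab$-, $bc$- and halved $ac$-difference families have full cardinalities $\alpha\beta$, $\beta\gamma$, $\gamma\alpha$), these three families are pairwise disjoint, and their union is $[N]$. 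As in the higher-step arguments, Lemma \ref{lem-congruence} together with the $t=1$ collision also yields the parity bookkeeping: at even times only $ac$-collisions and equal-parity $ab$- or $bc$-collisions occur, and at odd times the parities are reversed.

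The core of the argument would mimic and substantially extend Propositions \ref{prop-threeStep} and \ref{prop-fourStep}. After possibly applying $P\mapsto P^s$ we may assume the $t=1$ collision is $a_\alpha-b_1=1$. Parity then forces the $t=2$ collision to be an $ac$-collision or between non-adjacent constituents, which constrains the positions of $b_1,b_2,\dots$; iterating at $t=3,4,\dots$ should pin down the first few entries of each block, and the symmetric analysis at $t=N,N-1,N-2,\dots$, using disjointness and the cardinality constraints, should pin down the last few. I would first dispose of the sub-case $\gamma\le 2$ (types $(\alpha,\beta,1)$ and $(\alpha,\beta,2)$ with $\alpha,\beta\ge 3$), where the late-time structure is most rigid and the three-step and $(2,n,2)$ arguments can largely be recycled, and then treat $\alpha,\beta,\gamma\ge 3$. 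The global moment identities $\sum_{t\le N}t=\binom{N+1}{2}$ and $\sum_{t\le N}t^{2}=\tfrac{1}{6}N(N+1)(2N+1)$ give linear and quadratic relations among $\sum a_i,\sum b_j,\sum c_k$ and their second moments, useful for ruling out sporadic small configurations.

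The main obstacle is that the local analysis by itself cannot finish the proof: the infinite families $(2,1,k_1)$, $(1,2,k_1)$ and $(1,2,k_1+k_2)$ with $k_i=(4^{m_i+1}-1)/3$ show that the combinatorics genuinely supports self-similar, base-$4$ partial configurations that do not terminate on their own. To exclude such configurations when $\alpha,\beta\ge 3$ one needs a global device: either a monovariant that strictly decreases under the recursion underlying the sumset reformulation of \S\ref{sec-sumset}, or an exact base-$4$ accounting of how many $ab$-, $bc$- and $ac$-collisions remain ``at each scale,'' showing the totals cannot balance once two of the block sizes exceed $2$. Isolating the correct such invariant is where I expect essentially all the difficulty to lie; the rest of the argument, though long, should be of the same nature as the three- and four-step cases already carried out.
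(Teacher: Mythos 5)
The statement you are attempting to prove is Conjecture \ref{conj-fullList}; the paper does not prove it, and neither do you. The first part of your proposal --- the reduction of Conjecture \ref{conj-fullList} to Conjecture \ref{conj-main} (equivalently, by the type-reversing symmetry $P\mapsto P^s$, to Conjecture \ref{conj-simple}) via the classification theorems for $\beta\le 2$ and for $\alpha,\gamma\le 2$ --- is correct and is exactly the argument the paper gives in \S\ref{sec-overview2step} after stating the conjectures. But everything after that is a research plan, not a proof, and you say so yourself: the ``global device'' (a monovariant or a base-$4$ accounting) needed to exclude the self-similar configurations is not identified, and without it nothing is proved.

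Two concrete points about why the remaining step is a genuine gap and not a routine extension of the three- and four-step arguments. First, the sub-case you propose to ``dispose of first by recycling existing arguments'' --- types $(\alpha,\beta,1)$ and $(\alpha,\beta,2)$ with $\alpha,\beta\ge 3$, i.e.\ under symmetry $(1,\beta,\gamma)$ and $(2,\beta,\gamma)$ with $\beta,\gamma\ge 3$ --- contains precisely the case the paper flags as open even in its remark after Conjecture \ref{conj-simple} (``open even in the special case $\alpha=1$''); the late-time rigidity you hope to exploit there is not established anywhere in the paper, and the $(2,n,2)$ argument relies heavily on both outer blocks having size $2$. Second, the local parity/early-time analysis of Propositions \ref{prop-threeStep} and \ref{prop-fourStep} terminates quickly because with $r\ge 3$ there are enough adjacent-block parity constraints to force a contradiction within a few time steps; with $r=2$ the infinite families $(2,1,k_1)$, $(1,2,k_1)$, $(1,2,k_1+k_2)$ show that no bounded-time local analysis can succeed, so the moment identities and early/late pinning you describe cannot close the argument on their own. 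As written, your proposal establishes only the equivalence of the two conjectures, which the paper already records.
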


Our partial classification implies that Conjecture \ref{conj-fullList} is equivalent to the following a priori weaker statement.

\begin{conjecture}\label{conj-simple}
There are no Ulrich partitions of type $(\alpha,\beta,\gamma)$ if $\beta\geq 3$ and $\gamma \geq 3$.
\end{conjecture}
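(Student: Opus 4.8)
The plan is to reduce Conjecture \ref{conj-simple} to an additive-combinatorial packing statement and then to attack it by a boundary analysis in the spirit of \S\ref{sec-higherStep} and \S\ref{sec-beta2}, together with the classification results already proved. Normalizing $P(t)$ by subtracting $t,0,-t$ from the three blocks, I would write the blocks as finite sets $A,B,C\subset\ZZ$ of sizes $\alpha,\beta,\gamma$, with $A$ lying above $B$ lying above $C$ and $A\equiv C\pmod 2$ by Lemma \ref{lem-congruence}. An $AB$-pair then collides at time $a-b$, a $BC$-pair at time $b-c$, and an $AC$-pair at time $(a-c)/2$. Hence $P$ is Ulrich precisely when the three difference multisets $A-B$, $B-C$ and $\tfrac{1}{2}(A-C)$ are each multiplicity-free and their disjoint union is exactly $\{1,\dots,N\}$, where $N=\alpha\beta+\beta\gamma+\alpha\gamma$; equivalently, the pairwise difference sets $A-A$, $B-B$, $C-C$ meet only in $0$, and the collision times tile $[N]$ with no gap and no overlap. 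Using the symmetry $P\mapsto P^s$ I may assume $\alpha\leq\gamma$; the heart of the matter is then the ``all blocks thick'' case $3\leq\alpha\leq\gamma$ and $\beta\geq 3$, with the remaining subcases $\alpha\in\{1,2\}$ (types $(1,\beta,\gamma)$ and $(2,\beta,\gamma)$ with $\beta,\gamma\geq 3$) requiring extensions of the $(1,n,\cdot)$ and $(2,n,\cdot)$ arguments of \S\ref{sec-1n2}--\S\ref{sec-2n2} to longer outer blocks.

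For the main case I would carry out a refined boundary analysis. Since $A\equiv C\pmod 2$, after a global translation every entry of $A\cup C$ is even; then an $AB$- or $BC$-collision at time $t$ has $t$ of the same parity as the participating $B$-entry, while $AC$-collisions may have either parity. Splitting the collisions contributing to $[N]$ according to parity already forces the number of odd entries of $B$ into a narrow range in terms of $\alpha,\gamma$. Next I would analyze the pairs realizing the extreme times: for $\beta\geq 3$ the blocks are separated by enough room that the collision at $t=1$ is between adjacent blocks, and then I would inductively identify, time step by time step, which pair realizes each small time $t=1,2,3,\dots$ and, symmetrically, each large time $t=N,N-1,\dots$. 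The aim is to show the entries near the two interfaces $A$--$B$ and $B$--$C$ are forced into an essentially rigid configuration; from such rigidity one either recognizes one of the exceptional families of Conjecture \ref{conj-fullList}---which all have $\min(\alpha,\beta,\gamma)\leq 2$, contradicting the hypothesis---or one locates an integer $t\in[N]$ that no collision can reach, contradicting the tiling. A complementary tool is the moment method: equating $\sum_{t=1}^N t^k$ with the $k$-th power sums of the three difference multisets gives, for $k=1$ and $k=2$, polynomial relations among $\sum A,\sum B,\sum C$ and their quadratic analogues, which can be played against the rigidity produced by the boundary analysis.

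The hard part, and the reason this has so far resisted proof, is the absence of the ``single trackable entry'' that makes the $\beta\leq 2$ arguments work. When $\beta\leq 2$ one shows an extremal entry of $B$ must sweep across an entire adjacent block within three consecutive time steps, which immediately caps the length of that block; when $\beta\geq 3$ no such collapse happens, and the number of a priori possibilities for ``which pair collides at time $t$'' grows with $\beta$, so the step-by-step identification above branches badly. A complete proof will therefore likely need one of two new ingredients: either a genuine invariant---most plausibly a generating-function or character-sum identity showing that the difference multisets $A-B$, $B-C$, $\tfrac{1}{2}(A-C)$ cannot simultaneously be multiplicity-free and tile $[N]$ once the middle block and both outer blocks are thick, exploiting that the known $(1,2,k_i)$-type solutions have the rigid base-$4$ structure incompatible with a thick middle block---or a descent mechanism that converts a hypothetical solution with $\beta,\gamma\geq 3$ into a strictly smaller solution of an already-excluded type. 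Producing such a descent, or proving that none can exist and replacing it by a direct argument, is the crux; once $\beta\geq 3$ and $\gamma\geq 3$ is ruled out, Conjecture \ref{conj-fullList} follows from Theorems \ref{thm-1n1}, \ref{thm-beta1intro}, \ref{thm-beta2intro}, \ref{thm-1n2intro} and \ref{thm-2n2intro}, as noted after its statement.
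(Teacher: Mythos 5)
The statement you are addressing is Conjecture \ref{conj-simple}, which the paper does not prove: it is stated as an open problem, supported only by the partial classifications (Theorems \ref{thm-1n1}, \ref{thm-beta1intro}, \ref{thm-beta2intro}, \ref{thm-1n2intro}, \ref{thm-2n2intro}), by the remark that a brute-force search shows any counterexample with $\beta\geq 3$ and $\gamma\geq 3$ must have $\alpha+\beta+\gamma\geq 13$, and by the observation that it would imply Conjecture \ref{conj-fullList}. So there is no proof in the paper to compare yours against, and the only honest question is whether your proposal itself establishes the statement. It does not, and you say so yourself: after the correct reduction, the argument for the crucial case is only a program (``boundary analysis,'' ``rigidity,'' ``moment method''), and you explicitly defer the crux to ``new ingredients''---either a generating-function/character-sum obstruction or a descent mechanism---neither of which is supplied.

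To be precise about what is and is not in place: your translation of the Ulrich condition is correct and matches the paper's framework --- with the $1,0,-1$ normalization, $AB$-, $BC$-, and $AC$-pairs meet at times $a-b$, $b-c$, $(a-c)/2$, and $P$ is Ulrich iff these three difference multisets are multiplicity-free and tile $[N]$ with $N=\alpha\beta+\beta\gamma+\alpha\gamma$; the parity constraint $A\equiv C\pmod 2$ is Lemma \ref{lem-congruence}, and the $P\mapsto P^s$ reduction to $\alpha\leq\gamma$ is \S\ref{ssec-symmetryDuality}. The genuine gap is everything after that. The claim that the entries near the $A$--$B$ and $B$--$C$ interfaces are ``forced into an essentially rigid configuration'' is exactly what fails when $\beta\geq 3$: as you note, the $\beta\leq 2$ proofs in \S\ref{sec-beta2} hinge on an extremal $B$-entry being swept in at most three consecutive steps, which caps a block length, and no analogue is proved (or even precisely formulated) here, so the step-by-step identification of the pairs realizing $t=1,2,3,\dots$ and $t=N,N-1,\dots$ branches without control. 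The first-moment identity is automatically satisfied by any candidate (it is just $\sum_t t = \binom{N+1}{2}$ rewritten) and the second-moment relation is one equation in many unknowns, so the ``moment method'' as described cannot by itself rule anything out. Likewise the subcases $\alpha\in\{1,2\}$ with $\beta,\gamma\geq 3$ are not covered by \S\ref{sec-1n2}--\S\ref{sec-2n2}, whose arguments (e.g.\ the dimension formula and elongation structure) use $\gamma\leq 2$ in an essential way; the paper records that even $\alpha=1$ is open. In short, your proposal is a reasonable research plan consistent with the paper's techniques, but it proves nothing beyond what the paper already establishes, and the statement remains a conjecture.
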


Indeed, suppose Conjecture \ref{conj-simple} is true.  If $\beta\leq 2$, then Theorems \ref{thm-beta1intro} and \ref{thm-beta2intro} completely classify Ulrich partitions of type $(\alpha,\beta,\gamma)$.  So suppose $\beta \geq 3$.  If $\alpha \leq 2$ and $\gamma\leq 2$, then Theorems \ref{thm-1n1}, \ref{thm-1n2intro}, and \ref{thm-2n2intro} completely classify Ulrich partitions of type $(\alpha,\beta,\gamma)$.  If instead either $\alpha \geq 3$ or $\gamma\geq 3$, then by symmetry we may assume $\gamma\geq 3$ and therefore there are no Ulrich partitions of type $(\alpha,\beta,\gamma)$.  Therefore Conjecture \ref{conj-simple} implies Conjecture \ref{conj-fullList}.
 \begin{remark}
 As evidence for Conjecture \ref{conj-simple}, we performed a brute-force computer search to verify that if there is an Ulrich partition of type $(\alpha,\beta,\gamma)$ with $\beta\geq 3$ and $\gamma \geq 3$, then $\alpha + \beta +\gamma \geq 13$.

 We note that Conjecture \ref{conj-simple} is open even in the special case $\alpha=1$.
 \end{remark}

\section{Sumsets and Ulrich partitions of type $(\alpha,1,\gamma)$}\label{sec-sumset}

In this section we produce a new class of examples of Ulrich partitions and classify all the types $(\alpha,\beta,\gamma)$ of an Ulrich partition where $\beta = 1$.

\begin{theorem}\label{thm-sumset}
For every $m\geq 0$, there is a unique Ulrich partition of type $(2,1,k)$, where $k = (4^{m+1}-1)/3$. Up to symmetry, any Ulrich partition of type $(\alpha,1,\gamma)$ is of this form.
\end{theorem}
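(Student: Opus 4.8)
The plan is to translate the Ulrich condition for a type $(\alpha,1,\gamma)$ partition into an exact additive identity and then analyze that identity by a top‑down ``peeling'' argument.

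\textbf{Setup.} Normalize the unique middle entry to $0$ and, at each time step, subtract $1,0,-1$ from the three blocks. Writing the $A$-block as a set $A=\{a_1>\cdots>a_\alpha>0\}$ and the negatives of the $C$-block as $C=\{0<c_1<\cdots<c_\gamma\}$, the $AB$-collisions occur at the times in $A$, the $BC$-collisions at the times in $C$, and the $AC$-collisions at the times in $\tfrac12(A+C):=\{\tfrac{a+c}{2}:a\in A,\ c\in C\}$. Hence $P$ is Ulrich if and only if $A\sqcup C\sqcup\tfrac12(A+C)=[N]$ with $N=\alpha+\gamma+\alpha\gamma$, as a disjoint union of sets; in particular every sum $a+c$ is even, and the $\alpha\gamma$ half-sums are pairwise distinct, i.e.\ $(A-A)\cap(C-C)=\{0\}$. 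This is the ``sumset'' formulation of the problem.

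\textbf{Parity and normalization.} By Lemma~\ref{lem-congruence} (or directly from ``$a+c$ even'') all entries of $A\cup C$ share a common parity, and that parity must be odd: otherwise $1\in[N]$ lies in none of the three sets, since $1=\tfrac12(a+c)$ would force $a+c=2$ with $a,c$ positive and even. Next, the maximum $N$ of $[N]$ is realized in one of the three sets but cannot be a half-sum, since $\tfrac12(a+c)=N$ with $a,c\le N$ forces $a=c=N$ and then $N$ occurs three times. So, after possibly replacing $P$ by the symmetric partition $P^s$ (which interchanges $A$ and $C$), we may assume $a_1=N$. If $\alpha=1$ this quickly forces $C$ to be the set of all odd numbers in $[1,N-2]$, which overlaps $\tfrac12(a_1+C)$ unless $N=3$; that residual case is type $(1,1,1)$, already covered by Theorem~\ref{thm-1n1}, so assume $\alpha\ge 2$. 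Examining $N-1$, which is even and hence a half-sum, gives $a+c=2N-2$ with $a,c\le N$ odd, so $\{a,c\}=\{N,N-2\}$; thus $c_\gamma=N-2$, and since $N-2\notin A$ we get $a_2\le N-4$.

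\textbf{The peeling argument (the crux).} Continuing downward through $[N]$, one shows by induction that the configuration is completely rigid: at each stage the largest not-yet-accounted-for element of $[N]$ admits only one representation among the three collision types, and this determines, in turn, the remaining entries of $A$ and of $C$. The upshot is that there is no room for a third $A$-entry, so $\alpha=2$ and $A=\{N,1\}$; that $C$ is a specific set satisfying a base-$4$ recursion $C_m=4C_{m-1}\sqcup\{1,3,\dots,2\cdot 4^{m}-1\}$; and that, comparing sizes, $\gamma=k=(4^{m+1}-1)/3$ for some $m\ge 0$, equivalently $N=4^{m+1}+1$, with the partition unique. Existence for each $m$ is a direct check: for the recursively described partition $(N,1\mid 0\mid -c_\gamma,\dots,-c_1)$, the collisions involving $a_1=N$ exactly fill the top half $\{\tfrac{N+1}{2},\dots,N\}$ of $[N]$, while the remaining collisions fill the bottom half as a rescaled-by-$4$ copy of the picture for $m-1$.

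\textbf{Main obstacle.} All the substance is in the peeling/rigidity step: tracking precisely which element of $[N]$ is covered by which of the three collision types, genuinely ruling out $\alpha\ge 3$ (rather than merely bounding it), and extracting the self-similar factor-of-$4$ recursion that pins down $C$ and hence $\gamma$. The half-sum set $\tfrac12(A+C)$, whose members interleave with those of $A$ and $C$ in a parity-sensitive way, is exactly what makes this bookkeeping delicate; I expect the cleanest route is an induction that at each step isolates the current top element of $[N]$, uses the ``each half-sum has a unique representation'' constraint $(A-A)\cap(C-C)=\{0\}$ together with disjointness to force membership, and peels off the top half in one stroke to expose a smaller instance of the same problem.
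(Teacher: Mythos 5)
Your reduction to the sumset problem is exactly the paper's first step (normalize $b=0$, all entries odd, and $[N]=A\amalg C\amalg\tfrac12(A+C)$ with no repetition in $A+C$), and your top-end observations ($N\in A$ after symmetry, hence $N-2\in C$ and $a_2\le N-4$) are correct. But everything after that — which you yourself flag as "the crux" — is asserted rather than proved, and the assertions are false as stated. The claim that "at each stage the largest not-yet-accounted-for element of $[N]$ admits only one representation" already fails at $N-3$: it is even, hence a half-sum, and a priori it can be either $\tfrac12(N+(N-6))$ with $N-6\in C$ or $\tfrac12((N-4)+(N-2))$ with $N-4\in A$; nothing local forces one branch. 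A greedy top-down peel therefore does not run, and in particular it does not rule out $\alpha\ge 3$. The paper's argument needs genuinely global input: after symmetry it pins down the bottom end as well ($1\in A$, $3\in C$), proves the top entry lies in the same block as the bottom one, and then uses the no-repetition constraint between the pair $\{1,N\}\subset A$ and $C$ to force every element of $[N]$ congruent to $3\pmod 4$ (equivalently $2\pmod 4$ after shifting by $1$) into $C$; this is what shows $A=\{1,N\}$, then that $4\mid N-1$, and finally sets up the factor-of-$4$ recursion and the count $\gamma=(4^{m+1}-1)/3$.

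Your quantitative claims in the sketch are also wrong in detail. The recursion $C_m=4C_{m-1}\sqcup\{1,3,\dots,2\cdot 4^{m}-1\}$ does not reproduce the known example: for $m=1$ it gives $\{1,3,5,7,12\}$, whereas the unique type $(2,1,5)$ partition is $(17,1\,|\,0\,|\,{-3},-7,-9,-11,-15)$, i.e.\ $C=\{3,7,9,11,15\}$; in your normalization the correct recursion is $C_m=(4C_{m-1}-3)\sqcup\{c\le 4^{m+1}:c\equiv 3\pmod 4\}$. Likewise, the collisions involving $a_1=N$ do not "exactly fill the top half" of $[N]$ (for $N=17$ they occupy $\{10,12,13,14,16,17\}$, with $9,11,15$ covered by $BC$ collisions), so the existence verification as described also needs to be redone along the lines of the corrected recursion. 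In short: the reformulation matches the paper, but the heart of the uniqueness argument (forcing $\alpha=2$, the mod-$4$ structure, and the recursion) is missing, and the peeling strategy as described would not close the gap without the bottom-end/no-repetition ideas above.
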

\begin{proof}
We first rephrase our combinatorial problem to make it more tractable in this case.  Given a partition $$(a_1,\ldots,a_\alpha|b|c_1,\ldots,c_\gamma)$$ we first normalize $b=0$ (so the $c_j$ are all negative).  The entry $a_i$ meets $b=0$ at time $t= a_i$ and meets $c_j$ at time $\frac{1}{2}(a_i-c_j)$, while $c_j$ meets $b$ at time $t=-c_j$.  In order for the $a$'s to meet the $c$'s at integral times, they must all have the same parity.  Furthermore, they must all be odd, for otherwise at time $t=1$ no two entries would meet.

Let $A =\{a_1,\ldots,a_\alpha\}$ and $C = \{-c_1,\ldots,-c_\gamma\}$.  Then the Ulrich condition says precisely that the sumset $A+C:= \{a+c:a\in A,c\in C\}$ has size $\alpha\gamma$ and the set $[N]$ of times is a disjoint union $$ [N] = A\amalg C \amalg \tfrac{1}{2}(A+C).$$

Equivalently, by subtracting $1$ from every element of $A$ and $C$ and putting $N' = N-1$, we see that the existence of an Ulrich partition of type $(\alpha,1,\gamma)$ is equivalent to the existence of subsets of even numbers $A',C' \subset [0,N']$ of sizes $\alpha$ and $\gamma$ such that $[0,N']$ is a disjoint union $$[0,N'] = A'\amalg C' \amalg \tfrac{1}{2}(A'+C').$$ The numerics force $A'+C'$ to have size $\alpha\gamma$---no repetition can occur in the sumset.  This decomposition is the problem we will study for the rest of the proof.

We now claim that if $(\alpha,1,\gamma)$ is the type of an Ulrich partition then at least one of $\alpha,\gamma$ is $2$.  Suppose $A',C'\subset [0,N']$ correspond to such an Ulrich partition, and put $S' = \frac{1}{2}(A'+C')$.  Since $A'$ and $C'$ are disjoint, we have $0\notin S'$, and therefore either $0\in A'$ or $0\in C'$.  By symmetry we may assume $0\in A'$.  We must have $1\in S'$ since $A',C'$ consist of even numbers, and this forces $2\in C'$.

The number $N'$ must be even since $A'$ and $C'$ consist of even numbers no larger than $N'$.  Then $N'$ must actually be in $A'$ or $C'$.  If $N'\in C'$ then since $N'-1$ must be in $S'$ we have to have $N'-2\in A'$.  This is a contradiction since $0+N' = 2+(N'-2)$ gives a repetition in the sumset $A'+C'$.  We conclude $N'\in A'$ and similarly $N'-2\in C'$.

We next show by induction that if $k$ is an integer with $k\geq 0$ and $4k+2<N'$ then $4k+2\in C'$.  We already know $2\in C'$.  Suppose $4k+2\in C'$ for all $k\in [0,k_0)$.  Then $4k+2 \notin A'$ for all $k\in [0,k_0)$.  Furthermore, $4k+4\notin A'$ for all $k\in [0,k_0)$, for if $4k+4\in A'$ then $(4k+4)+(N'-2) = N'+(4k+2)$ is a repetition in the sumset.  Thus $A'$ contains no numbers in the interval $(0,4k_0]$. The odd number $2k_0+1$ must be in $S'$, but since every element of $C'$ is $\geq 2$ while the only nonzero elements of $A'$ are at least $4k_0+2$ the only possibility is that $4k_0+2\in C'$.  By induction, $C'$ must contain all numbers in $(0,N')$ which are $2 \pmod{4}$.  This argument also shows that $A' = \{0,N'\}$, i.e. $\alpha=2$.

Suppose $N'$ is not divisible by $4$.  Reversing the argument in the previous paragraph, we see that $C'$ must consist precisely of all the even numbers in $(0,N')$.  In particular, since $N'>4$ we have $4\in C'$, so $2\in S'$; this contradicts that $2\in C'$.  Therefore $4$ divides $N'$.

At this point we conclude that the sets $A',C'\subset [0,N']$ must have a recursive structure.  Let $$N'' = N'/4 \qquad A'' = \{0,N''\} \qquad C''=\{c:4c\in C'\} \qquad C''' = \{c\in [0,N']:c\equiv 2 \pmod 4\},$$ so that $C' = 4C'' \cup C'''$.  Then $$C''' \cup \tfrac{1}{2}(A'+C''') = \{d\in [0,N']:d\not\equiv 0 \pmod 4\};$$ furthermore, this union is disjoint and there is no repetition in the sumset.  It then follows that the sets $A',C'$ correspond to an Ulrich partition if and only if $$[0,N''] = A'' \amalg C'' \amalg \tfrac{1}{2}(A''+C'').$$ That is, either $A'',C''$ correspond to an Ulrich partition or $C''$  is empty, $N'' = 1$, and $A'' = \{0,1\}$, in which case we originally had $A' = \{0,4\}$ and $C' = \{2\}$.

Since $C''$ always has fewer elements than $C'$, we conclude by induction.  We may assume $C''$ has $(4^m-1)/3$ elements for some $m\geq 1$.  Then $$N'' +1=|A''|+|C''|+|A''||C''|=2+\frac{(4^{m}-1)}{3}+\frac{2(4^m-1)}{3}=4^m+1,$$ so $N'' = 4^m$ and $N' = 4^{m+1}$.  From this we similarly conclude $|C'| = (4^{m+1}-1)/3$, completing the proof.
\end{proof}

\begin{example}
By studying the proof of Theorem \ref{thm-sumset}, it is straightforward to recover the Ulrich partitions from the sumset construction.  As an example, let us find the Ulrich partition of type $(2,1,5)$, corresponding to $m=1$.  Then $N = 17$, so $N' = 16$, $A' = \{0,16\}$, and $C' = \{2,6,8,10,14\}$.  Correspondingly, the Ulrich partition is $(17,1|0|{-3},-7,-9,-11,-15)$.  We give the time evolution diagram of this partition in Figure \ref{fig-215}.
 \begin{figure}[htbp]
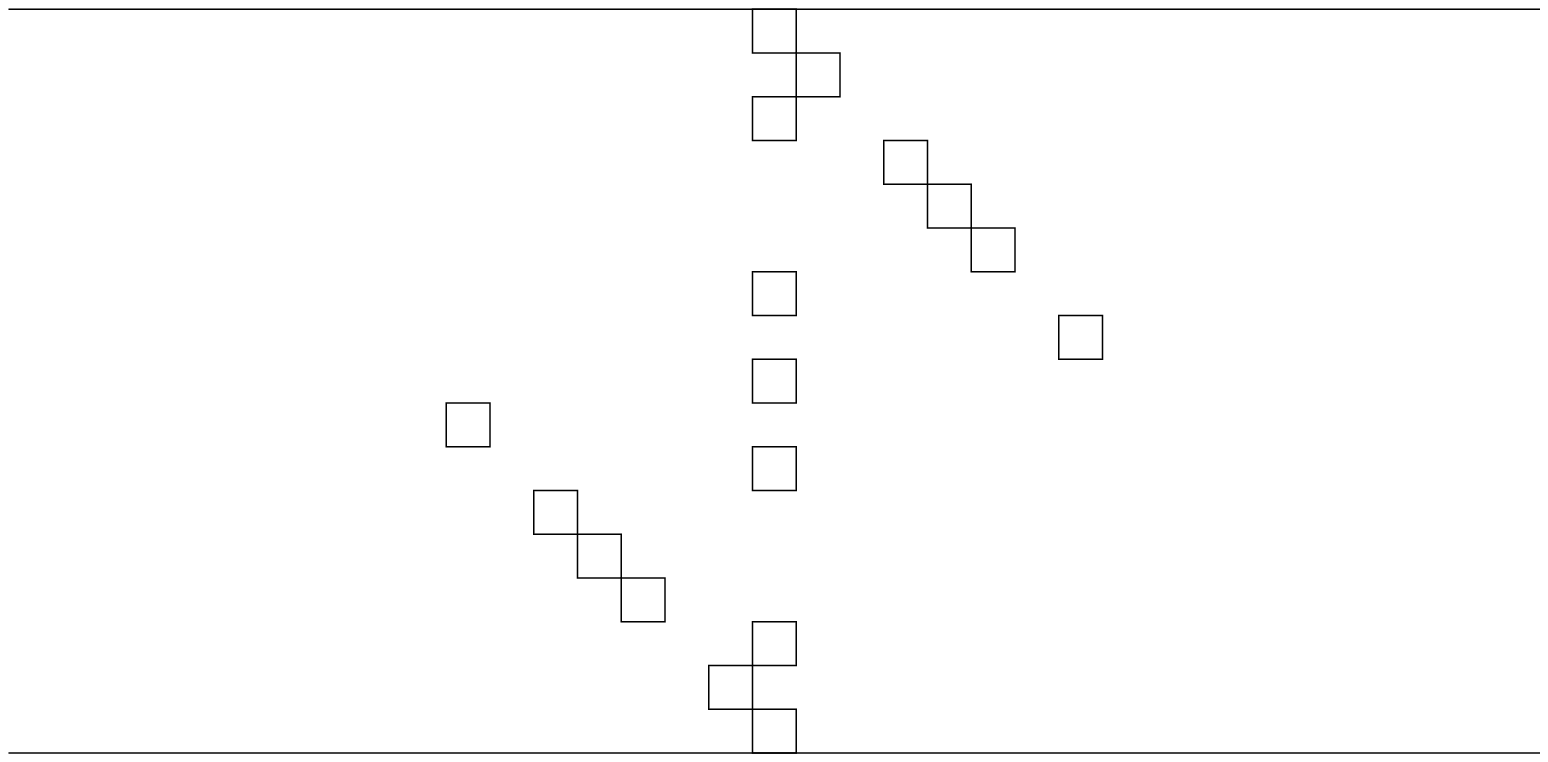
\caption{Time evolution diagram of the Ulrich partition of type $(2,1,5)$.}\label{fig-215}
\end{figure}
\end{example}

\section{Ulrich partitions of type $(\alpha,2,\gamma)$}\label{sec-beta2}

In this section we completely classify Ulrich partitions of type $(\alpha,2,\gamma)$.  There are many examples of such partitions, including a two-parameter infinite family and several sporadic examples.  The existence of these examples complicates the classification.

\subsection{The greedy algorithm} Most of our arguments on 3-part Ulrich partitions will involve studying the following algorithm for the construction of Ulrich partitions.  We normalize the evolution of partitions by subtracting $1,0,-1$, respectively, from the three blocks.  Fix a finite set $B\subset \Z$, which we view as being the list of entries in the $B$-block of a hypothetical Ulrich partition.  Let $A\subset \Z$ and $C\subset \Z$ be two finite sets such that $a>b>c$ holds for any $a\in A$, $b\in B$, $c\in C$.  We write $$A(t) = A -t \qquad B(t) = B \qquad C(t) = C+t,$$ where the right hand side is interpreted as a sumset, so that e.g. $A(t)$ gives the set of positions of the $A$-entries of $(A|B|C)$ at time $t$.

\begin{definition}
The partition $(A|B|C)$ is \emph{pre-Ulrich} if
\begin{enumerate}
\item all elements of $A$ and $C$ have the same parity, and
\item the set $T\subset \N_{>0}$ of times $t$ where two of $A(t), B(t),C(t)$ have a  common entry has size $N$, the dimension of $(A|B|C)$.
\end{enumerate}
\end{definition}

\begin{remark} Write $(A'|B'|C')\subset (A|B|C)$ if $A'\subset A$, $B'\subset B$, and $C'\subset C$.  Then if $(A|B|C)$ is pre-Ulrich, $(A'|B'|C')$ is pre-Ulrich.  Clearly Ulrich partitions are pre-Ulrich; the Ulrich condition requires that in addition $T = [N]$ is a consecutive range of integers.\end{remark}

Given any pre-Ulrich triple $(A|B|C)$, we introduce two operations for enlarging the $A$ or $C$ part of the triple.

\begin{definition}
Let $(A|B|C)$ be a pre-Ulrich partition.  Let $t_0\in \N_{>0}$ be the smallest time not in $T$.
\begin{enumerate}
\item Put $a_0(t_0) := \max(B(t_0) \cup C(t_0))$ and $a_0 := a_0(t_0)+t_0$.  The triple $(A\cup \{a_0\} |B|C)$ is \emph{obtained by adding a new $a$}.
 \item Put $c_0(t_0)=\min(A(t_0)\cup B(t_0))$ and $c_0 = c_0(t_0)-t_0$.  The triple $(A|B|C\cup \{c_0\})$ is \emph{obtained by adding a new $c$}.
\end{enumerate}
\end{definition}

\begin{warning}
It is not generally the case that the triple obtained from a pre-Ulrich triple by adding a new $a$ or new $c$ is again pre-Ulrich, since the new entry may meet old entries at times after $t_0$ that already have intersections.  See the proof of Lemma \ref{lem-BblockRestrict} for an example of this.
\end{warning}

The next result shows that an Ulrich partition $(A|B|C)$ can always be obtained from  $(\emptyset|B|\emptyset)$ by greedily adding $a$'s and $c$'s.

\begin{proposition}
If $(A|B|C)$ is an Ulrich triple, then it can be obtained by repeatedly adding new $a$'s and $c$'s to the triple $(\emptyset|B|\emptyset)$.  Furthermore, the sequence of $a$'s and $c$'s which are added is uniquely determined.
\end{proposition}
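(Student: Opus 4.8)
The plan is to construct $(A|B|C)$ from $(\emptyset|B|\emptyset)$ by adjoining one entry at a time, showing that at each stage the greedy move is forced and stays inside the target triple. Write $T$ for the intersection-time set of $(A|B|C)$; since $(A|B|C)$ is Ulrich, $T = [N]$ with $N = |A||B|+|A||C|+|B||C|$ its dimension, and each time $t\in[N]$ is the meeting time of exactly one pair of entries of $(A|B|C)$ (the $N$ pairs being in bijection with $[N]$). The main claim, proved by induction on $|A'|+|C'|$, is: every pre-Ulrich triple $(A'|B|C')$ with $(A'|B|C')\subsetneq(A|B|C)$ admits a unique greedy move (adding a new $a$ or a new $c$) whose result is again a sub-triple of $(A|B|C)$, and that result is obtained by adjoining an actual entry of $A\setminus A'$ or $C\setminus C'$. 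The base case is $(\emptyset|B|\emptyset)$, a sub-triple of $(A|B|C)$ and hence pre-Ulrich. Granting the claim, after $|A|+|C|$ steps the triple still has $B$-block $B$ and has acquired $|A|+|C|$ further entries lying inside $A\cup C$, which forces it to equal $(A|B|C)$; so the proposition follows, with both existence and uniqueness of the sequence coming out of the inductive step.

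For the inductive step, suppose $(A'|B|C')\subsetneq(A|B|C)$ is pre-Ulrich with intersection-time set $T'$. Every intersection of $(A'|B|C')$ is one of $(A|B|C)$ at the same time, so $T'\subseteq[N]$; since $|T'| = |A'||B|+|A'||C'|+|B||C'| < N$ (the dimension being strictly monotone in each of $|A'|,|C'|$ as $|B|\ge 1$), the smallest positive time $t_0\notin T'$ satisfies $t_0\le N$ and $[t_0-1]\subseteq T'$. Let $x\cdot y$ be the unique pair of entries of $(A|B|C)$ meeting at time $t_0$. Since $t_0\notin T'$, this is not a pair of old entries (entries of $A'\cup B\cup C'$), so at least one of $x,y$ is new, hence in $A\setminus A'$ or $C\setminus C'$ since the $B$-block never changes. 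The crucial point is that the pair contains \emph{exactly one} new entry, and I would derive this from the following observation: a new entry $w$ meets no old entry at any time $t<t_0$, for such a collision would occur at an integer time $t_1\in[1,t_0-1]\subseteq T'$ (integrality because all entries of $A\cup C$ share a parity), and then the unique time-$t_1$ intersection of $(A|B|C)$ would simultaneously involve $w\notin A'\cup C'$ and be an intra-$(A'|B|C')$ intersection — absurd. Together with uniqueness of the time-$t_0$ intersection, this forces a new entry of $A$ (which only moves left) to sit at position $\max\bigl(B(t_0)\cup C'(t_0)\bigr)$ or farther right, and a new entry of $C$ to sit at position $\min\bigl(A'(t_0)\cup B(t_0)\bigr)$ or farther left; if $x\in A$ and $y\in C$ were both new, this would give $\max B(t_0)\le x(t_0)=y(t_0)\le\min B(t_0)$ with a strict inequality, impossible since $B\ne\emptyset$.

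Thus exactly one of $x,y$ is new; say $x\in A\setminus A'$ (the case $x\in C\setminus C'$ being symmetric). Two entries of $A$ never collide, so $y\in B\cup C'$, and by the observation above together with uniqueness of the time-$t_0$ intersection, $x(t_0)=\max\bigl(B(t_0)\cup C'(t_0)\bigr)$. Hence the move "add a new $a$" adjoins exactly $a_0 = x(t_0)+t_0 = x$, and $(A'\cup\{x\}|B|C')$ is a sub-triple of $(A|B|C)$, hence pre-Ulrich: this gives existence. For uniqueness of the sequence, note the greedy algorithm only ever enlarges the triple, so any run terminating at $(A|B|C)$ passes entirely through sub-triples of $(A|B|C)$; and at a given stage at most one move keeps us inside the target, for if both "add a new $a$" and "add a new $c$" did, the new $a$ would meet the old entry at position $\max\bigl(B(t_0)\cup C'(t_0)\bigr)$ and the new $c$ the old entry at position $\min\bigl(A'(t_0)\cup B(t_0)\bigr)$, both at time $t_0$, yielding two distinct time-$t_0$ intersections of $(A|B|C)$ and contradicting uniqueness.

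I expect the main obstacle to be the two places where one must show the greedy formula recovers the target entry \emph{exactly} rather than merely some plausible entry; both reduce to the same mechanism — using that $(A|B|C)$ has a unique intersection at each time, together with $[t_0-1]\subseteq T'$, to forbid a new entry from meeting an old one before time $t_0$ — and once that lemma is isolated, the remaining bookkeeping (parities, the use of $B\ne\emptyset$, monotonicity of the dimension in $|A'|,|C'|$) is routine.
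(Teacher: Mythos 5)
Your proposal is correct and follows essentially the same route as the paper: reconstruct the triple greedily, take $t_0$ to be the first time with no intersection from the sub-triple, and use the fact that an Ulrich partition has exactly one intersection at each time (so the new entry cannot have met any old entry before $t_0$) to pin the new entry down as precisely the greedy one. Your sandwich argument ruling out an all-new $AC$ collision and your explicit uniqueness argument are minor variants/elaborations of the paper's case analysis, not a different method.
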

\begin{proof}
The sequence of $a$'s and $c$'s to add to $(\emptyset|B|\emptyset)$ is readily determined by the pattern of intersections of the Ulrich partition. Let $(A'|B|C') \subset (A|B|C)$ be any triple.  Since $(A|B|C)$ is Ulrich, $(A'|B|C')$ is pre-Ulrich.  Let $t_0\in \N_{>0}$ be the smallest time where $(A'|B|C')$ does not have an intersection, and consider the intersection occurring in $(A|B|C)$ at time $t_0$.

First suppose the intersection at time $t_0$ occurs between the $A$- and $B$- blocks, between an entry $a_0\in A$ and $b_0\in B$, so that $a_0(t_0)= b_0(t_0)$.  Then $a_0\notin A'$, since otherwise $(A'|B|C')$ has an intersection at time $t_0$.  No intersections between $a_0$ and an entry of $B$ or $C'$ have occurred before time $t_0$ since all these times already have intersections from $(A'|B|C')$.  In order for the intersections between $a_0$ and the entries of $B$, $C'$ to occur at time $t_0$ or later, we must have $a_0(t_0) \geq b(t_0)$ and $a_0(t_0) \geq c(t_0)$ for all $b\in B$ and $c\in C$, and we conclude $a_0(t_0) = \max(B(t_0)\cup C(t_0))$.  The triple obtained from $(A'|B|C')$ by adding a new $a$ is precisely $(A'\cup \{a_0\}|B|C')\subset (A|B|C)$.

If instead the intersection at time $t_0$ occurs between the $B$- and $C$-blocks, a symmetric argument shows that the triple $(A',B,C'\cup \{c_0\})$ obtained by adding a new $c$ is contained in $(A,B,C)$.

Finally suppose the intersection at time $t_0$ occurs between the $A$- and $C$-blocks, and let $a_0\in A$ and $c_0\in C$ be such that $a_0(t_0) = c_0(t_0)$.  By the choice of $t_0$, it can't be the case that both $a_0\in A'$ and $c_0\in C'$; we claim that in fact exactly one of $a_0\in A'$ or $c_0\in C'$ holds.  Suppose $a_0\notin A'$.  As before, this implies that $a_0(t_0) \geq b(t_0)$ and $a_0(t_0) \geq c(t_0)$ for all $b\in B$ and $c\in C'$, so $a_0(t_0) = \max(B(t_0)\cup C(t_0))$.  In fact, $a_0(t_0)>b(t_0)$ for all $b\in B$ since $a_0$ has not yet met the $B$-block at time $t_0$; fix some $b_0\in B$.  Since $c_0(t_0)=a_0(t_0) >b_0(t_0)$, the intersection between $c_0$ and $b_0$ must have occurred at a time before $t_0$.  This implies $c_0\in C'$.  Thus if $a_0\notin A'$, we conclude that the triple obtained from $(A'|B|C')$ by adding a new $a$ is $(A'\cup \{a_0\}|B|C')\subset (A|B|C)$; similarly, if $c_0\notin C'$, then the triple obtained from $(A'|B|C')$ by adding a new $c$ is $(A'|B|C'\cup \{c_0\})\subset (A|B|C)$.

Starting from $(\emptyset|B|\emptyset)$, we can now construct a chain
$$(\emptyset|B|\emptyset) \subset (A_1|B|C_1)\subset \cdots \subset (A_n|B|C_n) = (A|B|C)$$ of pre-Ulrich partitions where each triple differs from the previous one by adding an $a$ or a $c$.  Uniqueness is clear.
\end{proof}

\subsection{Duality and the trapezoid rule} The greedy algorithm will allow us to determine the structure of an Ulrich partition $(A|B|C)$ for early times $t$.  Recall that the dual $$(A|B|C)^* := ( A^* |B^*|C^*) := (C(N+1),B(N+1),A(N+1))$$ is also an Ulrich partition (see \S\ref{ssec-symmetryDuality}); this fact will allow us to determine the structure of an Ulrich partition at late (i.e. close to $N$) times.  The trapezoid rule is a simple observation which will allow us to combine information about a partition and its dual to say something about middle (i.e. close to $N/2$) times.  This is our principal tool for classifying Ulrich partitions of type $(\alpha,\beta,\gamma)$ with $\beta = 2$.

\begin{observation}[Trapezoid rule]\label{obsTrapezoid}
Let $(A|B|C)$ be an Ulrich partition and $(A^*|B^*|C^*)$ its dual.  If there exist $a\in A$, $a^*\in A^*$, $c\in C$, and $c^*\in C^*$ such that $a^*-a = c-c^*$, then $c(N+1) = a^*$ and $a(N+1) = c^*$.  In particular, $N+1=a^*-c=a-c^*$.
\end{observation}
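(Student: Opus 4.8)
The plan is to unwind the definition of the dual partition and reduce the asserted identity to the basic non-collision property of Ulrich partitions. With the normalization in which we subtract $1,0,-1$ from the three blocks, we have $A^* = C(N+1) = C+(N+1)$, $B^* = B$, and $C^* = A(N+1) = A-(N+1)$. Hence any $a^*\in A^*$ can be written uniquely as $a^* = \tilde c + (N+1)$ with $\tilde c\in C$, and any $c^*\in C^*$ can be written uniquely as $c^* = \tilde a - (N+1)$ with $\tilde a\in A$. First I would substitute these expressions into the hypothesis $a^*-a = c-c^*$; the two $N+1$ terms cancel, leaving $\tilde a + \tilde c = a + c$, equivalently $\tilde a - c = a - \tilde c$.

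The crux is then the observation that $\tilde a - c$ and $a - \tilde c$ are each a difference of an $A$-entry and a $C$-entry, and that a pair $(a',c')\in A\times C$ meets in the time evolution diagram at time $\tfrac12(a'-c')$. Since $(A|B|C)$ is Ulrich, no two pairs of entries from different blocks meet at the same time (this is exactly the characterization recalled just after the definition of Ulrich partition), so in particular the $\alpha\gamma$ differences $a'-c'$ with $a'\in A$ and $c'\in C$ are pairwise distinct. Therefore $\tilde a - c = a - \tilde c$ forces the two pairs to coincide, i.e.\ $\tilde a = a$ and $\tilde c = c$. Substituting this back gives $a^* = c + (N+1) = c(N+1)$ and $c^* = a - (N+1) = a(N+1)$, and then $a^*-c = N+1 = a-c^*$ follows immediately.

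I do not expect a genuine obstacle here: once the normalization is fixed this is essentially a one-line computation. The only points needing care are bookkeeping with the normalization (which shift is subtracted from which block, hence the explicit form of $A^*$ and $C^*$) and the remark that the elements $\tilde c$ and $\tilde a$ extracted from $a^*$ and $c^*$ really do lie in $C$ and $A$, so that the non-collision property for $AC$-pairs genuinely applies to the pair $(\tilde a, c)$. The name ``trapezoid rule'' reflects the trapezoidal picture formed by $a,a^*,c,c^*$ in the time evolution diagram, with the conclusion pinning the relevant height to $N+1$.
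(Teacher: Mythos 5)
Your proposal is correct and follows essentially the same route as the paper: both arguments extract $\tilde c = a^*-(N+1)\in C$ and $\tilde a = c^*+(N+1)\in A$ from the definition of the dual, and then use the fact that distinct $A$--$C$ pairs in an Ulrich partition meet at distinct times (equivalently, the differences $a'-c'$ are pairwise distinct) to force $\tilde a=a$ and $\tilde c=c$, from which the stated identities follow.
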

\begin{proof}
The assumptions give that $c':=a^*-(N+1)\in C$ and $a':=c^*+(N+1)\in A$.  Then $a$ meets $c'$ at time $\frac{1}{2}(a-c')=\frac{1}{2}(a-a^*+(N+1))$ and $a'$ meets $c$ at time $\frac{1}{2}(a'-c)=\frac{1}{2}(c^*-c+(N+1))$. The hypothesis is that these times are equal; thus since $(A|B|C)$ is Ulrich we must have $a=a'$ and $c=c'$.
\end{proof}

The name of the trapezoid rule comes from the following geometric interpretation using time evolution diagrams.  Suppose we can find $a,a^*,c,c^*$ as in the trapezoid rule.  View $a$ and $c$ as being entries of $(A|B|C)$ at time $0$, and view $a^*$ and $c^*$ as being entries of $(C|B|A)$ at time $N+1$.  The plane trapezoid with vertices at $(a,0)$, $(c,0)$, $(c^*,N+1)$, and $(a^*,N+1)$ is horizontally symmetric by assumption.  The conclusion of the trapezoid rule is that this trapezoid has diagonals which meet orthogonally.  See Figure \ref{fig-trapezoid}.

 \begin{figure}[htbp]
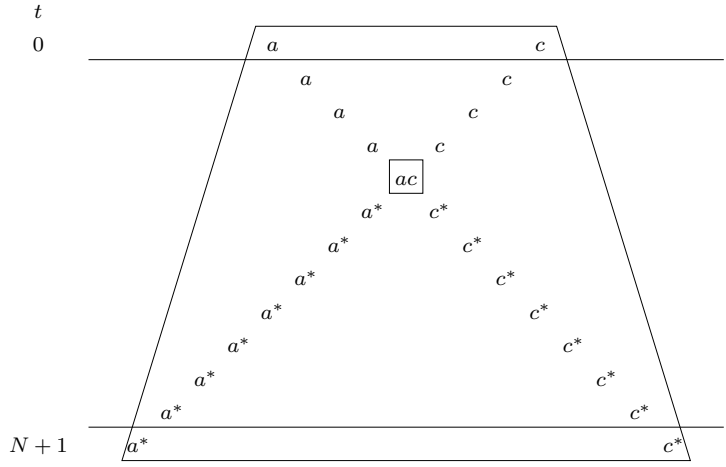
\caption{Graphical depiction of the trapezoid rule.  If $(A|B|C)$ is Ulrich and $a\in A,$ $a^*\in A^*,$ $c\in C,$ and $c^*\in C^*$ can be chosen such that the trapezoid displayed above is horizontally symmetric, then the diagonals meet orthogonally.  Equivalently, $N+1 = a^*-c = a-c^*$.}\label{fig-trapezoid}
\end{figure}

  See \S\ref{diff5} for the first simple application of this rule; more intricate applications will be discussed throughout the rest of the section.  We also point out one trivial rule which excludes many combinations of configurations of intersections for the early and late times.

\begin{observation}[Rectangle rule]\label{obsRectangle} Let $(A|B|C)$ be Ulrich, and let $(A^*|B^*|C^*)$ be its dual.  It is not possible that $A$ and $A^*$ share (at least) two entries.
\end{observation}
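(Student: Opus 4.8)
The plan is a short argument by contradiction, using only the explicit formula $A^* = C(N+1)$ for the first block of the dual (from the definition of duality recalled at the start of this subsection) together with the standard reformulation of the Ulrich condition: every pair of entries coming from different blocks collides at exactly one integer time in $[N]$, and no two pairs collide at the same time.

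First I would suppose, for contradiction, that $a_1 \neq a_2$ both lie in $A\cap A^*$. With the $(1,0,-1)$ normalization in force we have $A^* = C(N+1) = \{c+(N+1): c\in C\}$, so there exist $c_1,c_2\in C$ with $a_i = c_i + (N+1)$. Since $a_1\neq a_2$ these satisfy $c_1\neq c_2$, and since every $A$-entry exceeds every $C$-entry (the blocks are ordered $a>b>c$), the unordered pairs $\{a_1,c_1\}$ and $\{a_2,c_2\}$ are genuinely distinct pairs of entries of $(A|B|C)$ from different blocks. The key observation is then immediate: in $(A|B|C)$ the entry $a_i$ meets the entry $c_i$ at time $\frac{1}{2}(a_i - c_i) = \frac{1}{2}(N+1)$. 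Because $(A|B|C)$ is Ulrich, the pair $\{a_1,c_1\}$ must meet at an integer time in $[N]$, so $\frac{1}{2}(N+1)$ is such an integer (in particular $N$ is odd); the same applies to $\{a_2,c_2\}$. Thus two distinct pairs of entries meet at the single time $\frac{1}{2}(N+1)\in[N]$, which contradicts the Ulrich condition and finishes the proof.

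There is essentially no obstacle here; the only bookkeeping points are the two already mentioned — that the two pairs are genuinely distinct (clear from the block ordering and $c_1\neq c_2$) and that the shared meeting time $\frac{1}{2}(N+1)$ is a legitimate integer in $[N]$ (forced simply by applying the Ulrich hypothesis to either pair). Conceptually, the hypothesis that $A$ and $A^*$ share two entries produces an $ac$-collision of multiplicity two at the midpoint time $(N+1)/2$, and it is exactly this doubled collision, incompatible with the Ulrich property, that the rectangle rule is meant to exclude cheaply in the later case analysis.
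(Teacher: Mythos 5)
Your proof is correct and is essentially the paper's own argument: the paper observes that two shared entries force two entries of $A$ and two entries of $C$ with equal gaps, hence a simultaneous $ac$-collision, which is exactly your doubled collision of the pairs $\{a_i,c_i\}$ at time $\tfrac{1}{2}(N+1)$. You merely make the common collision time explicit, so there is nothing to add.
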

\begin{proof}
The hypotheses imply that there are two entries in $A$ with the same gap between them as two entries in $C$; there will necessarily be a multiple intersection at some time.
\end{proof}

Of course, by symmetry, an analogous version of the rectangle rule holds for $C$ and $C^*$.

\subsection{Restricting the $B$-block}  Suppose $(A|b_1,b_2|C)$ is an Ulrich partition.  Our first result restricts the gap $b_1-b_2$ in the $B$-block.

\begin{lemma}\label{lem-BblockRestrict}
If $(A|b_1,b_2|C)$ is an Ulrich partition then $$b_1-b_2\in \{1,3,5\}.$$
\end{lemma}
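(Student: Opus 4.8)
The plan is to run the greedy algorithm on $(A|b_1,b_2|C)$ and check that its first few moves are forced by parity, eventually producing a sub-triple that cannot be pre-Ulrich unless $b_1-b_2\le 5$. First I would normalize. The greedy reconstruction starts from $(\emptyset|B|\emptyset)$, which has no intersections, so its first move occurs at time $t_0=1$; moreover the first intersection of an Ulrich partition cannot be of type $AC$, since the argument establishing the greedy reconstruction shows that at the smallest uncovered time at least one endpoint of an $AC$ intersection must already have been added (here $A'$ and $C'$ are both empty). Replacing $(A|B|C)$ by its symmetric partition if necessary --- this preserves the gap $b_1-b_2$ and the Ulrich property, and converts a $BC$ intersection at time $1$ into an $AB$ one --- we may assume the first intersection is of type $AB$. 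Then the first move adds $a_0=b_1+1$, and every entry of $A$ and of $C$ has the common parity $\epsilon:=(b_1+1)\bmod 2$.

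If $b_1-b_2=1$ we are done, so suppose $b_1-b_2\ge 2$. Since $a_0=b_1+1$ meets $b_1$ at time $1$ and $b_2$ at time $b_1-b_2+1\ge 3$, the smallest uncovered time is now $t_0=2$. The greedy move at $t_0=2$ cannot add a new $a$, since that entry would be $\max(B(2)\cup C(2))+2=b_1+2$, giving $A\supseteq\{b_1+1,b_1+2\}$ of mixed parity, which is impossible as every triple in the greedy chain is pre-Ulrich. So it adds a new $c$, which (because $b_1-b_2\ge 2$) is $c_1=\min(A(2)\cup B(2))-2=b_2-2$; comparing parities forces $b_2-2\equiv\epsilon\equiv b_1+1\pmod 2$, so $b_1-b_2$ is odd. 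After passing to an equivalent partition I may therefore assume $b_2=0$ and $b_1=d:=b_1-b_2$ with $d$ odd, so that $\epsilon$ is even.

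Now suppose for contradiction that $d\ge 7$, and continue the greedy algorithm. From $(A_2|B|C_2)=(\{d+1\}|\{d,0\}|\{-2\})$ one computes $T_2=\{1,2,\tfrac{d+3}{2},d+1,d+2\}$, so the smallest uncovered time is $t_0=3$; adding a new $c$ would yield $\min(A_2(3)\cup B(3))-3=-3$, which is odd and hence forbidden, so the move adds a new $a$, namely $\max(B(3)\cup C_2(3))+3=d+3$ (even, as required). From $(A_3|B|C_3)=(\{d+1,d+3\}|\{d,0\}|\{-2\})$ one computes $T_3=\{1,2,3,\tfrac{d+3}{2},\tfrac{d+5}{2},d+1,d+2,d+3\}$, so the smallest uncovered time is $t_0=4$; adding a new $a$ would yield $\max(B(4)\cup C_3(4))+4=d+4$, which is odd and forbidden, so the move adds a new $c$, namely $\min(A_3(4)\cup B(4))-4=-4$ (even). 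Hence the greedy chain of $(A|B|C)$ passes through $(A_4|B|C_4)=(\{d+1,d+3\}|\{d,0\}|\{-2,-4\})$. But $\{d+1,d+3\}\subset A_4$ and $\{-4,-2\}\subset C_4$ are two pairs with the same gap $2$, so (as in the rectangle rule, Observation \ref{obsRectangle}) the distinct pairs $(d+3,-2)$ and $(d+1,-4)$ both meet at time $\tfrac{d+5}{2}$. Thus the set of intersection times of $(A_4|B|C_4)$ has size at most $11<12=|A_4|\,|B|+|A_4|\,|C_4|+|B|\,|C_4|$, so $(A_4|B|C_4)$ is not pre-Ulrich, a contradiction. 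Therefore $d\le 5$, and with $d$ odd this gives $b_1-b_2\in\{1,3,5\}$.

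The only real work is the bookkeeping inside the greedy algorithm: at each of the four steps one must honestly compute the current set of intersection times to locate the smallest uncovered time $t_0$, form both candidate entries $\max(B(t_0)\cup C(t_0))+t_0$ and $\min(A(t_0)\cup B(t_0))-t_0$, and check which one respects the parity $\epsilon$. The point requiring the most care is that one genuinely has to push all the way to the fourth move: the partition with $d=5$ is compatible with the greedy chain through three steps, and the fatal double intersection materializes only once both $A$ and $C$ have acquired a gap-$2$ pair, which happens precisely when $d\ge 7$. A smaller point that also needs attention is the reduction letting us assume the first intersection is of type $AB$.
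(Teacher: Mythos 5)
Your proof is correct and follows essentially the same route as the paper: run the greedy algorithm from $(\emptyset|b_1,b_2|\emptyset)$, use the parity of the $A$- and $C$-entries to force the first few additions, and observe that for an odd gap at least $7$ the forced subtriple $(\{d+3,d+1\}|\{d,0\}|\{-2,-4\})$ has two $AC$ pairs with equal gap $2$, hence a coincident intersection at time $(d+5)/2$, contradicting pre-Ulrichness. The only cosmetic difference is that you extract oddness of $b_1-b_2$ from the parity of the forced time-$2$ move rather than treating the even-gap case by a separate normalization as the paper does.
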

\begin{proof}
First let us show that $b_1-b_2$ is odd.  By way of contradiction, suppose $b_1=k$ and $b_2=-k$.  We consider what happens when we add new $a$'s and $c$'s to the pre-Ulrich triple $(\emptyset|k,-k|\emptyset)$.  Without loss of generality, we first add a new $a$, giving the triple $(k+1|k,-k|\emptyset)$.  This triple has intersections at times $1$ and $2k+1$; in particular there is no intersection at time $2$.  If we add a new $a$ to this triple we get $(k+2,k+1|k,-k|\emptyset)$, while if we add a new $c$ we get $(k+1|k,-k|{-k-2})$.  Neither triple is pre-Ulrich, since they both violate the parity requirement.

Next suppose that $b_1-b_2$ is odd and $\geq 7$.  We consider adding $a$'s and $c$'s to $(\emptyset|k,-k-1|\emptyset)$.  Without loss of generality, we first add an $a$ at time $1$ to get $(k+1|k,-k-1|\emptyset)$.  The triple is no longer pre-Ulrich if we add an $a$ at time $2$, so we add a $c$ at time $2$ and get $(k+1|k,-k-1|{-k-3})$.  At times $3$ and $4$ (which do not have intersections yet since $k\geq 3$) the parity condition implies we must add an $a$ and then another $c$, yielding $$(k+3,k+1|k,-k-1|{-k-3},-k-5).$$ This partition is no longer pre-Ulrich since both the $A$- and $C$-blocks have entries which are $2$ apart; there is a multiple intersection at time $k+3$.
\end{proof}

The classification varies considerably according to the value of $b_1-b_2$.  We consider each case separately in the next several subsections.

\subsection{Classification when $b_1-b_2=5$}\label{diff5}  This is the easiest case to classify.  For this subsection we fix $B = \{5,0\}$, let $(A|B|C)$ be an Ulrich partition, and assume that the intersection at time $t=1$ occurs between the $A$- and $B$-blocks, so $6\in A$.  After these choices, we prove there is a unique such partition.

\begin{example}[The (2,2,1) partition]\label{ex-221}
The partition $(8,6|5,0|{-2})$ is Ulrich.  It is obtained from $(\emptyset|5,0|\emptyset)$ by adding an $a$, then a $c$, then an $a$.  See Figure \ref{fig-221}.
 \begin{figure}[htbp]
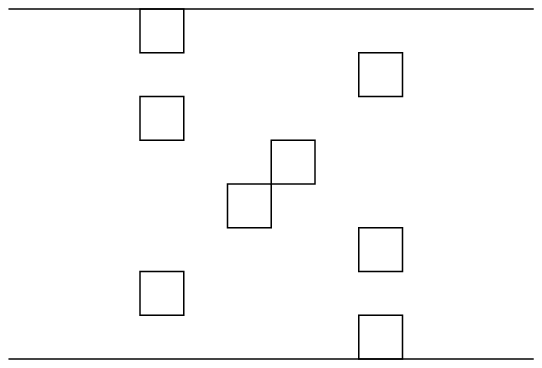
\caption{Time evolution diagram of the Ulrich partition $(8,6|5,0|{-2})$ of type $(2,2,1)$.}\label{fig-221}
\end{figure}
\end{example}

\begin{lemma}\label{diff5contain}
The partition $(A|B|C)$ contains $(8,6|5,0|{-2})$.
\end{lemma}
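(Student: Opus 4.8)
The plan is to reconstruct $(A|B|C)$ by running the greedy algorithm starting from $(\emptyset|5,0|\emptyset)$, and to show that its first three steps are forced, producing $(8,6|5,0|{-2})$ as a sub-triple. Two elementary observations control all of the case splits. First, since $(A|B|C)$ is Ulrich it is pre-Ulrich, so all elements of $A\cup C$ have a common parity; as the standing hypothesis already gives $6\in A$, every element of $A\cup C$ is even, and in particular every $c\in C$ satisfies $c\le -1$. Second, for $a\in A$ and $c\in C$ the block inequalities give $a\ge 6$ and $c\le -1$, so an $A$-$C$ intersection can only occur at a time $t$ with $2t=a-c\ge 7$; hence \emph{no $A$-$C$ intersection occurs at times $1$, $2$, or $3$}.

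Now run three steps of the greedy algorithm. At the first step, the intersection at time $t=1$ is an $A$-$B$ intersection, so the algorithm adjoins the entry $a=6$ (consistently with $6\in A$), giving $(6|5,0|\emptyset)$; its intersection set is $T=\{1,6\}$, so the next step occurs at time $t_0=2$. The intersection there is not of type $A$-$C$ by the second observation, and if it were of type $A$-$B$ then the participating entry $a_0\in A$ would satisfy $a_0-2\in\{5,0\}$ with $a_0\ge 6$, forcing $a_0=7$ and contradicting that $A\cup C$ consists of even integers. Hence the time-$2$ intersection is of type $B$-$C$, so the algorithm adjoins the new $c$-entry $\min(A(2)\cup B(2))-2=\min\{4,5,0\}-2=-2$, giving $(6|5,0|{-2})$, whose intersection set is $T=\{1,2,4,6,7\}$; the next step occurs at time $t_0=3$. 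Once more the time-$3$ intersection is not of type $A$-$C$, and if it were of type $B$-$C$ then the participating entry $c_0\in C$ would satisfy $c_0=b_0-3$ for some $b_0\in B$ with $c_0\le -1$, forcing $c_0=-3$ and again contradicting parity. So the time-$3$ intersection is of type $A$-$B$, and the algorithm adjoins the new $a$-entry $\max(B(3)\cup C(3))+3=\max\{5,0,1\}+3=8$, producing $(8,6|5,0|{-2})$.

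Since every triple arising in the greedy construction is contained in $(A|B|C)$, this gives $(8,6|5,0|{-2})\subseteq (A|B|C)$, as required. I do not anticipate a real obstacle: the entire argument is a short finite case analysis, whose only ingredients are the two parity exclusions, the elementary bound forbidding early $A$-$C$ intersections, and the three small computations needed to locate the steps at times $t_0=1,2,3$. As a consistency check, one may observe that $(8,6|5,0|{-2})$ is itself Ulrich, which is recorded in Example \ref{ex-221}.
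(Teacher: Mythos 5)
Your proof is correct and follows essentially the same route as the paper: reconstruct the partition by the greedy algorithm from $(\emptyset|5,0|\emptyset)$ and use the parity of $A\cup C$ (forced by $6\in A$) to see that the steps at times $1,2,3$ must add $a=6$, then $c=-2$, then $a=8$. Your explicit exclusion of early $A$--$C$ intersections just spells out what the paper's terse "by parity" argument handles implicitly via the greedy formulas, so there is no substantive difference.
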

\begin{proof}
Consider the sequence of $a$'s and $c$'s which must be added to $(\emptyset|5,0|\emptyset)$ to produce $(A|B|C)$.  By assumption, at time $t=1$ an $a$ must be added, giving $(6|5,0|\emptyset)$.  By parity, at time $t=2$ a $c$ is added, giving $(6|5,0|{-2})$.  Again by parity, at time $t=3$ an $a$ is added, yielding $(8,6|5,0|{-2})$ as required.
\end{proof}

The next proposition is now our first application of duality and the trapezoid rule.

\begin{proposition}
The partition $(A|B|C)$ equals $(8,6|5,0|{-2})$.
\end{proposition}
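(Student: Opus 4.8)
The plan is to upgrade Lemma~\ref{diff5contain} using duality and the trapezoid rule (Observation~\ref{obsTrapezoid}). By Lemma~\ref{diff5contain} we already know $(8,6|5,0|{-2})\subseteq(A|B|C)$, so $6,8\in A$, $-2\in C$, every entry of $A\cup C$ is even, $\min A=6$, and $\max C=-2$. Since the dimension $N=2\alpha+2\gamma+\alpha\gamma$ of a partition of type $(\alpha,2,\gamma)$ is strictly increasing in $\alpha$ and in $\gamma$, and $(8,6|5,0|{-2})$ has dimension $8$, the whole statement reduces to proving $N=8$.

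First I would identify the collision occurring at time $N$. The latest $ab$-, $bc$-, and $ac$-collisions occur at times $\max A$, $5-\min C$, and $\tfrac12(\max A-\min C)$ respectively; since the collisions $(\max A,0)$ and $(5,\min C)$ occur at times $\le N$ we have $\max A\le N$ and $\min C\ge 5-N$, whence $\tfrac12(\max A-\min C)\le N-\tfrac52<N$. So the time-$N$ collision is either the $ab$-collision between $\max A$ and $0$, forcing $\max A=N$ (so $N$ is even), or the $bc$-collision between $5$ and $\min C$, forcing $\min C=5-N$ (so $N$ is odd). I would then pass to the dual $(A^*|B^*|C^*)$, which is again Ulrich with $B^*=\{5,0\}$, $A^*=C+(N+1)$, and $C^*=A-(N+1)$, using that duality and the symmetry $s$ both interchange $ab$- and $bc$-collisions.

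In the odd case the dual has $B$-block $\{5,0\}$ and first collision of type $ab$, so Lemma~\ref{diff5contain} applies to it and gives $6\in A^*$ and $-2\in C^*$. The trapezoid rule applied to $(A|B|C)$ with $a=a^*=6$ and $c=c^*=-2$ (so $a^*-a=0=c-c^*$) then forces $N+1=a^*-c=8$, i.e.\ $N=7$, which is impossible. So we must be in the even case, $\max A=N$. Here the first collision of the symmetric dual $R:=((A|B|C)^s)^*$ is of type $ab$ and $R$ has $B$-block $\{5,0\}$, so Lemma~\ref{diff5contain} gives $(8,6|5,0|{-2})\subseteq R$. Computing the blocks of $R$ (its $A$-block is $\{N+6-a:a\in A\}$ and its $C$-block is $\{4-N-c:c\in C\}$), the containments $8\in A\text{-block of }R$ and $-2\in C\text{-block of }R$ translate to $N-2\in A$ and $6-N\in C$. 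Finally the trapezoid rule applied to $(A|B|C)$ with $a=8\in A$, $c=-2\in C$, $a^*=7\in A^*$ (valid since $6-N\in C$) and $c^*=-1\in C^*$ (valid since $N\in A$) gives $a^*-a=-1=c-c^*$, hence $N+1=a^*-c=9$, so $N=8$ and $(A|B|C)=(8,6|5,0|{-2})$.

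I expect the main obstacle to be the bookkeeping: correctly determining in each of the two cases which of $ab/bc/ac$ is realized at the first collision of the dual and of the symmetric dual, computing the $A$- and $C$-blocks of these auxiliary partitions in terms of $A$, $C$, and $N$, and thereby producing four entries $a\in A$, $a^*\in A^*$, $c\in C$, $c^*\in C^*$ whose trapezoid relation is not merely tautological but actually pins down $N$.
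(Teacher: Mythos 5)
Your argument is correct and follows essentially the paper's own route: Lemma \ref{diff5contain} applied to the original partition and, via the case split on the time-$N$ collision (equivalently the type of the dual's time-$1$ collision), to its dual or symmetric dual, followed by the trapezoid rule to force $N=8$. The only cosmetic difference is that in the branch the paper dispatches with the rectangle rule you instead run the trapezoid rule with $a=a^*=6$, $c=c^*=-2$ to reach the impossible value $N=7$.
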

\begin{proof}
By Lemma \ref{diff5contain}, we have $(8,6|5,0|{-2})\subset (A|5,0|C)$.  The dual partition $(A^*|B^*|C^*)$ is Ulrich.  Thus, by Lemma \ref{diff5contain}, it contains either $(8,6|5,0|{-2})$ or the symmetric partition $(7|5,0|{-1},-3)$, according to whether the time $t=1$ intersection occurs between the first two or last two blocks.  The first possibility is ruled out by the rectangle rule (Observation \ref{obsRectangle}).  Likewise, if it contains $(7|5,0|{-1},-3)$, then since $7-6 = -2-(-3)$ the trapezoid rule (Observation \ref{obsTrapezoid}) shows $N=8$.  Since $(8,6|5,0|{-2})$  has dimension $8$, this implies $(A|B|C) = (8,6|5,0|{-2})$.
\end{proof}

\subsection{Classification when $b_1-b_2=3$}
The classification here is substantially more complicated than when $b_1-b_2=5$. We normalize $B=\{3,0\}$, and assume the first intersection occurs between the $A$- and $B$-blocks, so $4\in A$.  There are three such examples of Ulrich partitions.

\begin{example}[The (1,2,1) partition]\label{ex121}
The partition $(4|3,0|{-2})$ is Ulrich.  It is obtained from $(\emptyset|B|\emptyset)$ by adding an $a$ and then a $c$.  This is an example of a partition obtained from Theorem \ref{thm-1n1}.
\end{example}

\begin{example}[The (2,2,2) partition]\label{ex222} The partition $(12,4|3,0|{-2},-8)$ is Ulrich.  It is obtained from $(\emptyset|3,0|\emptyset)$ by adding $a,c,c,a$. See Figure \ref{fig-222}.
 \begin{figure}[htbp]
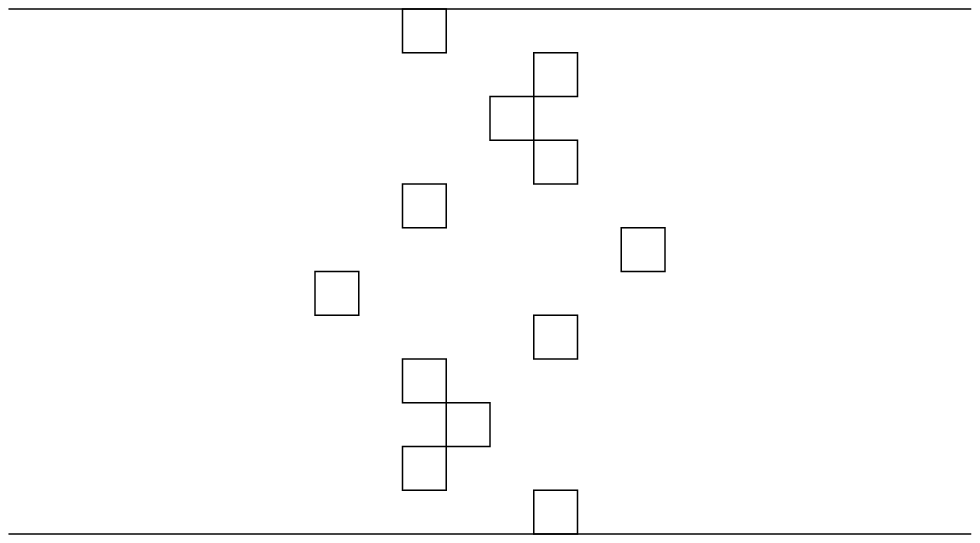
\caption{Time evolution diagram of the Ulrich partition $(12,4|3,0|{-2},-8)$ of type $(2,2,2)$.}\label{fig-222}
\end{figure}
\end{example}

\begin{example}[The (3,2,2) partition]\label{ex322}
The partition $(16,10,4|3,0|{-2},{-12})$ is Ulrich.  It is obtained from $(\emptyset|3,0|\emptyset)$ by adding in order $a,c,a,c,a$.  See Figure \ref{fig-322}.
 \begin{figure}[htbp]
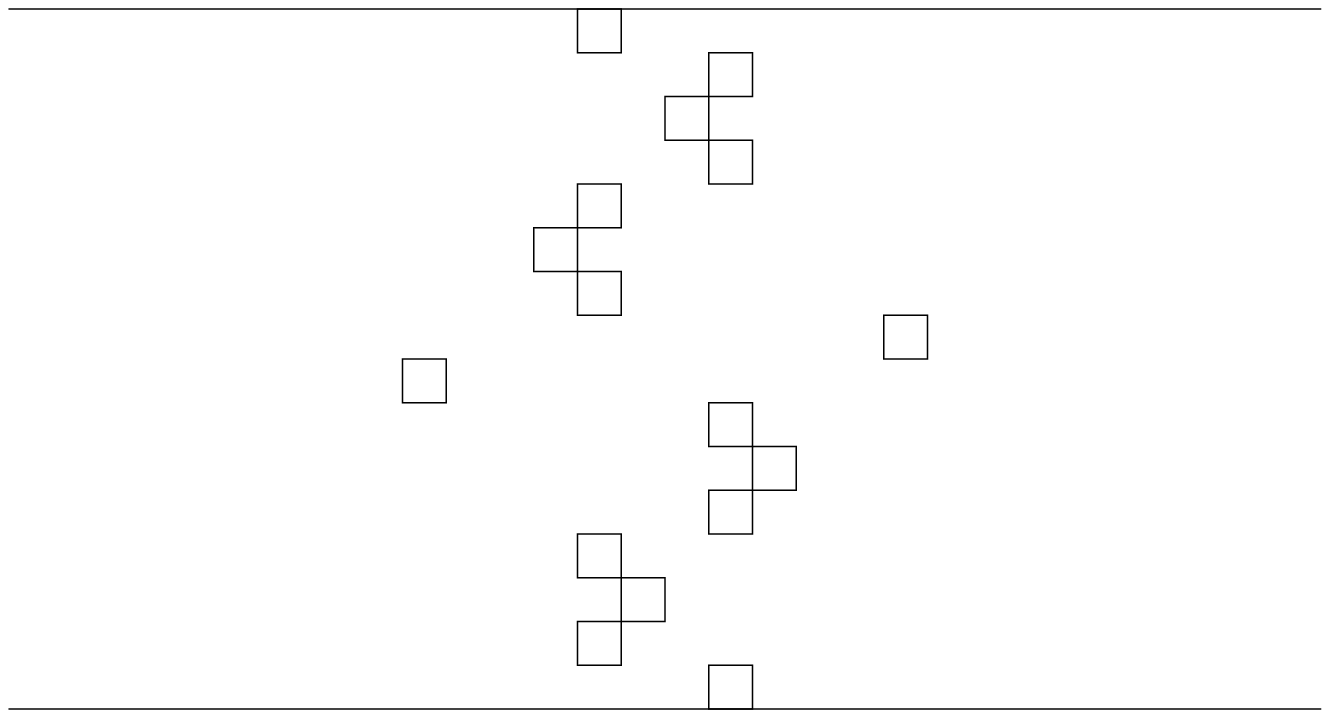
\caption{Time evolution diagram of the Ulrich partition $(16,10,4|3,0|{-2},-12)$ of type $(3,2,2)$.}\label{fig-322}
\end{figure}
\end{example}

By comparison with the $b_1-b_2=5$ case, there is less forced structure to the early intersections for an Ulrich triple in this case.  We let $\sigma\in \{a,c\}^*$ be the string of $a$'s and $c$'s which must be added to $(\emptyset|B|\emptyset)$ to yield the Ulrich triple $(A|B|C)$.  We write $|\sigma|$ for the length of $\sigma$.

\begin{lemma}\label{lemInitialSegment}
If $\sigma \neq ac$ (i.e. if $(A|B|C)$ is not the type $(1,2,1)$ partition of Example \ref{ex121}), then $\sigma$ begins with one of the strings
\begin{enumerate}
\item $acaaa$, so that $(16,14,10,4|3,0|{-2})\subset (A|B|C)$,
\item $acaca$, so that $(16,10,4|3,0|{-2},-12)\subset (A|B|C)$,
\item $acca$, so that $(12,4|3,0|{-2},-8)\subset (A|B|C)$, or
\item $acccc$, so that $(4|3,0|{-2},-8,-10,-14)\subset (A|B|C)$.
\end{enumerate}
\end{lemma}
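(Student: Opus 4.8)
The argument will be a finite, explicit case analysis driven by the greedy algorithm. Since $(A|B|C)$ is Ulrich with $B=\{3,0\}$, it is obtained from $(\emptyset|3,0|\emptyset)$ by successively adding new $a$'s and $c$'s, the sequence $\sigma$ being uniquely determined, and every intermediate triple is pre-Ulrich; in particular, because $4\in A$, all entries of its $A$- and $C$-blocks are even. The assumption that the time-$1$ intersection is between the $A$- and $B$-blocks forces $\sigma$ to begin with $a$, so the first intermediate triple is $(4|3,0|\emptyset)$. The plan is to build the decision tree of possible prefixes of $\sigma$: at each node, letting $t_0$ be the smallest positive integer not yet occurring as an intersection time, adding a new $a$ forces the new entry to be $a_0=\max(B(t_0)\cup C(t_0))+t_0$ and adding a new $c$ forces it to be $c_0=\min(A(t_0)\cup B(t_0))-t_0$; a branch is pruned as soon as the new entry has the wrong parity or duplicates an existing intersection time, since then the triple is not pre-Ulrich and cannot be a prefix of $\sigma$.

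First I would dispatch the forced opening moves. From $(4|3,0|\emptyset)$, whose intersection times are $1$ and $4$, the next uncovered time is $t_0=2$; adding an $a$ there would produce the odd entry $5$, so parity forces $\sigma_2=c$, giving the $(1,2,1)$ partition $(4|3,0|{-2})$ with intersection set $\{1,2,3,4,5\}$. If $\sigma\neq ac$ we continue: the next uncovered time is $t_0=6$, and here both moves survive, yielding $(10,4|3,0|{-2})$ or $(4|3,0|{-2},-8)$. These are the two main branches $\sigma=aca\cdots$ and $\sigma=acc\cdots$.

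I would then expand each branch by one or two further greedy steps, recording at each node the triple and its set of intersection times. In the branch beginning $aca$, at $t_0=8$ both moves again survive, giving $(14,10,4|3,0|{-2})$ and $(10,4|3,0|{-2},-12)$; in each of these the next move is forced, because adding a $c$ at $t_0=9$ repeats an intersection time, so we are forced to add an $a$, producing $\sigma$ beginning $acaaa$ in the first case and $\sigma=acaca$ in the second (the latter string already completing the $(3,2,2)$ partition). In the branch beginning $acc$, at $t_0=7$ one move gives the complete $(2,2,2)$ partition, so $\sigma=acca$, while the other gives $(4|3,0|{-2},-8,-10)$; from there the move at $t_0=9$ is forced, giving $\sigma$ beginning $acccc$. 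Reading off the containing triples at the four surviving nodes $acaaa$, $acaca$, $acca$, $acccc$ then yields the stated list.

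I do not expect any real obstacle: the whole proof is a bounded exhaustive search of depth at most five. The only point requiring care is the bookkeeping of intersection times together with the observation recorded in the warning preceding the lemma — that a greedy addition need not preserve the pre-Ulrich property; this is exactly the mechanism that prunes the tree and keeps the analysis finite. The one subtlety is to check that at the three genuine branching nodes (after $ac$, after $aca$, after $acc$) both children really survive, while at the remaining nodes exactly one child does, and that the four terminal strings in the statement are correct prefixes rather than the full $\sigma$ in cases $acaaa$ and $acccc$.
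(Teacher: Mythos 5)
Your proposal is correct and takes essentially the same route as the paper: build $\sigma$ by the greedy algorithm, pruning branches by the parity constraint and by coincident intersection times, through a depth-five case tree whose forced moves, branch points, and resulting subpartitions (including the entries $16,14,-12,-10,-14$) match the paper's computations exactly. One caution: your side claims that $\sigma=acaca$ and $\sigma=acca$ outright are not justified at this stage, since an intermediate triple being Ulrich does not by itself stop the greedy construction (ruling out further letters is exactly the content of Propositions \ref{prop322} and \ref{prop222}, proved later via the trapezoid rule); however, the lemma only asserts that $\sigma$ \emph{begins} with these strings, which your argument does establish.
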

\begin{proof}
By assumption the first letter of $\sigma$ is $a$, and by parity the second letter must be $c$.  Observe that the pre-Ulrich partitions $(10,4|3,0|{-2})$ and $(4|3,0|{-2},-8)$ corresponding to the strings $aca$ and $acc$ are both not Ulrich, so $|\sigma|\geq 4$.  To prove the lemma, we must show that unless $\sigma$ begins with $acca$ then $|\sigma|\geq 5$ and the fifth letter in $\sigma$ is determined by the first four.

Suppose $\sigma$ begins with $acaa$.  This gives $(14,10,4|3,0|{-2})\subset (A|B|C)$.  The first time where $(14,10,4|3,0|{-2})$ has no intersection is time $t=9$, and this triple is not Ulrich so $|\sigma|\geq 5$.  Adding a $c$ would yield $(14,10,4|3,0|{-2},-14)$, but this is not pre-Ulrich since there is a multiple intersection at time $t=14$.  Thus $\sigma$ must begin with $acaaa$.

If instead $\sigma$ begins with $acac$, then $(10,4|3,0|{-2},-12)\subset (A|B|C)$, and this triple is not Ulrich so $|\sigma|\geq 5$.  Adding a $c$ would yield $(10,4|3,0|{-2},-12,-14)$, which is not pre-Ulrich since there is a multiple intersection at time $t=12$.  So, $\sigma$ begins with $acaca$.

Finally suppose $\sigma$ begins with $accc$, so $(4|3,0|{-2},-8,-10)\subset (A|B|C)$.  This is not Ulrich, and adding an $a$ gives $(16,4|3,0|{-2},-8,-10)$.  Considering time $13$ shows this is not pre-Ulrich, so $\sigma$ begins with $acccc$.
\end{proof}

We now treat each case of Lemma \ref{lemInitialSegment} in further detail.

\begin{proposition}\label{propaccc}
Case (4) of Lemma \ref{lemInitialSegment} never actually arises: if $(A|B|C)$ is an Ulrich partition normalized as in this subsection, then the word $\sigma$ cannot begin with $acccc$.
\end{proposition}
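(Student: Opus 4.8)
Suppose for contradiction that $(A|B|C)$ is an Ulrich partition normalized as in this subsection (so $B=\{3,0\}$ and $4\in A$) whose greedy word $\sigma$ begins with $acccc$. By Lemma \ref{lemInitialSegment} this gives $(4|3,0|{-2},-8,-10,-14)\subseteq(A|B|C)$, so $4\in A$ and $\{-2,-8,-10,-14\}\subseteq C$. Reading off the intersections of this sub-partition, the collision of $-14$ with $3$ occurs at time $17$, so $N:=\dim(A|B|C)\ge 17$; in particular $N\neq 5$. Since $4\in A$ and $-2\in C$, all entries of $A$ and $C$ are even. The plan is to combine this control at early times with control at late times, coming from the dual partition $(A^*|B^*|C^*)=(C(N+1),B(N+1),A(N+1))$, which is again Ulrich, of the reversed type, and satisfies $B^*=\{3,0\}$, $A^*=C+(N+1)$, and $C^*=A-(N+1)$ in our normalization.

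\textbf{Using the dual.} Because $(A^*|B^*|C^*)$ is Ulrich with a $B$-block of gap $3$ and has at least four entries in an outer block (so is not of type $(1,2,1)$), either it or its symmetric partition has its first collision between the first two blocks; applying Lemma \ref{lemInitialSegment} to that one and translating back, $(A^*|B^*|C^*)$ contains one of the four partitions of Lemma \ref{lemInitialSegment} or one of their symmetrics. Each of the four ``unreversed'' patterns has $4$ in its $A$-block and $-2$ in its $C$-block, while each of the four ``reversed'' ones has $5$ in its $A$-block and $-1$ in its $C$-block; so at least one of these two statements holds. If $4\in A^*$ and $-2\in C^*$, then $3-N\in C$ and $N-1\in A$, and the distinct pairs $(4,3-N)$ and $(N-1,-2)$ in $A\times C$ both have difference $N+1$; thus two $A$-$C$ collisions occur at time $(N+1)/2$, contradicting that $(A|B|C)$ is Ulrich (this is the rectangle rule, Observation \ref{obsRectangle}). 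If instead $5\in A^*$ and $-1\in C^*$, then $4-N\in C$ and $N\in A$, and in three of the four reversed patterns the $C^*$-block carries an additional entry $-7$, $-9$, or $-13$, forcing one of $N-6,N-8,N-12$ into $A$; together with $-14,-10\in C$ this produces two distinct pairs of difference $N+2$ in $A\times C$ — namely $(N,-2)$ and one of $(N-12,-14)$ or $(N-8,-10)$ — again a double $A$-$C$ collision.

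\textbf{The main obstacle.} This disposes of every case except the one in which $(A^*|B^*|C^*)$ contains $(17,13,11,5|3,0|{-1})$, equivalently the one in which the symmetric dual of $(A|B|C)$ also has greedy word beginning with $acccc$. Here one only learns $\{4,N\}\subseteq A$ and $\{-2,-8,-10,-14\}\cup\{16-N,12-N,10-N,4-N\}\subseteq C$, no repeated difference is visible, and the configuration is stable under the symmetric dual, so there is no smaller instance to reduce to; this ``self-dual'' case is the crux. To finish, I would first observe that if the eight listed entries of $C$ are distinct then $|C|\ge 8$ and $|A|\ge 2$, whence $N=2|A|+|A||C|+2|C|\ge 36$; and if two of those entries coincide then $N\in\{18,20,22,24,26,30\}$, each of which is excluded by a direct computation showing the forced entries already make $2|A|+|A||C|+2|C|>N$. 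One then shows $\min A=4$, $\max A=N$, $\max C=-2$ and $\min C=4-N$ (the extremal entries must realize the first and last collisions). Finally, running the greedy algorithm a few more steps from $(4|3,0|{-2},-8,-10,-14)$, and symmetrically reconstructing $(A^*|B^*|C^*)$ past $(17,13,11,5|3,0|{-1})$, pins down the collisions near times $12,15,16$ and near times $N-4,\dots,N-16$; the rigidity of these two fringes, combined with the trapezoid rule (Observation \ref{obsTrapezoid}) applied to the newly located entries of $A^*$ and $C^*$, should collapse to a multiple intersection. Making this last step clean — in particular, verifying that the two fringes are incompatible for all $N\ge 36$ — is where the real work lies.
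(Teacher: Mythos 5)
Your handling of seven of the eight possibilities for the dual is correct: producing two distinct pairs in $A\times C$ with equal difference is exactly the content of the trapezoid and rectangle rules, and your bookkeeping there is fine. But the one case you leave open --- the dual contains $(17,13,11,5|3,0|{-1})$, i.e.\ the dual word also begins $ca^4$ --- is the whole substance of the proposition, and the route you sketch for it would not close. The obstruction in that case is not local: the pre-Ulrich configurations $(4|3,0|C_k)$ produced by the greedy word $ac^k$ keep having a unique intersection at every time for arbitrarily long stretches, so neither the bound $N\geq 36$ nor an analysis of a bounded ``fringe'' of times near $t\leq 16$ and near $t\geq N-16$ can exclude them; the fringes are compatible for infinitely many candidate configurations, and only the global structure of the all-$c$ run eliminates them. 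What the paper actually does is a sieve computation determining precisely at which times $t_k$ a new $c$ can be added (all $t\equiv 0\pmod 3$, no $t\equiv 2\pmod 3$, and the times $\equiv 1\pmod 3$ in alternating blocks of lengths $1,2,4,8,\dots$), from which it follows that after $ac^k$ an $a$ can be inserted only when $k+1=(4^{m+1}-1)/3$, i.e.\ at a time of the form $3\cdot 4^m$. Nothing of this sort appears in your outline, and without it there is no ``rigidity'' to exploit.

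The second missing idea is how to use the self-duality rather than be blocked by it. You write that ``there is no smaller instance to reduce to,'' but the paper sets up the induction differently: it takes $k\geq 4$ minimal over all Ulrich partitions such that some greedy word begins with $ac^k a$; minimality together with the symmetric dual then forces the dual word to begin with $ca^k$ for the same $k$. Feeding this into the trapezoid rule (Observation \ref{obsTrapezoid}) with the entries $2-t_{k+1}\in C$, $t_{k+1}+1\in A^*$ and $-1\in C^*$ pins down $|A|=2$ and $N=2t_{k+1}-2$, and then the smallest entry of $C$ would have had to be added at time $t_{k+1}-1\equiv 2\pmod 3$, a sieved-out time --- the contradiction. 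So the gap is concrete: you are missing both the sieve analysis of the long $c$-run and the minimality-of-$k$ device that converts the self-dual symmetry into usable information; the proposed fringe-plus-dimension-count finish cannot substitute for them.
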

\begin{proof}
If $\sigma$ begins with $acccc$ then $(4|3,0|{-2},-8,-10,-14)\subset (A|B|C)$, so $N \geq 17$ since there is an intersection at time $17$ in this subpartition.  We first study the dual Ulrich partition $(A^*|B^*|C^*)$.  By Lemma \ref{lemInitialSegment}, this partition contains one of the following 8 partitions, of which the first 7 are easily ruled out.
\begin{enumerate}
\item $(16,14,10,4|3,0|{-2})$.  In this case the equality $4-4=-2-(-2)$ and the trapezoid rule implies $N=5$, which is absurd.

\item $(5|3,0|{-1},-7,-11,-13)$.  Here the equality $5-4=-10-(-11)$ and the trapezoid rule gives $N=14$, a contradiction.

\item $(16,10,4|3,0|{-2},-12)$.  The same logic as in (1) applies.

\item $(15,5|3,0|{-1},-7,-13)$.  This time the equality $15-4 = -2-(-13)$ and the trapezoid rule gives $N = 16$.

\item $(12,4|3,0|{-2},-8)$.  Same as (1).
\item $(11,5|3,0|{-1},-9)$.  The equality $5-4=-8-(-9)$ and the trapezoid rule gives $N=12$.
\item $(4|3,0|{-2},-8,-10,-14)$.  Same as (1).
\item $(17,13,11,5|3,0|{-1}).$
\end{enumerate}

We conclude that $(A^*|B^*|C^*)$ must contain $(17,13,11,5|3,0|{-1})$.  Note that the sequence of $a$'s and $c$'s which must be added to $(\emptyset,B,\emptyset)$ to arrive at this subpartition is the sequence $caaaa$.

As a first observation, we find $|A|\geq 2$, for if $|A|=1$ then $N=4$.  Thus there is some $k\geq 4$ such that $\sigma$ begins with $ac^ka$.  By way of contradiction, let $k\geq 4$ be minimal such that there is an Ulrich partition $(A|B|C)$ such that the corresponding word $\sigma$ begins with $ac^ka$.  It follows from minimality and symmetry that the word $\sigma^*$ corresponding to the dual partition $(A^*|B^*|C^*)$ at least begins with $ca^k$.

We now compute the partition $(4|3,0|C_k)$ ($k\geq 1$) obtained from $(\emptyset|B|\emptyset)$ by adding the letters of the word $ac^k$.  We label the elements of $C_k$ in decreasing order as $C_k = \{c_1,\ldots,c_k\}$, so $C_{k+1} = C_k \cup \{c_{k+1}\}$.  We have $c_1 = -2$.  Let $T_k\subset \N_{>0}$ be the set of times which are \emph{not} intersection times for the partition $(4|3,0|C_k)$, so $T_1 = [6,\infty]$.  We let $t_{k+1}=\min T_k$.  For $k\geq 1$, when a $c$ is added to $(4|3,0|C_k)$ it is added at time $t_{k+1}$ and meets the $A$-entry at that time, so $c_{k+1}(t_{k+1}) = 4-t_{k+1}$ and $c_{k+1} = 4-2t_{k+1}$.  Computing the sequence of sets $C_k$ is therefore equivalent to computing the sequence of times $\{t_k\}_k$.

The computation of the sequence of times $\{t_k\}_k$ when new $c$'s are added is best explained in terms of a sieve.  Adding a new $c$ at time $t_{k+1}$ means we include $c_{k+1} = 4-2t_{k+1}$ in $C_{k+1}$.  Then $c_{k+1}(2t_{k+1}-4) = 0$ and $c_{k+1}(2t_{k+1}-1) = 3$, so $c_{k+1}$ meets the $B$-block at times $2t_{k+1}-4$ and $2t_{k+1}-1$.  Therefore $T_{k+1} = T_{k} \setminus \{t_{k+1},2t_{k+1}-4,2t_{k+1}-1\}$.

We now make this computation explicit.  To get started, we have $t_2 = 6$, which sieves out times $8$ and $11$.  We must then include $t_3 = 7$, which sieves times $10$ and $13$, etc.  We include the result of this sieve computation for small times below; $\times$'s denote times which are sieved out.
$$\begin{array}{ccccccccccccccccccccccc}6 & 7 & 8 & 9 & 10 & 11 & 12 & 13 & 14 & 15 & 16 & 17 & 18 & 19 & 20 & 21 & 22 & 23 & 24 & 25 & 26\\
t_2 & t_3 &\times& t_4 & \times & \times &t_5& \times &\times &t_6&t_7&\times&t_8&t_9&\times&t_{10}&t_{11}&\times&t_{12}&t_{13}&\times
\\
\\
27 & 28 & 29 & 30 & 31 & 32 & 33 & 34 & 35 & 36 & 37 & 38 & 39 & 40 & 41 & 42 & 43 & 44 & 45 & 46 & 47 \\
t_{14}&\times&\times&t_{15}&\times&\times&t_{16} &\times&\times & t_{17} &\times&\times& t_{18} & \times & \times& t_{19} & \times & \times & t_{20} & \times& \times
\\
\\
48 &49 & 50 & 51 & 52 & 53 & 54 & 55 & 56 & 57 & 58 & 59 & \cdots & 95 & 96 & 97 & 98 & 99 & 100 \\
t_{21} & \times& \times & t_{22} & t_{23} & \times & t_{24} & t_{25} & \times & t_{26} & t_{27} & \times & \cdots & \times & t_{52} & t_{53} & \times & t_{54} & \times
\end{array}$$
The result of the computation is easy to describe.  First, every time $t\geq 6$ with $t\equiv 0 \pmod{3}$ appears in $\{t_k\}$.  No times with $t \equiv 2 \pmod{3}$ appear.  The times congruent to $1 \pmod{3}$ may or may not appear, but the pattern with which they appear is simple.  The first time $(7)$ congruent to $1$ appears, the next 2 times (10,13) do not, the next 4 times (16,19,22,25) do appear, the next 8 do not, the next 16 do, etc.  This description is straightforward to prove, and completely specifies the sequence $\{t_k\}$.

Now suppose that an $a$, call it $a_2$, is added to $( 4|3,0|C_k)$.  Unless $k$ is very special, it turns out that the resulting partition is not pre-Ulrich.  We have $a_2(t_{k+1}) = c_1(t_{k+1})$, so $a_2 = 2t_{k+1}-2$.  Since $\{c_1,c_2,c_3\} = \{-2,-8,-10\}$, we find that $a_2$ will also meet the $C$-block at times $t_{k+1}+3$ and $t_{k+1}+4$.  For the resulting partition to be pre-Ulrich, it is therefore necessary that these times are not sieved out.  Additionally, if $t_{k+2} = t_{k+1} + 1$ then no intersection with $a_2$ will occur at time $t_{k+2}$, so it is necessary to add another $a$ or $c$ at this time; it must be a $c$ that is added, for otherwise we will have two $a$'s which are $2$ apart from one another, a contradiction since $c_2$ and $c_3$ are already $2$ apart from one another.  But then the $c$ which is added at time $t_{k+2}$, call it $c_{k+1}$, will satisfy $c_{k+1} = 4-2t_{k+2} = 2-2t_{k+1}$.  Then $a_2$, $0\in B$, and $c_{k+1}$ all coincide at time $2t_{k+1}-2$.  We finally conclude that the times $t_{k+1}+1$ must be sieved out and $t_{k+1}+3$ and $t_{k+1}+4$ cannot be sieved out in order for the partition to have a chance of being pre-Ulrich.  Inspecting the sieve, the only way for this to occur is for $t_{k+1}$ to be one of the times immediately before the $t_k$'s start occurring in pairs, e.g. $t_5 = 12$, $t_{21} = 48$, or $t_{85} = 192$.  A straightforward computation shows that these times are precisely the numbers $t_{k+1}$ of the form $3\cdot 4^m$ with $m\geq 1$, and $k+1 = \sum_{i=0}^m 4^i$.

Finally, suppose $t_{k+1} = 3\cdot 4^m$ and $a_2 = 2t_{k+1}-2$. We use the trapezoid rule to show that $(a_2,4|3,0|C_k)$ is not contained in any Ulrich partition $(A|B|C)$.  Observe that a $c$ was added at time $\frac{1}{2}t_{k+1}+1$; this time is the last time that was added in the block of pairs of times preceding $t_{k+1}$.  Thus $4-2(\frac{1}{2}t_{k+1}+1)=2-t_{k+1}\in C$.  Since the dual partition $(A^*|B^*|C^*)$ has corresponding word $\sigma^*$ beginning with $ca^k$ we symmetrically have $-(2-t_{k+1})+3=t_{k+1}+1\in A^*$ and $-1\in C^*$.  The equality $$(t_{k+1}+1)-(2t_{k+1}-2)=(2-t_{k+1})-(-1)$$ and the trapezoid rule prove $a_2(N+1) = -1$, from which it follows that $|A|=2$.  We also have $N=a_2 = 2t_{k+1}-2$.  At time $N$ we have $a_2(N) = 0\in B$, so at time $N-1$ the smallest entry $c_\gamma \in C$ has $c_\gamma(N-1) = 3$.  Then $c_\gamma = 3-(N-1)= 4-2(t_{k+1}-1)$.  This means that $c_\gamma$ had to be added at time $t_{k+1}-1$, contradicting that this time was sieved out since it is congruent to $2 \pmod{3}$.
\end{proof}

The next result has a proof that uses the exact same technique as the proof of Proposition \ref{propaccc}, so we omit it.

\begin{proposition}\label{propacaa}
Case (1) of Lemma \ref{lemInitialSegment} never actually arises: if $(A|B|C)$ is an Ulrich partition normalized as in this subsection, then the word $\sigma$ cannot begin with $acaaa$.
\end{proposition}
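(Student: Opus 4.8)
The plan is to imitate the proof of Proposition \ref{propaccc} essentially verbatim, interchanging the roles of the $a$'s and $c$'s (and of the two outer blocks), so I will only describe where the two arguments differ. First I would assume for contradiction that $(A|B|C)$ is an Ulrich partition, normalized as in this subsection, whose word $\sigma$ begins with $acaaa$. By Lemma \ref{lemInitialSegment} this gives $(16,14,10,4|3,0|{-2})\subset (A|B|C)$, and since the entry $16$ meets $0\in B$ at time $16$ inside this subpartition, $N\geq 16$. I would dispose of the case $|C|=1$ first: then $(A|B|C)$ has type $(\alpha,2,1)$ with $\sigma=aca^{\alpha-1}$, and running the greedy algorithm shows the largest $A$-entry meets $0\in B$ at a time exceeding the dimension, so no such partition is Ulrich. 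Hence $|C|\geq 2$, and $\sigma$ begins with $aca^kc$ for some $k\geq 3$; I then fix an Ulrich partition (normalized as above) for which $k$ is as small as possible.

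\textbf{Step 1: pin down the dual.} Next I would analyze the dual $(A^*|B^*|C^*)$, which is again Ulrich. By Lemma \ref{lemInitialSegment} it contains one of the nine initial configurations appearing in the proof of Proposition \ref{propaccc} (the four segments of that lemma, their four symmetric partitions, and the type $(1,2,1)$ partition of Example \ref{ex121}). Using the entries of $A$ furnished by $(16,14,10,4|3,0|{-2})$, the rectangle rule (Observation \ref{obsRectangle}) kills the configurations sharing two entries with $A$, and for the remaining ones the trapezoid rule (Observation \ref{obsTrapezoid}) --- supplemented once or twice by the crude dimension bound $N\geq\sum_{k<h}l_kl_h$ --- forces a value of $N$ smaller than $16$, a contradiction. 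The only survivor should be the symmetric $(5|3,0|{-1},-7,-11,-13)$ of the case-$(1)$ configuration, whose defining word is $caccc$. Thus $\sigma^*$ begins with $caccc$, and minimality of $k$ then forces $\sigma^*$ to begin with $cac^k$.

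\textbf{Step 2: the sieve.} I would then set up the sieve exactly as in Proposition \ref{propaccc}. Let $(A_k|3,0|{-2})$ be the pre-Ulrich partition obtained from $(\emptyset|3,0|\emptyset)$ by the word $aca^k$, and let $t_j$ be the time at which the $j$-th $a$ is added. One computes that the $j$-th $a$-entry equals $2t_j-2$ and meets the two entries of $B$ at times $2t_j-5$ and $2t_j-2$, so passing from $(A_{k-1}|3,0|{-2})$ to $(A_k|3,0|{-2})$ deletes $\{t_k,2t_k-5,2t_k-2\}$ from the set of non-intersection times. This sieve has the same shape as the one in Proposition \ref{propaccc}, only with the residue classes modulo $3$ permuted: every integer $t\geq 6$ with $t\equiv 0\pmod 3$ occurs among the $t_j$, no integer $\equiv 1\pmod 3$ occurs, and the integers $\equiv 2\pmod 3$ appear in the doubling pattern $1,\overline{2},4,\overline{8},16,\ldots$, with turnover times $t_{k+1}=3\cdot 4^m$ and $k+1=\sum_{i=0}^m 4^i$.

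\textbf{Step 3: conclude.} Finally I would show that adding a $c$ to $(A_k|3,0|{-2})$ at time $t_{k+1}$ produces a non-pre-Ulrich partition unless $t_{k+1}$ is a turnover time --- this uses $\{4,10,14\}\subset A_k$ and the precise gaps there, exactly as in the earlier argument. In the turnover case the new $c$-entry is determined explicitly, and combining it with the entries of $A^*$ and $C^*$ forced by $\sigma^*$ beginning with $cac^k$, the trapezoid rule pins down $N$ and forces $|C|=2$; then the greedy description of the last rows of the time evolution diagram of $(A|B|C)$ demands an entry that must have been added at a time the sieve had deleted, a contradiction. The main obstacle is purely the bookkeeping: verifying the exact form of the sieve and its doubling pattern, and carrying out the nine-way trapezoid elimination in Step 1. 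Neither contains a new idea beyond what is already in the proof of Proposition \ref{propaccc}, but each demands a careful, somewhat lengthy computation, which is why the paper elects to omit the details.
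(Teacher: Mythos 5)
Your overall plan is exactly what the paper has in mind (it omits the proof of Proposition \ref{propacaa}, asserting it is the same technique as Proposition \ref{propaccc}), and most of your computations check out: the greedy entries are indeed $2t_j-2$ with $B$-meetings at $2t_j-5$ and $2t_j-2$, the sieve has the residue pattern you describe (all $t\equiv 0\pmod 3$ occur, none $\equiv 1$, and the $t\equiv 2$ times occur in doubling blocks), and the turnover times are $3\cdot 4^m$. The endgame you sketch (trapezoid rule with $a=t_0+2\in A$, $a^*=5\in A^*$, $c=4-2t_0\in C$, $c^*=1-t_0\in C^*$ giving $N=2t_0$ and $|C|=2$, then a forced entry at a sieved time) also goes through. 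However, two of your specific claims would fail if executed literally. First, in Step~1 the trapezoid rule does not always force $N<16$: for the dual containing $(15,5|3,0|{-1},-7,-13)$ every admissible choice of entries gives exactly $N+1=17$, i.e.\ $N=16$, which is consistent with the bound $N\geq 16$ (note the bound here is $16$, not $17$ as in Proposition \ref{propaccc}); you must finish that case with the dimension formula, since $|A|\geq 4$ and $|C|\geq 2$ force $N=2\alpha+2\gamma+\alpha\gamma\geq 20$.

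Second, and more seriously, Step~3 is not ``exactly as in the earlier argument.'' The literal transplant of the conditions from Proposition \ref{propaccc} does not isolate the turnover times: here $t_0+1\equiv 1\pmod 3$ is \emph{always} already an intersection time, so that condition is vacuous, and the conditions coming from the gaps in $\{4,10,14\}$ (that $t_0+3$ and $t_0+5$ be unsieved) only force $t_0\equiv 0\pmod 3$ with $t_0+5$ lying in an ``appear'' block, which allows an entire interval of values of $t_0$ of length roughly $3\cdot 4^m$ (for instance $t_0=15$ passes both conditions but is not a turnover time). The missing ingredient is the analysis at time $t_0+2$, the true analogue of the paper's analysis at $t_{k+1}+1$: if $t_0+2$ is not already an intersection time a letter must be added there, a new $c$ clashes with the gap-$4$ pair $\{10,14\}$ (simultaneous collisions at time $t_0+5$), and a new $a$ has value $2t_0+2$ and produces a triple coincidence with $3\in B$ and $c_2=4-2t_0$ at time $2t_0-1$. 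Hence $t_0+2$ must be sieved while $t_0+5$ is not; since these are consecutive integers $\equiv 2\pmod 3$, $t_0+2$ must end a skip block and $t_0+5$ begin an appear block, which pins $t_0=3\cdot 4^m$. With that repair (and noting that in the final contradiction the forbidden addition time is $t_0+1\equiv 1\pmod 3$), your argument is complete; as written, though, the stated conditions do not suffice.
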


Fortunately, the positive classification results are easier.

\begin{proposition}\label{prop322}
If $\sigma$ begins with $acaca$ then $\sigma = acaca$ and $(A|B|C)$ is the $(3,2,2)$ partition of Example \ref{ex322}.
\end{proposition}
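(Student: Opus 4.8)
\emph{Proof proposal.} The plan is to run the same duality argument used in \S\ref{diff5} for the case $b_1-b_2=5$. By Lemma \ref{lemInitialSegment}(2), the hypothesis that $\sigma$ begins with $acaca$ already forces $(16,10,4|3,0|{-2},-12)\subseteq(A|B|C)$. This sub-partition has an intersection at every time in $\{1,\dots,16\}$, so $N\geq 16$; moreover $|A|\geq 3$ and $|C|\geq 2$. Since the dimension of a type $(|A|,2,|C|)$ partition is $N=2|A|+2|C|+|A||C|$, we have $N+4=(|A|+2)(|C|+2)$, and it suffices to prove $N=16$: then $(|A|+2)(|C|+2)=20$ together with $|A|+2\geq 5$ forces $|A|=3$ and $|C|=2$, whence $A=\{16,10,4\}$, $C=\{-2,-12\}$, so $(A|B|C)$ is the $(3,2,2)$ partition of Example \ref{ex322} and $\sigma=acaca$.

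To pin down $N$ I would analyze the dual partition $(A^*|B^*|C^*)$, which is again Ulrich with $B^*=\{3,0\}$, recalling that $A^*=C+(N+1)$ and $C^*=A-(N+1)$. Its first intersection occurs between the first two or the last two blocks; in the latter case one replaces it by its symmetric and renormalizes the middle block back to $\{3,0\}$. Applying Lemma \ref{lemInitialSegment} to the resulting partition, discarding the strings $acaaa$ and $acccc$ via Propositions \ref{propacaa} and \ref{propaccc}, and noting that the word $ac$ would give $N=5<16$, one is left with exactly the following possibilities for $(A^*|B^*|C^*)$:
\begin{enumerate}
\item it contains $(16,10,4|3,0|{-2},-12)$, so $A$ and $A^*$ both contain $16,10,4$, contradicting the rectangle rule (Observation \ref{obsRectangle});
\item it contains $(12,4|3,0|{-2},-8)$: taking $a=16\in A$, $a^*=12\in A^*$, $c=-12\in C$, $c^*=-8\in C^*$ we have $a^*-a=c-c^*=-4$, so the trapezoid rule (Observation \ref{obsTrapezoid}) gives $N+1=a^*-c=24$, i.e.\ $N=23$; but then $(|A|+2)(|C|+2)=27$ is impossible with $|A|+2\geq 5$ and $|C|+2\geq 4$;
\item it contains $(15,5|3,0|{-1},-7,-13)$, the renormalized symmetric of the partition in (1): taking $a=16\in A$, $a^*=15\in A^*$, $c=-2\in C$, $c^*=-1\in C^*$ we have $a^*-a=c-c^*=-1$, so the trapezoid rule gives $N+1=15-(-2)=17$, i.e.\ $N=16$;
\item it contains $(11,5|3,0|{-1},-9)$, the renormalized symmetric of the partition in (2): taking $a=4\in A$, $a^*=11\in A^*$, $c=-2\in C$, $c^*=-9\in C^*$ we have $a^*-a=c-c^*=7$, so the trapezoid rule gives $N+1=11-(-2)=13$, i.e.\ $N=12<16$, a contradiction.
\end{enumerate}
Only (3) survives, yielding $N=16$, which as explained above finishes the proof.

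The computational steps — verifying the four trapezoid identities, and checking that the renormalized symmetrics of $(16,10,4|3,0|{-2},-12)$ and $(12,4|3,0|{-2},-8)$ are indeed $(15,5|3,0|{-1},-7,-13)$ and $(11,5|3,0|{-1},-9)$ — are routine. The one place demanding care is case (2): there the trapezoid rule outputs a value of $N$ that is not excluded by size alone, so one must invoke the factorization $N+4=(|A|+2)(|C|+2)$ with the bounds $|A|\geq 3$, $|C|\geq 2$ (equivalently, that $27$ has no factorization $27=ef$ with $e\geq5$ and $f\geq4$) to obtain the contradiction. The other mild subtlety is the bookkeeping when the dual's first intersection lies on the $BC$ side, where one must work with the symmetric partition before applying Lemma \ref{lemInitialSegment}.
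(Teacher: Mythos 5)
Your proof is correct and follows essentially the same route as the paper: pass to the dual, apply Lemma \ref{lemInitialSegment} together with Propositions \ref{propaccc} and \ref{propacaa} (up to the symmetric normalization), and eliminate the four resulting cases by the rectangle and trapezoid rules. The only difference is cosmetic: where the dual contains $(12,4|3,0|{-2},-8)$ the paper applies the trapezoid rule to $4\in A$, $4\in A^*$, $-2\in C$, $-2\in C^*$, getting $N=5<16$ immediately, whereas you pick a different quadruple and close the case via the factorization $N+4=(|A|+2)(|C|+2)$; both are valid.
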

\begin{proof}
Since $\sigma$ begins in $acaca$, $(A|B|C)$ contains $(16,10,4|3,0|{-2},-12)$, and therefore $N\geq 16$ with equality iff $\sigma = acaca$.  By duality, the partition $(A^*|B^*|C^*)$ is Ulrich.  By Lemma \ref{lemInitialSegment} and Propositions \ref{propaccc} and \ref{propacaa} it contains one of the following partitions: \begin{enumerate}
\item $(16,10,4|3,0|{-2},-12)$; this is impossible by the rectangle rule.

\item $(15,5|3,0|{-1},-7,-13)$; in this case, the equality $15-10=-2-(-7)$ and the trapezoid rule give $N=16$, so in fact $\sigma = acaca$.
\item $(12,4|3,0|{-2},-8)$; the equality $4-4=-2-(-2)$ and the trapezoid rule give $N = 5$, a contradiction.
\item $(11,5|3,0|{-1},-9)$; the equality $11-4=-2-(-9)$ and the trapezoid rule again gives $N=12$.
\end{enumerate}
Thus the only possibility is that $\sigma = acaca$.
\end{proof}

Our final result in this subsection completes the classification when $B = \{3,0\}$.

\begin{proposition}\label{prop222}
If $\sigma$ begins with $acca$ then $\sigma = acca$ and $(A|B|C)$ is the $(2,2,2)$ partition of Example \ref{ex222}.
\end{proposition}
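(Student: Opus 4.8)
The plan is to follow the template already used in the proof of Proposition \ref{prop322}. Since $\sigma$ begins with $acca$, Lemma \ref{lemInitialSegment}(3) gives $(12,4|3,0|{-2},-8)\subset(A|B|C)$. This partition has type $(2,2,2)$, hence dimension $N=12$, so $N\geq 12$ with equality exactly when $\sigma=acca$; the goal is to force this equality. In particular $|A|\geq 2$ and $|C|\geq 2$, so the dual has $|A^*|=|C|\geq 2$ and $|C^*|=|A|\geq 2$.

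Next I would bring in the dual partition $(A^*|B^*|C^*)$, which is again Ulrich and which still has $B^*=\{3,0\}$ since the middle block is constant under the time evolution. Its first intersection at $t=1$ occurs between two adjacent blocks (an $AC$-intersection at time $1$ would need two entries differing by $2$, whereas any $a\in A^*$, $c\in C^*$ satisfy $a-c\geq(b_1-b_2)+2=5$). Applying Lemma \ref{lemInitialSegment} together with Propositions \ref{propaccc} and \ref{propacaa} to $(A^*|B^*|C^*)$, or to its (renormalized) symmetric when the dual's first intersection is of type $BC$, and discarding the possibility that the dual is (up to symmetry) the $(1,2,1)$ partition (ruled out by $|A^*|,|C^*|\geq 2$), one finds that the only surviving prefixes are $acaca$ and $acca$. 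Using Proposition \ref{prop322} for the first, this yields that the dual contains one of the four partitions $(16,10,4|3,0|{-2},-12)$, $(15,5|3,0|{-1},-7,-13)$, $(12,4|3,0|{-2},-8)$, or $(11,5|3,0|{-1},-9)$ --- exactly the list appearing in the proof of Proposition \ref{prop322}, the first two coming from the prefix $acaca$ (for the dual, resp.\ its symmetric, where Proposition \ref{prop322} forces equality) and the last two from the prefix $acca$.

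Then I would dispose of the four cases. In cases (1) and (2), Proposition \ref{prop322} shows the dual \emph{equals} the listed partition, so $(A|B|C)$ is its dual; a direct computation with $(A|B|C)^*=(C(N{+}1)|B(N{+}1)|A(N{+}1))$ and $N=16$ gives $(A|B|C)=(15,5|3,0|{-1},-7,-13)$ in case (1) and $(A|B|C)=(16,10,4|3,0|{-2},-12)$ in case (2), and neither contains $(12,4|3,0|{-2},-8)$ (neither $A$-block contains $12$), contradicting the hypothesis on $\sigma$; alternatively, case (1) dies immediately from the trapezoid rule via $16-12=-8-(-12)$. Case (3) is impossible by the rectangle rule (Observation \ref{obsRectangle}), since then $A$ and $A^*$ both contain the two entries $4$ and $12$. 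Finally, in case (4) the identity $11-4=-2-(-9)$ and the trapezoid rule (Observation \ref{obsTrapezoid}) give $N+1=11-(-2)=13$, so $N=12$; since $(A|B|C)$ contains the dimension-$12$ partition $(12,4|3,0|{-2},-8)$, a count of the type forces $(A|B|C)=(12,4|3,0|{-2},-8)$, the $(2,2,2)$ partition of Example \ref{ex222}, and $\sigma=acca$.

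The verifications of the trapezoid and rectangle identities and of the dual computations are routine. The step demanding the most care is establishing that the case list for the dual is genuinely exhaustive: one must handle the symmetric alternative correctly and confirm that the prefixes $acaaa$, $acccc$, and the word $ac$ are all excluded (by Propositions \ref{propacaa}, \ref{propaccc} and the size bound $|A^*|,|C^*|\geq 2$), so that only those four partitions can occur. Given Proposition \ref{prop322} and the structural lemmas already in place, this bookkeeping is essentially the whole content of the argument.
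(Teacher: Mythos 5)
Your argument is essentially the paper's proof: you arrive at the same four possibilities for the dual and dispatch them with the rectangle and trapezoid rules, the only (harmless) variation being that in cases (1) and (2) you use the uniqueness in Proposition \ref{prop322} to identify the dual outright and then contradict the containment $(12,4|3,0|{-2},-8)\subset(A|B|C)$, where the paper instead applies the trapezoid rule to $4-4=-2-(-2)$ (giving $N=5<12$) and to $5-12=-8-(-1)$ (giving $N=12$, hence $\sigma=acca$). One small correction: your parenthetical claim that case (1) ``dies immediately'' from the trapezoid rule via $16-12=-8-(-12)$ is not right as stated, since that identity only yields $N=23$, which is not an immediate contradiction (one would still have to exclude dimension $23$ for types $(\alpha,2,\gamma)$ with $\alpha,\gamma\geq 2$); your main argument for that case via Proposition \ref{prop322} and double duality is fine, as is the paper's identity $4-4=-2-(-2)$.
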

\begin{proof}
The Ulrich partition $(A|B|C)$ contains $(12,4|3,0|{-2},-8)$, so $N\geq 12$ with equality iff $\sigma = acca$.  As in the proof of Proposition \ref{prop322}, the dual $(A^*|B^*|C^*)$ contains  one of the following partitions:
\begin{enumerate}
\item $(16,10,4|3,0|{-2},-12)$; this is impossible by the trapezoid rule applied to the equality $4-4 = -2 - (-2)$.

\item $(15,5|3,0|{-1},-7,-13)$; the equality $5-12= -8-(-1)$ and the trapezoid rule gives $N=12$ so $\sigma = acca$ (in fact this is also a contradiction, since the dual partition has the wrong structure.)

\item $(12,4|3,0|{-2},-8)$; this contradicts the rectangle rule.
\item $(11,5|3,0|{-1},-9)$; the equality $11-4=-2-(-9)$ and the trapezoid rule give $N=12$, so $\sigma = acca$.
\end{enumerate}
We conclude that $\sigma = acca$.
\end{proof}

\subsection{Classification when $b_1-b_2=1$}
In this final case we normalize $B = \{1,0\}$.  We first classify an infinite family of examples which serve as primitive building blocks for a further family of examples.

\begin{proposition}\label{propn21}
Let $(2|1,0|C_k)$ be the partition obtained from $(2|1,0|\emptyset)$ by adding $k$ $c$'s.  This partition is Ulrich if and only if $k$ is a number of the form $k = \sum_{i=0}^m 4^i = \frac{1}{3}(4^{m+1}-1)$ for some $m\geq 0$.
\end{proposition}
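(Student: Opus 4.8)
The plan is to convert the Ulrich condition for $(2|1,0|C_k)$ into an explicit integer sieve, and then describe the dynamics of that sieve completely. Normalize the evolution, as throughout this subsection, by subtracting $1,0,-1$ from the three blocks, so that with $B=\{1,0\}$ and the single $A$-entry $a=2$ we have $a(t)=2-t$, $B(t)=B$, and $c(t)=c+t$. Then $a$ meets $b_1$ at time $1$ and $b_2$ at time $2$. When a new $c$ is added to $(2|1,0|C_k)$ at the first time $t_{k+1}:=\min T_k$ not yet carrying an intersection, it meets $a$ at that time, since $2-t_{k+1}=\min(A(t_{k+1})\cup B(t_{k+1}))$ (as $t_{k+1}\ge 3$); hence $c_{k+1}=2-2t_{k+1}$, and $c_{k+1}$ then meets $b_2=0$ at time $2t_{k+1}-2$ and $b_1=1$ at time $2t_{k+1}-1$. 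Two $c$-entries are parallel and never meet, so the intersection times of $(2|1,0|C_{k+1})$ are exactly those of $(2|1,0|C_k)$ together with $t_{k+1},2t_{k+1}-2,2t_{k+1}-1$. Writing $T_k$ for the set of times at which $(2|1,0|C_k)$ has no intersection, this gives $T_0=\{3,4,5,\dots\}$ and the recursion $T_{k+1}=T_k\setminus\{t_{k+1},2t_{k+1}-2,2t_{k+1}-1\}$ with $t_{k+1}=\min T_k$. Since the $3k$ removed times are distinct (the $t_j$ are increasing and lie in $T_{j-1}$, while the numbers $2t_i-2,2t_i-1$ with $i<j$ have already been deleted) and all entries are even, $(2|1,0|C_k)$ is always pre-Ulrich of dimension $N=2+3k$, and it is Ulrich exactly when $T_k$ is a half-line $\{u,u+1,\dots\}$, equivalently when the complement of $T_k$ is the consecutive range $[N]$.

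Next I would run the sieve and prove by induction on $m\ge 0$ the following description of $T_k$ for $k_m\le k\le k_{m+1}$, where $k_m:=\sum_{i=0}^m 4^i$. At $k=k_m$ one has $T_{k_m}=\{4^{m+1}+2,4^{m+1}+3,\dots\}$; and for $k=k_m+j$ with $1\le j\le 4^{m+1}-1$ one has $$T_k=\{4^{m+1}+2+j,\ \dots,\ 2\cdot 4^{m+1}+1\}\ \cup\ \{2\cdot 4^{m+1}+2+2j,\ 2\cdot 4^{m+1}+3+2j,\ \dots\},$$ a nonempty finite interval followed, after a genuine gap, by a half-line. The engine of the induction is the observation that once the finite interval has appeared, each step removes the left endpoint $t$ of the finite interval together with $2t-2$ and $2t-1$, which are precisely the two least elements of the tail; hence the left end of the finite interval advances by $1$ and the left end of the tail advances by $2$. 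After $4^{m+1}-1$ such steps the finite interval is exhausted, producing the half-line $\{4^{m+2}+2,\dots\}=\{4^{(m+1)+1}+2,\dots\}$, which is the case $j=0$ at level $m+1$; conversely the single ``opening'' step from $T_{k_m}=\{4^{m+1}+2,\dots\}$ deletes $4^{m+1}+2$ together with $2\cdot 4^{m+1}+2$ and $2\cdot 4^{m+1}+3$, which creates exactly the claimed gap. The base case $m=0$ is the direct computation $T_0=\{3,4,\dots\}\mapsto T_1=\{6,7,\dots\}$ (note $k_0=1$ and $4^{0+1}+2=6$). One records along the way that the three times removed at each step do lie in $T_k$, which re-confirms that no multiple intersection ever occurs, i.e. that all these partitions are pre-Ulrich.

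From the invariant the conclusion is immediate, since the intervals $[k_m,k_{m+1}]$ for $m\ge 0$ cover all $k\ge 1$. The set $T_k$ is a half-line precisely when $k=k_m$ for some $m\ge 0$, in which case $T_{k_m}=\{4^{m+1}+2,\dots\}$ and its complement equals $\{1,\dots,4^{m+1}+1\}=[N]$ with $N=4^{m+1}+1$; and $4^{m+1}+1=2+3k_m$ is exactly the dimension of a type $(1,2,k_m)$ partition, so $(2|1,0|C_{k_m})$ is Ulrich. For every other $k\ge 1$ the invariant exhibits $T_k$ as a nonempty finite interval (nonempty since $T_k\subseteq\{3,4,\dots\}$) separated by a genuine gap from its tail, so the complement of $T_k$ is not a consecutive range and $(2|1,0|C_k)$ is not Ulrich.

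I expect the only real difficulty to be the clean bookkeeping of the sieve invariant --- in particular verifying at each interior step that $2t-2$ coincides with the least element of the current tail, and correctly matching the last deletion of one block with the opening deletion of the next level --- rather than any conceptual issue. This is essentially the same mechanism as the sieve appearing in the proof of Proposition \ref{propaccc}, and as remarked there, once set up correctly the pattern is straightforward to establish.
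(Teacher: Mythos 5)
Your argument is correct and is essentially the paper's own proof in different packaging: the paper likewise inducts on $m$, computes that the $\ell$-th new $c$ added after the Ulrich stage $k_m$ sits at $-2\cdot 4^{m+1}-2\ell$, meets $a$ at time $4^{m+1}+1+\ell$ and the $B$-block at times $2\cdot 4^{m+1}+2\ell$ and $2\cdot 4^{m+1}+1+2\ell$, and concludes that the resulting gap of uncovered times closes exactly when $\ell=4^{m+1}$. Your explicit invariant for the sets $T_k$ is just a more detailed bookkeeping of that same computation, in the style of the sieve the paper itself uses in the proof of Proposition \ref{propaccc}.
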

\begin{proof}
Write $C_k = \{c_1,\ldots,c_k\}$ with increasing entries.  We prove the result by induction on $k$; clearly $(2|1,0|C_1)=(2|1,0|{-4})$ is Ulrich.  Fix some $m_0\geq 0$ and let $k_0 = \frac{1}{3}(4^{m_0+1}-1)$, and assume $(2|1,0|C_{k_0})$ is Ulrich.   The dimension of the flag variety corresponding to the type $(1,2,k_0)$ is $4^{m_0+1}+1$, so the set of times where $(2|1,0|C_{k_0})$ has an intersection is $[1,4^{m_0+1}+1]$.

At time $4^{m_0+1}+2$ the $a$ entry is at position $-4^{m_0+1}$, so adding a new $c$ to $(2|1,0|C_{k_0})$ gives $c_{k_0+1}(4^{m_0+1}+2) = -4^{m_0+1}$, i.e. $c_{k_0+1} = -2\cdot 4^{m_0+1}-2.$  This entry intersects the $B$-block at times $2\cdot 4^{m_0+1}+2$ and $2\cdot 4^{m_0+1}+3$.  Continuing inductively, if $\ell$ $c$'s are added to $(2|1,0|C_{k_0})$ and $\ell\leq 4^{m_0+1}$, then $c_{k_0+\ell}(4^{m_0+1}+1+\ell) = -4^{m_0+1}+1-\ell$ so $c_{k_0+\ell} = -2\cdot 4^{m_0+1} - 2\ell$, which meets the $B$-block at times $2\cdot 4^{m_0+1}+2\ell$ and $2\cdot 4^{m_0+1}+1+2\ell$.  Then if $\ell <4^{m_0+1}$, the partition $(2|1,0|C_{k_0+\ell})$ is not Ulrich since there is no intersection at time $2\cdot 4^{m_0+1}+1$.  When $\ell = 4^{m_0+1}$, there are intersections between some $c_{k_0+\ell}$ and the $a$ for all $t\in [4^{m_0+1}+2,2\cdot 4^{m_0+1}+1]$ and between some $c_{k_0+\ell}$ and the $B$-block for all $t\in [2\cdot 4^{m_0+1}+2,4^{m_0+2}+1]$.  Therefore $(2|1,0|C_{k_0})$ is Ulrich.
\end{proof}

\begin{remark}
Analyzing the proof of Proposition \ref{propn21}, the sets $C_k$ (equivalently, the numbers $c_k$) which appear are easily computable.  Here are the first several terms:
$$-4, \qquad -10,-12,-14,-16, \qquad -34,-36,\ldots,-64, \qquad -130,-132,\ldots,-256, \qquad  \ldots$$
A decreasing block of $4^k$ even integers ending in $4^{k+1}$ is followed by a gap of size $4^{k+1} + 2$.  In Figure \ref{fig-125} we give the time evolution diagram for the partition $(2|1,0|C_5)$ of type $(1,2,5)$.
 \begin{figure}[htbp]
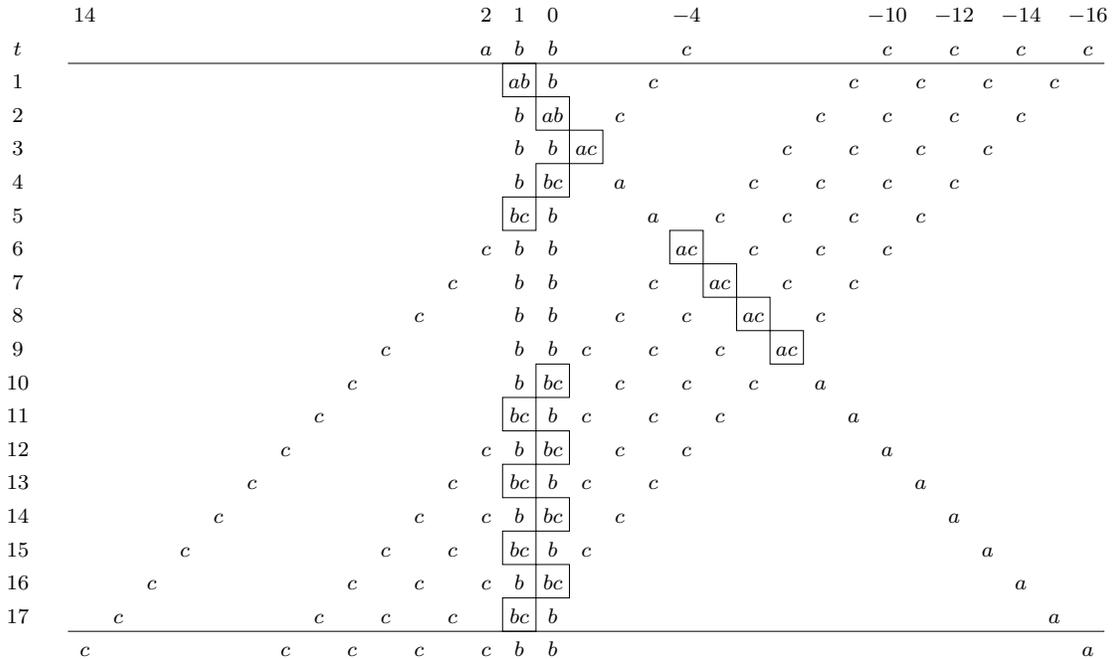
\caption{Time evolution diagram of the Ulrich partition $(2|1,0|C_5)$ of type $(1,2,5)$.}\label{fig-125}
\end{figure}
\end{remark}

A more careful analysis proves the following result.

\begin{lemma}\label{lem-stupid}
Suppose $(A|1,0|C)$ is an Ulrich partition and that the word $\sigma\in \{a,c\}^*$ generating it from $(\emptyset|B|\emptyset)$ begins with $acc$.  Then $|A|=1$ and $2\in A$, so $(A|B|C)$ is one of the examples of Proposition \ref{propn21}.
\end{lemma}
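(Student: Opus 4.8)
The plan is to prove the contrapositive-flavored statement $|A|=1$; once that is in hand, the rest is immediate, since the first letter of $\sigma$ being $a$ forces $2\in A$, and then $\sigma=ac^\gamma$, so Proposition \ref{propn21} identifies the partition (and forces $\gamma=(4^{m+1}-1)/3$). First I would record the shape of the greedy process in this normalization: the first $a$ added to $(\emptyset|1,0|\emptyset)$ is always $2$; a $c$ added at time $\tau\geq 2$ has value $2-2\tau$ (its position at time $\tau$ is that of the entry $2$); and once $-4$ has been placed in $C$, every later $a$ added at time $\tau\geq 5$ has value $2\tau-4$. Since $\sigma$ begins with $acc$, the first two $c$'s are forced to be $c_1=-4$ (added at time $3$) and $c_2=-10$ (added at time $6$), so $(2|1,0|{-4},-10)\subseteq(A|B|C)$ and $N\geq 11$. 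Now suppose for contradiction that $|A|\geq 2$, let $t^*$ be the time at which the second $a$ is added, and let $j\geq 2$ be the number of $c$'s added before it. The triple present just before $a_2$ is added is exactly the triple $(2|1,0|C_j)$ of Proposition \ref{propn21}, so its intersection-time pattern is governed by the sieve there, and the new entry is $a_2=2t^*-4$.

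The generic case is when $j$ is not of the form $(4^{m+1}-1)/3$. Writing $j=k_0+i$ with $k_0=(4^{m+1}-1)/3\leq j<(4^{m+2}-1)/3$, so $1\leq i\leq 4^{m+1}-1$, the sieve analysis in Proposition \ref{propn21} gives $t^*=4^{m+1}+2+i$, hence $a_2=2\cdot 4^{m+1}+2i$; moreover the most negative entry of $C_j$ is $c_{k_0+i}=-(2\cdot 4^{m+1}+2i)=-a_2$, added at time $4^{m+1}+1+i$. A one-line computation then shows that $a_2$ meets $c_{k_0+i}$ at time $t^*+(4^{m+1}+1+i)-3=a_2$, which is also the time at which $a_2$ meets $b=0$ and at which $c_{k_0+i}$ meets $b=0$. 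Thus the subpartition $(a_2,2|1,0|C_j)$ has a multiple intersection at time $a_2$, so it is not pre-Ulrich, contradicting that it sits inside the Ulrich partition $(A|B|C)$. This finishes every case except $j=(4^{m+1}-1)/3$ for some $m\geq 1$.

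In that remaining case $(2|1,0|C_{k_0})$ is itself Ulrich (of dimension $4^{m+1}+1$) and $a_2=2\cdot 4^{m+1}$, and I expect this to be the hard part: there is no immediate multiple-intersection obstruction after inserting $a_2$. Following the template of Propositions \ref{propaccc} and \ref{propacaa}, I would continue the greedy process from $(2\cdot 4^{m+1},2|1,0|C_{k_0})$ and argue in two steps. First, no further $a$ can ever be added: a prospective new $a$ at time $\tau$ has value $2\tau-4$ and meets the current $c$'s at times whose pairwise gaps replicate the consecutive-integer gaps already present among the addition times of $C_{k_0}$ (which include $\{6,7,8,9\}$), and one checks that such a time must coincide either with one of $a_2$'s $c$-meeting times $t^*+t_c-3$ or with an $a_2$-$b$ meeting time $2t^*-4$ or $2t^*-5$; hence every subsequent letter of $\sigma$ is a $c$. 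Second, tracking the resulting sieve (a $c$ added at time $\tau$ fills the current smallest missing time, meets $B$ at times $2\tau-2$ and $2\tau-1$, and meets $a_2$ at time $t^*+\tau-3$), one locates a time at which a newly added $c$ would meet $a_2$ at a time already occupied by an earlier $B$-$c$ intersection; at that time neither an $a$ nor a $c$ can be added, a contradiction. Alternatively — and this may be the cleaner route — one can extract suitable entries of $A$, $A^*$, $C$, $C^*$ (using that $-4\in C$ and $a_2\in A$ together with the structure of the dual word) and apply the trapezoid rule (Observation \ref{obsTrapezoid}) to force $N=a_2$, from which it follows that some entry of $C$ must have been added at a time the sieve has excluded. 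Either way, the crux is handling this $j=(4^{m+1}-1)/3$ case uniformly in $m$; the rest of the argument is routine given Proposition \ref{propn21}.
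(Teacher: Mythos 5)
Your reduction is the same one the paper uses: run the greedy process, let $j$ be the number of $c$'s added before a hypothetical second $a$, and split according to whether $j$ is of the form $(4^{m+1}-1)/3$. Your treatment of the generic case is correct and in fact more explicit than the paper's one-line ``it is easy to show'': with $j=k_0+i$, $1\leq i\leq 4^{m+1}-1$, the first missing time is $t^*=4^{m+1}+2+i$, the new $a$ is $2t^*-4=2\cdot 4^{m+1}+2i$, and it, the entry $0\in B$, and the bottom $c=-2\cdot4^{m+1}-2i$ all coincide at time $2\cdot4^{m+1}+2i$, violating the ``no two pairs meet at the same time'' formulation of the Ulrich condition. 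That part stands.

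The genuine gap is the case $j=(4^{m+1}-1)/3$, $m\geq 1$, which is the actual content of the lemma, and there you only offer two unexecuted sketches. Your first route rests on two assertions that are never proved: (i) that no third $a$ can ever be added, argued by saying the prospective $a$'s meeting times ``replicate'' gaps among the addition times of $C_{k_0}$ --- but the clash you need must be with intersections already scheduled at that stage, and these include intersections produced by the $c$'s added after $a_2$, so the bookkeeping is genuinely different from the static picture you describe and is not carried out; and (ii) that the ensuing all-$c$ sieve eventually double-books a time --- precisely the computation that has to be done, stated as ``one locates a time.'' Your alternative trapezoid-rule route is also only a hope: to invoke Observation \ref{obsTrapezoid} you need concrete entries of the dual partition, i.e.\ control of the end of the evolution, and nothing in your argument establishes the structure of $(A^*|B^*|C^*)$ here. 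For comparison, the paper closes this case by showing the greedy process \emph{forces} $c$'s at the specific positions $-3\cdot4^{m+1}$ and $-4^{m+2}$, and then observes that the $c$ at $-3\cdot4^{m+1}$ meets $0\in B$ at time $3\cdot 4^{m+1}$, the same time at which $a_2=2\cdot4^{m+1}$ meets the $c$ at $-4^{m+2}$ --- a single concrete coincidence, with no need to rule out all future $a$'s. As written, your proposal does not prove the lemma in the one case that matters, and you acknowledge as much; to complete it you would need to either carry out the forcing analysis of the missing times after $t_0=4^{m+1}+2$ (as the paper does) or actually verify the sieve and no-more-$a$'s claims uniformly in $m$.
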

\begin{proof}
We will be brief, as the proof is very similar to other arguments in this section.  Consider the partition obtained from $(2|1,0|C_k)$ by adding an $a$, with $k\geq 2$.  It is easy to show that unless $k$ is of the form $k = (4^{m+1}-1)/3$ (so $m\geq 1$ and the original partition is Ulrich) then the resulting partition is not even pre-Ulrich.  On the other hand, if $k$ is of this form, then adding a new $a_2$ at time $t_0 = 4^{m+1}+2$ will give $a_2 = 2\cdot 4^{m+1}$.  One shows that we will be forced to add new $c$'s at positions $-3\cdot 4^{m+1}$ and $-4^{m+2}$.  Then the $c$ at position $-3\cdot 4^{m+1}$ meets $0\in B$ at the same time as $a_2$ meets the $c$ at position $-4^{m+2}$, a contradiction.
\end{proof}

Proposition \ref{propn21} and Lemma \ref{lem-stupid} allow us to focus on classifying Ulrich partitions $(A|1,0|C)$ with $2\in A$ and $|A|\geq 2$, since they completely classify Ulrich partitions where $A = \{2\}$.  The next lemma explains the importance of the examples of Proposition \ref{propn21} to more general Ulrich partitions.

\begin{lemma}\label{lemNoMorecs}
Let $(A|1,0|C) $ be an Ulrich partition, and suppose $2\in A$.  Let $c_1 = \max C$, so that $c_1$ meets the $B$-block at time $t_0:=-c_1$.  Let $A'\subset A$ be those $a$'s which have met the $B$-block before time $t_0$.  Then the partition $(A'|1,0|c_1)\subset (A|1,0|C)$ is Ulrich of dimension $t_0+1$, so the dual $(A'|1,0|c_1)^*$ is one of the Ulrich partitions of Proposition \ref{propn21}.
\end{lemma}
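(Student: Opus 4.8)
The plan is to pin down the sub-triple $(A'|1,0|c_1)$ inside $(A|1,0|C)$ by a two-sided comparison of intersection times and then pass to the dual. The inclusion $(A'|1,0|c_1)\subset(A|1,0|C)$ already makes the sub-triple pre-Ulrich, so the number of its intersection times equals its dimension $N'$; the real content is to show that this set of times is exactly $[1,t_0+1]$. Throughout I would use the hypothesis $2\in A$ together with $0\in B$: these force every entry of $A$ and $C$ to be even, so $c_1$ and $t_0=-c_1$ are even.

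First I would verify the easy inclusion, that every intersection internal to $(A'|1,0|c_1)$ occurs at a time $\le t_0+1$. The entry $c_1$ meets the two entries of $B$ at times $t_0$ and $t_0+1$. If $a\in A'$, then $a$ met $B$ before time $t_0$, which forces $a\le t_0$; since $a=t_0$ would make $a$, $0\in B$ and $c_1$ coincide at time $t_0$, in fact $a\le t_0-2$, so $a$ meets $c_1$ at time $\tfrac{1}{2}(a+t_0)\le t_0-1$ and meets $B$ even earlier.

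The substantive step is the reverse inclusion: for each $t\in[1,t_0+1]$ the unique intersection of the Ulrich partition $(A|1,0|C)$ at time $t$ should involve only entries of $A'\cup B\cup\{c_1\}$, hence be an intersection of $(A'|1,0|c_1)$. I would rule out a $bc$-intersection with $c\in C\setminus\{c_1\}$, since then $c\le c_1-2$ and $c$ meets $B$ no earlier than time $-c\ge t_0+2$; and I would rule out any intersection involving $a\in A\setminus A'$, since such an $a$ has $a\ge t_0+1$, hence $a\ge t_0+2$ by parity, hence $a\ge t_0+4$ once the value $a=t_0+2$ is excluded (it would make $a$, $1\in B$ and $c_1$ coincide at time $t_0+1$), and then $a$ meets $B$ no earlier than $t_0+3$ and meets every $c\in C$ no earlier than $\tfrac{1}{2}(a+t_0)\ge t_0+2$. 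Since entries within one block never meet, this exhausts the possibilities. Combining the two inclusions, the intersection times of $(A'|1,0|c_1)$ are exactly $[1,t_0+1]$; as a pre-Ulrich partition realizes exactly $N'$ intersection times, $N'=t_0+1$ and $(A'|1,0|c_1)$ is Ulrich. I expect this reverse inclusion, and in particular discarding the borderline values $a=t_0$ and $a=t_0+2$ by way of the fact that an Ulrich partition has no two pairs meeting simultaneously and no triple meetings, to be the only delicate point.

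To conclude, I would invoke the duality of \S\ref{ssec-symmetryDuality}: $(A'|1,0|c_1)^*$ is Ulrich of type $(1,2,|A'|)$. Reading its entries off from the last row of the time evolution diagram, and using $N'+1=t_0+2$ together with $c_1=-t_0$ (so that $c_1+(N'+1)=2$), one finds $(A'|1,0|c_1)^*=(2|1,0|\{a-t_0-2:a\in A'\})$, a partition whose $A$-block is $\{2\}$ and whose $C$-entries are even. Since it is Ulrich, the classification of Ulrich partitions with $A=\{2\}$ provided by Proposition \ref{propn21} and Lemma \ref{lem-stupid} forces it to be one of the partitions $(2|1,0|C_k)$ with $k=|A'|=(4^{m+1}-1)/3$ for some $m\ge 0$, which is the claim.
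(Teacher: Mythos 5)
Your framing (show every intersection of $(A|1,0|C)$ at a time $t\le t_0+1$ is internal to the sub-triple, count, then dualize) is sound, and the easy direction, the exclusion of $a\in A\setminus A'$ via parity and the triple-coincidence at $t_0+1$, the dimension count, and the identification of the dual with a partition whose first block is $\{2\}$ all check out and match the endgame of the paper's proof. But your case analysis for the reverse inclusion is not exhaustive, and the omitted case is precisely the substance of the lemma: you rule out $bc$-intersections with $c\in C\setminus\{c_1\}$ and any intersection involving an $a\in A\setminus A'$, but you never rule out an $ac$-intersection between some $a\in A'$ and some $c\in C\setminus\{c_1\}$. Nothing in your estimates forbids this: for example $a=2$ meets a hypothetical $c_2$ with $2c_1\le c_2\le c_1-2$ at time $\tfrac12(2-c_2)\le t_0+1$, so parity and the bounds $a\le t_0-2$, $c\le c_1-2$ alone do not exclude an early $ac$ collision. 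Hence the sentence ``this exhausts the possibilities'' is false as written, and the proof has a genuine gap.

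Closing it requires using the forced initial structure of $A$, which is exactly what the paper does. Writing the greedy word of $(A|1,0|C)$ as $a^k c\sigma'$, one knows $A'$ contains the contiguous even block $\{2,4,\ldots,2k\}$ and that $c_1$ is the entry produced by the greedy rule at time $2k+1$, so that $c_1$ meets $2,4,\ldots,2k$ at $k$ consecutive times and meets $0\in B$ at time $t_0=-c_1$ (which is bounded by roughly $4k$). If some $c_2<c_1$ met $a_1=2$ at a time $t_1\le t_0$, then the run of $k$ consecutive times at which $c_2$ meets $2,4,\ldots,2k$ would either overlap the corresponding run for $c_1$ or contain the time $t_0$ at which $c_1$ meets $0\in B$; either way two distinct pairs of entries would meet simultaneously, contradicting the Ulrich property. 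This collision argument (the paper's claim $t_1>t_0$) is what guarantees that no entry of $C\setminus\{c_1\}$ participates in any intersection at a time $\le t_0+1$; without it, or an equivalent use of the block $\{2,\ldots,2k\}$, your reverse inclusion is unproven even though all the remaining steps of your proposal are correct.
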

\begin{proof}
Write $A = \{a_\alpha,\ldots, a_1\}$ and $C = \{c_1,\ldots, c_\gamma\}$ in decreasing order, and consider how $(A|1,0|C)$ is built from $(\emptyset|1,0|\emptyset)$ by adding $a$'s and $c$'s; let $\sigma\in \{a,c\}^*$ be the corresponding word.  Since the time $t=1$ intersection is between $A$ and $B$, $\sigma$ begins with an $a$.  We can then write $\sigma = a^k c \sigma'$ for some $k\geq 1$ and some word $\sigma'$.  Then $A$ contains the first $k$ even integers $\{2,\ldots,2k\}$ and the intersections at times $t\in [1,2k-1]$ occur between the $A$- and $B$-blocks.  At time $2k$ the entry $a_1$ is at position $2-2k$, so $c_1(2k) = -2k+2$ and $c_1 = -4k+2$.

Suppose $c_2$ meets $a_1$ at time $t_1$.  We claim $t_1>t_0$.  Indeed, first notice that $c_1$ meets $a_i$ $(1\leq i\leq k)$ at time $2k-1+i$, so $t_0 \geq 3k$.  Since $c_2$ meets $a_1$ at time $t_1$, it meets $a_i$ $(1\leq i\leq k)$ at time $t_1-1+i$.  But then assuming $3k\leq t_1 \leq t_0=4k-2$, we find that $c_2$ meets some $a_i$ at the same time as $c_1$ meets $0\in B$.  We conclude $t_1>t_0$.

Therefore, if $\sigma$ contains at least $2$ $c$'s, then the second $c$ is added after time $t_0$.  It follows that if $\sigma' = a^k c a^\ell$, where $\ell$ is the additional number of $\ell$'s which are added before time $t_0$, then $\sigma'$ is an initial segment of $\sigma$, the corresponding subpartition is $(A'|1,0|c_1)$, and this partition is Ulrich.  The last intersection in this partition occurs between $c_1$ and $1\in B$, so its dual is $(2|1,0|A'^*)$ and Proposition \ref{propn21} applies.
\end{proof}

\begin{example}[A two-parameter family of Ulrich partitions]\label{ex2param}
Let $m_1,m_2\geq 0$, and let $k_i = \frac{1}{3}(4^{m_i+1}-1)$.  Let $(2|1,0|C_{k_1})$ be the Ulrich partition of Proposition \ref{propn21}, and let $(A_{k_2}|1,0|{-1})$ be the partition symmetric to $(2|1,0|C_{k_2})$.  There is an Ulrich partition $(A|B|C)$ uniquely specified by the requirements that the type is $(k_1+k_2,2,1)$ and \begin{align*}(2|1,0|C_{k_1})^* &\subset (A|B|C) \\ (A_{k_2}|1,0|{-1})^* &\subset (A|B|C)^*.\end{align*} The dimension of the type $(k_1+k_2,2,1)$ is $N = 4^{m_1+1}+4^{m_2+1}$.
 Let $t_0 = 4^{m_1+1}$ be as in Lemma \ref{lemNoMorecs}.  For times $t\in [1,t_0]$, the pattern of intersections in $(A|B|C)$ is the same as that of $(2|1,0|C_{k_1})^*$.  Applying Lemma \ref{lemNoMorecs} to the dual $(A|B|C)^*$, the corresponding time is $t_0^* = 4^{m_2+1}$, and the pattern of intersections in $(A|B|C)$ for times $t\in [t_0+1,N]$ is the same as that of $(A_{k_2}|1,0|{-1})$ for times $t\in [1,t_0^*]$.  Thus every time $t\in [1,N]$ has an intersection.

Observe that if $m_1\neq m_2$ then the partitions corresponding to $(m_1,m_2)$ and $(m_2,m_1)$ are distinct but related by the symmetric dual.  The partition corresponding to $(m_1,m_1)$ is its own symmetric dual.

For example, consider the case $m_1=0$ and $m_2= 1$.
Then we compute \begin{align*} N&= 20\\C_{k_1} &= \{-4\} \\ A_{k_2} &= \{17,15,13,11,5\}, \\(2|1,0|C_{k_1})^* &= (2|1,0|{-4})\\ (A_{k_2}|1,0|{-1})^*&= (17|1,0|{-1},-3,-5,-7,-13)\\
(A|B|C) &=  (20,18,16,14,8,2|1,0|{-4}).
\end{align*}
See Figure \ref{fig-621} for the time evolution diagram.  Note that the examples of Proposition \ref{propn21} can be regarded as (duals of) the degenerate case where $m_2 = -1$.
 \begin{figure}[htbp]
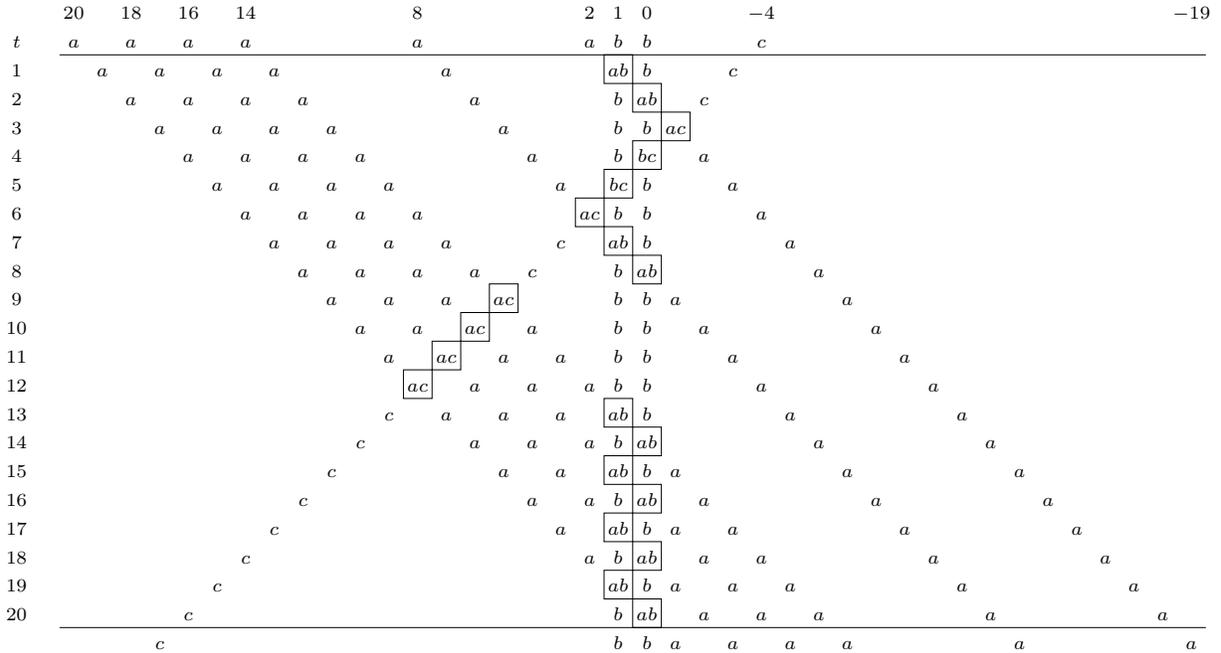
\caption{Time evolution diagram of the partition $(20,18,16,14,8,2|1,0|{-4})$ of type $(6,2,1)$.}\label{fig-621}
\end{figure}
\end{example}

\begin{theorem}
If $(A|1,0|C)$ is an Ulrich partition with $2\in A$ and $|A|\geq 2$ then it is either the dual of one of the examples from Proposition \ref{propn21}  or there exist $m_1,m_2\geq 0$ such that $(A|1,0|C)$ is the partition corresponding to $(m_1,m_2)$ in Example \ref{ex2param}.
\end{theorem}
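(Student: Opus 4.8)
The plan is to pin down the structure of $(A|1,0|C)$ at early times with Lemma \ref{lemNoMorecs}, to pin it down at late times by applying the same lemma (or its evident left--right mirror) to a symmetry of the partition, and then to glue the two ends together with the trapezoid rule. Throughout I keep the normalization $B=\{1,0\}$, so that $2\in A$ says the first intersection is of $A$--$B$ type. Since $|A|\geq 2$, Lemma \ref{lem-stupid} forbids the generating word $\sigma\in\{a,c\}^*$ from beginning with $acc$; together with $2\in A$ this shows $\sigma$ begins with $a^{k}c$ for some $k\geq 1$, and that if $k=1$ its third letter is an $a$.

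First the head. Put $c_1=\max C$ and $t_0=-c_1$, and let $A'\subseteq A$ consist of the $a$'s that have met the $B$-block before time $t_0$. By Lemma \ref{lemNoMorecs}, $(A'|1,0|c_1)$ is an Ulrich partition of dimension $t_0+1$ whose dual is one of the partitions $(2|1,0|C_k)$ of Proposition \ref{propn21}; hence $|A'|=k_1:=\tfrac13(4^{m_1+1}-1)$ for some $m_1\geq 0$, $t_0=4^{m_1+1}$, and the intersection pattern of $(A|1,0|C)$ on the times $[1,t_0]$ agrees with that of $(A'|1,0|c_1)$ and, dually, with that of $(2|1,0|C_{k_1})^{*}$. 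Now the tail. If $A=A'$ then $(A|1,0|C)=(A'|1,0|c_1)$ is literally the dual of a Proposition \ref{propn21} example---the degenerate $m_2=-1$ case in the notation of Example \ref{ex2param}---which is the first alternative of the theorem, so assume from now on that $A\supsetneq A'$; in particular $\max A$ exceeds every entry of $A'$. Using this I would show that one of the symmetric partitions $(A|1,0|C)^{*}$ or $(A|1,0|C)^{*s}$ has first intersection of $A$--$B$ type (equivalently, that the last intersection of $(A|1,0|C)$ is not of $B$--$C$ type), so that Lemma \ref{lemNoMorecs} applies to it. Running the lemma there produces, by the same reasoning as for the head, an integer $m_2\geq 0$, a value $k_2:=\tfrac13(4^{m_2+1}-1)\geq 1$, and a time $t_0'=4^{m_2+1}$; pulling the resulting sub-partition back through the symmetry gives a sub-partition $Q=(Q_a|1,0|\min C)\subseteq(A|1,0|C)$ of type $(k_2,2,1)$, where $Q_a$ consists of the $k_2$ largest entries of $A$, whose intersections occupy the $t_0'$ latest times of $(A|1,0|C)$.

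The remaining task---and the step I expect to be the main obstacle---is to show that in the case $A\supsetneq A'$ the head and the tail exhaust the whole partition: that $C=\{c_1\}$, that $A=A'\sqcup Q_a$, and that the two pieces abut, so that $N=t_0+t_0'=4^{m_1+1}+4^{m_2+1}$ with the head governing times $[1,t_0]$ and the tail governing times $[t_0+1,N]$; this is precisely the partition attached to $(m_1,m_2)$ in Example \ref{ex2param}. The sub-claim $C=\{c_1\}$ I would attack by observing that adjoining a further $c$ to the Ulrich partition $(A'|1,0|c_1)$ dualizes to adjoining an $a$ to $(2|1,0|C_{k_1})$, which already fails to be pre-Ulrich by the computations in the proofs of Proposition \ref{propn21} and Lemma \ref{lem-stupid}; a dual version of this applied at the tail rules out any $c$ there as well, and then a careful run of the greedy algorithm (constrained at both ends) must produce exactly the $a$'s of $Q_a$ after time $t_0$ and nothing more. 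The compatibility of the head with the tail at the join, and the assertion that no ``middle'' block of $a$'s sits between $A'$ and $Q_a$, I would finally settle by applying the trapezoid rule (Observation \ref{obsTrapezoid}) to quadruples consisting of a large $a$, the fixed entry $c_1$, and their duals---exactly as in the $\beta=2$ arguments of the preceding subsections---which forces $N=4^{m_1+1}+4^{m_2+1}$ and pins every entry of $A$ to the value prescribed by Example \ref{ex2param}, giving the second alternative. The delicate point is genuinely the middle: one must show that the doubly-constrained greedy construction has a unique continuation, which is what collapses the possibilities down to the single two-parameter family.
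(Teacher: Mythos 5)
Your skeleton (head via Lemma \ref{lemNoMorecs}, tail via the lemma applied to a dual partner, trapezoid rule to glue) is the same as the paper's, but two essential steps are missing or wrong. First, the step you say you ``would show'' --- that one of $(A|1,0|C)^{*}$ or $(A|1,0|C)^{*s}$ has its first intersection of $A$--$B$ type --- is automatic (whichever type the last intersection of $(A|1,0|C)$ has, one of the two flips converts it to an $A$--$B$ intersection at time $1$), and it is \emph{not} equivalent to your parenthetical claim that the last intersection is not of $B$--$C$ type. The latter is what your tail description actually requires: if the last intersection were of $B$--$C$ type, then Lemma \ref{lemNoMorecs} applies to $(A|1,0|C)^{*}$ and, pulled back, the late times of $(A|1,0|C)$ are governed by a subpartition of shape $(\max A\,|\,1,0\,|\,\text{the }k_2\text{ smallest entries of }C)$, of type $(1,2,k_2)$ --- not your $Q=(Q_a|1,0|\min C)$ of type $(k_2,2,1)$ --- and your gluing step does not apply. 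Excluding this configuration is genuinely nontrivial: in the paper it is done by a trapezoid application to the entries $4^{m_i+1}-2$ and $-4^{m_i+1}$, which forces $N=4^{m_1+1}+4^{m_2+1}-1$ and $|C|=1$, contradicted by a congruence-mod-$3$ count of the possible dimensions of types $(k,2,1)$. Your proposal contains no argument for this case.

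Second, and more seriously, your route to $C=\{c_1\}$ fails. You propose that adjoining a further $c$ to the head $(A'|1,0|c_1)$ ``dualizes'' to adjoining an $a$ to $(2|1,0|C_{k_1})$, which ``already fails to be pre-Ulrich.'' Neither half is correct. Duality is a global operation on a \emph{complete} Ulrich partition (it is defined through $N+1$) and it reverses time; adding a $c$ at time $4^{m_1+1}+2$ of $(A|1,0|C)$ does not correspond, under any duality available mid-construction, to adding an $a$ at the first free time of $(2|1,0|C_{k_1})$ --- it would correspond to prepending an entry before time $1$, which has no meaning for the greedy algorithm. Moreover, the cited computations say the opposite of what you need: when $k=(4^{m+1}-1)/3$, the partition obtained from $(2|1,0|C_k)$ by adding an $a$ \emph{is} pre-Ulrich, and the contradiction in Lemma \ref{lem-stupid} only emerges after tracking further forced additions. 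In the paper this case (``the next letter added after the head is a $c$'') is handled by an independent forward analysis: the new $c$ meets the $A$-block for $4^{m_1}$ consecutive times, new $a$'s are then forced at specific positions, and finally an $a$, $0\in B$, and the new $c$ collide simultaneously. Nothing in your proposal replaces this argument; note also that your quick dispatch of the case $A=A'$ already presupposes it, since $A=A'$ alone does not give $C=\{c_1\}$.
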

\begin{proof}Applying Lemma \ref{lemNoMorecs} to $(A|B|C)$ and its dual, we find that there are $m_1,m_2\geq 0$ and $k_i = \frac{1}{3}(4^{m_i+1}-1)$ such that $(2|1,0|C_{k_1})^*\subset (A|B|C)$ and either $(2|1,0|C_{k_2})^*\subset (A|B|C)^*$ or $(A_{k_2}|1,0|{-1})^*\subset (A|B|C)^*$, according to whether $2\in A^*$ or $-1\in C^*$.  Without loss of generality, assume $k_1\leq k_2$.

If $2\in A^*$ then $(2|1,0|C_{k_2})^*\subset (A|B|C)^*$. Computing the dual of the Ulrich partition $(2|1,0|C_{k_2})$ of dimension $4^{m_2+1}+1$, we find $-4^{m_2+1}\in C^*$ and $4^{m_2+1}-2\in A^*$ (since $-4\in C_{k_2}$).  Using the containment $(2|1,0|C_{k_1})^*\subset (A|B|C)$, we also have that $-4^{m_1+1}\in C$ and $4^{m_1+1}-2\in A$.  Then the equality $$(4^{m_2+1}-2) - (4^{m_1+1}-2) = (-4^{m_1+1})-(-4^{m_2+1})$$ and the trapezoid rule give that $N = 4^{m_1+1}+4^{m_2+1}-1$ and $|C|=1$.  This is impossible: the dimension of the type $(k_1+k_2,2,1)$ is $4^{m_1+1}+4^{m_2+1}$, so no type $(k_3,2,1)$ can have dimension $4^{m_1+1}+4^{m_2+1}-1$ by considering congruences mod $3$.  Therefore it must be the case that $-1\in C^*$.

We now know that $(2|1,0|C_{k_1})^*\subset (A|B|C)$ and $(A_{k_2}|1,0|{-1})^*\subset (A|B|C)^*$.  Assume both of these containments are proper, since otherwise we are in the case of Proposition \ref{propn21}.  Think of building $(A|B|C)$ from $(\emptyset|B|\emptyset)$ by adding $a$'s and $c$'s.  After the word corresponding to $(2|1,0|C_{k_1})^*$ has been added, we must either add an $a$ or a $c$.

\emph{Case 1:} Suppose the next letter which is added is an $a$.  We claim that the trapezoid rule implies that $(A|B|C)$ is the partition of Example \ref{ex2param} corresponding to the integers $m_1,m_2$.  The Ulrich subpartition $(2|1,0|C_{k_1})^*$ has dimension $4^{m_1+1}+1$, so the new $a$ is added at time $4^{m_1+1}+2$.  The element of $C$ arising from the inclusion $(2|1,0|C_{k_1})^*\subset (A|B|C)$ is $-4^{m_1+1}\in C$, so the new $a$ is at position $2$ at time $4^{m_1+1}+2$, and thus $4^{m_1+1}+4\in A$.  On the other hand, the inclusion $(A_{k_2}|1,0|{-1})^* \subset (A|B|C)^*$ gives $4^{m_2+1}+1 \in A^*$ and $5-(4^{m_2+1}+2)=-4^{m_2+1}+3\in C^*$.  We have $$(4^{m_2+1}+1)-(4^{m_1+1}+4)=(-4^{m_1+1})-(-4^{m_2+1}+3)$$ so by the trapezoid rule $N = 4^{m_1+1}+4^{m_2+1}$ and $|C|=1$.  Therefore $(A|B|C)$ is the Example  \ref{ex2param}.

\emph{Case 2:} Suppose the next letter which is added is a $c$.  We claim that this is impossible: there are no such Ulrich partitions.  We focus solely on the Ulrich partition $(A|B|C) := (2|1,0|C_{k_1})^*,$ as the contradiction arises from this initial segment and not from ``global'' considerations given by the trapezoid rule.  If $m_1 = 0$ then Lemma \ref{lem-stupid} gives $|A| = 1$, a contradiction, so we assume $m_1\geq 1$. Let $(A|B|C\cup \{c_2\})$ be the partition obtained by adding $c_2$ at time $t_0:=4^{m_1+1} + 2$.  Writing $A = \{a_1,\ldots,a_{k_1}\}$ in increasing order, we have $c_2(t_0) =a_1 ( t_0) = -4^{m_1+1}$ since  $a_1 = 2$. For $1\leq i\leq 4^{m_1}$ we have $a_i = 2i$, so $c_2$ meets the $A$-block for all times $t\in [t_0,t_0+4^{m_1}-1]$.

At time $t_0+4^{m_1}$ there is no intersection yet.  It is not possible to add a new $c$ at this time.  If we were to add some $c_3$ at time $t_0+4^{m_1}$ then this would provide intersections between $c_3$ and the $A$-block for the next $4^{m_1}$ times.  However, $a_{4^{m_1}+1}  = a_{4^{m_1}}+4^{m_1}+2$ meets $c_2$ at time $t_0+4^{m_1}+2\cdot 4^{m_1-1}$, which is a time excluded by the intersection of $c_3$ with the $A$-block.  Thus, at time $t_0+4^{m_1}$ we must add a new $a$, call it $a_{k_1+1}$.  It has $$a_{k_1+1}(t_0+4^{m_1}) = c_1(t_0+4^{m_1})$$ so $$a_{k_1+1} = -4^{m_1+1}+2(t_0+4^{m_1})=t_0+2\cdot 4^{m_1}+2.$$

By the same argument as in the last paragraph, no new $c$'s can be added before  the time $t_1$ when $c_2$ and $a_{k_1}$ meet.  At each time in $[t_0,t_1]$ where $c_2$ does not meet the $A$-block, a new $a$ \emph{must} be added.  This statement amounts to the claim that $a_{k_1+1}$ does not meet the $B$-block before time $t_1$.  Since $a_{k_1}(t_0)=-4$ and $c_2(t_0) = -4^{m_1+1}$, we have $t_1 =t_0+2\cdot 4^{m_1}-2$.  Thus $a_{k_1+1}$ meets the $B$-block at times $t_1+3$ and $t_1+4$.

Finally, we obtain our contradiction at time $t_1+1$.  No intersection has been scheduled yet.  We cannot add some $c_3$ at this time, since it would still meet the $A$-block at time $t_1+3$ when $a_{k_1+1}$ meets $1\in B$.  Thus we must add an $a$, call it $a'$, at time $t_1+1$.  We have $a'(t_1+1)=c_1(t_1+1)$, so $$a'(t_0) = a'(t_1+1)+(t_1+1-t_0)= -4^{m_1+1}+2(t_1+1)-t_0= 4^{m_1+1}$$ while $c_2(t_0) = -4^{m_1+1}$.  Thus, at time $t_0+4^{m_1+1}$, all three of $a'$, $0\in B$, and $c_2$ coincide.  Therefore, the partition $(A|B|C\cup \{c_2\})$ cannot be extended to an Ulrich partition by adding $a$'s and $c$'s.
\end{proof}

\section{Ulrich partitions of type $(2,n,1)$}\label{sec-1n2}
In this section, we classify Ulrich partitions of type $(2,n,1)$.  Throughout the section, we consider Ulrich partitions of the form $P = (a_1,a_2|b_1,\ldots,b_n|c)$, and normalize the evolution of the partition to subtract $1,0,-1$ from the blocks, as in \S\ref{sec-beta2}.  We further normalize the positions $$a_2 = y, \quad a_1 = y+2m,\quad \mbox{and}\quad  c=-y,$$ so that the intersection $a_2c$ happens at time $y$ and the gap between the $a$-entries is $2m$.  In our classification we will view $m$ as being a fixed parameter, similarly to how $b_1-b_2$ was a fixed parameter in the classification of Ulrich partitions of type $(\alpha,2,\gamma)$.

\begin{definition}
For any $m\geq 2$, the {\em fundamental pattern $F_{m}$ of type $m$} is the partition of type $(2, m-1,1)$ given by  $$F_{m}=(3m, m | m-1, m-2, \dots, 2, 1 | {-m}).$$ The fundamental pattern is Ulrich by Example \ref{example-1n2}.
\end{definition}

\begin{definition}
The {\em elongation} of a partition $$P=(a_1, a_2|b_1, \dots, b_n|c) = (y+2m,y|b_1,\ldots,b_n|{-y})$$ of type $(2,n,1)$ is the partition $E(P)$ of type $(2, n + 2m, 1)$ obtained by adding two contiguous blocks of length $m$ at the beginning and end of the $b$ sequence and shifting the $a$ and $c$ entries as follows: $$(y + 5m, y + 3m | y + 3m-1, \dots, y + 2m, b_1, \dots, b_n,- y-m, \dots, -y-2m+1 |{-y- 3m}).$$ The $k$th elongation of $P$ is defined inductively by $E^k(P) := E(E^{k-1}(P))$ and $E^0(P)=P$; it has type $(2,n+2mk,1)$.
\end{definition}

\begin{example}
The fundamental pattern of type 2 is the partition $F_2= (6,2|1|{-2})$. Its first and second elongations are $$E(F_2)= (12, 8|7, 6, 1, -4,-5|{-8}) \quad E^2(F_2) = (18, 14|13,12, 7,6,1,-4,-5, -10,-11  |{ -14}).$$
The fundamental pattern of type 3 is the partition $F_3 = ( 9,3|2,1|{-3})$. Its first elongation is  $$E(F_3)= (18, 12|11,10,9,2, 1, -6,-7,-8|{-12}).$$ See Figure \ref{fig-251} for the time evolution diagram of $E(F_2)$. \begin{figure}[htbp]
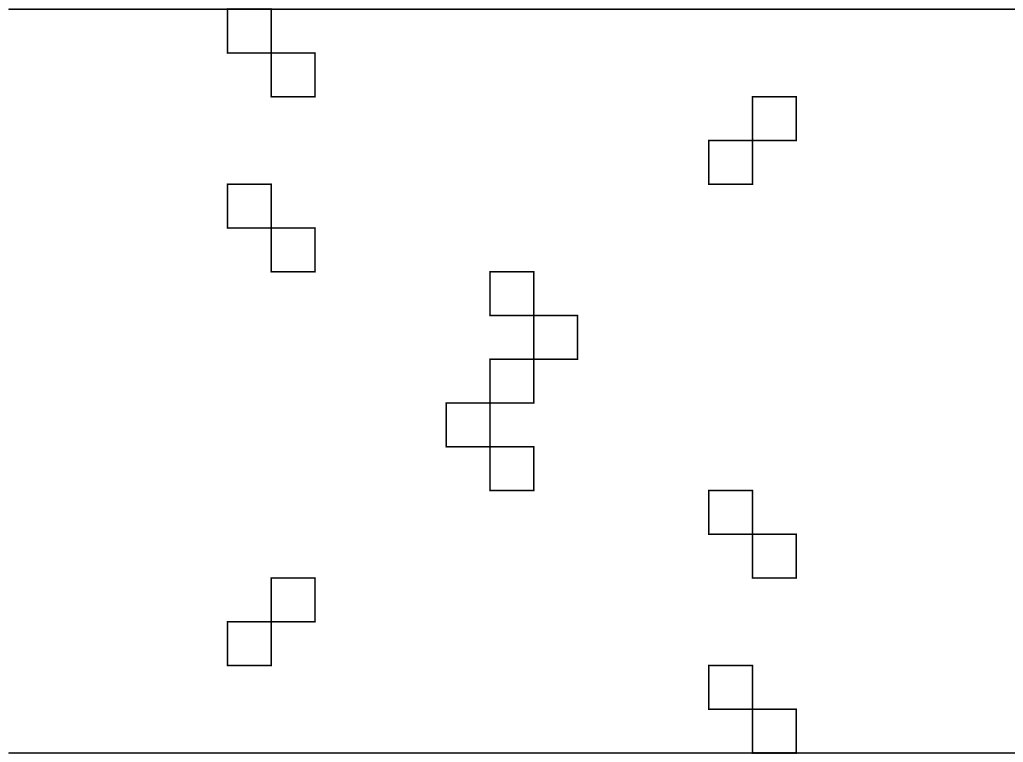
\caption{Time evolution diagram of the partition $E(F_2)$ of type $(2,5,1)$.}\label{fig-251}
\end{figure}
\end{example}

\begin{remark}
We will also need a degenerate case of the previous definitions.  We define the fundamental pattern $F_1$ to be the partition $(3,1|\emptyset|{-1})$. Its elongation $E(F_1) = (6,4|3,-2|{-4})$ still makes sense.  Observe that $E(F_1)$ is the Ulrich partition of Example \ref{ex-221}. To avoid discussing trivialities in the arguments that follow we generally focus on the $m\geq 2$ case and assure the reader that appropriate arguments work in the $m=1$ case.
\end{remark}

The main theorem in this section is the following.

\begin{theorem}\label{thm-2n1}
A partition $P$ of type $(2,n,1)$ is Ulrich if and only if there exists $k \geq 0$ and $m >0$ such that $n = 2mk + m -1$ and $P=E^k(F_{m})$.
\end{theorem}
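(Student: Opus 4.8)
The statement asserts that the Ulrich partitions of type $(2,n,1)$ are exactly the iterated elongations $E^k(F_m)$ of fundamental patterns, and that these occur precisely when $n = 2mk+m-1$.  The plan is to run the greedy algorithm from \S\ref{sec-beta2}, but now applied to the $C$-side of the partition: an Ulrich partition $P=(a_1,a_2|b_1,\ldots,b_n|c)$ of type $(2,n,1)$ is built from the three-entry seed $(\emptyset|\emptyset|\emptyset)$ together with the $B$-block, or more usefully, we fix the parameter $m$ (the gap $a_1-a_2=2m$) and the normalization $a_2=y$, $a_1=y+2m$, $c=-y$, and then analyze the forced structure of the $B$-block.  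The key dichotomy, parallel to Lemma~\ref{lem-BblockRestrict}, is a ``spacing'' constraint: by Lemma~\ref{lem-congruence} (which in the $(2,n,1)$ case just says the two $A$-entries and the single $C$-entry all have the same parity) and the usual odd/even time parity argument from Proposition~\ref{prop-fiveStep}, consecutive entries of the $B$-block that lie in the ``$a$-window'' or ``$c$-window'' are forced into a rigid pattern.  The main obstacle will be organizing this forced structure into the clean inductive statement ``$P = E^k(F_m)$'' rather than into a messier case analysis.

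First I would establish the easy direction: each $E^k(F_m)$ is Ulrich.  This is a direct generalization of Example~\ref{example-1n2} (which is the $k=0$ case) and of the computation sketched for $E(F_2)$: one checks by induction on $k$ that in $E^k(F_m)$, as time runs over $[1,N]$ with $N = \dim F(2,n+2mk;n+2mk+3)$, every time has exactly one intersection, by showing that the two new length-$m$ $B$-blocks appended at each elongation step exactly ``fill in'' the two blocks of $m$ consecutive times that were previously vacant (the $m$ times just before the $a_1c$ intersection and the $m$ times just after it).  Concretely: in $E^{k-1}(F_m)$ the last few intersections near $t=N^{(k-1)}$ and the pattern of $a_1$ meeting the old $B$-block leave predictable gaps; the elongation shifts $a_1,a_2$ up by $3m$ and $c$ down by $3m$, so the new $a$-entries sweep through the two new $B$-blocks at exactly the new times, and the $ac$-intersections are preserved in relative position.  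I would also record the numerics $N = 2n+2 + 2mk$-type identities and the dimension count that forces $n=2mk+m-1$ when we additionally demand the type be exactly $(2,n,1)$ with this $m$; actually the cleaner statement is that $E^k(F_m)$ automatically has $n=m-1+2mk$, so this equation is not an extra hypothesis but a consequence, and the ``only if'' direction is where it must be derived.

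For the hard direction, suppose $P=(a_1,a_2|b_1,\ldots,b_n|c)$ is Ulrich, normalized as above.  I would argue as follows.  At time $t=1$ two adjacent entries meet; by symmetry assume it is an $ab$ intersection (the $bc$ case is the symmetric dual), so $a_2+1 \in B$, i.e.\ $y+1\in B$ after normalization.  Then for a range of small times the $a_2$-entry sweeps down through a contiguous run of $B$-entries; parity forces this run to consist of $y+1, y-1, y-3,\ldots$ down to the point where it meets $c$.  Wait—more carefully, since $a_2$ and $c$ have the same parity and $a_2$ meets $c$ at time $y$, and since all times $[1,y-1]$ must be occupied and (by the argument of Prop.~\ref{prop-threeStep}, middle block between) the only intersections available at those times are $ab$ or $bc$, one shows the entries of $B$ in the open interval $(c,a_2)=(-y,y)$ together with $a_2,c$ and the time-reversal account for all these times, pinning down the top $m-1$ entries of $B$ to be $y-1,y-2,\ldots$ in the fundamental-pattern positions, OR forcing the appearance of an appended block structure.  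The recursion is extracted exactly as in Lemma~\ref{lem-stupid}/\ref{lemNoMorecs}: after $a_2$ has passed through its initial run, the sub-configuration consisting of the already-swept $B$-entries together with $a_2,a_1,c$ is itself an Ulrich partition of smaller dimension whose structure we know by induction, and peeling it off leaves a fundamental pattern.  The main work, and the expected obstacle, is proving that no ``irregular'' $B$-entry can occur between the forced blocks — i.e.\ ruling out $B$-entries at positions that would create a double intersection or a gap — which is exactly the kind of position-chasing done in Propositions~\ref{propaccc} and \ref{propacaa}, likely using the trapezoid rule (Observation~\ref{obsTrapezoid}) applied to $P$ and its dual $P^*$ to pin down $N$ and hence $|B|$, forcing the recursion to terminate precisely at $F_m$ after $k$ elongation steps with $n = m-1+2mk$.
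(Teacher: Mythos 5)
The easy direction of your plan is essentially the paper's Lemma \ref{lem-2n1}, though your description of where the new intersections occur is off: the two appended length-$m$ blocks supply the first $3m$ and the last $3m$ intersection times of $E(P)$ (swept by $a_2$, $c$, $a_1$ in turn), not the times adjacent to the $a_1c$ meeting; this is fixable. The hard direction, however, has a genuine gap: the recursion you propose is mis-specified. You claim that ``after $a_2$ has passed through its initial run, the sub-configuration consisting of the already-swept $B$-entries together with $a_2,a_1,c$ is itself an Ulrich partition of smaller dimension.'' It is not: $a_1$ meets $c$ only at time $y+m$, and $a_1$ and $c$ meet those top $B$-entries at late times, so the meeting times of this sub-collection are scattered through $[1,N]$ rather than filling an initial interval, and no analogue of Lemma \ref{lemNoMorecs} applies. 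The correct decomposition runs the other way: when $y>2m$ one must show that $B$ begins and ends with contiguous blocks of length \emph{exactly} $m$, and that deleting these two blocks (after evolving to time $3m$) leaves a smaller Ulrich partition $P'$ with $P=E(P')$. Your plan contains no mechanism to pin these block lengths to $m$; in the paper this is done by proving, inductively along with the classification, the dimension formula $\dim P = 2y+m-1$ (Observation \ref{obs-2n1}) and comparing it with $\dim P = 2y+2m-m_1-1$ read off from the last intersection. The trapezoid rule (Observation \ref{obsTrapezoid}), which you invoke as the expected tool, is not what does this work — with a single $c$ there is no useful trapezoid, and neither it nor the arguments of Propositions \ref{propaccc}--\ref{propacaa} substitute for the dimension count.

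Two further points need repair. First, ``by symmetry assume the time-$1$ intersection is $ab$'' is not immediate: the symmetric dual of a type $(2,n,1)$ partition exchanges the \emph{first}-intersection type with the \emph{last}-intersection type, so the reduction requires either the extra observation that a $bc$-first forces an $ab$-last (true, since $b_n=-y+1$ makes $a_1b_n$ the latest meeting) together with the fact that each $E^k(F_m)$ is its own symmetric dual, or a direct argument as in the paper's Claim 1 of the inductive step, which is where real work (using the inductive hypothesis) is done. Second, your parity bookkeeping for the $B$-block is spurious: Lemma \ref{lem-congruence} constrains only the $A$- and $C$-entries here (adjacent blocks carry no congruence), the first intersection gives $b_1=a_2-1=y-1$, not $a_2+1\in B$, and the claim that for every $p\in[y-1]$ exactly one of $\pm p$ lies in $B$ is valid only in the base case $y\le 2m$ (the paper's Lemma \ref{lem-2n1base}), since for larger $y$ the entry $a_1$ starts meeting $B$ at time $2m+1<y$.
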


The partitions $E^k(F_m)$ are clearly all distinct, so Theorem \ref{thm-2n1} implies Theorem \ref{thm-1n2intro}.  First we observe that the partitions $E^k(F_m)$ are indeed Ulrich.

\begin{lemma}\label{lem-2n1}
If $P$ is an Ulrich partition of type $(2,n,1)$ and $P$ has dimension $2y+m-1$ then $E(P)$ is an Ulrich partition of dimension $2y'+m-1$, where $y' = y+3m$.  In particular, the partition $E^k(F_m)$  is Ulrich of type $(2,2mk+m-1,1)$.\end{lemma}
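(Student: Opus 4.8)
\textbf{Proof plan for Lemma \ref{lem-2n1}.}

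The plan is to verify directly, using the combinatorial description of intersections in a time evolution diagram, that every time $t\in[1,2y'+m-1]$ witnesses exactly one intersection for $E(P)$, given that this holds for $P$ on $[1,2y+m-1]$. First I would set up notation: write $P = (y+2m,y\mid b_1,\dots,b_n\mid -y)$ with the normalization $a_2=y$, $a_1=y+2m$, $c=-y$, subtracting $1,0,-1$ from the three blocks at each time step. Then by definition $E(P) = (y+5m, y+3m \mid y+3m-1,\dots,y+2m,\ b_1,\dots,b_n,\ -y-m,\dots,-y-2m+1 \mid -y-3m)$, which has type $(2,n+2m,1)$ and hence dimension $N' = 2(n+2m)+3 = (2n+3)+4m = N+4m$; since $N = 2y+m-1$ and $y'=y+3m$ we get $N' = 2y+5m-1 = 2y'+m-1$, confirming the dimension claim.

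The core of the argument is to partition the time interval $[1,N']$ into three consecutive ranges and analyze each. First, for early times $t\in[1,2m]$, the top entry $a_2(E(P)) = y+3m$ moves down through the newly-prepended block $y+3m-1,\dots,y+2m$ (and also $a_1$ trails $2m$ behind), producing exactly one $a_2 b$ intersection at each of the times $t=1,\dots,2m$ — one needs to check that $a_1$, the $c$-entry, and the old $b$-block entries are too far away to create a collision in this window, which follows from the explicit positions. Symmetrically (or by invoking the symmetric dual, \S\ref{ssec-symmetryDuality}), for the late times $t\in[N'-2m+1,N']$ the bottom entry $c(E(P)) = -y-3m$ moves up through the newly-appended block $-y-m,\dots,-y-2m+1$, giving one $bc$ intersection at each such time. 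Finally, for the middle range $t\in[2m+1,\ N'-2m] = [2m+1,\ 2m+N]$, I would show that the pattern of intersections is literally a time-shift by $2m$ of the pattern of $P$: after the first $2m$ steps the configuration of $E(P)$, restricted to the relevant entries, agrees (up to an overall positional shift by $3m$ of the $a$'s, the original $b$'s sitting where they started, appropriate shift of $c$) with the configuration of $P$ at time $0$, because the prepended/appended blocks have by then been absorbed and contribute no further collisions until the very end (their entries are arranged in unit-decreasing runs so each meets the $B$-block exactly at the scheduled early/late times). Thus each time $t\in[2m+1,2m+N]$ has exactly one intersection, inherited from time $t-2m$ for $P$.

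The main obstacle is the bookkeeping in this last step: one must check carefully that the prepended entries $y+3m-1,\dots,y+2m$ and appended entries $-y-m,\dots,-y-2m+1$ do not produce any \emph{spurious} extra intersections during the middle range — in particular that a prepended $b$-entry never collides with an appended $b$-entry, never collides with $c$, and that an appended $b$-entry never collides with $a_1$ or $a_2$, at a time already occupied by the inherited pattern. This amounts to a finite list of inequalities on positions of the form (position of prepended entry at time $t$) vs.\ (position of appended entry at time $t$), etc., all of which hold because the gap $4m$ between the two new blocks, combined with $y$ being large relative to $m$ (which follows from $P$ being Ulrich of dimension $2y+m-1$ with $n=y$-ish many $b$'s actually $y \geq$ something), keeps them separated throughout $[2m+1,2m+N]$; the collisions of the new blocks with the $B$-block are exactly the ones counted in the early and late ranges. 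Once the elongation step is established, the final sentence follows by induction on $k$: $F_m$ is Ulrich of type $(2,m-1,1)$ and dimension $2y_0 + m-1$ with $y_0 = m$ (reading off $a_2 = m$, $c=-m$ from $F_m = (3m,m\mid m-1,\dots,1\mid -m)$) by Example \ref{example-1n2}, and each application of $E$ adds $2m$ to the length of the $b$-block and preserves the Ulrich property, so $E^k(F_m)$ is Ulrich of type $(2,\,2mk+m-1,\,1)$.
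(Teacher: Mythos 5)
Your overall strategy (absorb the new blocks in an initial and a final window, and realize the middle of the evolution of $E(P)$ as a time-shifted copy of $P$) is the same as the paper's, but the bookkeeping as written is wrong in ways that break the argument. First, the dimension of a type $(2,n,1)$ partition is $3n+2$ (each of the $n$ middle entries must meet two $a$'s and one $c$, plus the two $ac$ meetings), not $2n+3$; for instance $E(F_2)$ has type $(2,5,1)$ and dimension $17$. Hence $\dim E(P)=\dim P+6m$, not $N+4m$, and your displayed chain $N'=2y+5m-1=2y'+m-1$ is internally inconsistent, since $2y'+m-1=2y+7m-1$. Second, each new block has length $m$, not $2m$, so $a_2'=y+3m$ meets the prepended block only at times $t\in[1,m]$. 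Your claim that the times $t=1,\dots,2m$ are all covered by $a_2b$ intersections with the prepended block therefore fails, and in fact for $t\in(m,2m]$ the unique intersection must be $c'=-y-3m$ meeting the \emph{appended} block $-y-m,\dots,-y-2m+1$ --- exactly the collisions your plan instructs one to rule out in this window. The correct initial window is $[1,3m]$: $a_2'$ meets the prepended block on $[1,m]$, $c'$ meets the appended block on $(m,2m]$, and $a_1'$ meets the prepended block on $(2m,3m]$; symmetrically the final window $[2y+4m,2y+7m-1]$ is filled by $a_2'$ with the appended block, $c'$ with the prepended block, and $a_1'$ with the appended block.

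Consequently the middle-range shift is $3m$, not $2m$: at time $3m$ the entries $a_1',a_2',c'$ sit at $y+2m$, $y$, $-y$, so $(E(P))(3m)$ with the two new blocks deleted is literally $P$ at time $0$, and the hypothesis $\dim P=2y+m-1$ is exactly what makes the inherited intersections fill the interval $(3m,2y+4m)$ with no gap before the final window begins at $2y+4m$. With your windows $[1,2m]$, $[2m+1,2m+N]$, $[N'-2m+1,N']$ and $N'=N+4m$ the intervals formally concatenate, but the actual collisions do not occur at those times, so the Ulrich property is not verified (times in $(m,2m]$ are left uncovered by your early analysis, and the middle copy of $P$ is misaligned by $m$). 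These are arithmetic rather than conceptual errors, and once corrected your argument coincides with the paper's proof; the induction for $E^k(F_m)$ at the end is fine.
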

\begin{proof}
The beginning and ending intersections in $E(P)$ all occur between $a$'s or $c$'s and the new contiguous blocks of $b$'s as follows.
\begin{itemize}
\item For $t\in [1,m]$, $a_2$ meets the left new $B$-block.
\item For $t\in (m,2m]$, $c$ meets the right new $B$-block.
\item For $t\in (2m,3m]$, $a_1$ meets the left new $B$-block.
\item For $t\in [2y+4m,2y+5m)$, $a_2$ meets the right new $B$-block.
\item For $t\in [2y+5m,2y+6m)$, $c$ meets the left new $B$-block.
\item For $t\in [2y+6m,2y+7m)$, $a_1$ meets the right new $B$-block.
\end{itemize}
Note that $P$ can be obtained from $E(P)$ by shifting to the time $3m$ position $(E(P))(3m)$ and throwing out the two new $B$ blocks.  Since $P$ is Ulrich of dimension $2y+m-1$, we conclude that there are unique intersections in $E(P)$ at all times $t\in (3m,2y+4m)$.  Clearly $\dim E(P) = \dim P + 6m=2y+7m-1$, so $E(P)$ is Ulrich.

For the second statement, it suffices to observe that the fundamental partition $F_m$ satisfies the equality $\dim F_m = 2y+m-1=3m-1$, which is clear.
\end{proof}

\begin{observation}\label{obs-2n1}
By Lemma \ref{lem-2n1}, if $P=(y+2m,y|b_1,\ldots,b_n|{-y})$ is of the form $E^k(F_m)$ then it satisfies the formula $$\dim P  = 2y+m-1.$$ For any $P$, we say that it satisfies the \emph{dimension formula} if the above equality holds.  Theorem \ref{thm-2n1} in particular claims that the dimension formula holds for any Ulrich partition of type $(2,n,1)$.

We also observe that Theorem \ref{thm-2n1} implies that for any Ulrich $P$ the sequence $b_1,\ldots,b_n$ begins with a contiguous block $y-1,\ldots,y-l$ of length exactly $l$, where $l$ is either $m$ or $m-1$ depending on whether $k>0$ or $k=0$ in the equality $P = E^k(F_m)$.
\end{observation}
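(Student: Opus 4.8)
\textbf{Plan for Observation \ref{obs-2n1}.} The content here is lightweight: I want to record two immediate consequences of Theorem \ref{thm-2n1} for use later in the section, and the proof is essentially bookkeeping about the forms $E^k(F_m)$. First I would note that the ``dimension formula'' $\dim P = 2y+m-1$ is built in: by Lemma \ref{lem-2n1}, any partition of the shape $E^k(F_m)$ satisfies $\dim E^k(F_m) = 2y+m-1$ where $y$ is the normalized position of the entry $a_2$ (recall our normalization $a_2 = y$, $a_1 = y+2m$, $c = -y$). Combining this with Theorem \ref{thm-2n1}, which asserts that every Ulrich partition of type $(2,n,1)$ equals some $E^k(F_m)$, we immediately conclude that every Ulrich partition of this type satisfies the dimension formula. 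So the first part of the observation is just a restatement of Lemma \ref{lem-2n1} and Theorem \ref{thm-2n1} packaged together, and nothing further is needed.

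For the second part, about the initial contiguous block of the $b$-sequence, I would argue directly from the explicit descriptions of $F_m$ and of elongation. The fundamental pattern is $F_m = (3m,m\mid m-1,m-2,\ldots,1\mid{-m})$, whose $b$-sequence is the contiguous block $m-1,m-2,\ldots,1$ of length exactly $m-1$; here $a_2 = m$, so in the normalization the block is $y-1,y-2,\ldots,y-(m-1)$, consistent with $l = m-1$ when $k=0$. For $k > 0$, I examine one step of elongation: by definition $E(P)$ prepends the contiguous block $y+3m-1,\ldots,y+2m$ (length $m$) to the old $b$-sequence, while the new $a_2$ entry sits at $y' = y+3m$. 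Thus the $b$-sequence of $E(P)$ begins with the contiguous block $y'-1,y'-2,\ldots,y'-m$ of length exactly $m$ (it cannot be longer, since the next entries of $E(P)$'s $b$-sequence are the old $b_1,\ldots$, shifted to start at $y+2m-1 = y'-m-1$ only if the old $b_1 = y-1$, which would make the block length $m+1$ — I need to rule this out). The point is that for $F_m$ with $m\geq 2$ the first $b$-entry is $m-1 = y-1$, so after one elongation the total contiguous block has length $m + (m-1) \ne m$ in general; this shows I must instead phrase the claim carefully.

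\textbf{Correction to the approach.} On reflection, the cleanest route is: for $P = E^k(F_m)$ with $k\geq 1$, track exactly where the contiguous initial block of the $b$-sequence ends. When $k = 1$, $P = E(F_m)$ has $b$-sequence $(y'+3m-1,\ldots)$—wait, more simply: $E(F_m)$'s $b$-sequence is $3m-1, 3m-2, \ldots, 2m, \underbrace{m-1,\ldots,1}_{F_m\text{'s }b}, -m-1,\ldots$ after the shift built into the definition, with $a_2$ at position $3m$ (before any further shift). In the normalization $a_2 = y'$, the block at the front is $y'-1,\ldots,y'-m$, and the next entry is the image of $m-1$, which sits at position $y' - 2m - 1 < y'-m-1$; hence the block has length exactly $m$. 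For higher $k$ the same computation applies verbatim to the outermost elongation, since elongation always prepends a fresh length-$m$ contiguous block starting just below the new $a_2$, and the entry after it (the image of the previous $b_1$) is strictly more than one below the bottom of that block. So the initial contiguous block has length exactly $m$ when $k\geq 1$ and length exactly $m-1$ when $k = 0$, as claimed. I would present this as a short induction on $k$, with the base case $F_m$ handled by inspection and the inductive step handled by the one-line computation above.

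\textbf{Main obstacle.} There is essentially no obstacle: the entire observation is a corollary of Theorem \ref{thm-2n1} (already available to us) together with direct inspection of the definitions of $F_m$ and $E$. The only subtlety is getting the off-by-one counting right in the inductive step — specifically confirming that the entry of the $b$-sequence immediately following the freshly prepended block is at least two units below the bottom of that block (so the block length is exactly $m$, not $m+1$), which follows because elongation shifts the old $b$-entries down by $2m$ relative to the new $a_2$ while the old $b$-sequence's largest entry was at most $y-1$ (one below the old $a_2$). I would state this carefully and leave the rest as routine verification.
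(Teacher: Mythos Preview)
Your approach is correct and matches the paper, which treats this as a direct consequence of the definitions and does not give a separate argument. One small arithmetic slip: in your final paragraph you say elongation shifts the old $b$-entries down by $2m$ relative to the new $a_2$, but since the new $a_2$ equals $y+3m$ while the old $b_1$ stays at (at most) $y-1$, the relative shift is $3m$, so the old $b_1$ sits at position at most $y'-3m-1$, not $y'-2m-1$. This only strengthens the inequality $y'-3m-1 < y'-m-1$ you need, so the conclusion that the initial contiguous block has length exactly $m$ for $k\geq 1$ is unaffected.
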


The next lemma will form the base of an induction to prove Theorem \ref{thm-2n1}.

\begin{lemma}\label{lem-2n1base}
Let $P=(a_1,a_2|b_1,\ldots,b_n|c) = (y+2m,y|b_1,\ldots,b_n|{-y})$ be an Ulrich partition.  If $y\leq 2m$ then $P = F_m$.
\end{lemma}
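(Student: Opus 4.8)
\textbf{Proof proposal for Lemma \ref{lem-2n1base}.}

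The plan is to analyze directly the intersections in $P=(y+2m,y|b_1,\ldots,b_n|{-y})$ under the hypothesis $y\leq 2m$, using the greedy perspective of \S\ref{sec-beta2}. First I would establish that the intersection at time $t=1$ must occur between the $A$- and $B$-blocks: if instead the first intersection were a $bc$ intersection, then after normalizing so the $B$-block is fixed we would need $b_1 = -y+1$, but this is incompatible with $a_2=y>b_1$ meeting the $B$-block and then meeting $c$ at time $y$ unless the $B$-block is very short, and in any case by the symmetry/duality discussion the two cases are interchangeable so we may assume the $t=1$ intersection is an $ab$ intersection. Thus $b_1 = y-1$. Continuing greedily, the key observation is that the large entry $a_1=y+2m$ starts far to the right; since $y\leq 2m$ we have $a_1 - a_2 = 2m \geq y$, so $a_2$ reaches $c$ (at time $y$) before $a_1$ has travelled distance $2m$, i.e. before $a_1$ can interact with the bulk of the $B$-block. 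Concretely, $a_1$ first meets $a_2$'s old position only much later. So for the early times $t\in[1,y-1]$ the only intersections available are between $a_2$ and successive $B$-entries, which forces $b_i = y-i$ for $1\leq i\leq y-1$, i.e. the $B$-block begins with the contiguous run $y-1,y-2,\ldots,1$.

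Next I would pin down what happens at time $t=y$ and just after. At time $y$ the $a_2c$ intersection occurs at position $0$. At times $t\in[y+1, 2m]$ the entry $a_1$ (which at time $y$ sits at position $2m$) descends through positions $2m-1, 2m-2,\ldots$; meanwhile $a_2$ is descending through negative positions and $c$ is ascending. The crucial point is that there are no $B$-entries left in the range $[1,2m]$ other than possibly entries equal to the $b_i$ already forced — but we forced $b_i=y-i$ only up to $i=y-1$, giving entries $y-1,\ldots,1$, all of which are $<y\leq 2m$, wait, more carefully: these entries occupy positions $1,\ldots,y-1$, which $a_1$ has already passed at time $y$ since $a_1(y)=2m\geq y$. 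So for $t\in[y+1,2m]$, $a_1$ is in the gap between the top of the (already-passed) $B$-block and wherever the next $B$-entry might be; since these $2m-y$ times must all have intersections, and $a_2,c$ can meet $B$-entries only in negative/low positions, the only possibility is that $a_1$ meets new $B$-entries $b_y, b_{y+1},\ldots$ sitting at positions $2m-1, 2m-2,\ldots, y$. This forces $b_{y-1+j} = 2m-j$ for $1\leq j\leq 2m-y$, but combined with the earlier run this says the $B$-block is exactly the contiguous block $2m-1, 2m-2,\ldots, 1$ of length $2m-1$ — EXCEPT I must be careful about whether $a_1$ or $c$ supplies the intersection and about the value $y$ itself. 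I would carefully check the boundary: at the moment $a_1$ would reach position $y$ we should be at time $2m$, and the dimension count (the partition has dimension $2\cdot 1 + n\cdot(2+1) + 1\cdot 2 = 3n+4$, or rather $N = l_1l_2+l_1l_3+l_2l_3 = 2n + 2 + n = 3n+2$) should match up with $F_m$'s dimension $3m-1$ once we conclude $n=m-1$.

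The main obstacle I anticipate is handling the interplay near time $t\approx y$ between the three moving blocks — ensuring that no intersection at an early time can secretly be an $ac$ intersection (between $a_1$ and $c$, say) rather than the expected $a_2b$ or $a_1b$ intersection, and ruling out configurations where a stray $B$-entry sits in an unexpected position. The clean way to dispatch this is the inequality $y\leq 2m$ itself: it guarantees that the $a_1c$ intersection time $\tfrac12((y+2m)-(-y)) = y+m$ exceeds $2m$ only when $y>m$ and otherwise is at most... hmm, $y+m$ versus the window $[1,2m]$: if $y\leq m$ then $y+m\leq 2m$ so the $a_1c$ intersection does fall in the early window and must be accounted for, while if $m<y\leq 2m$ it falls just after. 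I would treat these two subcases, $y\leq m$ and $m<y\leq 2m$, separately, in each case using the forced contiguous structure of $B$ together with the requirement that every time in $[1,N]$ has exactly one intersection to squeeze out $n = m-1$ and the exact values $a_1=3m$, $a_2=m$, $c=-m$, $b_i = m-i$, which is precisely $F_m$. Finally I would double-check the degenerate $m=1$ case (where $F_1=(3,1|\emptyset|{-1})$ has empty $B$-block) matches the convention and the same argument.
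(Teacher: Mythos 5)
Your proposal has two genuine gaps, and they undermine the structure you derive. First, the assertion that every intersection at a time $t\in[1,y-1]$ must be an $a_2b$ intersection is precisely the content of the lemma and is never justified: the hypothesis $y\le 2m$ only shows that $a_1$ is inert before time $y$ (its earliest meeting time is $\min(y+m,\,y+2m-b_1)>y$), but it does nothing to exclude $bc$ intersections at early times, which is exactly what happens in the $2^n$ examples of Theorem \ref{thm-1n1} whenever $B$ contains negative entries. The appeal to symmetry/duality to force the $t=1$ intersection to be of type $ab$ is also invalid: the symmetric partition has type $(1,n,2)$, not $(2,n,1)$, and the symmetric dual preserves the roles of the blocks while exchanging first and last intersections, so it cannot convert a first $bc$ intersection into a first $ab$ one. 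Second, your analysis of the window $t\in[y+1,2m]$ forgets that after time $y$ the entry $c$ sits at positive positions and sweeps up through the positive part of the $B$-block, meeting $b=y-i$ at time $2y-i$; in $F_m$ (where $y=m$) these $c$--$B$ intersections fill the times $m+1,\dots,2m-1$ and time $2m$ is the $a_1c$ intersection. Your forced conclusion that new $B$-entries sit at positions $2m-1,\dots,y$ is impossible in any case, since every $b$ must be smaller than $a_2=y$, and the resulting ``contiguous block of length $2m-1$'' contradicts the statement being proved, in which $|B|=m-1$ and $y=m$.

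The missing idea is the paper's reduction to Theorem \ref{thm-1n1}. Since no intersection involving $a_1$ occurs at a time $\le y$, while the subpartition $(y|b_1,\dots,b_n|{-y})$ contributes exactly $n+1$ intersections at times $\le y$ (each $b_i\ne 0$ gives one, plus $a_2c$ at time $y$) and $n$ at times in $(y,2y)$, the Ulrich condition forces $n+1\ge y$; distinctness of the times $y\pm b_i$ inside $[1,2y-1]$ forces $n\le y-1$. Hence $n=y-1$, the subpartition's intersection times fill $[1,2y-1]$, and it is Ulrich of type $(1,n,1)$, so $B$ contains exactly one of $\pm p$ for each $p\in[y-1]$. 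The remaining $y$ times $\{y+2m-b_i\}\cup\{y+m\}$ must then be exactly $[2y,3y-1]$: since $y+m\ge 2y$ forces $m\ge y$, and fitting the $y-1$ values $y+2m-b_i$ below $3y-1$ then forces every $b_i>0$ and $m=y$, giving $B=\{y-1,\dots,1\}$ and $P=F_m$. Once the early $bc$ intersections are treated honestly, your greedy approach essentially has to reproduce this counting-and-interval argument; as written it assumes the key conclusion at the first step and then derives an incorrect shape for $B$.
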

\begin{proof}
The intersection $a_2c$ occurs before the intersection $a_1b_1$.  It follows that the partition  $(y|b_1,\ldots,b_n|{-y})$ is Ulrich of type $(1,n,1)$.  Recalling the classification of such partitions, the only possibility is that $n=y-1$, $a_1$ meets $c$ at time $2y$, and $(b_1,\ldots,b_n) = (y-1,\ldots,1)$.
\end{proof}

On the other hand, if $P$ is too large to be treated by Lemma \ref{lem-2n1base} then we show that it is an elongation of a smaller partition.  The next lemma completes the proof of Theorem \ref{thm-2n1}.

\begin{lemma}
Let $P = (a_1,a_2|b_1,\ldots,b_n|c) = (y+2m,y|b_1,\ldots,b_n|{-y})$ be an Ulrich partition.  If $y>2m$ then there is some Ulrich partition $P'$ of type $(2,n',1)$ with $E(P')=P$.
\end{lemma}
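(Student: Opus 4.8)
The goal is to reverse the elongation operation: given an Ulrich partition $P = (y+2m,y\,|\,b_1,\ldots,b_n\,|\,{-y})$ with $y>2m$, produce an Ulrich partition $P'$ of type $(2,n',1)$ with $E(P')=P$. The strategy is to analyze the early and late intersections of $P$, show that the first $m$ entries of the $B$-block form a contiguous block $y-1,\ldots,y-m$ and the last $m$ entries form a contiguous block $-y+1,\ldots,-y+m$ (written in decreasing order: $-y+m,\ldots,-y+1$), and that no $a$-$c$, $a$-$B$(interior), or $c$-$B$(interior) intersections occur during the first $3m$ times or the last $3m$ times. Once this structure is in place, define $P'$ by deleting these two contiguous blocks from $B$ and shifting $a_1,a_2,c$ by $3m$ toward the center, exactly inverting the definition of $E$; then verify $P'$ is Ulrich by the same bookkeeping as in Lemma \ref{lem-2n1}.

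\textbf{Key steps, in order.} First I would examine the early intersections. By parity and the normalization (subtract $1,0,-1$ from the blocks), at time $t=1$ an $A$-$B$ intersection must occur, forcing $a_2(1) = b_1(1)$, i.e.\ $b_1 = y-1$. Iterating, since $y > 2m$ the entry $a_2$ cannot reach $c$ until time $y > 2m$, so the intersections at times $t=1,\ldots,m$ are all $a_2 b_i$ intersections; this gives $b_i = y-i$ for $i=1,\ldots,m$, a contiguous block of length at least $m$. I must then check that $b_{m+1} < y-m-1$, i.e.\ the block has length \emph{exactly} $m$: this is where I would use the structure of the intersections at times $m+1,\ldots,3m$. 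The point is that at time $m+1$ the entry $a_2$ is at position $y-m-1$; if $b_{m+1}=y-m-1$ the intersection at time $m+1$ would be $a_2 b_{m+1}$, but then (using the dimension formula, which I would establish alongside, or a direct timing argument as in \S\ref{sec-beta2}) one finds a conflict. Instead at time $m+1$ the entry $c$ must meet a $B$-entry, which forces the \emph{last} $m$ entries of $B$ to be the contiguous block $-y+m,\ldots,-y+1$; and at times $2m+1,\ldots,3m$ the entry $a_1$ sweeps the left contiguous block. Dually (applying the classification to $P^*$, which is Ulrich of type $(1,n,2)$, so the symmetric situation has already been set up), the same analysis at the late times $t = N, N-1, \ldots, N-3m+1$ shows the mirror-image structure and pins down $N = 2y+m-1$, the dimension formula.

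\textbf{Finishing.} With the contiguous blocks identified and the first/last $3m$ intersection times accounted for entirely by $a$-to-new-$B$-block and $c$-to-new-$B$-block collisions (no interior collisions), I define $$P' = (y-m,\ y-3m\,|\,b_{m+1},\ldots,b_{n-m}\,|\,{-y+3m}),$$ of type $(2, n-2m, 1)$, normalized so that $E(P')=P$ reproduces $P$ exactly. That $P'$ is Ulrich is immediate: the intersections of $P'$ at times $1,\ldots,N-6m$ are in bijection (via a shift by $3m$) with the intersections of $P$ at times $3m+1,\ldots,N-3m$, which are all unique by hypothesis, and $\dim P' = N - 6m$ matches. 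This completes the induction: combined with Lemma \ref{lem-2n1base} (the base case $y\le 2m$) and Lemma \ref{lem-2n1} (elongations are Ulrich), Theorem \ref{thm-2n1} follows.

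\textbf{Main obstacle.} The delicate point is proving that the initial contiguous block has length \emph{exactly} $m$ and, more generally, ruling out a $B$-interior entry appearing among the early positions and disrupting the clean ``sweep'' picture. A priori $B$ could have extra entries lurking just below $y-m$ or just above $-y+m$, or an $a_1 c$ or $a_1$-interior-$B$ collision could happen early; excluding all of these requires carefully tracking which times are already occupied and invoking the uniqueness of intersections, much as in the trapezoid-rule arguments of \S\ref{sec-beta2}. Establishing the dimension formula $N = 2y+m-1$ in tandem is the lever that makes these exclusions work, since it tells us precisely when the late-time sweep must begin and hence forces the block lengths.
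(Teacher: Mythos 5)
Your overall plan (identify contiguous blocks at the top and bottom of the $B$-block, strip them off, shift by $3m$, and recognize the result as a smaller Ulrich partition) is the same as the paper's, but the two load-bearing steps are missing, and the justifications you do offer are not valid. (i) "By parity\ldots at time $t=1$ an $A$--$B$ intersection must occur" is false as a deduction: Lemma \ref{lem-congruence} constrains only the $A$- and $C$-blocks relative to each other, the $B$-entries carry no parity constraint, so a $bc$ intersection at $t=1$ (i.e.\ $b_n=-y+1$) is a priori just as possible as $a_2b_1$. Ruling it out is precisely the paper's Claim~1, and the argument there is not soft: it strips a forced block of length $2m$ off the bottom, applies the inductive hypothesis (the smaller partition is some $E^k(F_m)$, hence has a known initial-block structure) and produces a coincident intersection. (ii) "The intersections at times $1,\ldots,m$ are all $a_2b_i$" does not follow from "$a_2$ cannot reach $c$ before time $y$"; nothing you have said prevents $c$--$B$ intersections at times $2,\ldots,m$. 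In the paper the initial block has an \emph{unknown} length $m_1$, one proves $m_1+m_2=2m$ and $y>2m+m_1$, strips both blocks to get a smaller Ulrich $P'$, and only then concludes $m_1=m$ by comparing $\dim P=2y+2m-m_1-1$ with the dimension formula $\dim P'=2y'+m-1$, which $P'$ satisfies \emph{by induction on $n$} (Observation \ref{obs-2n1}). Your proposed lever, "the dimension formula, which I would establish alongside," is circular: for $P$ itself the formula $N=2y+m-1$ is equivalent to the exact-length statement you are trying to prove, and your write-up never sets up the induction on $n$ inside the lemma (the induction you mention at the end only assembles Theorem \ref{thm-2n1} from Lemmas \ref{lem-2n1base} and \ref{lem-2n1}). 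Likewise "no $a$--$c$ intersection in the first $3m$ times" needs $y>3m$, which is itself a claim requiring proof (the paper's Claim~3). The appeal to "the classification of $P^*$, of type $(1,n,2)$" is circular if read literally, since that classification is the theorem under proof.

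There is also a concrete structural error: you assert the last $m$ entries of $B$ are $-y+1,\ldots,-y+m$. In $E(P')=P$ the added bottom block sits at $-y+m+1,\ldots,-y+2m$, and the positions $-y+1,\ldots,-y+m$ must be \emph{empty}; this is exactly what makes $b_n=-y+m+1$, $N=a_1-b_n=2y+m-1$, and lets $c$ sweep the bottom block at times $m+1,\ldots,2m$. With the positions you name, $c$ would meet those entries at times $1,\ldots,m$, colliding with the $a_2$-sweep of the top block, and deleting them would not yield $E(P')=P$. Your "finishing" formula for $P'$ is consistent with the correct placement, so this may be a slip, but together with the points above it shows that the combinatorial bookkeeping which constitutes the actual proof has not been carried out: what remains in your proposal after removing the unjustified assertions is the easy part (that stripping the blocks from a partition already known to have the elongated structure leaves an Ulrich partition), while the hard part --- forcing that structure on an arbitrary Ulrich $P$ with $y>2m$, via induction on $n$ --- is exactly what is missing.
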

\begin{proof}
Inducting on $n$, by Lemma \ref{lem-2n1base} we may assume that any Ulrich partition of the form $$P'=(y'+2m,y'|b'_1,\ldots,b'_{n'}|{-y'})$$ with $n'<n$ is equal to $E^k(F_m)$ for some $k$.  In particular, $P'$ satisfies the dimension formula $$\dim P' = 2y' + m-1$$ and the $(b')$'s begin with a contiguous block $y'-1,\ldots,y'-l$ of length exactly $m$ or $m-1$.

\emph{Claim 1:} in the partition $P$ the time $t=1$\ intersection is $a_2b_1$, so $b_1 = y-1$.  Suppose this is not the case.  Then $b_n = -y+1$, and $a_1$ meets $b_n$ at time $2y+2m-1$.  By time $2y$ the $a_2$ and $c_1$ entries have already passed through the $B$-block, so all intersections for times $t\in [2y,2y+2m)$ must occur between $a_1$ and the $B$-block.  This gives that a contiguous block $B'=\{-y+2m,\ldots,-y+1\}\subset B$ of length $2m$ occurs in the $B$-block.  Since $a_2$ meets $B'$ for times in $[2y-2m,2y)$ and $c$ meets $B'$ for times in $[1,2m]$, it follows that the partition $$P'=(y+m,y-m|B\setminus B'|{-y+m})$$ is Ulrich.  By induction, $B\sm B'$ starts with a contiguous block $\{y-m-1,\ldots,y-m-l\}$ of length exactly $l \in \{m,m-1\}$.  In $P'$ the intersection at time $l+1$ must be between $c$ and some $b_0\in B\setminus B'$.  This is a contradiction, since in $P$ we find that $a_1$ meets $b_0$ at the same time as $a_2$ meets an entry of $B'$.  Therefore $b_1=y-1$.

Having established the claim, let $m_1\geq 1$ be the largest integer such that the contiguous block $B' = \{y-1,\ldots,y-m_1\}\subset B$.  Clearly $m_1\leq 2m$, since otherwise $a_1$ and $a_2$ would both intersect $B'$ at the same time.  An argument similar to the previous paragraph shows that in fact we must have $m_1<2m$.  At time $m_1+1$ the intersection must be $b_nc$; let $m_2\geq 1$ be the largest integer such that the contiguous block $B''=\{-y+m_1+m_2,\ldots,-y+m_1+1\}\subset B$.  Observe that $$\dim P = 2y+2m-m_1-1$$ since the last intersection is $a_1b_n$, so $P$ satisfies the dimension formula if and only if $m_1=m$; our eventual goal is to show that $m_1=m_2 = m$.

\emph{Claim 2: $m_1+m_2 = 2m$.} Let $t\in (m_1,2m]$.  When $a_1$ is at position $-y+t$, both $a_2$ and $c$ have finished intersecting the $B$-block.  Thus $-y+t\in B''$ for all $t\in (m_1,2m]$, and so $m_1+m_2\geq 2m$.  On the other hand, at time $t=2m+1$ we have an intersection $a_1b_1$, so $-y+2m+1\notin B''$.  Thus $m_1+m_2= 2m$.

\emph{Claim 3: $y> 2m+m_1$.} By assumption $y>2m$.  If $t\in (2m,2m+m_1]$ then $a_1$ meets $B'$ at time $t$, so it is not possible for the intersection $a_2c$ to happen at such a time.  Thus $y>2m+m_1$.

\emph{Claim 4: $m_1=m_2=m$.} Consider the partition $$P' = (y-m_1,y-2m-m_1|B\sm(B'\cup B'')|{-y+2m+m_1})$$ obtained by looking at the time $2m+m_1$ evolution $P(2m+m_1)$ and removing the contiguous blocks $B',B''$.  This makes sense since $y>2m+m_1$, and $P'$ is Ulrich because $P$ is Ulrich.  By induction, $P'$ satisfies the dimension formula $$\dim P' = 2(y-2m-m_1)+m-1.$$ On the other hand, the type of $P'$ is $(2,n-2m,1)$, so $$\dim P = \dim P'+6m = 2y+3m-2m_1-1.$$ Comparing this with our earlier expression for $\dim P$ gives $m_1=m$ and so $m_2=m$ as well.  This implies $P = E(P')$.
\end{proof}

\section{Ulrich partitions of type $(2,n,2)$}\label{sec-2n2}
In this section, we classify Ulrich partitions of type $(2,n,2)$. Throughout the section, we consider Ulrich partitions of the form $(a_1, a_2| b_1, \dots, b_n| c_1 , c_2)$, and normalize the evolution of the partition to subtract $1, 0, -1$ from the blocks. By symmetry, we may as well assume $a_1-a_2>c_1-c_2$.  We further normalize the positions
$$a_1 = y + s + r, \quad a_2 = y, \quad c_1 = - y, \quad \mbox{and} \quad c_2 = -y -s.$$
The intersection $a_2 c_1$ occurs at time $y$, the gap between the $a$-entries is $s+r$, and the gap between the $c$-entries is $s$.

\begin{definition}
Let $P_{u}$ be the partition of type $(2, 2u,2)$ given by
$$P_{u} := (6 u +5, 2u+1 | 2 u, 2 u -2, \dots, 4, 2, -1, -3, \dots, - 2 u +3, - 2u +1 | {- 2u -1}, - 6 u -3).$$ We observe that the subpartition $(a_2|B|c_1)$ is the Ulrich partition of type $(1,2u,1)$ corresponding to the subset of $[2u]$ consisting of even numbers in the proof of Theorem \ref{thm-1n1}.
\end{definition}

\begin{example}
We have $$P_1 = (11, 3|2, -1|{-3}, -9),$$
$$P_2 = (17, 5|4,2, -1, -3|{-5}, -15).$$ The time evolution diagram of $P_1$ was given in Example \ref{ex222}.  The time diagram for $P_2$ is Figure \ref{fig-242}.
\begin{figure}[htbp]
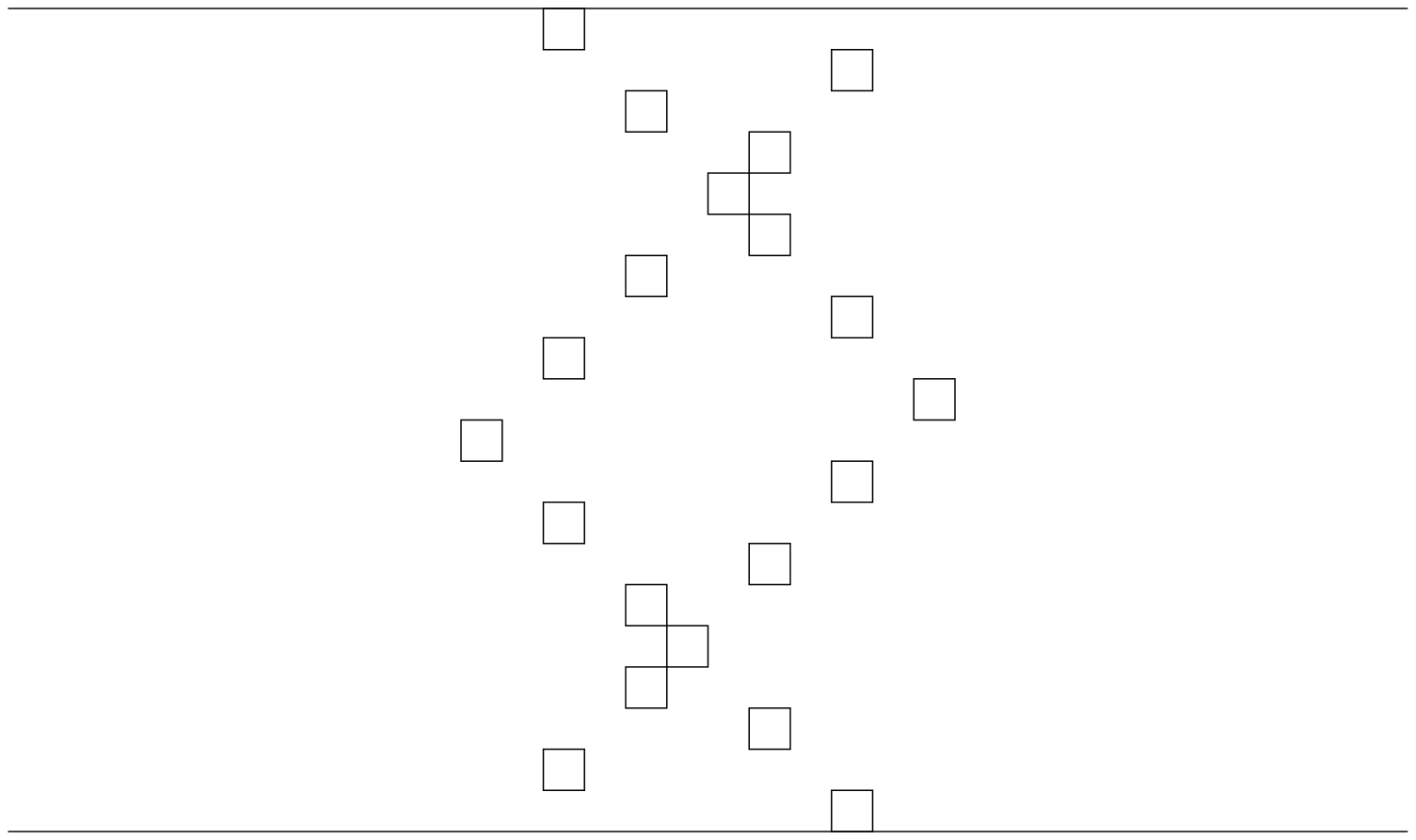
\caption{Time evolution diagram of the partition $P_2$ of type $(2,4,2)$.}\label{fig-242}
\end{figure}
\end{example}

The main theorem of this section asserts these are the only examples.

\begin{theorem}\label{thm-2n2}
If $P$ is an Ulrich partition of type $(2, n, 2)$, then $n= 2 u$ is even and up to symmetry $P= P_{u}$.
\end{theorem}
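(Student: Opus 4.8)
The plan is to mimic the structure of the classification in \S\ref{sec-1n2} (type $(2,n,1)$), but with the added complication of a second $c$-entry, and to use the trapezoid rule (Observation~\ref{obsTrapezoid}) and rectangle rule (Observation~\ref{obsRectangle}) as the main levers. First I would set up the normalization exactly as stated: $a_1 = y+s+r$, $a_2 = y$, $c_1 = -y$, $c_2 = -y-s$, with the convention $a_1 - a_2 > c_1 - c_2$, i.e.\ $r > 0$. The subpartition $(a_2\mid b_1,\dots,b_n\mid c_1)$ has the $a_2c_1$ intersection occurring at time $y$, and all intersections before time $y$ must be $a_2b$ or $bc_1$ intersections (since $a_1$ hasn't reached the $B$-block, and $c_2$ lies below $c_1$). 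As in Lemma~\ref{lem-2n1base}, this forces $(a_2\mid B\mid c_1)$ to behave like an Ulrich partition of type $(1,2u,1)$ on the times $[1,y]$; combined with the parity constraint from Lemma~\ref{lem-congruence} (all of $a_1, a_2, c_1, c_2$ are congruent mod $2$, so $n = y - 1$ forces exactly the even or odd positions of $[y-1]$ to appear in $B$), this is where the integer $u$ and the evenness of $n$ will emerge. The hard part is to pin down which subset of $[y-1]$ the $B$-block is, and to control what happens in the window of times after $y$ when $a_1, c_2$ do their work.

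The key steps, in order: (1) Show $b_1 = y-1$ and $b_n = -y+1$ are forced --- otherwise a whole contiguous block appears at one end of $B$ which, by the usual ``$a_1$ passes through $B'$'' argument (cf.\ the proof of Theorem~\ref{thm-2n1}), produces a multiple intersection. (2) Determine the value of $s$: the $c_1c_2$ gap is $s$, and I would argue using the trapezoid rule applied to the dual $(A^*\mid B^*\mid C^*)$ that $s$ is tightly constrained --- in fact the symmetry considerations ($P$ of type $(2,n,2)$ has a symmetric dual of the same type) together with the uniqueness philosophy of \S\ref{sec-beta2} suggest $s = 2u+1$ will be forced. (3) Determine $r$: examine the times immediately after the $a_2c_1$ intersection. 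At time $y+1$ either $a_1$ or $c_2$ must produce the intersection; tracking the forced structure and using the dimension count $N = \sum_{k<h}l_kl_h = 2\cdot 2u + 2u\cdot 2 + 2\cdot 2 = 8u + 4$ together with the observation that the last intersection is $a_1b_n$ (or $a_1c_2$) gives an equation pinning $r$ down to $4u+4$. (4) Conclude that the $B$-block consists exactly of $\{2u, 2u-2, \dots, 2, -1, -3, \dots, -2u+1\}$ --- the even numbers from $2$ to $2u$ and the odd numbers from $-1$ to $-2u+1$ --- by showing inductively that no other configuration satisfies the ``one intersection per time'' requirement, using the greedy-type reasoning that every time-slot with no scheduled intersection forces a specific new $B$-entry.

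I expect step (2)--(3), nailing down $s$ and $r$ simultaneously, to be the main obstacle: unlike the type $(2,n,1)$ case there is a genuine two-parameter family of normalizations $(s,r)$ to exclude, and the parity/congruence arguments alone won't suffice. The cleanest route is probably to feed the early-time structure (from steps 1 and 4's inductive skeleton) and the late-time structure (via the dual, using that $P^*$ is again type $(2,n,2)$ and hence, by an inductive hypothesis on $n$ or a direct symmetry argument, also of the claimed form) into the trapezoid rule: if one can locate entries $a\in A$, $a^*\in A^*$, $c\in C$, $c^*\in C^*$ with $a^* - a = c - c^*$, then $N+1 = a^* - c$ gives a numerical identity forcing $(s,r) = (2u+1, 4u+4)$ and simultaneously forcing $|A| = |C| = 2$ (no further entries). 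Once $s$, $r$, and the $B$-block are all determined, the partition is exactly $P_u$, and verifying $P_u$ is Ulrich is the computation already sketched in the definition (the $(1,2u,1)$ subpartition handles the middle times, and $a_1, c_2$ sweep out the beginning and ending blocks of times in the manner of the elongation argument of Lemma~\ref{lem-2n1}). The final uniqueness-up-to-symmetry statement then follows since the normalization $a_1 - a_2 > c_1 - c_2$ was chosen WLOG, the reverse inequality giving the symmetric partition $P_u^s$.
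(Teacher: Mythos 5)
Your outline breaks down at the very first structural claim: you assert that every intersection before time $y$ (the time of the $a_2c_1$ meeting) is of type $a_2b$ or $bc_1$ ``since $a_1$ hasn't reached the $B$-block,'' and from this you conclude that $(a_2|B|c_1)$ is a type $(1,n,1)$ Ulrich partition with $n=y-1$. But $a_1$ reaches $b_1=y-1$ at time $s+r+1$, and nothing rules out $y>s+r+1$ a priori; in that regime $a_1$ (and later $c_2$) interleave with the $B$-block long before time $y$, and the middle block is much longer than $y-1$. This is not a fringe worry: in the type $(2,n,1)$ classification of \S\ref{sec-1n2} exactly such configurations do occur (the elongations $E^k(F_m)$, where $a_1$ enters the $B$-block before the $a_2c$ meeting), so they cannot be dismissed softly. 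The paper's proof of Theorem \ref{thm-2n2} spends essentially all of its effort on this case: after Lemmas \ref{lem-afirst}--\ref{lem-boundm} and the dimension formula, Lemma \ref{lem-fundamental2n2} handles $y\le s+m$ (your scenario, giving $P_{(s-2)/4}$), while Lemma \ref{lem-mainMeat} shows that if $y>s+m$ then $r\mid s+m$ and $P$ is a $K(q,r,m)$-elongation of a shorter Ulrich partition of the same type; the theorem then follows by induction because the smaller partition must be some $P_u$, which forces $r=2$, $m=1$, and $r=2$ cannot divide $s+m=s+1$ with $s$ even. Your proposal contains no mechanism for excluding these elongation-type partitions; the vague appeal to the trapezoid rule is not developed into a concrete choice of $a,a^*,c,c^*$, and it is not clear it can see the early-time interleaving at all.

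There are also concrete numerical slips that suggest the intended endpoint is off: $b_n=-y+1$ is \emph{not} forced (for $P_u$ one has $m=1$ and $b_n=-y+2$; cf.\ Observation \ref{obs-dim2n2}), and for $P_u$ the normalization gives $s=4u+2$, $r=2$, $y=2u+1$, not $s=2u+1$, $r=4u+4$ as you predict (you appear to be conflating $s$ with $y$ and $r$ with $s+r$). The parity argument you cite from Lemma \ref{lem-congruence} also does not by itself force $B$ to be ``the even or odd positions'': in the $(1,n,1)$ classification any subset of $[n]$ occurs, and the rigid alternating structure of $B$ (and hence the evenness of $n$) only comes out of the detailed step-by-step analysis in Lemma \ref{lem-fundamental2n2}, which ends by showing $2\in B$, i.e.\ $y$ odd. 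So the proposal correctly identifies the easy half of the argument but is missing the central idea (elongation plus the divisibility obstruction) needed to finish.
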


We first show that these examples are in fact Ulrich.

\begin{lemma}
The partition $P_{u}$ is Ulrich. In particular, every flag variety $F(2, 2n+2; 2n+4)$ admits a Schur bundle which is Ulrich for $\cO(1)$.
\end{lemma}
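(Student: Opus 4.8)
The statement to prove is that the partition
$$P_u = (6u+5, 2u+1 \mid 2u, 2u-2, \dots, 4, 2, -1, -3, \dots, -2u+3, -2u+1 \mid {-2u-1}, -6u-3)$$
of type $(2,2u,2)$ is Ulrich.  The dimension of a partition of type $(2,2u,2)$ is $N = \sum_{k<h} l_k l_h = 2\cdot 2u + 2\cdot 2 + 2u\cdot 2 = 8u+4$, so by definition it suffices to show that for every $t\in[1,8u+4]$ the shifted partition $P_u(t)$ (normalized by subtracting $1,0,-1$ from the three blocks as in \S\ref{sec-beta2}) has exactly one repeated entry, equivalently that every pair of entries from different blocks collides at a distinct time in $[1,N]$.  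The plan is to exhibit this bijection between pairs-of-entries and times $[1,N]$ explicitly, grouping the collisions into six consecutive time intervals analogous to the structure of the time evolution diagram in Figure \ref{fig-242}.

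First I would record the collision-time formulas under the $1,0,-1$ normalization: an $a$-entry at position $a$ and a $b$-entry at position $b$ collide at time $a-b+1$; an $a$-entry and a $c$-entry at position $c$ collide at time $\frac12(a-c)$; a $b$-entry and a $c$-entry collide at time $b-c-1$.  Writing $a_1 = 6u+5$, $a_2 = 2u+1$, $c_1 = -2u-1$, $c_2 = -6u-3$, I would then observe that the subpartition $(a_2 \mid B \mid c_1)$ is the type $(1,2u,1)$ Ulrich partition of Theorem \ref{thm-1n1} corresponding to the subset of even integers in $[2u]$ — this is pointed out in the definition of $P_u$ — so all collisions among $a_2$, $B$, and $c_1$ already occupy the interval $[1,2u+1]$ bijectively.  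It remains to fit in the collisions involving $a_1$ and $c_2$ with each of the other entries.  Carefully tracing positions, I expect the following partition of $[1,N]$: for $t\in[1,2u+1]$ the collisions are the ones among $a_2,B,c_1$ just described; for $t\in[2u+2,4u+2]$ the entry $a_1$ sweeps through the $B$-block and also meets $c_1$ (there are $2u+1$ such collisions: $2u$ of type $a_1b$ plus the single $a_1c_1$ collision, fitting because the gap $a_1-c_1 = 8u+6$ makes $\frac12(a_1-c_1) = 4u+3$... I would need to recheck the exact placement); for the remaining times $c_2$ sweeps the $B$-block and meets $a_1, a_2$; and the final collision $a_1c_2$ at time $N = 8u+4$ since $\frac12(a_1-c_2) = \frac12(12u+8) = 6u+4$ — here I would need to recompute to land exactly on $8u+4$, which forces me to double-check the normalization shift.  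The key routine computation is verifying that within each interval the collision times are distinct consecutive integers and that the intervals abut without gap or overlap; this is a matter of plugging the arithmetic-progression positions of $B$ into the three collision-time formulas and simplifying.

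The main obstacle, and the step deserving the most care, is the bookkeeping around $c_2$ (and symmetrically $a_1$): because $c_2 = -6u-3$ is far below the $B$-block and $B$ is not a contiguous block but the even integers in $[-2u+1, 2u]$ with a gap at $0$ and at $-1$, the $c_2b$ collision times $b - c_2 - 1$ form an arithmetic progression with common difference $2$ except for a jump across the gap; I must confirm that the $c_2 a_2$ collision (time $\frac12(a_2 - c_2) = \frac12(8u+4) = 4u+2$) and the $c_2 a_1$ collision fill exactly the two holes left by that gap, so that the union is a full consecutive range.  Once the six intervals are shown to tile $[1,8u+4]$ with each collision accounted for exactly once, the partition is Ulrich by the criterion of \S\ref{sec-comb}, and the geometric consequence — that $F(2,2n+2;2n+4)$ (taking $u = n+1$, noting that $P_u$ has type $(2,2u,2)$ corresponding to $F(2, 2u+2; 2u+4)$) carries a Schur bundle Ulrich for $\OO(1)$ — follows immediately from the dictionary $\lambda \mapsto \lambda + \rho$ of Section \ref{sec-comb}.
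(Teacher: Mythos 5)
Your overall plan---tile $[1,N]=[1,8u+4]$ by the $N$ pairwise collision times---is a legitimate way to prove the lemma, but the schedule you propose is not the correct one, and as written the verification would fail. First, your collision-time formulas are off by one: with the paper's normalization (subtract $1,0,-1$ from the three blocks) an $a$-entry and a $b$-entry meet at time $a-b$, and a $b$-entry and a $c$-entry at time $b-c$ (check against Figure \ref{fig-242}, where $a_2=5$ meets $b=4$ at $t=1$, not $t=2$). Second, the subpartition $(a_2|B|c_1)$ has type $(1,2u,1)$, hence dimension $4u+1$, so its collisions occupy $[1,4u+1]$, not $[1,2u+1]$. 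Third, the placement of $a_1$ and $c_2$ is wrong: $a_1=6u+5$ does not start sweeping the $B$-block at time $2u+2$; it meets $c_1$ at time $4u+3$, first meets the $B$-block (at $b=2u$) at time $4u+5$, and meets $c_2$ at time $\tfrac12(12u+8)=6u+4$, not at $N$; the final collision is $a_1$ with $b_n=-2u+1$ at time $8u+4$. The correct schedule is: $[1,4u+1]$ from the subpartition; $a_2c_2$ at $4u+2$; $a_1c_1$ at $4u+3$; then $c_2$ and $a_1$ alternately meet the $B$-block at times $[4u+4,6u+3]$; $a_1c_2$ at $6u+4$; and the remaining $B$-collisions of $c_2$ and $a_1$ fill $[6u+5,8u+4]$. (Also, in the translation to geometry you should take $u=n$: type $(2,2n,2)$ corresponds to $F(2,2n+2;2n+4)$, not $u=n+1$.)

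You could also avoid most of this bookkeeping the way the paper does: after observing that $(a_2|B|c_1)$ is Ulrich of dimension $4u+1$, so every time in $[1,4u+1]$ has a collision, and that $a_2c_2$ occurs at time $4u+2$, note that $P_u$ is its own symmetric dual (\S\ref{ssec-symmetryDuality}); the symmetry $t\mapsto N+1-t$ then produces a collision at every time in $[4u+3,8u+4]$, which is all the Ulrich condition requires. If you prefer your direct tiling, rebuild it along the lines above with the corrected collision-time formulas.
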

\begin{proof}
The dimension of $P_u$ is $N = 8u+4$.  For times $t\in[1,4u+1]$, there are intersections from the Ulrich subpartition $(a_2|B|c_1)$ of type $(1,2u,1)$.  At time $4u+2$ we have the intersection $a_2 c_2$.  As $P_u$ is its own symmetric dual, times in $[4u+3,8u+4]$ also have intersections.
\end{proof}

The plan of the proof of Theorem \ref{thm-2n2} is similar to the classification of partitions of type $(2,n,1)$ in the previous section.  We will first show that if $P$ is an Ulrich partition with $y\leq s$ then $P$ is a known example.  We next show that if instead $y>s$ then $P$ can be obtained from a shorter example by a process of elongation.  However, we will finally show that elongations of the known examples are never Ulrich; this final step is the primary difference from the strategy in the $(2,n,1)$ case, where such elongations were possible.

Before beginning the proof in earnest, we establish a couple of lemmas which are true for arbitrary Ulrich partitions of type $(2,n,2)$.  Let $P$ be an Ulrich partition of this type, normalized as in the first paragraph of the section.  (In particular, recall that $s+r = a_1-a_2>c_1-c_2 = s$.)

\begin{lemma}\label{lem-afirst}
In the Ulrich partition $P$, the intersection at time $t=1$ is $a_2  b_1$.
\end{lemma}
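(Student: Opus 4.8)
The statement to prove is that in an Ulrich partition $P=(a_1,a_2|b_1,\dots,b_n|c_1,c_2)$ of type $(2,n,2)$, normalized so that $a_2=y$, $c_1=-y$, the intersection occurring at time $t=1$ is $a_2b_1$. The general principle is the one already used repeatedly in \S\ref{sec-beta2}: at time $t=1$ two adjacent blocks must meet, so the $t=1$ intersection is one of $a_2b_1$, $b_nc_1$, or---if $n=0$, which does not occur here since $n\geq 1$---possibly $a_2c_1$. By the normalization $a_1-a_2>c_1-c_2$, i.e.\ the $A$-gap strictly exceeds the $C$-gap, we have broken the symmetric-dual symmetry, and the goal is to show the remaining possibility $b_nc_1$ leads to a contradiction.

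First I would suppose for contradiction that the $t=1$ intersection is $b_nc_1$, so $b_n=-y+1$. The key is to extract an Ulrich subpartition and invoke the classification of type $(1,n,1)$ partitions (Theorem~\ref{thm-1n1}) or a gap/parity argument as in Lemma~\ref{lem-congruence}. Concretely: by Lemma~\ref{lem-congruence} applied to the first and last blocks (which are distance $2$ apart in the block ordering), all entries of $A$ and all entries of $C$ have the same parity; likewise, at the early times the only collisions possible alternate in a controlled parity pattern. Since $c_1$ meets the $B$-block starting at time $1$ and $c_2=c_1-s$ lags behind, while $a_2$ must eventually clear the $B$-block before meeting $c_1$ at time $y$ and then $c_2$ later, one compares the time $a_2$ needs to traverse the $B$-block with the time $c_1$ needs, exactly as in the proofs of Propositions~\ref{prop-threeStep} and~\ref{prop-fourStep}. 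The cleanest route is probably: from $b_n=-y+1$, deduce that a contiguous block $B'=\{-y+1,-y+2,\dots\}$ of $C$-type entries sits at the end of the $B$-block, argue that $a_2$ meets $B'$ symmetrically, peel off $B'$ to produce a smaller Ulrich partition $(a_1',a_2'|B\setminus B'|c_1',c_2')$ of type $(2,n',2)$ with a strictly shorter $B$-block, and then run an induction on $n$ together with the observation that the peeled partition must \emph{again} have its $t=1$ intersection of type $a_2b_1$ by the inductive hypothesis---yielding a contradiction at the first intersection time of the peeled partition, where instead $a_1$ meets a $B'$-entry at the same moment $a_2$ (shifted) meets a $B\setminus B'$-entry. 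This is the same ``Claim 1'' mechanism as in the proof for type $(2,n,1)$ in \S\ref{sec-1n2}.

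Alternatively, and perhaps more economically for this first lemma of the section, one can avoid induction: if $b_nc_1$ occurs at $t=1$, then because $a_1-a_2>c_1-c_2$ the entry $a_1$ is ``farther ahead'' than the gap structure on the $C$-side, and one shows directly that the intersection $a_1b_1$ (or $a_1c_2$) and the intersection $a_2c_1$ (or a $c_1$--$B$ intersection) would be forced to occur at the same time, violating the Ulrich condition---precisely in the spirit of Observation~\ref{obsTrapezoid} (the trapezoid rule) or the simple rectangle-type arguments. I would set $b_n=-y+1$, track the positions of $a_1,a_2,c_1,c_2$ at the relevant times, write down the two meeting times in terms of $y,s,r$ and the extremal $B$-entries, and observe that the strict inequality $r>0$ forces an overlap. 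The main obstacle is bookkeeping: choosing the right pair of intersections to compare so that the inequality $r>0$ (equivalently $a_1-a_2>c_1-c_2$) is exactly what produces the collision, and making sure the argument does not secretly require $n$ large. I expect the resolution to mirror the ``time it takes $d_1$ to intersect $c_1$ is longer than the time it takes $b$ to intersect $c_l$'' step from the proof of Proposition~\ref{prop-threeStep}, adapted to compare the $A$-side traversal with the $C$-side traversal.
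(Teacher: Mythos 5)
Neither of your two sketches actually contains the argument, and the one you lean on most heavily does not transfer to this setting. The paper's proof is a short direct computation: assuming the $t=1$ intersection is $b_nc_1$, let $m\geq 1$ be maximal with $\{-y+1,\ldots,-y+m\}\subset B$. Since $c_1$ occupies the times $1,\ldots,m$ meeting this bottom block, $y-i\notin B$ for $i\in[m]$; and $m\leq s$, since otherwise $c_2$ would meet $-y+1$ at the same time as $c_1$ meets $-y+s+1$. Now look at the time $t_0=2y+s+r-m-1$ when $a_1$ sits at the empty position $-y+m+1$: one checks $c_2(t_0)=y-m-1+r\geq y-m$ (here the normalization $r\geq 1$ enters), hence $c_2(t_0)\notin B$, while $c_1(t_0)>y$ and $a_2(t_0)\leq -y$. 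So no intersection occurs at $t_0$, yet $a_1$ still has the bottom block of $B$ below it, so $t_0<N$ — contradicting the Ulrich condition. The contradiction is a \emph{missing} intersection at a carefully chosen time, not a forced coincidence.

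Your primary route — peel off the bottom contiguous block $B'$ and induct on $n$, ``as in Claim~1 of \S\ref{sec-1n2}'' — has a genuine gap: in type $(2,n,2)$ the block $B'$ meets $c_1$ at times $1,\ldots,m$, but it also meets $c_2$ at the intermediate times $s+1,\ldots,s+m$ and meets $a_2$, $a_1$ near times $2y$ and $2y+s+r$. Hence the intersections of $P$ not involving $B'$ do not form a consecutive range of times, and removing $B'$ (with any time shift) does not yield an Ulrich partition; moreover no inductive classification of smaller $(2,n',2)$ partitions is available at this point, since this lemma is an input to that classification and is proved for all $n$ without induction. Your alternative route only conjectures the shape of the contradiction: the coincidences you name ($a_1b_1$ with $a_2c_1$, or $a_1c_2$ with a $c_1$--$B$ intersection) are not forced by $b_n=-y+1$ alone, and you never identify the quantities that make the argument work — the maximal bottom-block length $m$, the constraints $m\leq s$ and $m<y$, and the specific time $t_0$ at which the diagram goes silent. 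As written, the proposal is a plan rather than a proof.
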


\begin{proof}
If not, then the intersection at time $t=1$ is of the form $b c_1$. Let $m\geq 1$ be the maximal number so that $\{-y+1,\ldots,-y+m\}\subset B$, so $-y+m+1\notin B$.  This implies that $y-i\notin B$ for $i\in [m]$.  Note that $m\leq s$.  At the time $t_0 = 2y+s+r-m-1$ when $a_1(t_0) = -y+m+1$ we have $c_2(t_0)=y-m-1+r\geq y-m$, so $c_2(t_0)\notin B$.  Furthermore, $c_1(t_0)=y+s+r-m-1>y$ and $a_1(t_0) = -y-s-r+m+1<-y$.  We conclude that there is no intersection at time $t_0$ even though $a_2$ has not yet passed through the $B$-block.
\end{proof}

Lemma \ref{lem-afirst} applied to the symmetric dual implies that the last intersection must be $a_1  b_n$.  From now on, we let $m\geq 1$ be the largest number such that $\{y-1,\ldots,y-m\}\subset B$.

\begin{lemma}\label{lem-boundm}
We have $m<\min\{r,y-1\}$.
\end{lemma}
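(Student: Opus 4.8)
The plan is to argue by contradiction, using the same kind of timing arguments that proved Lemma \ref{lem-afirst}. We have $P=(y+s+r,y|b_1,\ldots,b_n|{-y},-y-s)$ normalized so that the first intersection is $a_2b_1$ (hence $b_1=y-1$), the last is $a_1b_n$, and $\{y-1,\ldots,y-m\}\subset B$ with $y-m-1\notin B$. I will establish the two inequalities $m<r$ and $m<y-1$ separately.

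First I would prove $m<r$. Suppose instead $m\geq r$. The entry $a_1$ is at position $y-i$ at time $s+r-i+(y-y)$; more precisely, normalizing time via the subtraction of $1,0,-1$, the entry $a_1$ meets position $p$ of the $B$-block at time $a_1-p = y+s+r-p$, while $a_2$ meets position $p$ at time $y-p$. So $a_1$ passes through the block $\{y-1,\ldots,y-m\}$ during the time interval $[s+r+1,s+r+m]$, while $a_2$ passes through it during $[1,m]$. If $m\geq r$ then these intervals overlap: for instance at time $t=s+r+1$, we have $a_1$ at position $y-1$ and $a_2$ at position $y-s-r-1$; but more to the point, comparing with the moment $a_2$ sits at position $y-s-r-1$, one sees a double intersection. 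The cleanest version: at time $t_0=s+1$ we have $a_2(t_0)=y-s-1$ and $a_1(t_0)=y+r-1$; if $m\geq r$ then $y+r-1\in\{y-1,\ldots,y-m\}$ is false, so instead I would compare $a_1$ hitting the top of the contiguous block at time $s+r+1$ with $c_1$ or another $b$; the point is that since $c_1-c_2=s$, the entry $c_2$ lags $c_1$ by exactly $s$, and one checks that if $m\geq r$ the entry $a_1$ meets some $b_i$ with $b_i\geq y-r+1$ at the same time that $a_2$ meets $b_{i'}$ with $b_{i'}=b_i-r$, both of which lie in the contiguous top block; this forces a multiple intersection, contradicting Ulrich. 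So $m<r$.

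Next, $m<y-1$. Suppose $m\geq y-1$, i.e. the contiguous block $\{y-1,\ldots,y-m\}$ contains all of $\{y-1,\ldots,1\}$ and possibly more. Then $a_2$ meets these positions at times $1,\ldots,y-1$, and at time $y$ it meets $c_1$ (by our normalization), and then continues; but the entry $c_1$ starting at $-y$ also meets the contiguous block, at times $-y-(y-i)\cdot(-1)$... rather, $c_1$ at position $-y+t$ meets $b=y-i$ at time $t=2y-i$, which for $i\leq m$ is at least $2y-m\leq y+1$. Meanwhile $a_2$ after time $y$ is below position $0$ and meets the lower portion of $B$. The contradiction I expect: since $m\geq y-1$, the position $0$ or $-1$ or some small position is forced into $B$, but then at the time $a_2$ reaches it (some time $\geq y$) there is a conflict with the already-scheduled $a_2c_1$ intersection at time $y$, or with a $c_1b$ intersection. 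Concretely, the subpartition $(a_2|B|c_1)=(y|B|{-y})$ would have to be Ulrich of type $(1,\ell,1)$ by the standard argument (the $a_2c_1$ intersection precedes the first $a_1b$ and last $bc_2$ intersections), and by Theorem \ref{thm-1n1} this forces the $B$-block to have exactly $y-1$ entries with exactly one of $p,-p$ in $B$ for each $p\in[y-1]$; then $\{y-1,\ldots,1\}\not\subset B$ unless $B=\{y-1,\ldots,1\}$, in which case $m=y-1$ exactly, still contradicting $m\geq y-1$ being strict — wait, that gives $m=y-1$ which we must also exclude. In the boundary case $m=y-1$ one has $n$ could be larger than $y-1$, so $(a_2|B|c_1)$ need not be the whole of the known $(1,n,1)$ partition; here I would use that the lowest entry of $B$ is then $\leq -y+ \text{something}$ and rerun a timing argument showing $a_2$, after passing $c_1$, collides with the $B$-block at a time already occupied.

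I expect the main obstacle to be the $m<y-1$ half, specifically the boundary case $m=y-1$: one cannot simply invoke Theorem \ref{thm-1n1} since the $B$-block may extend below the contiguous top block, so a genuine timing/parity argument is needed, tracking where $a_2$, $c_1$, and $c_2$ sit after time $y$ and showing that the contiguous block $\{y-1,\ldots,1\}$ forces $a_2$ (now at positions $\leq -1$) to revisit a scheduled intersection time, or forces $c_1$ and the top block to produce a multiple intersection with the $a_2c_1$ collision. The inequality $m<r$ should be routine by comparing the two arithmetic progressions of collision times of $a_1$ and $a_2$ with the contiguous block.
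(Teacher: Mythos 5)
Both halves of your plan have genuine gaps; neither the $m<r$ argument nor the $m<y-1$ argument goes through as sketched. For $m<r$: the double intersection you predict never materializes. With the normalization you use, $a_2$ traverses the contiguous block $\{y-1,\dots,y-m\}$ at times $1,\dots,m$ while $a_1$ traverses it at times $s+r+1,\dots,s+r+m$; these schedules conflict only when $m\ge s+r+1$, and two entries met simultaneously by $a_1$ and $a_2$ differ by $s+r$, not by $r$ (your ``$b_{i'}=b_i-r$'' is off). So comparing the two arithmetic progressions of $a_1$- and $a_2$-collision times says nothing about $m$ versus $r$, and the range $r\le m\le s+r$ is untouched. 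The actual obstruction has to involve $c_2$ and the dual of Lemma \ref{lem-afirst}: since $a_2$ occupies times $1,\dots,m$, the positions $-y+1,\dots,-y+m$ carry no $b$'s, so $b_n\ge -y+m+1$; if $m>r$ the collision $b_1c_2$ (at time $2y+s-1$) would then occur after $a_1b_n$, contradicting that the last intersection is $a_1b_n$; and the boundary case $m=r$ is eliminated by analyzing time $m+1$, where the forced collision $c_1$ with $-y+r+1\in B$ makes $c_2$ meet $y-1$ at the same moment ($2y+s-1$) as $a_1$ meets $-y+r+1$.

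For $m<y-1$: you cannot invoke Theorem \ref{thm-1n1}, because the subpartition $(a_2|B|c_1)$ being Ulrich of type $(1,n,1)$ is exactly what is not known at this stage (if $y$ is large relative to $s$, the entries $a_1$ and $c_2$ interact with $B$ before $a_2$ and $c_1$ finish, so the subpartition need not be Ulrich); indeed that statement is essentially the conclusion of the whole $(2,n,2)$ classification. You acknowledge that the decisive boundary case $m=y-1$ is unresolved, and that is where all the work lies. What is needed there: first, if $m=y-1$ then $B$ is exactly $\{y-1,\dots,1\}$ (a negative entry $b$ of $B$ would be met by $c_1$ at time $y+b\in[1,y-1]$, a time already occupied by $a_2$ on the top block, and $0\in B$ would give a triple coincidence at time $y$); then the dimension count $\dim P=s+r+y-1=4y$ pins down $s+r=3y+1$; the intersection at time $2y$ can only be $a_2c_2$, forcing $s=2y$, $r=y+1$, hence $P=(4y+1,y\,|\,y-1,\dots,1\,|\,{-y},-3y)$; finally $a_1$ and $c_2$ meet at position $(y+1)/2$, which for odd $y\ge3$ lies in $B$ (a triple intersection) and for even $y$ violates parity. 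None of these quantitative steps appear in your sketch, so the lemma is not yet proved.
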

\begin{proof}
First we show that $m<y-1$.  Clearly $m\leq y-1$ since the intersection $a_2c_1$ happens at time $y$.  If $m= y-1$ then $B$ is the contiguous block $\{y-1,\ldots,1\}$.  By Lemma \ref{lem-afirst} the last intersection in $P$ is between $a_1$ and $1\in B$, so $\dim P = s+r+y-1 = 4y$.  At time $2y$ we have $a_2(2y) = -y$ and $c_1(2y) = y$, so the intersection must be of the form $ac$; since $a_2-c_2< a_1-c_2$, it must be $a_2c_2$.  Thus $s=2y$, $r = y+1$, and $$P = (4y+1,y|y-1,\ldots,1|{-y},-3y).$$  But then $a_1$ and $c_2$ meet at position $(y+1)/2$, which is in $B$ if $y\geq 3$. Parity is violated if $y=2$, so we conclude $m<y-1$.

Next we show $m<r$.  If $m>r$ then the last intersection in $P$ is $b_1c_2$, contradicting Lemma \ref{lem-afirst}.  Suppose $m=r$.  At time $m+1$ the intersection is one of $bc_1$, $a_1b$, or $a_2c_1$.  If it is $a_2c_1$ then $m=y-1$ and we are done by the previous paragraph.  If the intersection is $bc_1$ then $-y+r+1\in B$ and $c_2$ meets $y-1\in B$ when $a_1$ meets $-y+r+1\in B$, a contradiction.  Finally, if the intersection is $a_1b$ then again the last intersection is of type $bc$, violating Lemma \ref{lem-afirst}.
\end{proof}

\begin{observation}\label{obs-dim2n2}
Combining Lemma \ref{lem-afirst} and $\ref{lem-boundm}$, we have $b_n = -y+m+1$ for any Ulrich partition $P$ of type $(2,n,2)$, and thus we have the dimension formula $$\dim P =2y+s+r-m-1.$$
\end{observation}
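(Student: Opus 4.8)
The plan is to reduce everything to the single claim $b_n=-y+m+1$; the dimension formula is then immediate, since by Lemma~\ref{lem-afirst} applied to the symmetric dual of $P$ the last intersection of $P$ is $a_1b_n$, occurring at time $a_1-b_n=(y+s+r)-(-y+m+1)=2y+s+r-m-1=\dim P$.

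First I would record the consequences of the two cited lemmas that make $b_n$ accessible. By Lemma~\ref{lem-afirst} we have $b_1=a_2-1=y-1$, so $m\geq 1$ is defined; moreover for each $t\in[1,m]$ the entry $a_2$ sits at position $y-t\in\{y-1,\dots,y-m\}\subseteq B$, which is therefore the unique intersection at time $t$. By Lemma~\ref{lem-boundm}, $m<r$ and $m<y-1$. The bound $m<r$ (together with $s\geq 1$) gives $a_1(m+1)=y+s+r-m-1\geq y+1$, so at time $m+1$ the entry $a_1$ lies strictly above all of $B$ and above $c_1(m+1)$ and $c_2(m+1)$, excluding every intersection involving $a_1$; the bound $m<y-1$ gives $a_2(m+1)=y-m-1>-y+m+1=c_1(m+1)$, excluding the $a_2$–$c_1$ intersection (which in fact only happens at the later time $y$); and a routine check of the remaining within-block types and the $a_2$–$c_2$ type (where $a_2(m+1)>0>c_2(m+1)$) eliminates those as well. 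Since $y-m-1\notin B$, the only intersection types left available at time $m+1$ are $B$–$c_1$ and $B$–$c_2$.

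Next I would show no entry of $B$ is smaller than $-y+m+1$, which will simultaneously finish the previous step. Suppose $b'\in B$ with $b'<-y+m+1$. Because $B$ lies above the $C$-block, $b'>c_1=-y$, so $b'\in[-y+1,-y+m]$ and $y+b'\in[1,m]$. But at time $y+b'$ the entry $c_1$ occupies position $b'$, creating a $B$–$c_1$ intersection distinct from the $a_2$–$B$ intersection already present at that time (distinct because $b'\neq 0$, as $b'=0$ would force $y-1<m$); this contradicts the Ulrich property. The same computation kills the $B$–$c_2$ possibility at time $m+1$: an element $-y-s+m+1\in B$ would either lie in $[-y+1,-y+m]$ when $s\leq m$, contradicting the previous sentence, or satisfy $-y-s+m+1\leq -y$ when $s\geq m+1$, impossible as every element of $B$ exceeds $c_1=-y$. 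Hence at time $m+1$ the intersection must be $B$–$c_1$, forcing $-y+m+1\in B$, and combined with the lower bound this yields $b_n=\min B=-y+m+1$ and the dimension formula.

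I expect no real obstacle here: the argument is an elementary but slightly fussy inventory of intersection types at the two distinguished times $t=m+1$ and $t=y+b'$. The one point needing care is the $B$–$c_2$ subcase at time $m+1$, which is why the lower bound on $\min B$ in the third paragraph has to be developed in tandem with, rather than after, the time-$(m+1)$ analysis; everything else is forced by the inequalities $s\geq 1$, $m<r$, $m<y-1$ already in hand.
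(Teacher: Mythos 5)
Your proposal is correct, and it is exactly the routine verification the paper has in mind: the paper states this as an unproved Observation, having already noted (right after Lemma \ref{lem-afirst}) that the symmetric dual forces the last intersection to be $a_1b_n$, and your time-$(m+1)$ case analysis together with the exclusion of $B$-entries in $(-y,\,-y+m+1)$ is the natural way to combine Lemmas \ref{lem-afirst} and \ref{lem-boundm}, in the same style as the arguments in their proofs. All the inequalities you invoke ($s\geq 1$, $m<r$, $m<y-1$, and the two cases $s\leq m$ versus $s\geq m+1$ for ruling out $c_2(m+1)\in B$) check out.
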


Next we establish the fact that if the intersection $a_2c_1$ happens before $a_1$ or $c_2$ meet the $B$-block then $P$ is a known example.

\begin{lemma}\label{lem-fundamental2n2}
Let $P$ be an Ulrich partition of type $(2,n,2)$ normalized as in this section.  If $y\leq s+m$ then $P = P_{\frac{s-2}{4}}$.
\end{lemma}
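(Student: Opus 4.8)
The plan is to show that the hypothesis $y\le s+m$ forces $P$ to have a very rigid structure: the intersection $a_2c_1$ (at time $t=y$) must occur so early that $a_1$ and $c_2$ cannot yet have entered the $B$-block, so the subpartition $(a_2|b_1,\dots,b_n|c_1)$ must itself be Ulrich of type $(1,n,1)$. By the classification in Theorem \ref{thm-1n1}, this pins down $n$, the $b_i$'s, and the value of $y$ relative to $n$. We then have only the positions of $a_1$ and $c_2$ left as free parameters, and the remaining intersections $a_1c_2$, $a_1B$, $c_2B$ must fill in the rest of the time interval exactly once each; chasing these constraints (together with the gap inequality $s+r>s$, Lemma \ref{lem-afirst}, and Lemma \ref{lem-boundm}) should determine $s$ and $r$ and exhibit $P$ as $P_{(s-2)/4}$.

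Concretely, here is the order of steps I would carry out. First, I would argue that under $y\le s+m$ the entry $a_1$ meets the $B$-block only at times $\ge y$: since $a_1 = y+s+r$ and the top of the $B$-block is $b_1 = y-1$ by Lemma \ref{lem-afirst}, $a_1$ reaches position $y-1$ at time $s+r+1 > s \ge y-m > y - (y-1)$... more carefully, I would compare the time $s+r+1$ at which $a_1$ first could hit $b_1=y-1$ against $y$; using $r>m$ (Lemma \ref{lem-boundm}) and the normalization one checks $s+r+1 > y$ except possibly in degenerate small cases handled separately. Symmetrically $c_2 = -y-s$ meets the $B$-block only at times $\ge y$ (here $c_2$ hits the bottom entry $b_n = -y+m+1$ at time $s+m+1 \ge y+1$, precisely where $y\le s+m$ is used). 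Hence for all $t<y$ every intersection of $P(t)$ is either $a_2 b_i$ or $b_i c_1$, i.e. the triple $(a_2|B|c_1)$ exhibits an intersection at every time in $[1,y-1]$. Since the dimension of a type $(1,n,1)$ partition with this normalization is $2y$ (from Theorem \ref{thm-1n1}, where the $ac$-intersection sits at the midpoint), and $(a_2|B|c_1)$ has its $a_2c_1$ intersection exactly at time $y$, the triple $(a_2|B|c_1)$ is a genuine Ulrich partition of type $(1,n,1)$. Second, I invoke Theorem \ref{thm-1n1}: $n = y-1$... wait, rather $2\cdot(\text{something})$; the classification says a type $(1,n,1)$ Ulrich partition normalized with $a_2 = y$, $c_1=-y$ has $y = n$ and the $B$-block meeting position $p\in[y]$ or $-p$. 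So $n = y$ and $B\subset [-y+1,y-1]$ with exactly one of $\{p,-p\}$ in $B$ for each $p\in[y-1]$, together with $0\notin B$... (one needs to recheck parity and the exact indexing, but this is routine). Third, with $n$ and the combinatorial shape of $B$ fixed, I examine times $t\ge y$: at $t=y$ we have $a_2c_1$; after that, $a_2$ leaves to the left of $c_1$ and $c_1$ leaves to the right of $a_2$, so for $t$ slightly above $y$ the only remaining possible intersections involve $a_1$ or $c_2$. Counting: the remaining intersection pairs to be realized are $a_1c_2$, $a_1c_1$, $a_2c_2$, the $n$ pairs $a_1b_i$, and the $n$ pairs $c_2b_i$ — that is $N - 1 - \binom{?}{}$... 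I would just use $\dim P = 2y+s+r-m-1$ (Observation \ref{obs-dim2n2}) and the already-accounted-for times $[1,y]$ to deduce the number of times left is $y+s+r-m-1$, forcing $s$, $r$, $m$ into one equation. Fourth, using that $a_1c_1$ occurs at time $\tfrac12((y+s+r)-(-y)) = y+\tfrac{s+r}{2}$ and that no $b_i$ may sit at $a_1$'s position at that time (and the analogous statements for $a_2c_2$ at time $y+\tfrac{s}{2}$ and $a_1c_2$ at time $y+\tfrac{2s+r}{2}$), I would locate these three "forbidden positions" inside the known $B$-pattern; compatibility with the one-of-$\{p,-p\}$ structure of $B$ should force $s = 2k+2$, $r = s+1 = 2k+3$ (matching $a_1 = y+s+r = 6u+5$, $c_2 = -y-s = -2u-1$... with $u=k$, $y = 2u+1$), i.e. exactly $P_u$. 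Finally I would verify that the resulting $B$-block is forced to be $\{2u,2u-2,\dots,2,-1,-3,\dots,-2u+1\}$, which is the stated $P_u$, and check the degenerate small-$y$ cases ($y=1,2$, or $n=0$) separately.

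The main obstacle I anticipate is Step 4: translating the three "forbidden-position" constraints on $a_1,a_2$ relative to $c_1,c_2$ into a clean determination of $s$ and $r$. The subtlety is that the classification of type $(1,n,1)$ partitions gives a whole family $2^n$ of possibilities for $B$, so I need to rule out all but the even-integers pattern. I expect this to work because the requirement that $a_1$ (respectively $c_2$) slalom through the entire $B$-block in contiguous time without ever colliding when $a_1c_1$, $a_1c_2$, or $a_2c_2$ also collides is extremely restrictive — essentially it forces $B$ on the positive side and negative side to be "interleaved with a fixed offset," which is exactly the even-integer pattern. I would likely package this as: after Step 3 the partition $(a_1|B|c_2)$, suitably shifted, must \emph{also} be an Ulrich partition of type $(1,n,1)$ (now its $a_1c_2$ intersection lands at the right midpoint because of the dimension bookkeeping), and then compare the two type-$(1,n,1)$ structures coming from $(a_2|B|c_1)$ and $(a_1|B|c_2)$ — their mutual compatibility is what pins $B$ down uniquely up to symmetry. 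If that comparison is clean, the rest is arithmetic. A secondary nuisance is handling the small cases and the precise parity indexing in Theorem \ref{thm-1n1}, but those are bookkeeping rather than conceptual difficulties.
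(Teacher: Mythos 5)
The first half of your plan is correct and is exactly how the paper begins: under $y\le s+m$ the first possible $a_1b$ and $bc_2$ collision times are $s+r+1>s+m\ge y$ and $s+m+1>y$, and all $ac$ collisions occur after time $y$, so every $t<y$ (and then every $t\in(y,2y)$) carries an $a_2b$ or $bc_1$ collision; hence exactly one of $\{p,-p\}$ lies in $B$ for each $p\in[y-1]$, so $n=y-1$ (not $n=y$ as you wrote), the subpartition $(a_2|B|c_1)$ is Ulrich of type $(1,n,1)$, and the dimension formula of Observation \ref{obs-dim2n2} gives $4y=2y+s+r-m-1$, i.e.\ $2y=s+r-m-1$.

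The genuine gap is your Step 4, which you flag as the obstacle and never resolve: nothing in the proposal actually determines $s$, $r$, $m$, the pattern of $B$, or the parity of $y$. The values you say the forbidden-position analysis ``should force,'' $s=2k+2$ and $r=s+1=2k+3$, are wrong: they do not reproduce $P_u$ (which has $s=c_1-c_2=4u+2=2y$ and $r=2$), and they are impossible outright since all entries of the $A$- and $C$-blocks are congruent mod $2$, so $s$ and $r$ must both be even. Your fallback claim, that $(a_1|B|c_2)$ suitably shifted is again a type $(1,n,1)$ Ulrich partition, is true for $P_u$ but does not follow from ``dimension bookkeeping'' alone: it is equivalent to the $2n+1$ collisions internal to $(a_1|B|c_2)$ filling a contiguous window of times, i.e.\ to the two leftover collisions $a_2c_2$ and $a_1c_1$ occupying the first two free times $2y$ and $2y+1$, and proving that is precisely the step the paper supplies. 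Namely, since the times $[1,2y-1]$ are already occupied, $a_2c_2$ (at time $y+s/2$) occurs at time $\ge 2y$, so $s\ge 2y$; then the first $a_1b$ and $bc_2$ times $s+r+1$ and $s+m+1$ exceed $2y$, so the time-$2y$ collision must be $a_2c_2$ (giving $s=2y$) and the time-$(2y+1)$ collision must be $a_1c_1$ (giving $r=2$, hence $m=1$ from $2y=s+r-m-1$). After that the paper determines $B$ by induction and forces $y$ odd by examining time $3y$; I note that once $s=2y$, $r=2$, $m=1$ are in hand, your comparison of the two $(1,n,1)$ structures attached to $(a_2|B|c_1)$ and to the shifted $(a_1|B|c_2)$ does pin down the alternating $B$ and the parity of $y$, so it would be a workable substitute for that final induction--but without the forcing argument at times $2y$ and $2y+1$ the proof is incomplete.
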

\begin{proof}
If $y \leq s+m$, then every intersection until time $y$ is of the form $a_2  b$ or $b c_1$. Hence, there must be a $b$ either at position $p$ or $-p$ for every $1 \leq p < y$. Consequently, the total number of $b$ entries is $y-1$ and for $t \in (y, 2y)$ every intersection is also of the form $a_2 b$ or $b c_1$. By the dimension formula, $4y = 2y+s+r-m-1$, so $2y = s+r-m-1$.

At time $t=2y$, $a_2$ and $c_1$ are at positions $-y$ and $y$, respectively. They cannot intersect a $b$. The intersection  $a_2 c_2$ happens before $a_1 c_1$, so at time $t = 2 y$ the intersection is $a_2 c_2$ and $s=2y$. At time $2y+1$, $c_2$ cannot intersect a $b$, hence the intersection must be  $a_1 c_1$. We conclude that $r=2$, so $m = 1$.

We now inductively determine the $B$-block.  We have $y-1,-y+2\in B$.  When $a_2$ is at position $-y+k$ with $3\leq k<y$ the entry $c_1$ is at position $y-k+2$, while $c_2$ and $a_1$ have already intersected all other entries.  By induction we may assume $y-k+2\in B$ iff $k$ is odd; therefore $-y+k\notin B$ iff $k$ is odd and $y-k\in B$ iff $k$ is odd.

Finally, note that $y$ is odd. Equivalently, $2\in B$. Otherwise, at time $t = 3y$, $c_2$ is at position $0$ and $a_2$ is at position $2$ and there is no intersection. Therefore $s\equiv 2 \pmod 4$, and $P = P_{\frac{s-2}{4}}$.
\end{proof}

We next argue that any Ulrich partition of type $(2,n,2)$ must be obtained from some $P_u$ by a process of elongation. Finally, we will see that $P_u$ cannot be elongated.

Given a partition $P$ at time $t=0$, we will view it as three sequences $A B C$, where $A$ is the sequence of $a$'s and blank spaces $a_1 \times \cdots \times a_2$, $C$ is the sequence of $c$'s and blank spaces $c_1 \times \cdots \times \ c_2$ and $B$ is the sequence of $b$'s and blank spaces in between $a_2$ and $c_1$. The partition $P$ is a concatenation of these three. Given a contiguous pattern $\Psi$ of $b$'s and blank spaces at positive positions, let $\Psi^c$ be the complementary pattern which has a $b$ at position $p$ if $\Psi$ does not have a $b$ at position $-p$ and vice versa. Let $\ell(\Psi)$ be the length of $\Psi$. Let $X_\ell$ denote a contiguous sequence of blank spaces of length $\ell$.

\begin{example}
If $\Psi = b \times b \ b \ \times$, then $\Psi^c = b \times \times  \ b \ \times $. Also, $X_3 = \times \times \times$.
\end{example}

\begin{definition}
Let $P=ABC$ be a partition and $\Psi$ a pattern of $b$'s and blanks. The \emph{$\Psi$-elongation of $P$} is the partition corresponding to the concatenation $$A \Psi X_{\ell(\Psi)} B X_{\ell(\Psi)} \Psi^c C$$ of patterns.
\end{definition}

\begin{example}
Continuing the previous example, if  $$P = a_1 \times \times \times \ a_2 \ b_1 \ b_2 \ \times \times \ b_3 \times \times \ c_1 \times c_2,$$ then the $\Psi$-elongation of $P$ is
$$ a_1 \times \times \times  a_2 \ b \times b\ b \times \times \times
\times \times \times \ b_1 \ b_2  \times \times \ b_3 \times \times \times \times \times \times \times  b \times \times \ b \times  c_1 \times c_2.$$
\end{example}

For integers $q,r>0$, we let $K(q, r,m)$ be the pattern of $b$'s consisting of $q$ iterations of a contiguous block of $m$ $b$'s followed by $r-m$ blanks.

\begin{example}
$K(2, 6, 2) = b \ b \times \times \times \times \ b \ b \times \times \times \times $.
\end{example}

Our final lemma easily implies Theorem \ref{thm-2n2}.

\begin{lemma}\label{lem-mainMeat}
Let $P$ be an Ulrich partition of type $(2,n,2)$, normalized as above. If $y > s+m$, then $r$ divides $s+m$; let the quotient be $q$.  Then $P$ is the $K(q, r,m)$-elongation of an Ulrich partition $P'$ of type $(2, n- s-m, 2)$.  Furthermore, the initial block of $b$'s in $P'$ also has length $m$.
\end{lemma}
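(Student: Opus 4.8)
\textbf{Proof proposal for Lemma \ref{lem-mainMeat}.}

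The plan is to analyze the forced structure of the early (and, by duality, late) intersections of an Ulrich partition $P$ of type $(2,n,2)$ with $y>s+m$, and to show that this structure exactly matches a $K(q,r,m)$-elongation. By Lemma \ref{lem-afirst}, the intersection at time $t=1$ is $a_2b_1$, so $b_1=y-1$, and by Lemma \ref{lem-boundm} the initial contiguous block of $b$'s has length exactly $m$; thus $\{y-1,\ldots,y-m\}\subset B$ and $y-m-1\notin B$. I would first trace what happens at times $t\in[1,y]$: since $y>s+m$, during this range the entries $a_1$ and $c_2$ are still to the right of, respectively left of, the whole $B$-block interval $(-y+m+1,y-1)$, so every intersection with a $b$ is either an $a_2b$ or a $bc_1$ intersection. (One must check the boundary cases carefully using $y>s+m$: at time $t$, $a_1$ is at $y+s+r-t$ and $c_2$ at $-y-s+t$, and these avoid the $B$-interval for $t\le y$.) This already forces a precise alternating pattern: whenever $a_2$ is at a position $-y+k$ that is not occupied by a $b$, the entry $c_1$ must simultaneously meet a $b$, and vice versa, and between the contiguous block of $m$ $b$'s near $y$ the next intersection event (time $m+1$) must be $b_nc_1$, giving a contiguous block of $b$'s near the bottom; iterating this shows that on the interval where $a_2$ and $c_1$ both traverse the $B$-block, the $b$-pattern read from the top is exactly $K(q,r,m)$ for the appropriate $q$, and that $r\mid s+m$ with quotient $q$. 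The key mechanism is that the gap $r$ between the $a$-entries and the fact that $c_2$ lags $c_1$ by $s$ force the $b$'s near the top to come in blocks of size $m$ separated by gaps of size $r-m$, since at the moment $a_2$ reaches the first blank after such a block, $a_1$ reaches a position $r$ higher which must be blank too (else a double intersection with the $c$-side occurs).

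Next I would identify the partition $P'$. Set $P'$ to be the partition obtained from $P$ by shifting to the time-$t_0$ evolution $P(t_0)$ for the appropriate $t_0$ and deleting the top block of $b$'s of total length $s+m$ (that is, the $q$ blocks-plus-gaps $K(q,r,m)$) together with the symmetrically-placed bottom block of length $s+m$; concretely $t_0$ is the time at which $a_1$ has just finished passing the top inserted block. One checks that $P'$ has type $(2,n-s-m,2)$, with the $a$-gap still $s+r$... wait — more precisely, the new positions are chosen so that $P'$ has the same $s$ and $r$ parameters (the insertions do not change the $a$- or $c$-gaps, only the $b$-block and the overall shift), and its $y$-parameter decreases. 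Since $P$ is Ulrich and the inserted top/bottom patterns account precisely for the intersections at the initial and terminal ranges of times — here I would write out the list of which entry meets which inserted $b$ at each time in those ranges, exactly as in the proof of Lemma \ref{lem-2n1} — it follows that $P'$ has a unique intersection at every time in its range, i.e. $P'$ is Ulrich. The claim that $P$ is the $K(q,r,m)$-elongation of $P'$ is then a direct comparison of the position sequences $A\Psi X_{\ell(\Psi)}BX_{\ell(\Psi)}\Psi^cC$ against the computed structure of $P$, using that $\Psi=K(q,r,m)$ has length $qr=s+m$ and that $\Psi^c$ is precisely the bottom inserted pattern (this is where the complementation $\Psi\mapsto\Psi^c$ matters: the bottom block is the mirror-complement of the top block because $c_1$ and $a_2$ scan complementary position sets). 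Finally, the initial block of $b$'s in $P'$ has length $m$: by construction it is the first block of $B$ that was not deleted, and by Lemma \ref{lem-boundm} applied to $P'$ (which is legitimate since $P'$ is itself an Ulrich partition of type $(2,\cdot,2)$ normalized the same way) its initial $b$-block has length exactly the corresponding $m'$; one checks $m'=m$ because the $r$-periodic structure is inherited.

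The main obstacle I anticipate is the bookkeeping in the first paragraph: proving rigorously that the $b$-pattern on the traversal interval is \emph{exactly} $K(q,r,m)$ and not merely "blocks of size $\le m$ with gaps", and simultaneously extracting the divisibility $r\mid s+m$. This requires an induction on position showing (i) a block of $b$'s near the top can have length at most $m$ (else $a_1$ and $a_2$ double-hit, or parity fails), (ii) a block of length $<m$ followed by a gap is impossible because the corresponding $c_1$-intersection would have to land on a non-$b$, and (iii) the gaps have length exactly $r-m$, forced by the $a_1$-versus-$c$-side collision argument sketched above; the induction must be set up so that it terminates cleanly exactly when $a_2$ reaches position $-y+(s+m)$, which is what yields $q=(s+m)/r\in\Z$. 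A secondary subtlety is handling the degenerate small cases ($m=1$, or $r-m=0$, or $y$ barely exceeding $s+m$) and the parity constraints on $a$- and $c$-entries, but these are routine case-checks analogous to those already appearing in Lemmas \ref{lem-boundm} and \ref{lem-fundamental2n2}. Once Lemma \ref{lem-mainMeat} is in hand, Theorem \ref{thm-2n2} follows by induction on $n$: the base case $y\le s+m$ is Lemma \ref{lem-fundamental2n2}, the inductive step uses Lemma \ref{lem-mainMeat} together with a separate verification — which I would carry out immediately afterwards — that no $P_u$ admits a $K(q,r,m)$-elongation which is Ulrich, by comparing the forced $s,r,m$ of $P_u$ (namely $s=2u+... $, $r=2$, $m=1$) against the divisibility and dimension-formula constraints.
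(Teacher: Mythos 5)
The central step of the lemma --- that the top and bottom $s+m$ positions of the $B$-block carry exactly the pattern $K(q,r,m)$ and its complement, and that $r\mid s+m$ --- is precisely what you defer as ``the main obstacle,'' and the mechanism you sketch for it would not work. First, your time window is wrong: under the hypothesis $y>s+m$ it is false that every intersection at a time $t\le y$ is of type $a_2b$ or $bc_1$. The entry $c_2=-y-s$ reaches $b_n=-y+m+1$ already at time $s+m+1\le y$, and $a_1=y+s+r$ reaches $b_1=y-1$ at time $s+r+1$, which may also be less than $y$; the correct statement holds only for $t\le s+m$, and by itself it yields just the complementarity ``exactly one of $y-u$, $-y+u$ lies in $B$'' for $u\le s+m$, not the block structure. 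Second, your ``key mechanism'' couples $a_2$ with $a_1$ ``a position $r$ higher,'' but $a_1-a_2=s+r$, not $r$. The period-$r$ structure actually comes from coupling $a_1$ with $c_2$, whose positions sum to $r$ at every time: when $c_2$ meets a top $b$ (at a late time), $a_1$ sits at a prescribed bottom position which must then be blank, and when $c_2$ sits at a top blank, $a_1$ must meet a bottom $b$. The paper's proof is an induction alternating between this late-time $a_1$/$c_2$ constraint and the early-time $a_2$/$c_1$ complementarity, and the divisibility $r\mid s+m$ then needs a separate boundary case analysis on the remainder of $s+m$ modulo $r$ (remainder in $[1,m]$ versus strictly between $m$ and $r$). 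None of this is supplied, so the core of the lemma remains unproved in your proposal.

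Two further steps are asserted without support. The claim that the initial block of $P'$ has length $m$ ``because the $r$-periodic structure is inherited'' does not follow: the periodicity is established only on the deleted top and bottom $s+m$ positions and says nothing about the first entry of $B'$. The paper instead deduces $m'=m$ from the dimension formula of Observation \ref{obs-dim2n2} applied to both $P$ and $P'$, combined with $\dim P=\dim P'+4(s+m)$. Likewise, Ulrichness of $P'$ requires showing that all intersections involving the deleted $b$'s occupy exactly the initial segment $[1,2s+2m]$ and the corresponding terminal segment of times, which needs the inequality $y>2s+2m$ and a dual (late-time) analysis symmetric to the first part; your blanket ``I would write out the list'' conceals exactly the place where your incorrect premise (that $a_1$ and $c_2$ stay outside the $B$-range until time $y$) would lead the bookkeeping astray.
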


\begin{proof}[Proof of Theorem \ref{thm-2n2} assuming Lemma \ref{lem-mainMeat}]
We have already classified the Ulrich partitions of type $(2,2,2)$ in \S \ref{sec-beta2}. By induction on $n$, suppose that for $n < n_0$ the only Ulrich partitions of type $(2, n, 2)$ are $P_{u}$ with $n=2u$. Let $P$ be an Ulrich partition of type $(2,n_0,2)$. By Lemma \ref{lem-fundamental2n2} we may assume $y>s+m$.  By Lemma \ref{lem-mainMeat}, $P$ is the $K(q, r,m)$-elongation of a smaller Ulrich partition $P'$. By induction and Lemma \ref{lem-mainMeat}, we must have $r=2$ and $m=1$. However, $s$ is even, so $r=2$ does not divide $s+m= s+1$.  This contradiction proves the theorem.
\end{proof}

\begin{proof}[Proof of Lemma \ref{lem-mainMeat}]
We can determine the pattern of intersections inductively until the time $s+m<y$ immediately before $c_2$ first meets the $B$-block.  Until this time, all intersections are of the form $a_1b$ or $bc_2$.

For the base of the induction, we first recall $\{y-1,\ldots,y-m\}\subset B$.  We know $-y+m+1\in B$ by Lemma \ref{lem-boundm}, and $m<r$.  Suppose $v\in (m,r]$.  The only possible intersection at time $t_0$ when $a_1(t_0) = -y+v$ is $a_1b$ since $a_2(t_0)=-y+v-r-s < -y$ and $c_2(t_0)=y-v+r\geq y$.  Therefore $-y+v\in B$ for $v\in (m,r]$.

We now continue by induction until we reach time $s+m$. The positions $y - (h-1)r -v$ have a $b$ for $1 \leq v \leq m$ by induction. Consequently, the positions $-y+ hr + v$ cannot have $b$'s for $1 \leq v \leq m$. Otherwise, when $c_2$ is at position $y- (h-1)r -v$, $a_1$ would be at position $-y + hr + v$ giving a coincident intersection. Thus there is a contiguous $B$-block of length $m$ at positions $y - hr - v$ for $1 \leq v \leq m$. Similarly, there are no $b$ entries in positions $y - (h-1)r - v$ for $m+1 \leq v \leq r$ by induction. When $c_2$ is at these positions $a_1$ is at the positions $-y + hr + v$. Since $a_1b$ is the only possible intersection, there must be a contiguous $B$-block of length $r-m$ at these positions.

\emph{Claim: $r|s+m$.}  Write $s+m = qr+j$ with $0\leq j<r$ as in the division algorithm.  What we have shown so far is that the pattern of $b$'s and blanks in the interval $B \cap (y,y-s-m]$ is the truncation of $K(q,r,m)$ to a sequence of length $s+m$.  Similarly, the pattern of $b$'s and blanks in $B\cap [-y+s+m,-y)$ is the truncation of $K(q,r,m)^c$ to a sequence of length $s+m$.  As $\ell(K(q,r,m)) = qr$, the claim is that no truncation actually takes place.  There are two cases to consider depending on the remainder $j$.

\emph{Case 1: $1\leq j\leq m$}.  In this case, $y-s-m\in B$.  When $c_2$ is at position $y-s-m$, we find that $c_1$ is at position $y-m\in B$, a contradiction.

\emph{Case 2: $m< j <r$}. Consider the time $t_0$ when $a_1(t_0) = -y+s+m+1$. Then $c_2(t_0) = y-s-m-1+r\notin B$, and $c_1(t_0)\geq y$, $a_2(t_0)\leq -y$ hold.  Furthermore, $-y+s+m+1\notin B$, since $c_1$ is at this position when $c_2$  is at $-y+m+1\in B$.  Thus there is no intersection at time $t_0$.

Therefore $r|s+m$.  Let us analyze the known intersections.  For times $t\in [1,s+m]$, the intersections are of type $a_2b$ or $bc_1$.  When $t\in (s+m,2s+2m]$, the intersections are all $a_1b$ or $bc_2$.   This gives $y-t,-y+t\notin B$ for all such times and $y>2s+2m$.  Dually, a symmetric description holds for the last $2s+2(r-m)$ times.  Evolving $P$ to time $2s+2m$ and throwing out all the $b$'s which have already met $a$'s and $c$'s, we arrive at an Ulrich partition $$P'=(y-s+r-2m,y-2s-2m|B'|{-y+2s+2m},-y+s+2m)$$  such that $P$ is the $K(q,r,m)$-elongation of $P'$; it is easy to see that $B'$ is nonempty.

Finally, we analyze the length $m'$ of the initial block of $b$'s in $P'$.  The dimension formula Observation \ref{obs-dim2n2} gives equalities \begin{align*}
 \dim P' &= 2(y-2s-2m)+r+s-m'-1\\
 \dim P &= 2y+r+s-m-1\\
 \dim P &= \dim P' + 4(s+m),
 \end{align*}
 from which $m=m'$ follows immediately.
\end{proof}

\bibliographystyle{plain}

\end{document}